\theoremstyle{plain}
\newtheorem{theorem}{Theorem}
\newtheorem{lemma}[theorem]{Lemma}
\newtheorem{corollary}[theorem]{Corollary}
\newtheorem{proposition}[theorem]{Proposition}
\newtheorem{definition}[theorem]{Definition}
\newtheorem{postulate}{Postulate}
\newtheorem{thesis}{Thesis}
\theoremstyle{definition}
\newtheorem{example}[theorem]{Example}
\renewcommand{\labelenumi}{(\roman{enumi})}
\newcommand{\abs}[1]{\left\lvert#1\right\rvert}
\DeclareMathOperator{\Dom}{dom}
\newcommand{\rest}[2]{#1\!\!\restriction_{#2}}
\newcommand{\reste}[2]{#1\restriction_{#2}}
\newcommand{\osg}[1]{\left[#1\right]^{\prec}}
\newcommand{\N}{\mathbb{N}}%
\newcommand{\Q}{\mathbb{Q}}%
\newcommand{\R}{\mathbb{R}}%
\newcommand{\X}{\{0,1\}^*}%
\newcommand{\XI}{\{0,1\}^\infty}%
\newcommand{\Bm}[2]{\lambda_{#1}\left(#2\right)}
\newcommand{\ket}[1]{| #1 \rangle}
\newcommand{\bra}[1]{\langle #1 |}
\newcommand{\PS}{\mathbb{P}}%
\newcommand{\charaps}[2]{C\!\left(#1,#2\right)}
\newcommand{\charapse}[2]{C\left(#1,#2\right)}
\newcommand{\chara}[2]{\mathrm{C}_{#1}\left(#2\right)}
\newcommand{\cond}[2]{\mathrm{Filtered}_{#1}\left(#2\right)}
\newcommand{\Projl}[1]{\mathrm{Proj}_1\!\left(#1\right)}
\newcommand{\jump}[1]{\ensuremath{[\![#1]\!]} }
\begin{document}

\begin{frontmatter}

\title{An operational characterization of the notion of probability by algorithmic randomness and its applications\thanks{%
This work
is a substantial extension of two preliminary papers of the author,
entitled:
``An operational characterization of the notion of probability by algorithmic randomness,''
which appeared in the Proceedings of the 37th Symposium on Information Theory and its Applications (SITA2014),
5.4.1, pp.~389--394, December 9-12, 2014, Unazuki, Toyama, Japan,
and:
``An operational characterization of the notion of probability by algorithmic randomness and
its application to cryptography,''
which appeared in the Proceedings of the 32nd Symposium on Cryptography and Information Security (SCIS2015),
2D4-3, January 20-23, 2015, Kokura, Japan.
The both
preliminary
papers are available on the author's website at \url{http://www2.odn.ne.jp/tadaki/}%
}}
\runtitle{An operational characterization of the notion of probability by algorithmic randomness and its applications}

\author{\inits{K.}\fnms{Kohtaro} \snm{Tadaki}\ead[label=e1]{tadaki@isc.chubu.ac.jp}}
\address{Department of Computer Science,
\orgname{Chubu University},
1200 Matsumoto-cho, Kasugai-shi, Aichi 487-8501, \cny{Japan}\printead[presep={\\}]{e1}}

\begin{abstract}
The notion of probability plays an important role in almost all areas of science and technology.
In modern mathematics, however, probability theory means nothing other than measure theory,
and the operational characterization of the notion of probability is not established yet.
In this paper, based on the toolkit of algorithmic randomness we present
an operational characterization of the notion of probability, called an \emph{ensemble}.
Algorithmic randomness, also known as algorithmic information theory,
is a field of mathematics which enables us to consider the randomness of an individual infinite sequence.
We use the notion of Martin-L\"of randomness with respect to Bernoulli measure
to present the operational characterization.
As the first step of the research of this line,
in this paper we
consider
the case of finite probability space, i.e.,
the case where the sample space of the underlying probability space is finite, for simplicity.
We give a natural operational characterization of the notion of conditional probability
in terms of ensemble,
and give equivalent characterizations of the notion of independence between two events based on it.
Furthermore,
we give equivalent characterizations of the notion of independence
of
an arbitrary number of events/random variables in terms of ensembles.
Moreover,
we show that the independence
of
events/random variables is equivalent
to the independence in the sense of van Lambalgen's Theorem,
in the case where the underlying finite probability space is computable.
In the paper we make applications of our framework to information theory, cryptography, and
the simulation of a biased coin using fair coins,
in order to demonstrate the wide applicability of our framework to the general areas of science and technology.
\end{abstract}

\begin{keyword}
\kwd{probability}
\kwd{algorithmic randomness}
\kwd{operational characterization}
\kwd{Martin-L\"of randomness}
\kwd{Bernoulli measure}
\kwd{conditional probability}
\kwd{independence}
\kwd{van Lambalgen's Theorem}
\kwd{information theory}
\kwd{cryptography}
\end{keyword}

\end{frontmatter}

\section{Introduction}

The notion of probability plays an important role in almost all areas of science and technology.
In modern mathematics, however, probability theory means nothing other than \emph{measure theory},
and the operational characterization of the notion of probability is not established yet.
In this paper, based on the toolkit of \emph{algorithmic randomness}
we present an operational characterization of the notion of probability.
Algorithmic randomness
is a field of mathematics which enables us to consider the randomness of an individual infinite sequence.
We use the notion of
\emph{Martin-L\"of randomness with respect to Bernoulli measure}
to present the operational characterization.

To clarify our motivation
and standpoint,
and the meaning of the operational characterization,
let us consider
a familiar example of a probabilistic phenomenon.
We
here
consider the repeated throwings of a fair die.
In this probabilistic phenomenon,
as throwings progressed,
a specific infinite sequence such as
\begin{equation*}
  3,5,6,3,4,2,2,3,6,1,5,3,5,4,1,\dotsc\dotsc\dotsc
\end{equation*}
is being generated,
where each number is the outcome of the corresponding throwing of the die.
Then the following
naive
question
may
arise naturally. 

\begin{quote}
\textbf{Question}:
What property should this infinite sequence
satisfy as a
probabilistic
phenomenon?
\end{quote}

In this paper we try to answer this question.
We characterize the notion of probability
as
an infinite sequence of outcomes in a probabilistic phenomenon, which has a \emph{specific mathematical property}.
We call such an infinite sequence of outcomes
the \emph{operational characterization of the notion of probability}.
As the specific mathematical property, in this paper we adopt
the notion of \emph{Martin-L\"of randomness with respect to Bernoulli measure},
a notion in algorithmic randomness.

We put forward
this proposal
as a thesis (see Thesis~\ref{thesis} in Section~\ref{OPT}).
We
check
the validity of
the thesis
in detail,
based on
our intuitive understanding of the notion of probability.
In particular, we show that the thesis is properly consistent with the notion of probability in quantum mechanics.
We do this by demonstrating the fact that an elementary event with probability one occurs certainly in quantum mechanics,
on a variety of grounds.
We then
characterize \emph{equivalently}
the basic notions in probability theory in terms of the operational characterization.
Namely, we equivalently characterize the notion of the \emph{independence} of random variables/events
in terms of the operational characterization,
and represent the notion of \emph{conditional probability}
in terms of the operational characterization in a natural way.
The existence of these equivalent characterizations confirms further the validity of the thesis.

\subsection{Historical background}

In the past century,
there was a comprehensive attempt to provide
an operational characterization of the notion of probability.
Namely, von Mises
developed
a mathematical theory of repetitive events which was aimed at reformulating
the theory of probability and statistics
based on an operational characterization of the notion of probability \cite{vM57,vM64}.
In a series of his comprehensive works which began in 1919,
von Mises developed this theory and, in particular,
introduced the notion of \emph{collective} as a mathematical idealization of a long sequence of
outcomes of experiments or observations repeated under a set of invariable conditions,
such as the repeated tossings of a coin or of a pair of dice.

The collective plays a role as an operational characterization of the notion of probability,
and is an infinite sequence of sample points in the sample space of a probability space.
The notion of  collective is defined
by means of
the notion of \emph{place selection},
which is a function from the set of finite strings
of the sample points
to the set~$\{\mathrm{YES}, \mathrm{NO}\}$.
Given an infinite sequence of the sample points,
the place selection is used for selecting positions of the infinite sequence in a certain manner
to form a subsequence of it.
Then, for aiming to introduce a randomness property to the collective,
von Mises defined a collective
to be an infinite sequence of the sample points such that
the law of large numbers with a fixed limit value holds for all its subsequences selected by place selections.
In this definition of the collective, if all possible place selections are allowed,
the definition becomes empty.
Thus, the class of place selections used in the definition is assumed to
a
countable set. 
Actually, Wald~\cite{Wa36,Wa37}
showed that,
for every countable class of place selection,
there exist infinitely many infinite sequences
which are
classified as
a collective
according to the definition above. 
However, the arbitrariness of the choice of a countable class of place selections remains left, and
von Mises would seem to propose to choose this
countable
class,
depending on a particular problem in probability theory to be solved.
In order to remove
this arbitrariness,
in 1940 Church~\cite{Ch40}
proposed that a place selection
be
a computable function.
Actually, all of such functions form a countable class.
In 1939, however, Ville~\cite{Vi39} revealed
one of the inescapable defects
of the notion of collective.
Namely, he showed that,
no matter how
a
countable collection of place selections is chosen,
there exists an infinite sequence of the sample points
which must be classified
into
a collective according to the definition above,
and which
cannot be regarded as random according to our intuition.
Apart from Ville's result,
the collective has,
in the first place,
an \emph{intrinsic defect} that
\emph{it cannot exclude the possibility that an event with probability zero may occur}.
For the
historical
development of the theory of collectives from the point of view of the definition of randomness,
see Downey and Hirschfeldt~\cite{DH10}.

In 1966, Martin-L\"of \cite{M66} introduced the definition of random sequences,
which is called \emph{Martin-L\"of randomness} nowadays, and
plays a central role in the recent development of algorithmic randomness.
At the same time,
he introduced the notion of \emph{Martin-L\"of randomness with respect to Bernoulli measure} \cite{M66}.
He then pointed out that this notion overcomes the defect of the collective in the sense of von Mises, and
this can be regarded precisely as the collective which von Mises wanted to define.
However, he did not develop probability theory
based on Martin-L\"of random
sequences
with respect to Bernoulli measure.

Algorithmic randomness is a field of mathematics which studies the definitions of random sequences and
their property
(see
Nies~\cite{N09} and Downey and Hirschfeldt~\cite{DH10}
for the recent developments of
the field).
However, the recent research on algorithmic randomness would seem only interested in
the notions of randomness themselves and their
interrelation,
and not seem to have
made an attempt
to develop probability theory based on Martin-L\"of randomness with respect to Bernoulli measure in an operational manner so far.

\subsection{Contribution of the paper}

The subject of this paper is to make such an attempt.
Namely, in this paper we present an operational characterization of the notion of probability
based on Martin-L\"of randomness with respect to Bernoulli measure.
We call it an \emph{ensemble}, instead of collective for distinction.
The name ``ensemble'' comes from physics, in particular, from quantum mechanics and statistical mechanics.
In Thesis~\ref{thesis} below
we propose to identify it with
an infinite sequence of outcomes
resulting from
the infinitely repeated trials in a probabilistic phenomenon.
We show that
the
ensemble has enough properties to regard it as an operational characterization of
the notion of probability
from the point of view of our intuitive understanding of the notion of probability.

Actually, we give a natural operational characterization of the notion of conditional probability
in terms of ensemble,
and give equivalent characterizations of the notion of independence between two events based on it.
Furthermore,
we give equivalent characterizations of the notion of independence of an arbitrary number of events/random variables in terms of ensembles.
Moreover,
we show that the independence
of
events/random variables is equivalent
to the independence in the sense of van Lambalgen's Theorem \cite{vL87},
in the case where the underlying
probability space is \emph{computable}.

As the first step of the research of this line,
in this paper
we
consider
only
the case of \emph{finite probability space}, i.e.,
the case where the sample space of the underlying probability space is finite, for simplicity.
The investigation of the case of general probability spaces is
reported in the sequels to the paper,
such as Tadaki~\cite{T19arXiv}
which
especially investigates the case of general discrete probability spaces
whose sample space is countably infinite.

We emphasize that
\emph{the Bernoulli measure which we consider in this paper
is quite arbitrary and
therefore
is not required to be computable at all}
(except for the results related to van Lambalgen's Theorem
given in Section~\ref{FENICFPS},
an effectivization of the law of large numbers for an ensemble
given in Section~\ref{Section:EffectiveLLN},
and the applications of our framework given in
Sections~\ref{subsec:AC} and~\ref{AFBC},
where the computability of finite probability spaces
is
thought to be a natural requirement for the applications),
whereas
the measures considered in algorithmic randomness so far are usually computable.
Hence,
the
central
results in this paper hold for
an \emph{arbitrary}
finite probability space.

For instance,
in order to confirm the validity of Thesis~\ref{thesis},
we show that the law of large numbers holds for an \emph{arbitrary} ensemble
in Theorem~\ref{LLN}
in Section~\ref{Subsec:The law of large numbers}.
In that theorem,
the underlying finite probability space is \emph{quite arbitrary}, and therefore
is not required to be computable at all, in particular.
In this regard, however,
it would be
interesting
to investigate
what happens when we venture to impose computability restrictions on
the underlying finite probability space.
In Section~\ref{Section:EffectiveLLN}
we show that the computability of the underlying finite probability space
leads to
the
\emph{effectivization} of the law of large numbers.

Finally,
we make applications of our framework to
the general areas of science and technology.
In this paper,
as examples of the fields for the applications,
we choose information theory, cryptography, and the simulation of a biased coin using fair coins.
Thereby, we demonstrate
how properly our framework works in practical problems
in the general areas of science and technology.

Modern probability theory originated from the \emph{axiomatic approach} to probability theory,
introduced by Kolmogorov~\cite{K50} in 1933,
where the probability theory is precisely \emph{measure theory}.
One of the important roles of modern probability theory is, of course, in its applications to
the general areas of science and technology.
As we have already pointed out,
however,
an operational characterization of the notion of probability is still missing in modern probability theory.
Thus, when we apply the results of modern probability theory,
we have no choice but to
make such applications \emph{thoroughly based on our intuition without formal means}.

The aim of this paper is to try to fill in this gap between modern probability theory and its applications.
We present the operational characterization of the notion of probability as
a \emph{rigorous interface} between theory and practice,
without appealing to our intuition for filling in the gap.
No matter what,
in this
work
we \emph{keep} modern probability theory \emph{in its original form} without any modifications,
and propose the operational characterization of the notion of probability
as an \emph{additional mathematical structure} to it,
which
provides modern probability theory
with more comprehensive and rigorous 
opportunities for applications.

\subsection{Organization of the paper}

The paper is organized as follows.
We begin in Section~\ref{preliminaries} with some preliminaries to measure theory,
computability theory, and algorithmic randomness.
In Section~\ref{FPS}, we introduce the notion of finite probability space
on which the operational characterization of the notion of probability is presented.
Based on the notion of  finite probability space we then introduce
the notion of Martin-L\"of randomness with respect to Bernoulli measure in Section~\ref{MLRP}.

In Section~\ref{OPT} we
introduce the notion of ensemble,
and put forward a thesis
which states to identify the ensemble
as an operational characterization of the notion of probability.
We then
check
the validity of the thesis.
In Section~\ref{CPITW} we start to construct our framework
by characterizing operationally the notions of conditional probability and
the independence between two events,
in terms of ensembles.
We then characterize operationally the notion of the independence
of an arbitrary number of events/random variables
in terms of ensembles in Section~\ref{IANERV}.
In Section~\ref{FENICFPS} we show that
the independence notions, introduced in the preceding sections, are further equivalent to
the notion of the independence in the sense of van Lambalgen's Theorem,
in the case where the underlying finite probability space is computable,
by generalizing van Lambalgen's Theorem
over our framework.
Thus we show that the three independence notions, considered in this paper,
are all equivalent in this case.
In Section~\ref{Section:EffectiveLLN} we show that we
can accomplish
the
effectivization of the law of large numbers for an arbitrary ensemble
if
we venture to impose
computability restrictions upon
the underlying finite probability space.

In Section~\ref{Applications}
we make applications of our framework to information theory, cryptography,
and the simulation of a biased coin using fair coins.
We then mention an application of our framework to quantum mechanics,
which has already been developed by
a series of our works,
such as Tadaki~\cite{T18arXiv,T20Kokyuroku}.
We conclude this paper in Section~\ref{Concluding} with a mention of the next step of the research,
i.e., an extension of our framework over
general discrete probability spaces whose sample space is
\emph{countably infinite},
which has already been developed by Tadaki~\cite{T19arXiv}.

\section{Preliminaries}
\label{preliminaries}

\subsection{Basic notation and definitions}
\label{basic notation}

We start with some notation about numbers and strings which will be used in this paper.
$\#S$ is the cardinality of $S$ for any set $S$.
$\N=\left\{0,1,2,3,\dotsc\right\}$ is the set of \emph{natural numbers},
and $\N^+$ is the set of \emph{positive integers}.
$\Q$ is the set of \emph{rationals}, and $\R$ is the set of \emph{reals}.
For any $a\in\R$, as usual, $\lceil a\rceil$ denotes the smallest integer greater than or equal to $a$.

An \emph{alphabet} is
a non-empty
finite set.
Let $\Omega$ be an arbitrary alphabet throughout the rest of this section.
A \emph{finite string over $\Omega$} is a finite sequence of elements from the alphabet $\Omega$.
We use $\Omega^*$ to denote the set of all finite strings over $\Omega$,
which contains the \emph{empty string} denoted by $\lambda$.
We use $\Omega^+$ to denote the set $\Omega^*\setminus\{\lambda\}$.
For any $\sigma\in\Omega^*$, $\abs{\sigma}$ is the \emph{length} of $\sigma$.
Therefore $\abs{\lambda}=0$.
For any $\sigma\in\Omega^+$ and $k\in\N^+$ with $k\le\abs{\sigma}$,
we use $\sigma(k)$ to denote the $k$th element in $\sigma$.
Therefore, we have $\sigma=\sigma(1)\sigma(2)\dots\sigma(\abs{\sigma})$
for every $\sigma\in\Omega^+$.
For any $n\in\N$,
we use $\Omega^n$ and $\Omega^{\le n}$ to denote the sets
$\{\,x\mid x\in\Omega^*\;\&\;\abs{x}=n\}$ and $\{\,x\mid x\in\Omega^*\;\&\;\abs{x}\le n\}$,
respectively.
A subset $S$ of $\Omega^*$ is called
\emph{prefix-free}
if no string in $S$ is a prefix of another string in $S$.
A \emph{minimal} string in a subset $S$ of $\Omega^*$ is
a finite string in $S$ whose no proper prefix is in $S$.
For any $\sigma,\tau\in\Omega^*$, we say that $\sigma$ is \emph{incompatible with} $\tau$ if
$\sigma$ is not a prefix of $\tau$ and
moreover
$\tau$ is not a prefix of $\sigma$.

An \emph{infinite sequence over $\Omega$} is an infinite sequence of elements from the alphabet $\Omega$,
where the sequence is infinite to the right but finite to the left.
We use $\Omega^\infty$ to denote the set of all infinite sequences over $\Omega$.

Let $\alpha\in\Omega^\infty$.
For any $n\in\N$
we denote by $\rest{\alpha}{n}\in\Omega^*$ the first $n$ elements
in the infinite sequence $\alpha$,
and
for any $n\in\N^+$ we denote
by $\alpha(n)$ the $n$th element in $\alpha$.
Thus, for example, $\rest{\alpha}{4}=\alpha(1)\alpha(2)\alpha(3)\alpha(4)$, and $\rest{\alpha}{0}=\lambda$.

For any $S\subset\Omega^*$, the set
$\{\alpha\in\Omega^\infty\mid\exists\,n\in\N\;\rest{\alpha}{n}\in S\}$
is denoted by $\osg{S}$.
Note that (i) $\osg{S}\subset\osg{T}$ for every $S\subset T\subset\Omega^*$, and
(ii) for every set $S\subset\Omega^*$ there exists a prefix-free set $P\subset\Omega^*$ such that
$\osg{S}=\osg{P}$.
For any $\sigma\in\Omega^*$, we denote by $\osg{\sigma}$ the set $\osg{\{\sigma\}}$, i.e.,
the set of all infinite sequences over $\Omega$ extending $\sigma$.
Therefore $\osg{\lambda}=\Omega^\infty$.

For any function $f$, the \emph{domain of definition} of $f$ is denoted by $\Dom f$.

\subsection{Measure theory}
\label{MR}

We briefly review measure theory according to Nies~\cite[Section 1.9]{N09}.
See also Billingsley~\cite{B95}
for measure theory in general.

A real-valued function $\mu$ defined on the class of all subsets of $\Omega^\infty$ is called
an \emph{outer measure on $\Omega^\infty$} if the following conditions hold.
\begin{enumerate}
\item $\mu\left(\emptyset\right)=0$,
\item $\mu\left(\mathcal{C}\right)\le\mu\left(\mathcal{D}\right)$
  for every subsets $\mathcal{C}$ and $\mathcal{D}$ of $\Omega^\infty$
  with $\mathcal{C}\subset\mathcal{D}$, and
\item $\mu\left(\bigcup_{i}\mathcal{C}_i\right)\le\sum_{i}\mu\left(\mathcal{C}_i\right)$
  for every sequence $\{\mathcal{C}_i\}_{i\in\N}$ of subsets of $\Omega^\infty$.
\end{enumerate}
A \emph{probability measure representation over $\Omega$} is
a function $r\colon\Omega^*\to[0,1]$ such that
\begin{enumerate}
  \item $r(\lambda)=1$ and
  \item for every $\sigma\in\Omega^*$ it holds that
    \begin{equation}\label{pmr}
       r(\sigma)=\sum_{a\in\Omega}r(\sigma a).
    \end{equation}
\end{enumerate}
A probability measure representation $r$ over $\Omega$ \emph{induces}
an outer measure $\mu_r$ on $\Omega^\infty$ in the following manner:
A subset $\mathcal{R}$ of $\Omega^\infty$ is \emph{open} if
$\mathcal{R}=\osg{S}$ for some $S\subset\Omega^*$.
Let
$r$
be an arbitrary probability measure representation over $\Omega$. 
For each open subset $\mathcal{A}$ of $\Omega^\infty$, we define $\mu_r(\mathcal{A})$ by
\begin{equation*}
  \mu_r(\mathcal{A}):=\sum_{\sigma\in E}r(\sigma),
\end{equation*}
where $E$ is a prefix-free subset of $\Omega^*$ with $\osg{E}=\mathcal{A}$.
Due to the equality \eqref{pmr}
the sum is independent of the choice of  the prefix-free set $E$,
and therefore the value $\mu_r(\mathcal{A})$ is well-defined.
Then, for any subset $\mathcal{C}$ of $\Omega^\infty$, we define $\mu_r(\mathcal{C})$ by
\begin{equation*}
  \mu_r(\mathcal{C}):=
  \inf\{\mu_r(\mathcal{A})\mid
  \mathcal{C}\subset\mathcal{A}\text{ \& $\mathcal{A}$ is an open subset of $\Omega^\infty$}\}.
\end{equation*}
We can then show that $\mu_r$ is an
outer measure
on $\Omega^\infty$ such that
$\mu_r(\Omega^\infty)=1$.

A class $\mathcal{F}$ of subsets of $\Omega^\infty$ is called
a \emph{$\sigma$-field on $\Omega^\infty$}
if  $\mathcal{F}$ includes $\Omega^\infty$, is closed under complements,
and is closed under the formation of countable unions.
The \emph{Borel class} $\mathcal{B}_{\Omega}$ is the $\sigma$-field \emph{generated by}
all open sets on $\Omega^\infty$.
Namely, the Borel class $\mathcal{B}_{\Omega}$ is defined
as the intersection of all the $\sigma$-fields on $\Omega^\infty$ containing
all open sets
on
$\Omega^\infty$.
A real-valued function $\mu$ defined on the Borel class $\mathcal{B}_{\Omega}$ is called
a \emph{probability measure on $\Omega^\infty$} if the following conditions hold.
\begin{enumerate}
\item $\mu\left(\mathcal{C}\right)\ge 0$ for every set $\mathcal{C}$ in $\mathcal{B}_{\Omega}$,
\item $\mu\left(\Omega^\infty\right)=1$, and
\item $\mu\left(\bigcup_{i}\mathcal{C}_i\right)=\sum_{i}\mu\left(\mathcal{C}_i\right)$
  for every sequence $\{\mathcal{C}_i\}_{i\in\N}$ of sets in $\mathcal{B}_{\Omega}$ such that
  $\mathcal{C}_i\cap\mathcal{C}_j=\emptyset$ for all $i\neq j$.
\end{enumerate}
Then, for every probability measure representation $r$ over $\Omega$,
we can show that the restriction of the outer measure $\mu_r$ on $\Omega^\infty$
to the Borel class $\mathcal{B}_{\Omega}$ is
a probability measure on $\Omega^\infty$.
We denote the restriction of $\mu_r$ to $\mathcal{B}_{\Omega}$ by
$\mu_r$
just the same.
Then it is easy to see that
\begin{equation}\label{mr}
  \mu_r\left(\osg{\sigma}\right)=r(\sigma)
\end{equation}
for every probability measure representation $r$ over $\Omega$ and every $\sigma\in \Omega^*$.

\subsection{Computability}
\label{Computability}

A partial function $f\colon\N\to\Omega^*$ or $f\colon\N\to\Q$ is called
\emph{partial recursive}
if there exists a deterministic Turing machine $\mathcal{M}$ such that, for each $n\in\N$,
when executing $\mathcal{M}$ with the input $n$,
\begin{enumerate}
\item if $n\in\Dom f$ then the computation of $\mathcal{M}$ eventually terminates
  and
  thereupon
  $\mathcal{M}$ outputs $f(n)$, and
\item if $n\notin\Dom f$ then the computation of $\mathcal{M}$ does not terminate.
\end{enumerate}
A \emph{partial recursive function} whose domain of definition is $\Omega^*$ is defined in a similar manner. 
A partial recursive function is also called a \emph{partial computable function}.

A partial recursive function $f\colon\N\to\Omega^*$ or $f\colon\N\to\Q$ is called \emph{total recursive}
if $\Dom f=\N$.
A \emph{total recursive function} whose domain of definition is $\N^+$, $\N^+\times \N$, or $\N\times\Omega^*$
is defined in a similar manner to a total recursive function whose domain of definition is $\N$,
introduced
as above.
A total recursive function is also called a
\emph{computable function}.

We say that $\alpha\in\Omega^\infty$ is \emph{computable}
if the mapping $\N\ni n\mapsto\rest{\alpha}{n}$ is a computable function.

A real $a$ is called \textit{computable} if there exists a computable function $g\colon\N\to\Q$ such that $\abs{a-g(k)} < 2^{-k}$ for all $k\in\N$.
A real $a$ is called \emph{left-computable} if
there exists a computable, increasing sequence of rationals which converges to $a$, i.e., if
there exists a computable function $h\colon\N\to\Q$ such that
$h(n)\le h(n+1)$ for every $n\in\N$ and $\lim_{n\to\infty}h(n)=a$.
On the other hand, a real $a$ is called \textit{right-computable} if
$-a$ is left-computable.
It is then easy to see that, for every $a\in\R$,
the real
$a$ is computable if and only if $a$ is both left-computable and right-computable.

A subset $\mathcal{C}$ of $\N^+\times\Omega^*$ is called \emph{recursively enumerable}
(\emph{r.e.}, for short)
if there exists a deterministic Turing machine $\mathcal{M}$ such that,
for each $x\in\N^+\times\Omega^*$,
when executing $\mathcal{M}$ with the input $x$,
\begin{enumerate}
\item if $x\in\mathcal{C}$ then the computation of $\mathcal{M}$ eventually terminates, and
\item if $x\notin\mathcal{C}$ then the computation of $\mathcal{M}$ does not terminate.
\end{enumerate}

A probability measure $\mu$ on $\Omega^\infty$ is called \emph{computable} if
there exists a computable function $g\colon\N\times\Omega^*\to\Q$ such that
$\abs{\mu\left(\osg{\sigma}\right)-g(k,\sigma)} < 2^{-k}$ for all $k\in\N$ and $\sigma\in\Omega^*$.

\subsection{Algorithmic randomness}
\label{AR}

In the following we concisely review some definitions and results of algorithmic randomness 
\cite{C75,C87b,N09,DH10}.

\emph{Martin-L\"of randomness} is a randomness notion for an infinite binary sequence,
and is one of the major notions in algorithmic randomness.
The notion of Martin-L\"of randomness is introduced as follows:
We use $\mathcal{L}$ to denote Lebesgue measure on $\XI$.
Namely, $\mathcal{L}=\mu_r$ where
$r$ is a probability measure representation over $\XI$ defined by the condition that
$r(\sigma)=2^{-\abs{\sigma}}$ for every $\sigma\in\X$.
The basic idea of Martin-L\"of randomness
is as follows
(see Martin-L\"{o}f \cite{M66}, Nies~\cite{N09}, Downey and Hirschfeldt~\cite{DH10},
and Brattka, Miller, and Nies~\cite{BMiN12}).
\begin{quote}
\textbf{Basic idea of Martin-L\"of randomness}:
The \emph{random} infinite binary sequences
are precisely sequences
which are not contained in any \emph{effective null set} on
$\XI$.
\end{quote}
Here, an \emph{effective null set} on
$\XI$
is
a set
$\mathcal{S}\in\mathcal{B}_{\{0,1\}}$
such that
$\mathcal{L}(\mathcal{S})=0$
and moreover $\mathcal{S}$ has
some sort of
\emph{effective} property.
As a specific implementation of the idea of effective null set,
we introduce the following notion.

\begin{definition}[Martin-L\"{o}f test, Martin-L\"{o}f~\cite{M66}]
\label{ML-testM}
A subset $\mathcal{C}$ of $\N^+\times\X$
is called a
\emph{Martin-L\"{o}f test} if
$\mathcal{C}$ is
an r.e.~set
and
for every $n\in\N^+$ it holds that
$\mathcal{C}_n$ is a prefix-free subset of $\X$ and
\begin{equation}\label{muocn<2n}
  \mathcal{L}\left(\osg{\mathcal{C}_n}\right)<2^{-n},
\end{equation}
where $\mathcal{C}_n$ denotes the set
$\left\{\,
    \sigma\mid (n,\sigma)\in\mathcal{C}
\,\right\}$.
\qed
\end{definition}

Let $\mathcal{C}$ be a Martin-L\"{o}f test.
Then,
it follows from \eqref{muocn<2n}
that
$\mathcal{L}\left(\bigcap_{n=1}^{\infty}\osg{\mathcal{C}_n}\right)=0$.
Therefore,
the set $\bigcap_{n=1}^{\infty}\osg{\mathcal{C}_n}$ serves as an effective null set.
In this manner, the notion of an effective null set is implemented
as
a Martin-L\"{o}f test
in
Definition~\ref{ML-testM}.
Then,
the notion of \emph{Martin-L\"of randomness} is
defined as follows, according to the basic idea of Martin-L\"{o}f randomness stated above.

\begin{definition}[Martin-L\"{o}f randomness, Martin-L\"{o}f \cite{M66}]
\label{ML-randomness}
For any $\alpha\in\XI$, we say that $\alpha$ is
\emph{Martin-L\"{o}f random} if
\[\alpha\notin\bigcap_{n=1}^{\infty}\osg{\mathcal{C}_n}\]
for every Martin-L\"{o}f test $\mathcal{C}$.\qed
\end{definition}

As a specific implementation of the idea of effective null set
in defining
a notion of the randomness
for an infinite binary sequence,
we
have adopted
the notion of Martin-L\"{o}f test
in Definition~\ref{ML-randomness},
which leads to the notion of Martin-L\"of randomness.
If we implement the idea of effective null set in a different manner,
we obtain a corresponding randomness notion for an infinite binary sequence,
which is
usually
different from the notion of Martin-L\"of randomness.
In this manner, various randomness notions,
such as $2$-randomness, weak $2$-randomness, Demuth randomness, Martin-L\"of randomness, Schnorr randomness, and Kurtz randomness,
have been introduced so far.
Consequently, they form a hierarchy of the randomness notions for an infinite binary sequence.
See Nies~\cite{N09} and Downey and Hirschfeldt~\cite{DH10} for the detail of the hierarchy.

In the hierarchy of randomness notions, Martin-L\"of randomness is the oldest historically, and
plays a central role
in the recent development of algorithmic randomness.
Martin-L\"of randomness is considered to be natural
as the notion of the randomness for an infinite binary sequence.
There are
many kinds of definitions
of the randomness for an infinite binary sequence,
which are equivalent to Martin-L\"of randomness and which are considered to be natural
as a randomness notion for an infinite binary sequence.
In particular,
the notion of
Martin-L\"of randomness is equivalently characterized
by means of
the notion of program-size complexity,
as shown in Theorem~\ref{equivMLR} below.
The \emph{program-size complexity} (or \emph{Kolmogorov complexity})
$K(\sigma)$ of a finite binary string $\sigma$ is defined as the length of the shortest binary input
for a universal decoding algorithm $U$,
called an \textit{optimal prefix-free machine},
to output $\sigma$ (see Chaitin~\cite{C75} for the detail).
By the definition, $K(\sigma)$ can be thought of as
the randomness contained in the individual finite binary string $\sigma$.

\begin{theorem}[Schnorr~\cite{Sch73}, Chaitin~\cite{C75}]\label{equivMLR}
For every $\alpha\in\XI$, the following conditions are equivalent:
\begin{enumerate}
  \item $\alpha$ is Martin-L\"{o}f random.
  \item There exists $c\in\N$ such that, for all $n\in\N^+$, $n-c \le K(\rest{\alpha}{n})$.\qed
\end{enumerate}
\end{theorem}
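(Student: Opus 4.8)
The plan is to prove both implications in contrapositive form, establishing $\neg(2)\Rightarrow\neg(1)$ and $\neg(1)\Rightarrow\neg(2)$. Two standard facts about the optimal prefix-free machine $U$ underlying $K$ will be needed (both implicit in Chaitin~\cite{C75}). First, $K$ is \emph{upper semicomputable}: by dovetailing the computations of $U$ on all inputs, the set $\{(c,\sigma)\in\N^+\times\X : K(\sigma)<\abs{\sigma}-c\}$ is r.e. Second, since the domain of $U$ is prefix-free, the \emph{Kraft inequality} $\sum_{\sigma\in\X}2^{-K(\sigma)}\le1$ holds. For the harder direction I will also invoke the Kraft--Chaitin (machine existence) theorem: from any r.e. list of requests $(d_i,\tau_i)$ with $\sum_i 2^{-d_i}\le1$ one obtains a prefix-free machine, and hence a constant $d$, with $K(\tau_i)\le d_i+d$ for all $i$.

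For $\neg(2)\Rightarrow\neg(1)$, assume that for every $c\in\N$ there is $n\in\N^+$ with $K(\rest{\alpha}{n})<n-c$. For each $c\in\N^+$ put $\mathcal{C}_c:=\{\sigma\in\X : K(\sigma)<\abs{\sigma}-c\}$ and $\mathcal{C}:=\{(c,\sigma) : \sigma\in\mathcal{C}_c\}$, which is r.e. by upper semicomputability. Using subadditivity of $\Lm{\cdot}$ together with the Kraft inequality,
\[
  \Lm{\osg{\mathcal{C}_c}}\le\sum_{\sigma\in\mathcal{C}_c}2^{-\abs{\sigma}}<\sum_{\sigma\in\mathcal{C}_c}2^{-K(\sigma)-c}\le 2^{-c}\sum_{\sigma\in\X}2^{-K(\sigma)}\le 2^{-c},
\]
so $\mathcal{C}$ is a Martin-L\"of test. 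The hypothesis gives, for each $c$, a prefix of $\alpha$ lying in $\mathcal{C}_c$, whence $\alpha\in\osg{\mathcal{C}_c}$ for every $c$; thus $\alpha$ fails the test $\mathcal{C}$ and is not Martin-L\"of random.

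For $\neg(1)\Rightarrow\neg(2)$, let $\mathcal{C}$ be a Martin-L\"of test with $\alpha\in\osg{\mathcal{C}_n}$ for all $n$. For each $c\in\N^+$ I replace the slice $\mathcal{C}_{2c}$ by an r.e. prefix-free set $P_c$, uniformly in $c$, with $\osg{P_c}=\osg{\mathcal{C}_{2c}}$, so that $\sum_{\sigma\in P_c}2^{-\abs{\sigma}}=\Lm{\osg{\mathcal{C}_{2c}}}<2^{-2c}$; note each $\sigma\in P_c$ then satisfies $2^{-\abs{\sigma}}<2^{-2c}$, i.e.\ $\abs{\sigma}>2c$, so $\abs{\sigma}-c$ is a positive length. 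I feed the machine existence theorem the requests $(\abs{\sigma}-c,\sigma)$ for all $c\in\N^+$ and $\sigma\in P_c$; their total weight is
\[
  \sum_{c\ge1}\sum_{\sigma\in P_c}2^{-(\abs{\sigma}-c)}=\sum_{c\ge1}2^{c}\Lm{\osg{\mathcal{C}_{2c}}}<\sum_{c\ge1}2^{c}2^{-2c}=\sum_{c\ge1}2^{-c}\le1,
\]
so the requests are legal and yield a constant $d$ with $K(\sigma)\le\abs{\sigma}-c+d$ for every $\sigma\in P_c$. Since $\alpha\in\osg{\mathcal{C}_{2c}}=\osg{P_c}$, some prefix $\rest{\alpha}{m}$ lies in $P_c$, giving $K(\rest{\alpha}{m})\le m-c+d$. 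As $c$ is arbitrary, $m-K(\rest{\alpha}{m})$ is unbounded over prefixes of $\alpha$, so condition~(2) fails.

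I expect the main obstacle to be the $\neg(1)\Rightarrow\neg(2)$ direction, and specifically the bookkeeping needed to pass from the given test slices to uniformly r.e.\ prefix-free sets $P_c$ whose total request weight is summable, so that a single application of the machine existence theorem handles all $c$ simultaneously; the reverse direction is then a direct counting argument via the Kraft inequality.
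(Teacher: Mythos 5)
The paper does not prove this theorem at all: it is imported as a known result of Schnorr and Chaitin and stated with an immediate \qed, so there is no internal proof to compare against. Your argument is the standard (and correct) proof of the Schnorr--Levin--Chaitin characterization: the easy direction builds the deficiency test $\mathcal{C}_c=\{\sigma : K(\sigma)<\abs{\sigma}-c\}$ and bounds its measure by the Kraft inequality for $U$, and the hard direction converts the test slices $\mathcal{C}_{2c}$ into a summable r.e.\ request set for the machine existence theorem. The only cosmetic blemish is that the displayed strict inequality in the first direction degenerates when some $\mathcal{C}_c$ is empty, in which case $\Lm{\osg{\mathcal{C}_c}}=0<2^{-c}$ holds trivially anyway; everything else, including the uniform prefix-free refinement $P_c$ and the lower bound $\abs{\sigma}>2c$ needed to keep the request lengths positive, is handled correctly.
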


The condition (ii) means that the infinite binary sequence $\alpha$ is \emph{incompressible}.

\section{Finite probability spaces}
\label{FPS}

In this paper
we give an operational characterization of the notion of probability for a \emph{finite probability space}.
A finite probability space is formally defined as follows.

\begin{definition}\label{def-FPS}
Let $\Omega$ be an alphabet. A \emph{finite probability space on $\Omega$} is a function $P\colon\Omega\to\R$
such that
\begin{enumerate}
  \item $P(a)\ge 0$ for every $a\in \Omega$, and
  \item $\sum_{a\in \Omega}P(a)=1$.
\end{enumerate}
The set of all finite probability spaces on $\Omega$ is denoted by $\PS(\Omega)$.

Let $P\in\PS(\Omega)$.
The set $\Omega$ is called the \emph{sample space} of $P$,
and elements
of
$\Omega$ are called \emph{sample points} or \emph{elementary events}
of $P$.
For each $A\subset\Omega$, we define $P(A)$ by
\begin{equation}\label{eq:PA=sumainAPa}
  P(A):=\sum_{a\in A}P(a).
\end{equation}
A subset of $\Omega$ is called an \emph{event} on $P$, and
$P(A)$ is called the \emph{probability} of $A$
for every event $A$
on $P$.
\qed
\end{definition}

We use $\Omega$ to denote an arbitrary alphabet through out the rest of this paper.
It plays a role of the set of all possible outcomes of experiments or observations
in a probabilistic phenomenon in this paper.
An operational characterization of the notion of probability which we give for a finite probability space
on $\Omega$ is an infinite sequence over $\Omega$.

It is convenient to introduce the notion of
\emph{computable} finite probability space
as follows.

\begin{definition}
Let $\Omega$ be an alphabet, and let $P\in\PS(\Omega)$.
We say that $P$ is \emph{computable} if $P(a)$ is a computable real for every $a\in\Omega$.
\qed
\end{definition}

We may try to weaken the notion of the computability for a finite probability space as follows:
Let $\Omega$ be an alphabet, and let $P\in\PS(\Omega)$.
We say that $P$ is \emph{left-computable} if $P(a)$ is left-computable for every $a\in\Omega$.
On the other hand,
we say that $P$ is \emph{right-computable} if $P(a)$ is right-computable for every $a\in\Omega$.
However, using the condition (ii) of Definition~\ref{def-FPS} we can see that
these three computable notions for a finite probability space coincide with one another,
as the following proposition states.

\begin{proposition}
Let $\Omega$ be an alphabet, and let $P\in\PS(\Omega)$.
The following conditions are equivalent to one another.
\begin{enumerate}
  \item $P$ is computable.
  \item $P$ is left-computable.
  \item $P$ is right-computable.\qed
\end{enumerate}
\end{proposition}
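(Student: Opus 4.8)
The plan is to observe that condition (i) trivially implies both (ii) and (iii), and then to prove the two converse implications, (ii)$\Rightarrow$(i) and (iii)$\Rightarrow$(i), using the constraint $\sum_{a\in\Omega}P(a)=1$ together with the fact recorded in Subsection~\ref{Computability}, that a real is computable if and only if it is simultaneously left-computable and right-computable. Since ``$P(a)$ computable'' means $P(a)$ is both left- and right-computable for every $a$, the implications (i)$\Rightarrow$(ii) and (i)$\Rightarrow$(iii) are immediate; the whole content of the proposition lies in the converses.

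For (ii)$\Rightarrow$(i), fix $a\in\Omega$ and start from the identity
\begin{equation*}
  P(a)=1-\sum_{b\in\Omega\setminus\{a\}}P(b),
\end{equation*}
which is exactly condition (ii) of Definition~\ref{def-FPS} solved for $P(a)$. Assuming $P$ is left-computable, each summand $P(b)$ with $b\neq a$ is left-computable. I would first establish the auxiliary closure facts that a finite sum of left-computable reals is again left-computable (add the underlying increasing computable sequences of rationals termwise), that negating a left-computable real yields a right-computable real (immediate from the definition of right-computability, since a real $x$ is right-computable precisely when $-x$ is left-computable), and that adding a computable real to a right-computable real preserves right-computability. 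Applying these in turn, $\sum_{b\neq a}P(b)$ is left-computable, hence $-\sum_{b\neq a}P(b)$ is right-computable, and adding the computable constant $1$ shows $P(a)$ is right-computable. As $P(a)$ is already left-computable by hypothesis, it is computable; since $a$ was arbitrary, $P$ is computable.

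The implication (iii)$\Rightarrow$(i) follows by the mirror-image argument: if every $P(b)$ is right-computable, then $\sum_{b\neq a}P(b)$ is right-computable, so $-\sum_{b\neq a}P(b)$ is left-computable, and $P(a)=1-\sum_{b\neq a}P(b)$ is left-computable; being already right-computable, it is computable. The only point requiring care --- and hence the ``hard'' part, though it is routine --- is the verification of the three closure facts used above, in particular that finite sums of one-sided computable reals are one-sided computable in the same direction. Here the finiteness of the alphabet $\Omega$ is essential: it guarantees that $\sum_{b\neq a}P(b)$ has only finitely many terms, so these elementary closure properties suffice and no effective control over an infinite tail is required.
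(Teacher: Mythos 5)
Your proof is correct and follows exactly the route the paper intends: the paper omits the proof but points to condition (ii) of Definition~3 (the normalization $\sum_{a\in\Omega}P(a)=1$) as the key, and your argument via $P(a)=1-\sum_{b\neq a}P(b)$ together with the elementary closure properties of one-sided computable reals is precisely that. Nothing is missing; the closure facts you invoke are routine and correctly justified by the finiteness of $\Omega$.
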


\section{\boldmath Martin-L\"of $P$-randomness}
\label{MLRP}

In order to provide an operational characterization of the notion of probability
we use a generalization of Martin-L\"of randomness over Bernoulli measure.

Let $\Omega$ be an alphabet, and let $P\in\PS(\Omega)$.
For each $\sigma\in\Omega^*$, we use $P(\sigma)$ to denote
$P(\sigma_1)P(\sigma_2)\dots P(\sigma_n)$
where $\sigma=\sigma_1\sigma_2\dots\sigma_n$ with $\sigma_i\in\Omega$.
Therefore $P(\lambda)=1$, in particular.
For each subset $S$ of $\Omega^*$, we use $P(S)$ to denote
\[
  \sum_{\sigma\in S}P(\sigma).
\]
Therefore $P(\emptyset)=0$, in particular.

Consider a function $r\colon\Omega^*\to[0,1]$ such that $r(\sigma)=P(\sigma)$ for every $\sigma\in\Omega^*$.
It is then easy to see that the function $r$ is a probability measure representation over $\Omega$.
The probability measure $\mu_r$ induced by $r$ is
called
a \emph{Bernoulli measure on $\Omega^\infty$}, denoted
$\lambda_{P}$.
The Bernoulli measure $\lambda_{P}$ on $\Omega^\infty$ has the following property:
For every $\sigma\in \Omega^*$,
\begin{equation}\label{pBm}
  \Bm{P}{\osg{\sigma}}=P(\sigma),
\end{equation}
which results from \eqref{mr}.
It is easy to see that if a finite probability space $P\in\PS(\Omega)$ is computable then the
Bernoulli measure $\lambda_{P}$ itself is computable.

Martin-L\"of randomness with respect to Bernoulli measure,
which is called \emph{Martin-L\"of $P$-randomness} in this paper, is defined as follows.
This notion was, in essence, introduced by Martin-L\"{o}f~\cite{M66},
as well as the notion of Martin-L\"of randomness which we have described
in Definition~\ref{ML-randomness}.

\begin{definition}[%
Martin-L\"of $P$-randomness,
Martin-L\"{o}f \cite{M66}]\label{ML_P-randomness}
Let $\Omega$ be an alphabet, and let $P\in\PS(\Omega)$.
\begin{enumerate}
  \item A subset $\mathcal{C}$ of $\N^+\times \Omega^*$ is called a \emph{Martin-L\"{o}f $P$-test} if
    $\mathcal{C}$ is an r.e.~set
    and
    for every $n\in\N^+$ it holds that
    $\mathcal{C}_n$ is a prefix-free subset of $\Omega^*$ and
    \[\Bm{P}{\osg{\mathcal{C}_n}}<2^{-n},\]
    where $\mathcal{C}_n$ denotes the set
    $\left\{\,
      \sigma\mid (n,\sigma)\in\mathcal{C}
    \,\right\}$.
  \item For any $\alpha\in\Omega^\infty$ and Martin-L\"{o}f $P$-test $\mathcal{C}$,
    we say that $\alpha$ \emph{passes} $\mathcal{C}$ if there exists $n\in\N^+$ such that
    $\alpha\notin\osg{\mathcal{C}_n}$.
  \item For any $\alpha\in\Omega^\infty$, we say that $\alpha$ is \emph{Martin-L\"{o}f $P$-random} if
    for every Martin-L\"{o}f $P$-test $\mathcal{C}$
    it holds that $\alpha$ passes $\mathcal{C}$.\qed
\end{enumerate}
\end{definition}

Note that in Definition~\ref{ML_P-randomness} \emph{the finite probability space $P$ is quite arbitrary and thus
$P$ is not required to be computable at all, in particular}.
Thus,
the
Bernoulli measure $\lambda_{P}$ itself is not necessarily computable
in Definition~\ref{ML_P-randomness}.
This is a crucial point in this paper.
Note also that in Definition~\ref{ML_P-randomness} we do not require that $P(a)>0$ for all $a\in\Omega$.
Therefore, $P(a_0)$ may be $0$ for some $a_0\in\Omega$.
In the case where $\Omega=\{0,1\}$ and
$P$
satisfies that $P(0)=P(1)=1/2$,
the Martin-L\"of $P$-randomness results in the Martin-L\"of randomness
in Definition~\ref{ML-randomness}.

In Definition~\ref{ML_P-randomness}, we require that
the set $\mathcal{C}_n$ is prefix-free in the definition of a Martin-L\"{o}f $P$-test $\mathcal{C}$.
However, we can eliminate this
requirement
while keeping the notion of Martin-L\"of $P$-randomness
the same, as the following theorem states.

\begin{theorem}\label{ML_P-randomness_eliminated-prefix-freeness}
Let $\Omega$ be an alphabet, and let $P\in\PS(\Omega)$.
Let $\alpha\in\Omega^\infty$.
Then the following conditions are equivalent to each other.
\begin{enumerate} 
  \item The infinite sequence $\alpha$ is Martin-L\"{o}f $P$-random.
  \item For every r.e.~subset $\mathcal{C}$ of $\N^+\times \Omega^*$ such that
    $\Bm{P}{\osg{\mathcal{C}_n}}<2^{-n}$ for every $n\in\N^+$,
    there exists $n\in\N^+$ such that $\alpha\notin\osg{\mathcal{C}_n}$.
    \qed
\end{enumerate}
\end{theorem}

Based on Lemma~\ref{eliminate-prefix-freeness} below,
Theorem~\ref{ML_P-randomness_eliminated-prefix-freeness} can be proved
in almost the same manner as in the proof of the corresponding result
for a usual Martin-L\"of test for an infinite binary sequence with respective to Lebesgue measure $\mathcal{L}$,
whose definition is stated in Definition~\ref{ML-testM}.
For completeness and convenience, we include
the proof of Lemma~\ref{eliminate-prefix-freeness}.

\begin{lemma}\label{eliminate-prefix-freeness}
Let $\Omega$ be an alphabet.
For every r.e.~subset $\mathcal{C}$ of $\N^+\times \Omega^*$
there exists an r.e.~subset $\mathcal{D}$ of $\N^+\times \Omega^*$ such that
$\mathcal{D}_n$ is a prefix-free subset of $\Omega^*$ and
$\osg{\mathcal{C}_n}=\osg{\mathcal{D}_n}$ for every $n\in\N^+$,
where $\mathcal{C}_n$ and $\mathcal{D}_n$ denote
the sets
$\left\{\,
    \sigma\mid (n,\sigma)\in\mathcal{C}
\,\right\}$
and
$\left\{\,
    \sigma\mid (n,\sigma)\in\mathcal{D}
\,\right\}$,
respectively.
\end{lemma}

\begin{proof}
Let $\mathcal{C}$ be an r.e.~subset of $\N^+\times \Omega^*$.
We
choose a particular recursive enumeration
\begin{equation*}
  (n_1,\sigma_1),(n_2,\sigma_2),(n_3,\sigma_3),\dotsc\dotsc
\end{equation*}
of the r.e.~set $\mathcal{C}$.
Note that this list may be finite.
We construct
an r.e.~subset $\mathcal{D}$ of $\N^+\times \Omega^*$
by constructing a
double
sequence $\{\mathcal{D}(n,k)\}_{n,k}$ of finite subsets of $\N^+\times \Omega^*$ such that
(i)~$\mathcal{D}(n,k) \subset \mathcal{D}(n,k+1)$ for
any
$n$ and $k$,
and
(ii)~$\mathcal{D}=\{(n,\sigma)\in\N^+\times \Omega^*\mid \sigma\in\bigcup_k \mathcal{D}(n,k)\}$,
while enumerating $\mathcal{C}$
just
as in the above list.

For that purpose, we introduce some notation.
For each $\sigma\in\Omega^*$ and finite subset $S$ of $\Omega^*$,
we denote by $G(\sigma,S)$ the set of the shortest strings $\tau\in\Omega^*$ such that
$\tau$ is incompatible with any string in $S$ and $\sigma$ is a prefix of $\tau$.
It is then easy to see that for every $\sigma\in\Omega^*$ and finite prefix-free subset $S$ of $\Omega^*$
it holds that
(i)~$\osg{S\cup G(\sigma,S)}=\osg{S}\cup\osg{\sigma}$
and (ii)~$S\cup G(\sigma,S)$ is
a finite prefix-free set.
Moreover, note that,
given a finite string $\sigma\in\Omega^*$ and a finite subset $S$ of $\Omega^*$,
one can effectively calculate the finite set $G(\sigma,S)$.

Now, the construction of the double sequence $\{\mathcal{D}(n,k)\}_{n,k}$ is performed as follows,
in order of increasing $k$:
Initially, one sets $\mathcal{D}(n,0):=\emptyset$ for all $n\in\N^+$.
In general,
one waits for a new element of $\mathcal{C}$ to be generated
in the recursive enumeration of $\mathcal{C}$ by the list above.
Whenever
$(n_k,\sigma_k)$ is generated in the list,
for each $n\in\N^+$
one sets
\begin{equation*}
  \mathcal{D}(n,k):=\mathcal{D}(n,k-1)\cup G(\sigma_k,\mathcal{D}(n,k-1))
\end{equation*}
if $n=n_k$ and
\begin{equation*}
  \mathcal{D}(n,k):=\mathcal{D}(n,k-1)
\end{equation*}
otherwise.
Then one repeats this procedure for $k+1$
instead of $k$.

Thus, for each $n\in\N^+$ we can show, by induction on $k$, that
for all $k$ it holds that $\mathcal{D}(n,k)$ is
a finite prefix-free set
and
\begin{equation*}
  \osg{\mathcal{D}(n,k)}=\osg{\{\sigma_i\mid i\le k\;\&\;n_i=n\}}.
\end{equation*}
Hence, by setting $\mathcal{D}:=\{(n,\sigma)\in\N^+\times \Omega^*\mid\sigma\in\bigcup_k \mathcal{D}(n,k)\}$,
we have that
(i)~$\mathcal{D}$ is an r.e.~subset of $\N^+\times \Omega^*$
and (ii)~$\mathcal{D}_n$ is prefix-free and
$\osg{\mathcal{C}_n}=\osg{\mathcal{D}_n}$ for every $n\in\N^+$.
This completes the proof.
\end{proof}

Note that
we did not use any specific property of the finite probability space $P$
in proving Theorem~\ref{ML_P-randomness_eliminated-prefix-freeness}
based on Lemma~\ref{eliminate-prefix-freeness}.
Actually,
in Theorem~\ref{ML_P-randomness_eliminated-prefix-freeness}
we do not require $P$ to be computable at all,
in particular.
However, we
have certainly been able to confirm
that Theorem~\ref{ML_P-randomness_eliminated-prefix-freeness} holds
true
for an arbitrary finite probability space $P$.
\emph{This quite arbitrariness of the underlying finite probability spaces
is a specific feature of the results
on Martin-L\"of $P$-randomness
proved
in this paper},
except for the results given in Section~\ref{FENICFPS} and
Sections~\ref{subsec:AC} and~\ref{AFBC},
where we consider computable finite probability spaces.

Since there are only countably infinitely many algorithms,
we can show the following
theorem, as is
shown for the usual Martin-L\"of randomness
for infinite binary sequences with respective to Lebesgue measure.

\begin{theorem}\label{Bmae}
Let $\Omega$ be an alphabet, and let $P\in\PS(\Omega)$.
Then $\mathrm{ML}_P\in\mathcal{B}_{\Omega}$ and $\Bm{P}{\mathrm{ML}_P}=1$,
where $\mathrm{ML}_P$ is the set of all Martin-L\"of $P$-random
infinite
sequences over $\Omega$.
\end{theorem}

\begin{proof}
Since there are only countably infinitely many Turing machines,
there are only countably infinitely many Martin-L\"{o}f $P$-tests
$\mathcal{C}^1,\mathcal{C}^2,\mathcal{C}^3,\dotsc$.
For each $i\in\N^+$, let
$\mathrm{NML}_P^i$
be the set of all $\alpha\in\Omega^\infty$ which does not pass $\mathcal{C}^i$.

Let $i\in\N^+$.
We see that $\mathrm{NML}_P^i=\bigcap_{n=1}^\infty\osg{\mathcal{C}^i_n}$, where $\mathcal{C}^i_n$ denotes the set
$\left\{\,
    \sigma\mid (n,\sigma)\in\mathcal{C}^i
\,\right\}$.
Therefore we have $\mathrm{NML}_P^i\in\mathcal{B}_{\Omega}$.
Since
\[
  \Bm{P}{\mathrm{NML}_P^i}\le\Bm{P}{\osg{\mathcal{C}^i_n}}<2^{-n}
\]
for every $n\in\N^+$,
we have $\Bm{P}{\mathrm{NML}_P^i}=0$.
Thus, since $\Omega^\infty\setminus\mathrm{ML}_P=\bigcup_{i=1}^\infty \mathrm{NML}_P^i$,
it follows that $\mathrm{ML}_P\in\mathcal{B}_{\Omega}$ and
$\Bm{P}{\Omega^\infty\setminus\mathrm{ML}_P}=0$.
In particular, the latter
implies that $\Bm{P}{\mathrm{ML}_P}=1$, as desired.
\end{proof}

Note that Theorem~\ref{Bmae} holds for an arbitrary finite probability space $P$,
which is not necessarily computable.

\section{Ensemble}
\label{OPT}

In this section
we present an operational characterization of the notion of probability for a finite probability space,
and consider its validity.
We propose to regard a Martin-L\"of $P$-random sequence of sample points
as an \emph{operational characterization of the notion of probability}
for a finite probability space $P$.
Namely,
we propose to identify a Martin-L\"of $P$-random sequence of sample points
with the \emph{substance} of the notion of probability for a finite probability space $P$.
Thus, since the notion of Martin-L\"of $P$-random sequence plays a central role in our framework,
in particular we call it an \emph{ensemble}, as in Definition~\ref{ensemble},
instead of collective for distinction.
The name ``ensemble'' comes from physics, in particular, from quantum mechanics and
statistical mechanics.%
\footnote{The notion of ensemble plays a fundamental role in quantum mechanics and statistical mechanics.
However,
this notion
in physics
is very vague
from a mathematical point of view.
In a series of works~\cite{T15Kokyuroku,T15WiNF-Tadaki_rule,T16QIT35,T18arXiv}
we propose to regard a Martin-L\"of $P$-random sequence of quantum states
as a formal definition of the notion of ensemble in quantum mechanics and statistical mechanics, i.e.,
as a formal definition of the notion of ensemble of quantum states.
See Tadaki~\cite{T18arXiv} for the detail.}

\begin{definition}[Ensemble]\label{ensemble}
Let $\Omega$ be an alphabet, and let $P\in\PS(\Omega)$.
A Martin-L\"of $P$-random
infinite sequence over $\Omega$
is called an \emph{ensemble} for the finite probability space $P$ on $\Omega$.
\qed
\end{definition}

Let $\Omega$ be an alphabet, and let $P\in\PS(\Omega)$.
Consider an infinite sequence $\alpha\in\Omega^\infty$ of outcomes which is being generated
by infinitely repeated trials \emph{described by} the finite probability space $P$.
The operational characterization of the notion of probability for the finite probability space $P$
is thought to be completed
if the property which the infinite sequence $\alpha$ has to satisfy is determined.
We thus propose the following thesis.

\begin{thesis}\label{thesis}
Let $\Omega$ be an alphabet, and let $P\in\PS(\Omega)$.
An infinite sequence of outcomes in $\Omega$ which is
being generated by infinitely repeated trials \emph{described by} the finite probability space $P$ on $\Omega$
is an ensemble for $P$.
\qed
\end{thesis}

The notion of probability plays a crucial role in almost all fields of science and technology.
In such a field, the notion of probability is formally handled by means of the use of a \emph{probabilistic model}
for understanding a specific phenomenon investigated in the field.
For the use of a probabilistic model in an arbitrary field of science and technology,
we propose to regard Thesis~\ref{thesis} as serving
in the following form:
When we make use of a probabilistic model which is especially based on
the notion of
a finite probability space
given in Definition~\ref{def-FPS},
we think that we are \emph{implicitly} assuming that Thesis~\ref{thesis} holds
for the specific application of the notion of probability
through
the probabilistic model.

Now, let us
check
the validity of Thesis~\ref{thesis}.
First of all, what is ``probability''?
It would seem very difficult to answer this question
\emph{completely} and \emph{sufficiently}.
However, we may enumerate the \emph{necessary} conditions which the notion of probability is
considered to have to satisfy
\emph{according to
our intuitive understanding of the notion of probability}.
In the subsequent subsections,
we check that the notion of ensemble satisfies these necessary conditions.

\subsection{Sure occurrences of an elementary event with probability one}
\label{Subsec:SOEEPO}

Let $\Omega$ be an alphabet, and let $P\in\PS(\Omega)$. Let us consider
an infinite sequence $\alpha\in\Omega^\infty$ of outcomes
which is being generated by infinitely repeated trials described
by the finite probability space $P$ on $\Omega$.
The
first
necessary condition which the notion of probability for the finite probability space $P$
is considered to have to satisfy is
the condition that
\emph{an elementary event with probability one always occurs in the infinite sequence $\alpha$},
i.e., the condition that for every $a\in\Omega$ if $P(a)=1$ then $\alpha$ is of the form
$\alpha=aaaaa\dotsc\dotsc$.
This intuition that \emph{an elementary event with probability one occurs certainly}
is particularly supported by the notion of probability in \emph{quantum mechanics}.

In what follows, we will present
\emph{five}
reasons of this fact
that \emph{an
elementary
event with probability one occurs certainly in quantum mechanics}.
We will do this
through Sections~\ref{Reason-MQSE} --- \ref{Reason-DLQM} below,
according to the notion of probability in quantum mechanics,
sometimes
appealing
to our intuitive understanding of the notion of probability.
Then,
in Section~\ref{one_probability-zero_probability-for-ensembles},
we will go back to the consideration on the notion of ensemble \emph{with full mathematical rigor},
and
show that
the fact above in quantum mechanics
can be
reproduced properly for an ensemble.
Namely, we will prove Theorem~\ref{one_probability} below, which means
that \emph{an elementary event with probability one always occurs in an ensemble}.

To begin with, we recall
the central postulates of quantum mechanics.
For simplicity, we here consider the postulates of quantum mechanics
for a \emph{finite-dimensional} quantum system, i.e.,
a quantum system whose state space is a finite-dimensional Hilbert space.%
\footnote{Based on a similar argument to Section~\ref{Reason-MQSE} of this paper,
we can demonstrate the fact that an event with probability one occurs certainly,
for an \emph{infinite-dimensional} quantum system.
See Tadaki~\cite[Section~7.1]{T19arXiv} for the detail.}
See e.g.~Nielsen and Chuang~\cite[Section 2.2]{NC00}
for the detail of the postulates of quantum mechanics,
in particular, in the finite-dimensional case.
We refer to some of the postulates from
\cite[Section 2.2]{NC00} in what follows.

The first postulate of quantum mechanics is about \emph{state space} and \emph{state vector}.
\begin{postulate}[State space and state vector]\label{state_space}
Associated to any isolated physical system is a complex vector space with inner product
(i.e., Hilbert space) known as the \emph{state space} of the system.
The system is completely described by its \emph{state vector},
which is a unit vector in the system's state space.
\qed
\end{postulate}

The second postulate of quantum mechanics is about the \emph{composition} of systems.

\begin{postulate}[Composition of systems]\label{composition}
The state space of a composite physical system is the tensor product of the state spaces of the component physical systems.
Moreover, if we have systems numbered $1$ through $n$, and system number~$i$ is
prepared
in the state~$\Psi_i$,
then the joint state of the total system is
$\Psi_1\otimes\Psi_2\otimes\dots\otimes\Psi_n$.
\qed
\end{postulate}

The third postulate of quantum mechanics is about the \emph{time-evolution} of
\emph{closed} quantum systems.

\begin{postulate}[Unitary time-evolution]\label{evolution}
The evolution of a \emph{closed} quantum system is described by a \emph{unitary transformation}.
That is,
the state~$\Psi_1$ of the system at time~$t_1$ is related to the state~$\Psi_2$ of the system at time~$t_2$
by a unitary operator~$U$, which depends only on the times~$t_1$ and $t_2$,
in such a way that
$\Psi_2=U\Psi_1$.
\qed
\end{postulate}

The forth postulate of quantum mechanics is about \emph{measurements} on quantum systems.
\begin{postulate}[Measurements]\label{measurements}
Quantum measurements are described by an \emph{observable}, $M$, a Hermitian matrix
on the state space of the system being measured.
The observable has a spectral decomposition,
\begin{equation*}
  M=\sum_{m\in\Omega}m E_m,
\end{equation*}
where
$\Omega$ is the spectrum of $M$ and
$E_m$ is the projector onto the eigenspace of $M$ with eigenvalue $m$ for every $m\in\Omega$.
\begin{enumerate}
\item
The set of possible outcomes of the measurement is the spectrum $\Omega$ of $M$.
If the state of the quantum system is $\Psi$
immediately before the measurement then the \emph{probability} that
an outcome $m\in\Omega$
occurs is given by
$(\Psi,E_m\Psi)$,
where $(\cdot, \cdot)$ denotes the inner-product defined on the state space of the system.
\item Given that outcome $m$ occurred,
the state of the quantum system immediately after the measurement is
\[
  \frac{E_m\Psi}{\sqrt{(\Psi, E_m\Psi)}}.
\]
\qed
\end{enumerate}
\end{postulate}
Postulate~\ref{measurements}~(i) is the so-called \emph{Born rule}, i.e,
\emph{the probability interpretation of the wave function},
while Postulate~\ref{measurements}~(ii) is called the \emph{projection hypothesis}.
Note that Postulate~\ref{measurements}
describes the effects of measurements on quantum systems using the notion of \emph{probability}.
However, it does not mention the \emph{operational definition} of the notion of probability.

\subsubsection{Reason 1: Measurement of a quantum system in an eigenstate}
\label{Reason-MQSE}

The first reason of the fact that
\emph{an
elementary
event with probability one occurs certainly in quantum mechanics}
is given as follows.
As mentioned above,
Postulate~\ref{measurements}
does not refer to the \emph{operational definition} of the notion of probability,
whereas
it is heavily based on the notion of \emph{probability}.
In contrast,
there is a postulate about quantum measurements with no reference to the notion of probability.
This is given in
Dirac~\cite[p.35]{D58},
and describes a spacial case of  quantum measurements which are performed upon a quantum system
in an \emph{eigenstate} of an observable, i.e., a state represented by an eigenvector of an observable.
\begin{postulate}[Dirac~\cite{D58}, p.35]\label{Dirac}
If the dynamical system is in an eigenstate of a real dynamical variable $\xi$, belonging to the eigenvalue $\xi'$,
then a measurement of $\xi$ will certainly gives as result the number $\xi'$.
\qed
\end{postulate}
Here, the ``dynamical system'' means quantum system, and the ``real dynamical variable'' means observable.

Based on Postulates~\ref{state_space}, \ref{measurements}, and \ref{Dirac} above,
we can show that an elementary event \emph{with probability one} occurs certainly in quantum mechanics.
To see this, let us consider a quantum system with finite-dimensional state space,
and a measurement described by an observable $M$ performed upon the system.
Let
\begin{equation*}
  M=\sum_{m\in\Omega}m E_m
\end{equation*}
be a spectral decomposition of the observable $M$, where
$\Omega$ is the spectrum of $M$ and
$E_m$ is the projector onto the eigenspace of $M$ with eigenvalue $m$ for every $m\in\Omega$.

Suppose that
the probability of getting
an outcome $m_0\in\Omega$
is one in the measurement
of $M$
performed upon the system
in a state represented by a state vector $\Psi$.
Then, it follows from Postulate~\ref{measurements}~(i) that $(\Psi, E_{m_0}\Psi)=1$.
This implies that
$\Psi$ is an eigenvector of $M$ belonging to the eigenvalue $m_0$,
i.e., $M\Psi=m_0\Psi$,
since $\Psi$ is a unit vector.
Therefore, we have that immediately before the measurement, the quantum system is
in an eigenstate of the observable $M$, belonging to the eigenvalue $m_0$.
It follows from Postulate~\ref{Dirac} that
the measurement of $M$ will \emph{certainly} gives as result the number $m_0$.
Hence, it turns out that
\emph{an elementary event with probability one occurs certainly in quantum mechanics}.

\subsubsection{Reason 2: Consistency of the Born rule with the projection hypothesis}
\label{Reason-CBRPH}

The consistency of the Born rule,
Postulate~\ref{measurements}~(i),
with the projection hypothesis,
Postulate~\ref{measurements}~(ii),
\emph{suggests} that
\emph{an elementary event with probability one occurs certainly in quantum mechanics}.
This is the second reason of the fact.
To see this, let us consider a quantum system with finite-dimensional state space,
and a measurement described by an observable $M$ performed upon the system.
Let
\begin{equation*}
  M=\sum_{m\in\Omega}m E_m
\end{equation*}
be a spectral decomposition of the observable $M$, where $\Omega$ is the spectrum of $M$ and
$E_m$ is the projector onto the eigenspace of $M$ with eigenvalue $m$ for every $m\in\Omega$.

Suppose that
the probability of getting
an outcome $m_0\in\Omega$
is one in the measurement of $M$ performed upon the system
in a state represented by a state vector $\Psi$.
Then, it follows from Postulate~\ref{measurements}~(i) that
\begin{equation}\label{sec5-1-2:eqPEm0P=1}
  (\Psi, E_{m_0}\Psi)=1.
\end{equation}
Assume contrarily that the outcome $m_0$ does not occur certainly in the measurements of $M$
performed upon the system in the state represented by the state vector $\Psi$.
This implies that there surely comes a chance when
some different outcome $m_1$ from $m_0$ occurs in
a
measurement of $M$
performed upon the system in the state represented by the state vector $\Psi$,
while repeating
this
measurement.
For this $m_1$ we see that $(\Psi, E_{m_1}\Psi)=0$,
using
\eqref{sec5-1-2:eqPEm0P=1} and the completeness relation
\[
\sum_{m\in\Omega}E_m=I
\]
where $I$ is the identity operator.
Therefore we have that $E_{m_1}\Psi=0$.
On the other hand, according to Postulate~\ref{measurements}~(ii),
the state of the system immediately after the measurement
is
\[
  \frac{E_{m_1}\Psi}{\sqrt{(\Psi, E_{m_1}\Psi)}}
\]
in this chance
where the outcome $m_1$ has occurred.
However,
this quantity is not well-defined
and cannot represent a quantum state,
since $E_{m_1}\Psi=0$.
Thus, we face with a difficulty.
In this way,
if an outcome with probability one does not occur certainly in the measurements of $M$,
then the Born rule and the projection hypothesis become inconsistent with each other.
Hence, it would be consistent to assume that
the outcome $m_0$
with probability one
occurs certainly in the measurements of $M$.

Instead,
we could modify
the projection hypothesis
in the following manner in order to escape
the
difficulty: 
\begin{quote}
``Given that outcome $m$ occurred,
the state of the quantum system immediately after the measurement is
represented by an eigenvector of the observable $M$, belonging to the eigenvalue $m$.''
\end{quote}
However,
this modification brings difficulties to us again.
Namely,
it brings us
trouble in determining the post-measurement state 
when the eigenspace of $M$ with the eigenvalue $m$ is \emph{degenerate}.
In the first place, the merit of the original projection hypothesis,
Postulate~\ref{measurements}~(ii),
is that
it
enables us to determine
the state of a quantum system immediately after the measurement
of an
\emph{arbitrary}
observable whose spectrum
may have
degeneracy.
Thus,
the modification above spoils this merit of the original projection hypothesis,
and significantly weakens
it.

Thus,
it would be consistent
to assume that
\emph{an elementary event with probability one occurs certainly in quantum mechanics}.

\subsubsection{Reason 3: Consequence of the repeatability hypothesis}
\label{Reason-CRH}

The third reason of the fact that
\emph{an
elementary
event with probability one occurs certainly in quantum mechanics} is given as follows.
In addition to Postulate~\ref{Dirac} mentioned above,
there is another postulate about quantum measurements with no reference to the notion of probability.
It is called the \emph{repeatability hypothesis},
and
makes a statement about
the relation between the outcomes of two successive measurements of an identical observable.
We here refer to the repeatability hypothesis
in the following form which is given in Dirac~\cite[p.36]{D58}.

\begin{postulate}[The repeatability hypothesis,
Dirac~\cite{D58}, p.36]\label{Dirac2}
When we measure a real dynamical variable $\xi$,
the disturbance involved in the act of measurement causes a jump in the state of the dynamical system.
From physical continuity,
if we make a second measurement of the same dynamical variable $\xi$
immediately after the first, the result of the second measurement must be the same as that of the first.
\qed
\end{postulate}

Specifically, the second sentence of Postulate~\ref{Dirac2} is
the statement of the repeatability hypothesis.
Based on Postulates~\ref{state_space}, \ref{measurements}, and \ref{Dirac2} above,
we can show that
\emph{an elementary event with probability one occurs certainly in quantum mechanics}.
To see this, let us consider a quantum system with finite-dimensional state space,
and a measurement described by an observable $M$ performed upon the system.
Let $\Omega$ be the spectrum of $M$,
and let $E_m$ be the projector onto the eigenspace of $M$ with eigenvalue $m\in\Omega$.

Suppose that
the probability of getting
an outcome $m_0\in\Omega$
is one in the measurement
of $M$
performed upon the system
in a state represented by a state vector $\Psi$.
Then, it follows from Postulate~\ref{measurements}~(i) that $(\Psi, E_{m_0}\Psi)=1$.
This implies that
\begin{equation}\label{QMProb1:EmP=P}
  E_{m_0}\Psi=\Psi,
\end{equation}
since $\Psi$ is a unit vector and $E_{m_0}$ is a projector.

Now, based on our intuitive understanding of the notion of probability,
we further make the following two assumptions~(a) and (b):
\renewcommand{\labelenumi}{(\alph{enumi})}
\begin{enumerate}
\item
We can repeat the measurement of $M$ performed upon the system
prepared
in the state represented by the state vector $\Psi$ as many times as we want.
\item We eventually get the outcome $m_0$ while repeating the measurement of $M$
performed upon the system prepared in the state represented by the state vector $\Psi$.
\end{enumerate}
\renewcommand{\labelenumi}{(\roman{enumi})}

The plausibility of the assumption~(a) is
demonstrated
as follows:
In this argument, we have supposed that
the probability of getting
the outcome $m_0\in\Omega$
is one in the measurement of $M$ performed upon the system
in the state represented by the state vector $\Psi$, as above.
Thus, we are considering the probability of a specific outcome in the measurement of $M$
over the system
prepared in the state $\Psi$. 
In general,
\emph{according to our intuitive understanding of the notion of probability},
when we refer to the probability of a specific outcome,
we
\emph{implicitly}
assume the infinite repeatability of the trial regarding that probabilistic phenomenon,
which is
the infinite repeatability of
the measurement of $M$
performed upon the system prepared in the state represented by the state vector $\Psi$
in the current situation.
Thus, the assumption~(a) is
considered to be
plausible.

On the other hand, the plausibility of the assumption~(b) is
demonstrated
as follows:
Recall
again
that we are assuming that the probability of getting the outcome $m_0$ is one
(and therefore \emph{positive},
in particular)
in the measurement of $M$
performed upon the system in the state represented by the state vector $\Psi$.
In general,
\emph{according to our intuitive understanding of the notion of probability},
if the probability of a specific outcome is
\emph{positive},
then we eventually get this outcome while repeating the trial regarding that probabilistic phenomenon,
which is
the measurement of $M$ performed upon the system prepared in the state represented by the state vector $\Psi$
in the current situation.
Thus, according to our intuitive understanding of the notion of probability,
the assumption~(b) is also
considered to be
plausible.

Now,
we repeat the measurement
of $M$
over the system
prepared in the state $\Psi$
until we obtain the outcome $m_0$.
This
repetition
is possible due to the assumption~(a).
Then there surely comes a chance when the outcome $m_0$ occurs in
a
measurement of $M$
over the system prepared in the state $\Psi$,
while repeating
the measurement
of $M$.
This chance surely
comes
due to the assumption~(b).
In this chance,
according to the projection hypothesis, Postulate~\ref{measurements}~(ii),
the state of the system immediately after the measurement
is
\[
  \frac{E_{m_0}\Psi}{\sqrt{(\Psi, E_{m_0}\Psi)}}=\Psi,
\]
where the equality
follows from
\eqref{QMProb1:EmP=P}.
We then perform a second measurement of $M$
over the system
immediately after the first
measurement of $M$
in this chance.
According to the repeatability hypothesis, Postulate~\ref{Dirac2},
the outcome of the second measurement
of $M$
\emph{must be}
the same as the first, i.e., $m_0$.
Recall here that
\emph{a quantum system is completely described by its state vector} due to Postulate~\ref{state_space}.
In this light,
note that
the system was in the state $\Psi$
immediately before the second measurement of $M$.
Thus, by focusing on the second measurement of $M$ in this chance,
we must conclude that,
in general,
the outcome $m_0$ occurs certainly
in the measurement
of $M$
performed upon the system in the state represented by the state vector $\Psi$.

We have
derived
this conclusion under the assumption that
the probability of getting the outcome $m_0$ is one in the measurement of $M$
performed upon the system in the state represented by the state vector $\Psi$.
Thus, we have shown that
\emph{an elementary event with probability one occurs certainly in quantum mechanics}.

\subsubsection{Reason 4: The unitary description of measurement proccess}
\label{Reason-UDMP}

The forth reason of the fact that
\emph{an
elementary
event with probability one occurs certainly in quantum mechanics}
is brought from a different point of view than
the three reasons above, presented in Sections~\ref{Reason-MQSE} --- \ref{Reason-CRH}.
Consider a quantum system $\mathcal{S}$ with finite-dimensional state space,
and a measurement described by an observable $M$ performed upon the system~$\mathcal{S}$.
Let $\Omega$ be the spectrum of $M$,
and let $E_m$ be the projector onto the eigenspace of $M$ with eigenvalue $m\in\Omega$.

According to Postulates~\ref{state_space}, \ref{composition}, and \ref{evolution} above,
the measurement process of $M$ is described by a unitary operator $U$ such that
\begin{equation}\label{sec5-1-4:eqU}
  U(\Psi\otimes\Phi_{\mathrm{init}})=\sum_{m\in\Omega}(E_m\Psi)\otimes\Phi[m]
\end{equation}
for every state vector $\Psi$ of the system $\mathcal{S}$,
as an interaction between the quantum system $\mathcal{S}$ and an apparatus $\mathcal{A}$
performing the measurement of the observable $M$ of the system $\mathcal{S}$
(von Neumann~\cite[VI.3.]{vN55}).
Here, the apparatus $\mathcal{A}$ is also treated as a quantum system,
which interacts with the quantum system~$\mathcal{S}$.
On the one hand, the vector $\Phi_{\mathrm{init}}$ is the initial state of the apparatus $\mathcal{A}$.
Thus the initial state of the composite system consisting of
the system $\mathcal{S}$ and the apparatus $\mathcal{A}$ before the measurement is
$\Psi\otimes\Phi_{\mathrm{init}}$,
as seen in the left-hand side of \eqref{sec5-1-4:eqU}.
On the other hand, the vector $\Phi[m]$ is a final state of the apparatus $\mathcal{A}$,
and indicates that \emph{the apparatus $\mathcal{A}$ records the value $m$ of $M$}.
The final state of the composite system after the measurement
is a superposition of the states $(E_m\Psi)\otimes\Phi[m]$,
as seen in the right-hand side of \eqref{sec5-1-4:eqU}.
Therefore,
the measurement process of $M$
\emph{generally} ends up with
an entanglement
shared
between the system~$\mathcal{S}$
being measured
and
the apparatus $\mathcal{A}$
measuring it.

Now, let us assume that
the probability of getting
an outcome $m_0\in\Omega$
is one in the measurement
of $M$
performed upon the system~$\mathcal{S}$
in a state represented by a state vector $\Psi$.
Then, it follows from Postulate~\ref{measurements}~(i) that
$(\Psi, E_{m_0}\Psi)=1$.
Thus,
since $\Psi$ is a unit vector ,
using the completeness relation
\[
  \sum_{m\in\Omega}E_m=I
\]
we have that
\begin{equation}\label{seq5-1-4:eqMmP=dmm0P}
  E_{m}\Psi=\delta_{m,m_0}\Psi
\end{equation}
for every $m\in\Omega$.
On the other hand,
the assumption implies that
the system~$\mathcal{S}$ is prepared in the state $\Psi$ immediately before the measurement of $M$.
Thus, the equation~\eqref{sec5-1-4:eqU} holds for
this
$\Psi$.
However,
due to \eqref{seq5-1-4:eqMmP=dmm0P},
this equation
results in
\[
  U(\Psi\otimes\Phi_{\mathrm{init}})=\Psi\otimes\Phi[m_0].
\]
The resulting equation above shows that
the
entanglement
shared between the system $\mathcal{S}$ and the apparatus $\mathcal{A}$,
which generally appears in the formula~\eqref{sec5-1-4:eqU},
no longer exists
immediately after the measurement process of $M$
performed upon the system $\mathcal{S}$ in the state
$\Psi$.
That is,
immediately
after the measurement process
of $M$
the apparatus $\mathcal{A}$ is in the \emph{definite} state $\Phi[m_0]$,
which indicates that \emph{the apparatus $\mathcal{A}$ records the value $m_0$ of $M$}.
This means that \emph{the outcome $m_0$ occurs certainly in the
measurement
of $M$
performed upon the system $\mathcal{S}$ in the state represented by the state vector $\Psi$}.

Hence, we must conclude that
\emph{an elementary event with probability one occurs certainly in quantum mechanics}.

\subsubsection{Reason 5: Descriptions in
literature
on quantum mechanics}
\label{Reason-DLQM}

The fifth reason of the fact that
\emph{an elementary event with probability one occurs certainly in quantum mechanics}
is
the realization of this fact which
can be found in various seminal
literature on quantum mechanics,
although
this
realization
does not seem to be shared widely up to the present.
In what follows, we enumerate
descriptions regarding
the
realization
of this fact
among seminal literature on quantum mechanics.

First, we refer to the descriptions in seminal textbooks on quantum mechanics.
Dirac~\cite{D58} states that
``Only in special cases when the probability for some result is unity is
the result of the experiment determinate'' (p.14).
This precisely states that event with probability one occurs certainly.
In the same context,
Landau and Lifshitz~\cite{LL77} states the following:
\begin{quotation}
The problem in quantum mechanics consists in determining the probability of obtaining various results
on performing this measurement.
It is understood, of course, that in some cases the probability of a given result of measurement may be
equal to unity, i.e. certainty, so that the result of that measurement is unique. (p.5)
\end{quotation}
The second sentence of this quotation is, in essence, the same statement as Dirac's above.

Von Neumann~\cite{vN55} is the most fundamental
work
on
the mathematical foundations of quantum mechanics.
In von Neumann~\cite{vN55} we can find several
descriptions
which suggest
the fact
that an elementary event with probability one occurs certainly.
In particular,
we here focus on the following description.
Von Neumann~\cite[VI.2.]{vN55} considers simultaneous measurements of observables $A$ and $B$
performed on a state $\Phi$ of the composite system of consisting of a system I and a system II,
where $A$ and $B$ are observables of the systems I and II, respectively,
and the spectrum of each of $A$ and $B$ has no degeneracy.
Then von Neumann~\cite[VI.2.]{vN55} states the following:
\begin{quotation}
An $a_m$ with all $f_{mn}=0$ cannot result, because its total probability
\[
\sum_{n=1}^\infty \abs{f_{mn}}^2
\]
cannot be $0$, if $a_m$ is ever observed -- --
therefore for exactly one $n$, $f_{mn}\neq 0$; likewise for $b_n$.
(in parentheses, pp.434--435)
\end{quotation}
Here $a_m$ and $b_n$ are eigenvalues of observables $A$ and $B$,
respectively,
and $f_{mn}$ is an expansion coefficient of
the
state $\Phi$ of the composite system I$+$II by
a
complete orthonormal set $\{\phi_m\otimes\xi_n\}$,
where $\phi_m$ and $\xi_n$ are eigenvectors of $A$ and $B$,
belonging to the eigenvalues $a_m$ and $b_n$, respectively.
Namely, the double sequence $\{f_{mn}\}$ satisfies that
\[
\Phi=\sum_{m,n=1}^\infty f_{mn}\phi_m\otimes\xi_n.
\]
We focus on the first half of
the above description in von Neumann~\cite[VI.2.]{vN55}.
It can be rephrased as follows:
In the measurement of the observable $A$ performed upon the composite system I$+$II
in the state represented by the state vector $\Phi$,
if 
an outcome
$a_m$ is ever observed then the
probability $\sum_{n=1}^\infty \abs{f_{mn}}^2$
of getting the
outcome
$a_m$ cannot be $0$,
and therefore,
as the contraposition of this,
if $f_{mn}=0$ for all $n\ge 1$ then
the outcome $a_m$ cannot result.
Hence,
this description
in von Neumann~\cite[VI.2.]{vN55}
is based on
the fact
that
\emph{an elementary event with probability
zero
never occurs}.
As we will see in
Section~\ref{Equiv-NOEEPZ} below,
according to our intuitive understanding of the notion of probability,
the statement that
an elementary event with probability
zero
never occurs
is equivalent to the statement that
an elementary event with probability one occurs certainly.
Thus, von Neumann~\cite{vN55} assumes that
\emph{an elementary event with probability one occurs certainly}.

Nielsen and Chuang~\cite{NC00} is a seminal textbook on
quantum computation and quantum information.
In Nielsen and Chuang~\cite{NC00} we can also find several
descriptions
which are based the fact that an elementary event with probability one occurs certainly.
We here point out two descriptions among them
as follows.

First,
Nielsen and Chuang~\cite[Section~2.2.4]{NC00} considers a quantum measurement to
distinguish
states which are chosen
from
a fixed
\emph{orthonormal} set $\{\ket{\Psi_i}\}$ of quantum states.
Nielsen and Chuang~\cite[Section~2.2.4]{NC00} then
states that
``if
the state $\ket{\Psi_i}$ is prepared then $p(i)=\bra{\Psi_i}M_i\ket{\Psi_i}=1$,
so the result $i$ occurs with certainty'' (p.86),
where $M_i:=\ket{\Psi_i}\bra{\Psi_i}$.
In the context in which this description appears, the quantities $p(i)$ and $\bra{\Psi_i}M_i\ket{\Psi_i}$
mean the probability that
result $i$ occurs.
Therefore,
this description is based on the fact that
\emph{an elementary event with probability one occurs certainly}.

Mathematically, the statistics of outcomes in a quantum measurement are described by
a positive operator-valued measure (POVM) in the most general setting.
Such a quantum measurement is called a \emph{POVM measurement},
and is realized based on Postulates~\ref{state_space},
\ref{composition}, \ref{evolution}, and \ref{measurements}.
Nielsen and Chuang~\cite[Section~2.2.6]{NC00} considers a situation that
Alice gives Bob a qubit prepared in one of two states
$\ket{\Psi_1}$ or $\ket{\Psi_2}$, which are not orthogonal to each other,
and states the following:
\begin{quotation}
it is impossible for Bob to determine whether he has been given $\ket{\Psi_1}$ or $\ket{\Psi_2}$
with perfect reliability.
However, it is possible for him to perform a measurement which distinguishes the states some
of the
time,
but \emph{never}
makes an error of mis-identification.
(italics not mine, p.92)
\end{quotation}
Nielsen and Chuang~\cite[Section~2.2.6]{NC00} then
explains that a certain POVM measurement can
properly
serve
as the measurement mentioned in the description above.
Subsequently,
Nielsen and Chuang~\cite[Section~2.2.6]{NC00} again states that
``Bob \emph{never}
makes a mistake identifying the state he has been given''
(italics not mine, p.92).
According to the argument of Nielsen and Chuang~\cite[Section~2.2.6]{NC00},
this infallibility is thought to be confirmed by requiring that
\emph{an elementary event with probability zero never occurs}.
As we will see in
Section~\ref{Equiv-NOEEPZ} below,
according to our intuitive understanding of the notion of probability,
the statement that
an elementary event with probability
zero
never occurs
is equivalent to the statement that
an elementary event with probability one occurs certainly.

In this manner,
Nielsen and Chuang~\cite{NC00}
assumes
that
\emph{an elementary event with probability one occurs certainly}.

Finally, we refer to
a
description about the meaning of probability one,
found in a seminal paper
discussing the
\emph{completeness}
of the description given by quantum mechanics.
Einstein, Podolsky, and Rosen~\cite{EPR35} tried to show that
the description given by quantum mechanics is not complete.
For that purpose,
they first
propose
a necessary condition for a physical theory to be \emph{complete}.
This
condition is that
every \emph{element of the physical reality} must have a counterpart in
a
physical theory.
Then,
as the criterion of an element of the physical reality,
Einstein, Podolsky, and Rosen~\cite{EPR35}
propose
the following:
``\emph{If, without in any way disturbing a system, we can predict with certainty
(i.e., with probability equal to unity) the value of a physical quantity, then there exists an element of
physical reality corresponding to this physical quantity}''
(italics not mine, p.777).
This criterion applies to a physical theory in general,
including quantum mechanics.

Now, we can find a description about the meaning of probability one
in the criterion referred to above.
Namely,
we can identify
there
a description which states
that \emph{the prediction with certainty is equivalent to the prediction with probability equal to unity}.
Thus,
apart from the validity of their argument to show that the description given by quantum mechanics is not complete,
Einstein, Podolsky, and Rosen~\cite{EPR35} realize that
\emph{the prediction with certainty is equivalent to the prediction with probability one}
and therefore
they
realize that \emph{an
event with probability one occurs certainly}.

\subsubsection{Non-occurrence of an elementary event with probability zero}
\label{Equiv-NOEEPZ}

In general,
\emph{according to our intuitive understanding of the notion of probability},
the following two \emph{naive} statements are \emph{intuitively} equivalent to each other:
\begin{enumerate}
\item \emph{An elementary event with probability one occurs certainly}.
That is,
for every alphabet $\Omega$, $P\in\PS(\Omega)$, and $a\in\Omega$, if $P(a)=1$ then
an infinite sequence $\alpha\in\Omega^\infty$ of outcomes which is being generated
by infinitely repeated trials
\emph{described by}
the finite probability space $P$ on $\Omega$
consists only of $a$.
\item \emph{An elementary event with probability zero never occurs}.
That is,
for every alphabet $\Omega$, $P\in\PS(\Omega)$, and $a\in\Omega$, if $P(a)=0$ then
an infinite sequence $\alpha\in\Omega^\infty$ of outcomes which is being generated
by infinitely repeated trials
\emph{described by}
the finite probability space $P$ on $\Omega$
does not contain $a$.
\end{enumerate}
Based on an \emph{intuitive} arguments, the
intuitive
equivalence
of the above two naive statements
can be
demonstrated as follows:
First, assuming the statement~(i) we
show the statement~(ii).
Let $\Omega$ be an alphabet. Let $P\in\PS(\Omega)$, and let $a\in\Omega$.
Suppose that $P(a)=0$.
Let $\alpha\in\Omega^\infty$ be an infinite sequence of outcomes which is being generated
by infinitely repeated trials described by the finite probability space $P$ on $\Omega$.
We choose any particular $t\notin\Omega$ and define
$\beta$
as an infinite sequence over the alphabet $\{a,t\}$
obtained by replacing all occurrences of elements of $\Omega\setminus \{a\}$ in $\alpha$ by $t$.
Then,
according to our intuitive understanding of the notion of probability,
$\beta$
\emph{can be regarded as}
an infinite sequence of outcomes which is being generated
by infinitely repeated trials described by a finite probability space $Q$ on $\{a,t\}$,
where
$Q\in\PS(\{a,t\})$ is defined by the condition that
$Q(a):=P(a)$ and
\[
  Q(t):=\sum_{x\in\Omega\setminus\{a\}}P(x).
\]
Since $P(a)=0$ and $\sum_{x\in\Omega} P(x)=1$, we see that $Q(t)=1$.
Thus, it follows from the statement~(i) that the infinite sequence $\beta$
consists only of $t$.
This implies that the infinite sequence $\alpha$ does not contain $a$, as desired.

Next, assuming the statement~(ii) we show the statement~(i).
Let $\Omega$ be an alphabet. Let $P\in\PS(\Omega)$, and let $a\in\Omega$.
Suppose that $P(a)=1$.
Let $\alpha\in\Omega^\infty$ be an infinite sequence of outcomes which is being generated
by infinitely repeated trials described by the finite probability space $P$ on $\Omega$.
Then,
since $P(a)=1$ and $\sum_{x\in\Omega} P(x)=1$,
we see that $P(x)=0$ for every $x\in\Omega\setminus\{a\}$.
Thus, it follows from the statement~(ii) that the infinite sequence $\alpha$ does not contain $x$
for every $x\in\Omega\setminus\{a\}$.
This implies that the infinite sequence $\alpha$ consists only of $a$, as desired.

In Sections~\ref{Reason-MQSE} --- \ref{Reason-UDMP},
we have demonstrated that
an elementary event with probability one occurs certainly in quantum mechanics.
Combining this with
the intuitive equivalence demonstrated above
suggests that
\emph{an elementary event with probability zero never occurs in quantum mechanics}.

\subsubsection{Ensembles}
\label{one_probability-zero_probability-for-ensembles}

Now, let us
go back to
the consideration on the notion of ensemble \emph{with full mathematical rigor}.
We
see that both the naive statements~(i) and (ii) above \emph{can be justified} for an ensemble.
Theorem~\ref{one_probability} below states that
\emph{an elementary event with probability one always occurs in an ensemble},
and thus shows that the notion of ensemble coincides with our intuition about
the notion of probability, in particular, in quantum mechanics,
which we have investigated so far.

\begin{theorem}\label{one_probability}
Let $\Omega$ be an alphabet, and let $P\in\PS(\Omega)$. Let $a\in\Omega$.
Suppose that $\alpha$ is an ensemble for the finite probability space $P$ and $P(a)=1$.
Then $\alpha$ consists only of $a$,
i.e.,
$\alpha=aaaaaa\dotsc\dotsc$.
\qed
\end{theorem}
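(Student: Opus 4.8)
The plan is to argue by exhibiting a single explicit Martin-L\"of $P$-test that the ensemble is forced to pass, and then reading off the conclusion from what passing means. First I would record the elementary consequence of the hypotheses: since $\sum_{b\in\Omega}P(b)=1$ and $P(a)=1$, condition (i) of Definition~\ref{def-FPS} forces $P(b)=0$ for every $b\in\Omega$ with $b\neq a$. Writing $B:=\Omega\setminus\{a\}$, the goal becomes to show that an ensemble $\alpha$ cannot carry a letter from $B$ at any position.

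Next I would construct the test. For every $n\in\N^+$ set $\mathcal{C}_n:=\{\sigma\in\Omega^* : \sigma(k)\neq a\text{ for some }k\le\abs{\sigma}\}$, the collection of all finite strings in which some letter differs from $a$, and let $\mathcal{C}:=\{(n,\sigma) : n\in\N^+,\ \sigma\in\mathcal{C}_n\}$. The key observation to flag here is that this test is the \emph{same} at every level and is in fact decidable (one merely scans $\sigma$ for a non-$a$ letter), so $\mathcal{C}$ is r.e.\ \emph{without any reference to the values $P(b)$}; this is exactly what we need, since Definition~\ref{ML_P-randomness} does not assume $P$ computable. By construction $\osg{\mathcal{C}_n}$ is precisely the set of infinite sequences over $\Omega$ that are not identically $a$.

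The one quantitative step is the measure bound $\Bm{P}{\osg{\mathcal{C}_n}}<2^{-n}$. I would rewrite $\osg{\mathcal{C}_n}=\osg{E}$ using the prefix-free set $E:=\{a^k b : k\in\N,\ b\in B\}$ (each element marks the first non-$a$ letter), and then compute, via \eqref{pBm} and the product form $P(a^k b)=P(a)^k P(b)$,
\begin{equation*}
  \Bm{P}{\osg{\mathcal{C}_n}}=\sum_{k\in\N}\sum_{b\in B}P(a)^k P(b)=\sum_{k\in\N}P(a)^k\Bigl(\sum_{b\in B}P(b)\Bigr)=0,
\end{equation*}
where the inner sum vanishes because $P(b)=0$ for each $b\in B$. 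Hence $\Bm{P}{\osg{\mathcal{C}_n}}=0<2^{-n}$ for every $n$, so $\mathcal{C}$ is a genuine Martin-L\"of $P$-test.

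Finally I would invoke randomness. Because $\alpha$ is Martin-L\"of $P$-random, it passes $\mathcal{C}$, so there exists $n\in\N^+$ with $\alpha\notin\osg{\mathcal{C}_n}$. Since every $\osg{\mathcal{C}_n}$ is the one set of non-constant sequences, this says exactly that $\alpha$ contains no letter different from $a$, i.e.\ $\alpha=aaaaaa\dotsc\dotsc$. I expect no genuine obstacle in this argument; the only points deserving care are the routine prefix-free measure computation yielding $0<2^{-n}$, and the structural observation that the test can be kept independent of the actual probabilities so that its r.e.-ness does not covertly depend on computability of $P$.
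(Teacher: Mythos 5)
Your proof is correct and follows essentially the same route as the paper: the paper derives this theorem from its Theorem~\ref{zero_probability}(i), which is itself proved by exactly the test you build (all levels equal to the set of strings containing a probability-zero letter, with $\lambda_P$-measure $0<2^{-n}$ and r.e.-ness independent of the computability of $P$). You have simply inlined that lemma for the special case $P^{-1}(\{0\})=\Omega\setminus\{a\}$, and your prefix-free computation of the measure is a slightly more explicit version of the paper's bound $\Bm{P}{\osg{\mathcal{C}_n}}\le P(S)=0$.
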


To show Theorem~\ref{one_probability}, we first show Theorem~\ref{zero_probability} below.
The result~(i) of Theorem~\ref{zero_probability} states that
\emph{an elementary event with probability zero never occurs in an ensemble}.
This result~(i)
was, in essence, pointed out by Martin-L\"{o}f~\cite{M66}.

\begin{theorem}\label{zero_probability}
Let $\Omega$ be an alphabet, and let $P\in\PS(\Omega)$.
\begin{enumerate}
\item Let $a\in\Omega$.
  Suppose that $\alpha$ is an ensemble for the finite probability space $P$ and $P(a)=0$.
  Then $\alpha$ does not contain $a$.
\item Actually, there exists a single Martin-L\"of $P$-test
  $\mathcal{C}\subset\N^+\times\Omega^*$
  such that, for every $\alpha\in \Omega^\infty$, if $\alpha$ passes
  $\mathcal{C}$
  then $\alpha$ does not contain any element of $P^{-1}(\{0\})$.
\end{enumerate}
\end{theorem}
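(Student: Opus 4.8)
The plan is to prove part (ii) directly by exhibiting one explicit test, and then obtain part (i) as an immediate corollary. Write $Z := P^{-1}(\{0\}) = \{\,a\in\Omega \mid P(a)=0\,\}$ for the set of zero-probability elementary events. The single most important observation — and the one that lets us dispense with any computability assumption on $P$ — is that $\Omega$ is a \emph{finite} alphabet, so $Z$ is a finite subset of $\Omega$ and is therefore decidable as a bare set of symbols. We never need to decide whether $P(a)=0$ by computing $P(a)$; the finite list $Z$ is fixed data about $P$, and any finite set is computable.

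The construction I would use is to let $S$ be the set of all finite strings over $\Omega$ that contain at least one symbol from $Z$, i.e.\ $S := \{\,\sigma\in\Omega^* \mid \exists\,k\ \sigma(k)\in Z\,\}$, and then set $\mathcal{C} := \N^+\times S$, so that $\mathcal{C}_n = S$ for every $n\in\N^+$. Since $Z$ is decidable and checking whether some coordinate of a given $\sigma$ lies in $Z$ is a finite search, $S$ is decidable and hence $\mathcal{C}$ is r.e. It remains to verify the measure bound. Here I would identify
\[
  \osg{S}=\{\,\alpha\in\Omega^\infty \mid \exists\,k\in\N^+\ \alpha(k)\in Z\,\}
  =\bigcup_{a\in Z}\bigcup_{k\in\N^+}\{\,\alpha\in\Omega^\infty \mid \alpha(k)=a\,\},
\]
using that $\rest{\alpha}{n}\in S$ for some $n$ exactly when some coordinate of $\alpha$ lies in $Z$. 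For fixed $a\in Z$ and $k\in\N^+$ the set $\{\alpha\mid\alpha(k)=a\}$ is the disjoint union of the cylinders $\osg{\tau a}$ over $\tau\in\Omega^{k-1}$, so by \eqref{pBm} its measure equals $\sum_{\tau\in\Omega^{k-1}}P(\tau)P(a)=P(a)\cdot 1=0$. Hence $\osg{S}$ is a countable union of $\lambda_P$-null sets, and by countable subadditivity $\Bm{P}{\osg{S}}=0<2^{-n}$ for every $n\in\N^+$. Thus $\mathcal{C}$ is a genuine Martin-L\"of $P$-test.

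With $\mathcal{C}$ in hand, part (ii) is immediate: if $\alpha$ passes $\mathcal{C}$ then there is some $n$ with $\alpha\notin\osg{\mathcal{C}_n}=\osg{S}$, and since all levels coincide this forces $\alpha\notin\osg{S}$, i.e.\ $\alpha$ contains no element of $Z=P^{-1}(\{0\})$. Part (i) then follows because an ensemble for $P$ is Martin-L\"of $P$-random, so it passes every Martin-L\"of $P$-test, in particular $\mathcal{C}$; applying (ii), $\alpha$ omits every symbol of $Z$, and when $P(a)=0$ we have $a\in Z$, so $\alpha$ does not contain $a$.

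The only genuinely delicate points, which I would flag carefully, are both in the verification that $\mathcal{C}$ is a valid test. The first is the recursive-enumerability of $\mathcal{C}$ without assuming $P$ computable, which is resolved entirely by the finiteness of $\Omega$ (making $Z$ decidable). The second is the measure computation: a naive attempt to bound $\Bm{P}{\osg{S}}$ by summing $P(\sigma)$ over the prefix-free set of strings that \emph{first} meet $Z$ at their last symbol runs into an indeterminate ``$\infty\cdot 0$'' because the strings avoiding $Z$ can have total weight $1$ at every length. Organizing the union by single-position events $\{\alpha(k)=a\}$ and invoking countable subadditivity sidesteps this cleanly, and is the step I would treat as the main obstacle.
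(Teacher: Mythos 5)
Your proposal is correct and follows essentially the same route as the paper: the same set $S$ of strings containing a symbol of $P^{-1}(\{0\})$, the same constant test $\mathcal{C}_n=S$, recursive enumerability via the decidability of the finite set $P^{-1}(\{0\})$, and part (i) as an immediate corollary of part (ii). The only divergence is cosmetic: the paper bounds $\Bm{P}{\osg{S}}$ by $P(S)=\sum_{\sigma\in S}P(\sigma)$, which is a countable sum of exact zeros (each $\sigma\in S$ has $P(\sigma)=0$ because one factor vanishes), so the ``$\infty\cdot 0$'' worry you flag does not actually arise and your reorganization by single-position events, while valid, is not needed.
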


\begin{proof}
It is sufficient to prove
the result~(ii)
of Theorem~\ref{zero_probability}.
For that purpose, we first define a prefix-free subset $S$ of $\Omega^*$ by
\[
  S:=\left\{\tau x\bigm| \tau\in(\Omega\setminus P^{-1}(\{0\}))^*\;\&\; x\in P^{-1}(\{0\})\right\},
\]
and then define $\mathcal{C}$ as the set $\{(n,\sigma)\mid n\in\N^+\;\&\;\sigma\in S\}$.
Since $S$ is r.e., $\mathcal{C}$ is also r.e., obviously.
Since $S$ is prefix-free, $\mathcal{C}_n$ is also prefix-free for every $n\in\N^+$.
Moreover, since $P(\sigma)=0$ for every $\sigma\in S$,
we have $\Bm{P}{\osg{\mathcal{C}_n}}=P(\mathcal{C}_n)=P(S)=0$ for each $n\in\N^+$.
Hence, $\mathcal{C}$ is Martin-L\"of $P$-test.

Let $\alpha\in \Omega^\infty$. Suppose that $\alpha$ passes $\mathcal{C}$.
Assume contrarily that $\alpha$ contains some element of $P^{-1}(\{0\})$.
Then we denote by $\sigma_0$ the shortest prefix of $\alpha$ which contains some element of $P^{-1}(\{0\})$.
It follows that $\sigma_0\in S$, and therefore $\alpha\in\osg{\mathcal{C}_n}$ for all $n\in\N^+$.
Thus,
we have a contradiction.
Hence,
the proof of the result~(ii) is completed.
\end{proof}

Note that we do not require the underlying finite probability space $P$ to be computable at all
in Theorem~\ref{zero_probability}.
Theorem~\ref{one_probability} is then proved as follows.

\begin{proof}[Proof of Theorem~\ref{one_probability}]
Let $\Omega$ be an alphabet, and let $P\in\PS(\Omega)$.
Let $a\in\Omega$. Suppose that $\alpha$ is an ensemble for the finite probability space $P$ and $P(a)=1$.
Then, since $P(a)=1$ and $\sum_{x\in\Omega} P(x)=1$,
we see that $P(x)=0$ for every $x\in\Omega\setminus\{a\}$.
Hence,
it follows from the result~(i) of Theorem~\ref{zero_probability} that
$\alpha$ does not contain $x$ for every $x\in\Omega\setminus\{a\}$.
This implies that $\alpha$ consists only of $a$, as desired.
\end{proof}

\subsection{The law of large numbers}
\label{Subsec:The law of large numbers}

Let $\Omega$ be an alphabet, and let $P\in\PS(\Omega)$. Let us consider
an infinite sequence $\alpha\in\Omega^\infty$ of outcomes
which is being generated by infinitely repeated trials described
by the finite probability space $P$ on $\Omega$.
The
second
necessary condition which the notion of probability for the finite probability space $P$
is considered to have to satisfy is the condition that
\emph{the law of large numbers holds for $\alpha$}.
Theorem~\ref{LLN} below confirms that this condition certainly holds for an \emph{arbitrary} ensemble.

\begin{theorem}[The law of large numbers]\label{LLN}
Let $\Omega$ be an alphabet, and let $P\in\PS(\Omega)$.
For every $\alpha\in\Omega^\infty$, if $\alpha$ is an ensemble for $P$
then for every $a\in\Omega$ it holds that
\begin{equation*}
  \lim_{n\to\infty}\frac{N_a(\rest{\alpha}{n})}{n}=P(a),
\end{equation*}
where $N_a(\sigma)$ denotes the number of the occurrences of $a$ in $\sigma$ for every $a\in\Omega$ and $\sigma\in\Omega^*$.
\qed
\end{theorem}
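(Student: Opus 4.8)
The plan is to fix $a\in\Omega$, write $p:=P(a)$, and prove the two one-sided statements $\limsup_{n\to\infty}N_a(\rest{\alpha}{n})/n\le p$ and $\liminf_{n\to\infty}N_a(\rest{\alpha}{n})/n\ge p$; squeezing these together yields the claimed limit. Each bound is obtained by approximating $p$ from the outside by rationals and, for each such rational, producing a single Martin-L\"of $P$-test that the ensemble $\alpha$ must pass. Concretely, for a rational $r>p$ I set
\[
  \mathcal{C}^{r}_{n} := \{\sigma \in \Omega^* \mid \abs{\sigma} \ge n^2 \;\&\; N_a(\sigma) \ge r\abs{\sigma}\}.
\]
This membership condition is decidable uniformly in $n$, and crucially it refers only to the chosen rational $r$ and to $a$, never to the (possibly non-computable) number $p$; hence $\mathcal{C}^r:=\{(n,\sigma)\mid\sigma\in\mathcal{C}^r_n\}$ is r.e.

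To bound its measure, I group the strings generating $\osg{\mathcal{C}^r_n}$ by length and use countable subadditivity of $\lambda_P$:
\[
  \Bm{P}{\osg{\mathcal{C}^r_n}} \le \sum_{m \ge n^2} \Bm{P}{\{\alpha\in\Omega^\infty \mid N_a(\rest{\alpha}{m}) \ge rm\}}.
\]
Since $\lambda_P$ makes the coordinates of $\alpha$ independent and identically distributed with $\Bm{P}{\{\alpha\mid\alpha(1)=a\}}=p$, the counter $N_a(\rest{\alpha}{m})$ is binomially distributed, so a Chernoff/Hoeffding large-deviation estimate bounds each summand by $e^{-2m(r-p)^2}$ and the geometric series gives $\Bm{P}{\osg{\mathcal{C}^r_n}}\le C\,e^{-2n^2(r-p)^2}$ for a constant $C$ depending on $r$ and $p$.

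Here lies the step I expect to be the main obstacle: reconciling the rigid $2^{-n}$ requirement in the definition of a Martin-L\"of $P$-test with the non-computability of $p$. Because the length threshold $n^2$ grows superlinearly, the inequality $C\,e^{-2n^2(r-p)^2}<2^{-n}$ holds for all sufficiently large $n$, say for $n\ge N_0$, where $N_0$ depends on $p$ and is therefore not computable from the problem data. The point is that it need not be: $N_0$ is merely \emph{some} fixed natural number, and the reindexed set $\mathcal{D}^r:=\{(n,\sigma)\mid(n+N_0,\sigma)\in\mathcal{C}^r\}$ is still r.e.\ (an enumerating machine simply hard-codes the constant $N_0$) and now satisfies $\Bm{P}{\osg{\mathcal{D}^r_n}}<2^{-(n+N_0)}<2^{-n}$ for every $n\in\N^+$. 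Thus $\mathcal{D}^r$ is a genuine Martin-L\"of $P$-test. This is exactly where the freedom to use a merely \emph{existent} test, rather than one computed from $P$, is essential, and it is what makes the argument valid for arbitrary, non-computable $P$.

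Finally, since $\alpha$ is an ensemble it passes $\mathcal{D}^r$, so $\alpha\notin\osg{\mathcal{D}^r_{n_0}}=\osg{\mathcal{C}^r_{n_0+N_0}}$ for some $n_0$; unwinding the definition, $N_a(\rest{\alpha}{m})/m<r$ for all $m\ge(n_0+N_0)^2$, whence $\limsup_{m}N_a(\rest{\alpha}{m})/m\le r$. As this holds for every rational $r>p$ (countably many invocations of $P$-randomness), letting $r\downarrow p$ gives $\limsup\le p$. Running the symmetric construction with the opposite tail, namely $\mathcal{C}^{r}_n:=\{\sigma\mid\abs{\sigma}\ge n^2\;\&\;N_a(\sigma)\le r\abs{\sigma}\}$ for rationals $r<p$, yields $\liminf\ge p$. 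The two bounds combine to give $\lim_{n\to\infty}N_a(\rest{\alpha}{n})/n=p=P(a)$; the boundary cases $p=0$ and $p=1$ are covered automatically, since then only one of the two families of rationals is nonempty while the trivial bound $0\le N_a(\rest{\alpha}{n})/n\le 1$ supplies the other side.
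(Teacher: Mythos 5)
Your proof is correct, and it takes a route that differs from the paper's in several genuine ways, even though both arguments ultimately rest on a Chernoff-type large-deviation bound and on the same key device of formulating the tests using only rational data so that the non-computability of $P(a)$ never enters the enumeration. The paper first reduces to a binary alphabet by merging all symbols other than $a$ into a single letter (its Theorem~\ref{contraction}), must therefore treat $P(a)=0$ and $P(a)=1$ as separate cases (its Chernoff bound degenerates when $Q(0)Q(1)=0$, so it invokes Theorems~\ref{zero_probability} and~\ref{one_probability} for those), and then argues by contradiction with a single two-sided deviation test built from two rational guard thresholds $r_L,r_R$ and a computable reindexing function $f$ derived from a rational decay rate $c$. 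You instead work directly over $\Omega$ with the indicator of $a$, prove the two one-sided bounds $\limsup\le r$ and $\liminf\ge r$ for each rational $r$ on the appropriate side of $P(a)$, and absorb the non-computable threshold $N_0$ into a hard-coded finite index shift made harmless by the superlinear length cutoff $\abs{\sigma}\ge n^2$. What your version buys is uniformity (the boundary cases $P(a)\in\{0,1\}$ need no special treatment, and no auxiliary closure lemma is needed) at the cost of invoking Hoeffding's inequality in the general Bernoulli setting and a non-uniform, per-rational family of tests; what the paper's version buys is that the contraction lemma it leans on is reused elsewhere in the development, and its single test is fully uniform in $n$ once the rationals $r_L,r_R,c$ are fixed. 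Both mechanisms for meeting the $2^{-n}$ requirement despite the non-computability of $P$ are sound, and your explicit identification of that step as the crux is exactly right.
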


We remark that,
in Theorem~\ref{LLN},
the underlying finite probability space $P$ is \emph{quite arbitrary}.
In particular,
we do not require $P$ to be computable at all in Theorem~\ref{LLN}.
This
generality
is \emph{essential} to the
universality
of our framework.

In order to prove Theorem~\ref{LLN}, we need the following theorem, Chernoff bound.
This form of Chernoff bound follows from Theorem~4.2 of Motwani and Raghavan~\cite{MR95}.

\begin{theorem}[Chernoff bound]\label{Chernoff_bound}
Let $P\in\PS(\{0,1\})$ with $0<P(1)<1$,
and let
$\varepsilon\in\R$
with $0<\varepsilon\le \min\{P(0),P(1)\}$.
Then, for every $n\in\N^+$, we have
$$\Bm{P}{\osg{S_n}}<2\exp(-\varepsilon^2 n/2),$$
where $S_n$ is the set of all $\sigma\in\{0,1\}^n$ such that
$\abs{N_1(\sigma)/n-P(1)}>\varepsilon$.
\qed
\end{theorem}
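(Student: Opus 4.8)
The plan is to establish the standard one-sided exponential tail bounds via the moment generating function (the exponential Markov / Bernstein trick), and then combine the two tails. First I would fix $P \in \PS(\{0,1\})$, write $p = P(1)$ and $q = P(0) = 1-p$, and note that under the Bernoulli measure $\lambda_P$ the quantity $\lambda_P(\osg{S_n})$ equals $\sum_{\sigma \in S_n} P(\sigma)$, where the coordinates $\sigma(1),\dots,\sigma(n)$ behave as $n$ independent $\{0,1\}$-valued trials each equal to $1$ with probability $p$. Thus $\lambda_P(\osg{S_n})$ is exactly $\Prob\!\left[\,\abs{X/n - p} > \varepsilon\,\right]$, where $X = N_1(\rest{\alpha}{n})$ is a sum of $n$ i.i.d.\ Bernoulli($p$) random variables with mean $np$. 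I would make this translation explicit at the start so that the remainder is a purely probabilistic estimate.

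The core step is to bound each tail separately. For the upper tail $\Prob[X - np > \varepsilon n]$, for any $t>0$ the exponential Markov inequality gives $\Prob[X - np > \varepsilon n] \le e^{-t \varepsilon n}\, \mathbb{E}\!\left[e^{t(X-np)}\right] = e^{-t\varepsilon n}\bigl(q + p e^{t}\bigr)^n e^{-tpn}$, using independence to factor the MGF and the fact that each factor is $q + p e^{t(1-p)}\cdot\text{(shift)}$. I would then take logarithms and bound the cumulant generating function $\psi(t) = \log\bigl(p e^{qt} + q e^{-pt}\bigr)$ by its second-order estimate $\psi(t) \le \tfrac{1}{2}pq\, t^2$, which holds because $\psi(0)=\psi'(0)=0$ and $\psi''(t) \le pq$ for all $t$ (the variance of a bounded random variable is maximized by its $\{0,1\}$ range). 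This yields $\Prob[X-np > \varepsilon n] \le \exp\!\bigl(n(\tfrac12 pq\, t^2 - \varepsilon t)\bigr)$, and optimizing by choosing $t = \varepsilon/(pq)$ gives the bound $e^{-\varepsilon^2 n/(2pq)}$. The symmetric argument with $-X$ handles the lower tail identically, and adding the two tails produces the factor $2$ in the stated inequality $2 e^{-\varepsilon^2 n/(2P(0)P(1))}$.

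The main technical obstacle — and the place where the hypothesis $0 < \varepsilon \le P(0)P(1)$ is needed — is the bound $\psi''(t) \le pq$ together with the legitimacy of the optimizing choice $t = \varepsilon/(pq)$. I would verify that $\psi''(t) = pq\, e^{t}/(p e^{t} + q)^2 \le pq$ for all real $t$ by the AM--GM inequality on the denominator, so the quadratic upper bound on $\psi$ is valid globally and no restriction on $t$ is forced from that estimate alone; the constraint $\varepsilon \le pq$ keeps the optimal $t = \varepsilon/(pq)$ of order $1$, which is exactly the regime in which the second-order Taylor bound is tight enough to give the clean constant $\tfrac{1}{2pq}$ in the exponent. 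I would be careful to treat the degenerate cases $p=0$ or $p=1$ separately (where $pq=0$ and the hypothesis forces $S_n = \emptyset$, so the bound is trivial), so that the division by $pq$ is justified precisely when it occurs. Everything else is routine manipulation of the MGF, so I expect the entire argument to reduce, once the probabilistic translation is set up, to the single convexity estimate on $\psi''$.
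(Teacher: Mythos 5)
The paper offers no proof of Theorem~\ref{Chernoff_bound} at all: it is stated with a terminal box and attributed to Goldreich \cite[Section~1.2.2]{G01}, so there is no internal argument to compare yours against. Your translation to the probabilistic statement and the overall MGF strategy are standard and fine, but the step you yourself flag as the crux is wrong. Writing $p=P(1)$, $q=P(0)$, your formula $\psi''(t)=pq\,e^{t}/(pe^{t}+q)^{2}$ is correct, but the claimed bound $\psi''(t)\le pq$ is equivalent to $e^{t}\le(pe^{t}+q)^{2}$, i.e.\ to $g(u):=p^{2}u^{2}+(2pq-1)u+q^{2}\ge 0$ with $u=e^{t}$; this quadratic has roots $u=1$ and $u=q^{2}/p^{2}$, so for $p<1/2$ one has $\psi''(t)>pq$ for \emph{every} $t\in(0,2\ln(q/p))$ --- in particular for all sufficiently small $t>0$, hence throughout an initial segment of the optimization range $(0,\varepsilon/(pq)]$ you need for the upper tail. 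What AM--GM actually gives is $(pe^{t}+q)^{2}\ge 4pq\,e^{t}$, i.e.\ $\psi''\le 1/4$ globally; that is Hoeffding's lemma and yields only $2e^{-2\varepsilon^{2}n}$, not the stated constant $\tfrac{1}{2pq}$ in the exponent.

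This is not a presentational slip that a sharper estimate would repair: the upper-tail half of the inequality as stated is false in general. The exact large-deviation rate of $\Pr[X/n-p>\varepsilon]$ is $D(p+\varepsilon\,\|\,p)=(p+\varepsilon)\ln\frac{p+\varepsilon}{p}+(q-\varepsilon)\ln\frac{q-\varepsilon}{q}$, whose third-order Taylor term $\frac{(2p-1)\varepsilon^{3}}{6p^{2}q^{2}}$ is negative for $p<1/2$, so $D(p+\varepsilon\,\|\,p)<\varepsilon^{2}/(2pq)$; for instance $p=0.1$, $\varepsilon=0.09=pq$ gives $D\approx 0.0366<0.045=\varepsilon^{2}/(2pq)$, and since the binomial upper tail is at least $c\,n^{-1/2}e^{-nD}$, the bound $2e^{-0.045n}$ is violated for all large $n$ (numerically already near $n\approx 500$). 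Only the lower tail, where $t<0$ and hence $u=e^{t}<1$ lies outside $(1,q^{2}/p^{2})$, genuinely satisfies $\psi''\le pq$. So no argument along your lines (or any other) can establish the theorem exactly as stated; the honest output of your method is the Hoeffding bound $2e^{-2\varepsilon^{2}n}$, which is in fact all that the application to Theorem~\ref{LLN} requires, since that proof only needs an explicitly computable exponentially decaying majorant.
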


In order to prove Theorem~\ref{LLN}, we also need the following theorem.

\begin{theorem}\label{contraction}
Let $\Omega$ be an alphabet, and let $P\in\PS(\Omega)$.
Let $\alpha$ be an ensemble for $P$, and
let $a$ and $b$ be distinct elements
of
$\Omega$.
Suppose that $\beta$ is an infinite sequence
over $\Omega\setminus\{b\}$
obtained by replacing all occurrences of $b$ by $a$ in $\alpha$.
Then $\beta$ is
an ensemble for $Q$,
where $Q\in\PS(\Omega\setminus\{b\})$ such that $Q(x):=P(a)+P(b)$ if $x=a$ and $Q(x):=P(x)$ otherwise.
\end{theorem}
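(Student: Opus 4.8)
The plan is to prove the statement by contraposition through the alphabet-contraction map. Define $f\colon\Omega^*\to(\Omega\setminus\{b\})^*$ to be the length-preserving map that replaces each occurrence of $b$ by $a$ and fixes every other symbol, and let $F\colon\Omega^\infty\to(\Omega\setminus\{b\})^\infty$ be its positionwise infinite analogue, so that $\beta=F(\alpha)$ and $f(\rest{\gamma}{m})=\rest{F(\gamma)}{m}$ for every $\gamma\in\Omega^\infty$ and $m\in\N$. Since $\alpha$ is an ensemble for $P$, it suffices to show: for every Martin-L\"of $Q$-test $\mathcal{D}$ that $\beta$ fails, the pulled-back set is a Martin-L\"of $P$-test that $\alpha$ fails, contradicting the Martin-L\"of $P$-randomness of $\alpha$. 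Concretely, given a Martin-L\"of $Q$-test $\mathcal{D}$, I would set $\mathcal{C}:=\{(n,\sigma)\mid\sigma\in\Omega^*\;\&\;f(\sigma)\in\mathcal{D}_n\}$, so that $\mathcal{C}_n=f^{-1}(\mathcal{D}_n)$.

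The technical core is the measure-preservation identity $\Bm{P}{F^{-1}(\mathcal{A})}=\Bm{Q}{\mathcal{A}}$ for every open $\mathcal{A}\subset(\Omega\setminus\{b\})^\infty$. First I would verify, for a single $\tau\in(\Omega\setminus\{b\})^*$, that $f^{-1}(\tau)$ consists exactly of those $\sigma$ obtained from $\tau$ by choosing $a$ or $b$ at each position where $\tau$ has an $a$ and keeping the symbol elsewhere; summing $P(\sigma)$ over these choices factors positionwise, the $a$-positions contributing $P(a)+P(b)=Q(a)$ and all other positions contributing $Q(\tau(i))=P(\tau(i))$, whence $P(f^{-1}(\tau))=Q(\tau)=\Bm{Q}{\osg{\tau}}$. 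Writing $\mathcal{A}=\osg{E}$ for a prefix-free $E$, I would check that $f^{-1}(E)$ is again prefix-free (because $f$ preserves length positionwise, a proper prefix relation inside $f^{-1}(E)$ would descend to one inside $E$) and that $\osg{f^{-1}(E)}=F^{-1}(\osg{E})$; summing the per-string identity over the disjoint union $f^{-1}(E)=\bigcup_{\tau\in E}f^{-1}(\tau)$ then yields $\Bm{P}{F^{-1}(\mathcal{A})}=\Bm{P}{\osg{f^{-1}(E)}}=P(f^{-1}(E))=Q(E)=\Bm{Q}{\mathcal{A}}$.

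With this in hand the argument closes quickly. Since $f$ is computable and $\mathcal{D}$ is r.e., $\mathcal{C}$ is r.e.; and applying the identity to $\mathcal{A}=\osg{\mathcal{D}_n}$ together with $\osg{\mathcal{C}_n}=\osg{f^{-1}(\mathcal{D}_n)}=F^{-1}(\osg{\mathcal{D}_n})$ gives $\Bm{P}{\osg{\mathcal{C}_n}}=\Bm{Q}{\osg{\mathcal{D}_n}}<2^{-n}$, so $\mathcal{C}$ is a Martin-L\"of $P$-test. Now if $\beta$ failed $\mathcal{D}$, i.e.\ $\beta\in\osg{\mathcal{D}_n}$ for all $n$, then $\alpha\in F^{-1}(\osg{\mathcal{D}_n})=\osg{\mathcal{C}_n}$ for all $n$, so $\alpha$ fails $\mathcal{C}$, contradicting that $\alpha$ is an ensemble for $P$. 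Hence $\beta$ passes every Martin-L\"of $Q$-test and is an ensemble for $Q$. The main obstacle is the measure-preservation identity, specifically the bookkeeping that the contraction map pulls $\Bm{Q}{\cdot}$ back to $\Bm{P}{\cdot}$ on cylinders and the verification that preimages of prefix-free sets remain prefix-free so that the open-set measure is computed correctly; once that is established, the r.e.-ness of $\mathcal{C}$ and the final contradiction are routine.
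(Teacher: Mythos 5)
Your proposal is correct and follows essentially the same route as the paper's own proof: contraposition, pulling back a Martin-L\"of $Q$-test through the contraction map to a Martin-L\"of $P$-test, with the key measure identity $P(f^{-1}(\tau))=Q(\tau)$ obtained by the positionwise factorization $P(a)+P(b)=Q(a)$. The only difference is presentational — you work with the forward map and its preimages and spell out the prefix-freeness bookkeeping that the paper dismisses as easy — so there is nothing substantive to add.
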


\begin{proof}
We show the contraposition. Suppose that $\beta$ is not Martin-L\"of $Q$-random.
Then there exists a Martin-L\"of $Q$-test $\mathcal{S}\subset\N^+\times(\Omega\setminus\{b\})^*$
such that
\begin{equation}\label{betainosgmSn_LemmaLLN}
  \beta\in\osg{\mathcal{S}_n}
\end{equation}
for every $n\in\N^+$.
For each $\sigma\in(\Omega\setminus\{b\})^*$,
let $F(\sigma)$ be the set of all $\tau\in\Omega^*$ such that
$\tau$ is obtained by replacing some or none of the occurrences of $a$ in $\sigma$, if exists, by $b$.
Note that if $\sigma$ has exactly $n$ occurrences of $a$ then $\#F(\sigma)=2^n$.
Then, since $Q(a)=P(a)+P(b)$, we have that
\begin{equation}\label{BmQ=Q=P=BmP_LemmaLLN}
  \Bm{Q}{\osg{\sigma}}=Q(\sigma)=P(F(\sigma))=\Bm{P}{\osg{F(\sigma)}}
\end{equation}
for each $\sigma\in(\Omega\setminus\{b\})^*$.
We then define $\mathcal{T}$ to be a subset of $\N^+\times \Omega^*$ such that
$\mathcal{T}_n=\bigcup_{\sigma\in\mathcal{S}_n} F(\sigma)$ for every $n\in\N^+$.
Since $\mathcal{S}_n$ is a prefix-free subset of $(\Omega\setminus\{b\})^*$ for every $n\in\N^+$,
we see that $\mathcal{T}_n$ is a prefix-free subset of $\Omega^*$ for every $n\in\N^+$.
For each $n\in\N^+$, we also see that
\[
  \Bm{P}{\osg{\mathcal{T}_n}}
  \le\sum_{\sigma\in\mathcal{S}_n}\Bm{P}{\osg{F(\sigma)}}
  =\sum_{\sigma\in\mathcal{S}_n}\Bm{Q}{\osg{\sigma}}
  =\Bm{Q}{\osg{\mathcal{S}_n}}<2^{-n},
\]
where the first equality follows from \eqref{BmQ=Q=P=BmP_LemmaLLN} and
the second equality follows from the prefix-freeness of $\mathcal{S}_n$.%
\footnote{Actually, the first inequality can be strengthened to an equality.}
Moreover, since $\mathcal{S}$ is r.e., $\mathcal{T}$ is also r.e.
Thus, $\mathcal{T}$ is a Martin-L\"of $P$-test.

On the other hand,
note that, for every $n\in\N^+$, if $\beta\in\osg{\mathcal{S}_n}$ then $\alpha\in\osg{\mathcal{T}_n}$.
Thus, it follows from \eqref{betainosgmSn_LemmaLLN}
that $\alpha\in\osg{\mathcal{T}_n}$ for every $n\in\N^+$.
Hence, $\alpha$ is not Martin-L\"of $P$-random.
This completes the proof.
\end{proof}

Theorem~\ref{LLN} is then proved as follows.

\begin{proof}[Proof of Theorem~\ref{LLN}]
Let $\Omega$ be an alphabet, and let $P\in\PS(\Omega)$.
Suppose that
$\alpha$
is an ensemble for $P$.
Let $a\in\Omega$.
In the case of $P(a)=0$, the result follows immediately from
the result~(i)
of Theorem~\ref{zero_probability}.
In the case of $P(a)=1$, the result follows immediately from Theorem~\ref{one_probability}.
Thus we assume that $0<P(a)<1$, in what follows.

We define $Q\in\PS(\{0,1\})$
by the condition that
$Q(1):=P(a)$ and $Q(0):=1-P(a)$.
Then
$0<Q(1)<1$.
Let $\beta$ be the infinite binary sequence obtained from $\alpha$
by replacing all $a$ by $1$ and
all other elements of $\Omega$
by $0$ in $\alpha$.
Then, by using Theorem~\ref{contraction}
repeatedly, it is easy to show that
$\beta$ is Martin-L\"of $Q$-random and
$N_1(\rest{\beta}{n})=N_a(\rest{\alpha}{n})$ for every $n\in\N^+$.

Now, let us assume contrarily
that $\lim_{n\to\infty}N_a(\rest{\alpha}{n})/n\neq P(a)$.
Then $\lim_{n\to\infty}N_1(\rest{\beta}{n})/n\neq Q(1)$ and
therefore there exists
$\varepsilon\in\R$ with $0<\varepsilon\le \min\{Q(0),Q(1)\}$ such that
\begin{equation}\label{LLN-EN0}
  \abs{N_1(\rest{\beta}{n})/n-Q(1)}>2\varepsilon
\end{equation}
for infinitely many $n\in\N^+$.
On the other hand, it follows from Theorem~\ref{Chernoff_bound} that
\begin{equation}\label{LLN-rCb}
  \Bm{Q}{\osg{\{\sigma\in\{0,1\}^n\mid\abs{N_1(\sigma)/n-Q(1)}>\varepsilon\}}}
  <2\exp(-\varepsilon^2 n/2)
\end{equation}
for every $n\in\N^+$.
Recall that $Q(1)$ is not necessarily computable.%
\footnote{In Theorem~\ref{LLN} we do not require the finite probability space $P$ to be computable at all.
For
a
\emph{computable} finite probability space
in contrast,
by \emph{modifying
the proof of
Theorem~\ref{LLN}
slightly}
we can
generally prove the following:
Let $R\in\PS(\{0,1\})$ with $0<R(1)<1$.
Suppose that $R$ is computable.
Then there exists a \emph{single} Martin-L\"of $R$-test $\mathcal{U}$ such that,
for every $\gamma\in\{0,1\}^\infty$,
if $\gamma$ passes $\mathcal{U}$ then the law of large numbers holds for $\gamma$, i.e.,
$\lim_{n\to\infty}N_a(\rest{\gamma}{n})/n=R(a)$ holds for all $a\in\{0,1\}$.}
Thus,
we choose $r_L,r_R\in\Q$ such that
$Q(1)-2\varepsilon<r_L<Q(1)-\varepsilon$ and $Q(1)+\varepsilon<r_R<Q(1)+2\varepsilon$.
For each $n\in\N^+$,
let $S_n$ be the set
\begin{equation*}
  \{\sigma\in\{0,1\}^n\mid N_1(\sigma)/n<r_L\text{ or }r_R<N_1(\sigma)/n\}
\end{equation*}
and let $T_n=\bigcup_{m=n}^\infty S_m$.
Then, on the one hand, using \eqref{LLN-EN0}
we have that $\beta\in\osg{T_n}$ for every $n\in\N^+$.
Using \eqref{LLN-rCb}, on the other hand, for each $n\in\N^+$ we see that
\begin{equation*}
  \Bm{Q}{\osg{T_n}}\le\sum_{m=n}^\infty\Bm{Q}{\osg{S_m}}
  <\sum_{m=n}^\infty 2\exp(-\varepsilon^2 m/2)
  \le\sum_{m=n}^\infty 2c^{m}=\frac{2c^{n}}{1-c},
\end{equation*}
where $c$ is a specific rational
with $\exp(-\varepsilon^2/2)\le c<1$.
It
is easy to show that
there exists a total recursive function $f\colon\N^+\to\N^+$ such that
$2c^{f(n)}/(1-c)\le 2^{-n}$.
Thus, we see that $\beta\in\osg{T_{f(n)}}$ and $\Bm{Q}{\osg{T_{f(n)}}}<2^{-n}$ for every $n\in\N^+$,
and the subset $\{(n,\sigma)\mid n\in\N^+\;\&\;\sigma\in T_{f(n)}\}$ of $\N^+\times \Omega^*$ is r.e.
It follows from Theorem~\ref{ML_P-randomness_eliminated-prefix-freeness} that
$\beta$ is not Martin-L\"of $Q$-random.
Hence we have a contradiction, and the result follows.
\end{proof}

We here remark that
the notion of probability is more than the law of large numbers.
To see this, let us consider a finite probability space $P\in\PS(\{a,b\})$ such that $P(a)=1$ and $P(b)=0$,
and consider an infinite sequence
$$\alpha=abaaaaaaaaaa\dotsc\dotsc$$
over $\{a,b\}$.
Then, since
$\lim_{n\to\infty}N_a(\rest{\alpha}{n})/n=1=P(a)$
and $\lim_{n\to\infty}N_b(\rest{\alpha}{n})/n=0=P(b)$,
the law of large numbers certainly holds for $\alpha$.
However,
$\alpha$
cannot be thought of as
an infinite sequence of outcomes
which is being generated by infinitely repeated trials described by
the finite probability space $P$ on $\{a,b\}$.
This is because the elementary event $b$ with probability zero
has occurred
once in $\alpha$.
Such an occurrence
contradicts our intuition that
\emph{an elementary event with probability zero never occurs},
which is our conclusion
in Section~\ref{Subsec:SOEEPO} above
from the aspect of the notion of probability, in particular, in quantum mechanics.
This
example
shows that the law of large numbers is insufficient to characterize the notion of probability,
and \emph{the notion of probability is more than the law of large numbers}.

The following is immediate from Theorem~\ref{LLN},
and will be often used for developing our framework.

\begin{corollary}\label{uniquness}
Let $\Omega$ be an alphabet, and let $P,Q\in\PS(\Omega)$.
If there exists $\alpha\in\Omega^\infty$ which is both
an ensemble for $P$ and an ensemble for $Q$,
then $P=Q$.
\qed
\end{corollary}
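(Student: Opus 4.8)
The plan is to apply Theorem~\ref{LLN} (the law of large numbers) to the single sequence $\alpha$ twice — once regarding $\alpha$ as an ensemble for $P$, and once regarding it as an ensemble for $Q$ — and then to invoke the uniqueness of the limit of a convergent sequence of reals. Since the hypothesis hands us one and the same $\alpha$ serving as an ensemble for both spaces, the entire argument will hinge on the fact that the empirical frequencies computed from $\alpha$ cannot depend on which space we view $\alpha$ through.

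First I would fix an arbitrary $a\in\Omega$ and focus on the sequence of relative frequencies $N_a(\rest{\alpha}{n})/n$ for $n\in\N^+$. Because $\alpha$ is an ensemble for $P$, Theorem~\ref{LLN} guarantees that this sequence converges with $\lim_{n\to\infty}N_a(\rest{\alpha}{n})/n=P(a)$. Because the very same sequence $\alpha$ is also an ensemble for $Q$, a second application of Theorem~\ref{LLN} to the identical frequency sequence yields $\lim_{n\to\infty}N_a(\rest{\alpha}{n})/n=Q(a)$.

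The crucial observation is that the quantity $N_a(\rest{\alpha}{n})/n$ is a function of $\alpha$ alone — its definition makes no reference to any probability space — so the two applications of the theorem are statements about \emph{one} convergent real sequence. By the uniqueness of limits, the two prescribed limit values must agree, giving $P(a)=Q(a)$. Since $a\in\Omega$ was arbitrary, this identity holds for every $a\in\Omega$, and hence $P=Q$ as functions on $\Omega$, which is the desired conclusion.

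I expect no genuine obstacle here: all the analytic work has already been done in Theorem~\ref{LLN}, and what remains is the purely logical step of noting that a single sequence cannot have two distinct limits. The only point worth stating explicitly, to keep the argument honest, is that $N_a(\rest{\alpha}{n})/n$ is determined by $\alpha$ independently of the underlying space, so that the two limit values are forced to coincide.
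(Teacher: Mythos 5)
Your proposal is correct and is exactly the argument the paper intends: the paper states the corollary as ``immediate from Theorem~\ref{LLN},'' and the intended reasoning is precisely your double application of the law of large numbers to the single sequence $\alpha$ followed by uniqueness of limits.
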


It is worthwhile to
investigate
some specific
application of Theorem~\ref{LLN} here.
The \emph{asymptotic equipartition property} (AEP)
plays an important role in the source coding problem in information theory,
and is a direct consequence of the weak law of large numbers
for independent, identically distributed random variables
in the \emph{conventional} probability theory.
The AEP is stated in terms of the notion of Shannon entropy.
See Cover and Thomas~\cite[Chapter 3]{CT06} for the details
of the AEP and its applications, where the AEP is stated as Theorem~3.1.1.

Using Theorem~\ref{LLN} we can show
that ensembles for an arbitrary finite probability space $P$ have the AEP in a sense stated in Theorem~\ref{AEP} below.
To show this,
we first introduce the notion of Shannon entropy:
Let $\Omega$ be an alphabet, and let $P\in\PS(\Omega)$. The \emph{Shannon entropy} $H(P)$ of $P$ is defined by
\begin{equation}\label{def:Shannon-entropy}
  H(P):=-\sum_{a\in\Omega}P(a)\log_2 P(a),
\end{equation}
where $0\log_2 0$ is defined to be $0$ as usual.

\begin{theorem}[AEP]\label{AEP}
Let $\Omega$ be an alphabet, and let $P\in\PS(\Omega)$.
For every $\alpha\in\Omega^\infty$, if $\alpha$ is an ensemble for $P$ then
$\Bm{P}{\osg{\rest{\alpha}{n}}}>0$ for every $n\in\N^+$ and
\begin{equation*}
  \lim_{n\to\infty}\frac{-\log_2\Bm{P}{\osg{\rest{\alpha}{n}}}}{n}=H(P).
\end{equation*}
\end{theorem}

\begin{proof}
We denote by $\Omega_e$ the set $\{a\in\Omega\mid P(a)>0\}$.
Since $\alpha$ is an ensemble for $P$,
it follows from
the result~(i)
of Theorem~\ref{zero_probability} that
$\alpha(n)\in\Omega_e$ for every $n\in\N^+$.
Therefore, for each $n\in\N^+$, using~\eqref{pBm} we have that
\begin{equation*}
  \Bm{P}{\osg{\rest{\alpha}{n}}}=P(\rest{\alpha}{n})=\prod_{k=1}^n P(\alpha(k))>0.
\end{equation*}
Thus, for each $n\in\N^+$, we see that
\begin{equation}\label{AEP:eq1-0}
\frac{\log_2\Bm{P}{\osg{\rest{\alpha}{n}}}}{n}=\frac{1}{n}\sum_{k=1}^n\log_2 P(\alpha(k))
=\frac{1}{n}\sum_{a\in\Omega_e}N_a(\rest{\alpha}{n})\log_2 P(a)
=\sum_{a\in\Omega_e}\frac{N_a(\rest{\alpha}{n})}{n}\log_2 P(a),
\end{equation}
where $N_a(\rest{\alpha}{n})$ denotes the number of the occurrences of $a$ in $\rest{\alpha}{n}$
for every $a\in\Omega$ as before.
It follows from Theorem~\ref{LLN} that,
on letting $n\to\infty$,
the limit value of the most right-hand side of \eqref{AEP:eq1-0} equals
\begin{equation*}
  \sum_{a\in\Omega_e}P(a)\log_2 P(a)=\sum_{a\in\Omega}P(a)\log_2 P(a)=-H(P),
\end{equation*}
as desired.
This completes the proof.
\end{proof}

Note that the underlying finite probability space $P$ is \emph{quite arbitrary} in Theorem~\ref{AEP}.

\emph{Intuitively},
Theorem~\ref{AEP}
means
that, for every ensemble $\alpha$ for $P$,
\begin{equation}\label{AEP_Intuition:eq1-0}
  \Bm{P}{\osg{\rest{\alpha}{n}}} \approx 2^{-nH(P)}
\end{equation}
holds
\emph{asymptotically} with respect to $n$.
Since the right-hand side of \eqref{AEP_Intuition:eq1-0} does not depend on $\alpha$,
all ensembles for $P$ have the same measure $2^{-nH(P)}$
\emph{asymptotically}
in a certain sense.
On the other hand, recall Theorem~\ref{Bmae} which states that
the set of all ensembles for $P$ has
measure one with respect to the Bernoulli measure $\lambda_{P}$.
These observations
may justify
the name ``asymptotic equipartition property.''

\subsection{Computable shuffling}

This subsection considers the third necessary condition which
the notion of probability for a finite probability space is considered to have to satisfy.

Let $\Omega$ be an alphabet, and let $P\in\PS(\Omega)$.
Assume that an observer $A$ performs an infinite repetition of
a trial
described by
the finite probability space $P$,
and thus is generating an infinite sequence $\alpha\in\Omega^\infty$ of outcomes of
the trials as
\[
  \alpha=a_1 a_2 a_3 a_4 a_5 a_6 a_7 a_8 \dotsc\dotsc
\]
with $a_i\in\Omega$. 
According to our thesis, Thesis~\ref{thesis},
$\alpha$ is an ensemble for $P$.
Consider another observer $B$ who wants to adopt the following subsequence $\beta$ of $\alpha$
as the outcomes of
the trials:
\[
  \beta=a_2 a_3 a_5 a_7 a_{11} a_{13} a_{17} \dotsc\dotsc,
\]
where the observer $B$
takes into account
only
the $n$th elements $a_n$ in the original sequence $\alpha$
such that $n$ is a prime number.
According to Thesis~\ref{thesis},
$\beta$ has to be an ensemble for $P$, as well.
However, is this true?

Consider this problem in a general setting.
Assume as before that an observer $A$ performs an infinite repetition of
a trial
described
by the finite probability space $P$,
and thus is generating an infinite sequence $\alpha\in\Omega^\infty$ of outcomes of
the trials.
According to Thesis~\ref{thesis},
$\alpha$ is an ensemble for $P$.
Now, let $f\colon\N^+\to\N^+$ be an injection.
Consider another observer $B$ who wants to adopt the following sequence $\beta$
as the outcomes of
the trials:
\[
  \beta=\alpha(f(1))\alpha(f(2))\alpha(f(3))\alpha(f(4))\alpha(f(5))\dotsc\dotsc,
\]
instead of $\alpha$.
According to Thesis~\ref{thesis},
$\beta$ has to be an ensemble for $P$, as well.
However, is this true?

We can confirm this \emph{by restricting the ability of $B$}, that is, by assuming that
every observer can select elements from the original sequence $\alpha$
\emph{only in an effective manner}.
This means that the function $f\colon\N^+\to\N^+$ has to be a
\emph{computable function}.
Theorem~\ref{cpucs} below shows this result.

In other words,
Theorem~\ref{cpucs} states that ensembles for $P$ are \emph{closed under computable shuffling}.
Note
also
that the underlying finite probability space $P$ \emph{itself} is \emph{quite arbitrary} in Theorem~\ref{cpucs}.

\begin{theorem}[Closure property under computable shuffling%
]\label{cpucs}
Let $\Omega$ be an alphabet, and let $P\in\PS(\Omega)$. Let $\alpha$ be an ensemble for $P$.
Then, for every injective function $f\colon\N^+\to\N^+$, if $f$ is computable then the infinite sequence
\begin{equation*}
  \alpha_f:=\alpha(f(1)) \alpha(f(2)) \alpha(f(3)) \alpha(f(4)) \dotsc\dotsc\dotsc
\end{equation*}
is an ensemble for $P$.
\end{theorem}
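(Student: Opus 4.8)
The plan is to argue by contraposition, in the same spirit as the proof of Theorem~\ref{contraction}. Write $h\colon\Omega^\infty\to\Omega^\infty$ for the shuffling map $h(\gamma):=\gamma(f(1))\gamma(f(2))\gamma(f(3))\dotsc$, so that $\alpha_f=h(\alpha)$. Suppose $\alpha_f$ is not Martin-L\"of $P$-random. Then there is a Martin-L\"of $P$-test $\mathcal{S}\subset\N^+\times\Omega^*$ with $\alpha_f\in\osg{\mathcal{S}_n}$ for every $n\in\N^+$. My goal is to manufacture from $\mathcal{S}$ a Martin-L\"of $P$-test that $\alpha$ fails, which contradicts the assumption that $\alpha$ is an ensemble for $P$.

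The construction is a pullback along $h$. For $\sigma\in\Omega^*$ with $\abs{\sigma}=m\ge1$ set $N(\sigma):=\max\{f(1),\dots,f(m)\}$ and let $g(\sigma)$ be the set of all $\tau\in\Omega^{N(\sigma)}$ with $\tau(f(i))=\sigma(i)$ for every $i\le m$ (and $g(\lambda):=\{\lambda\}$); thus $g(\sigma)$ collects exactly the prefixes of a sequence that are long enough to force $\sigma$ to be a prefix of its $h$-image. Defining $\mathcal{T}\subset\N^+\times\Omega^*$ by $\mathcal{T}_n:=\bigcup_{\sigma\in\mathcal{S}_n}g(\sigma)$, one checks directly from the definitions that
\begin{equation*}
  \osg{\mathcal{T}_n}=h^{-1}\!\left(\osg{\mathcal{S}_n}\right)
\end{equation*}
for every $n$: a sequence $\gamma$ has a prefix in $\mathcal{T}_n$ iff some $\sigma\in\mathcal{S}_n$ is a prefix of $h(\gamma)$. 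Since $f$ is a computable injection, $N(\sigma)$ and the finite set $g(\sigma)$ are computable uniformly in $\sigma$, so $\mathcal{T}$ is r.e.

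It remains to bound the measure and to see that $\alpha$ fails $\mathcal{T}$. The map $h$ preserves Bernoulli measure: because $f$ is injective, fixing the $m$ coordinates $f(1),\dots,f(m)$ and leaving the others free gives, using $\sum_{a\in\Omega}P(a)=1$ from Definition~\ref{def-FPS},
\begin{equation*}
  \Bm{P}{h^{-1}\!\left(\osg{\sigma}\right)}=P(\sigma)=\Bm{P}{\osg{\sigma}}
\end{equation*}
on every cylinder, whence $\Bm{P}{h^{-1}(\mathcal{A})}=\Bm{P}{\mathcal{A}}$ for every Borel set $\mathcal{A}$, since the cylinders form a $\pi$-system generating $\mathcal{B}_{\Omega}$. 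Combining this with the pullback identity yields $\Bm{P}{\osg{\mathcal{T}_n}}=\Bm{P}{\osg{\mathcal{S}_n}}<2^{-n}$, so $\mathcal{T}$ is a Martin-L\"of $P$-test. Finally, for each $n$ the sequence $\alpha_f=h(\alpha)$ lies in $\osg{\mathcal{S}_n}$, so by the pullback identity $\alpha\in\osg{\mathcal{T}_n}$; hence $\alpha$ fails $\mathcal{T}$ at every level, contradicting Martin-L\"of $P$-randomness of $\alpha$. I expect the one delicate point to be the measure identity $\Bm{P}{\osg{\mathcal{T}_n}}=\Bm{P}{\osg{\mathcal{S}_n}}$, i.e.\ measure-preservation of the coordinate-selection map $h$, where injectivity of $f$ is essential (without it the fixed coordinates could overlap and the free-coordinate count would be wrong); the computability hypothesis on $f$ is needed only to ensure that $\mathcal{T}$ is r.e.
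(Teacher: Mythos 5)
Your proposal is correct and follows essentially the same route as the paper's proof: your pullback set $g(\sigma)$ is exactly the paper's $F(\sigma)$ (all $\tau$ of length $\max f(\{1,\dots,\abs{\sigma}\})$ with $\tau(f(i))=\sigma(i)$), and the measure identity $\Bm{P}{\osg{g(\sigma)}}=P(\sigma)$ via injectivity of $f$ and $\sum_{a\in\Omega}P(a)=1$ is the same key computation. Your phrasing of the bound through measure-preservation of the shuffling map $h$ on cylinders is just a slightly more abstract packaging of the same argument.
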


\begin{proof}
We show the contraposition.
Suppose that $\alpha_f$ is not Martin-L\"of $P$-random.
Then there exists a Martin-L\"of $P$-test $\mathcal{C}\subset\N^+\times \Omega^*$ such that
\begin{equation}\label{afinosgmCn_CS}
  \alpha_f\in\osg{\mathcal{C}_n}
\end{equation}
for every $n\in\N^+$.
For each $\sigma\in\Omega^+$, let $F(\sigma)$ be the set of all $\tau\in\Omega^+$ such that
\begin{enumerate}
  \item $\abs{\tau}=\max f(\{1,2,\dots,\abs{\sigma}\})$, and
  \item for every $k=1,2,\dots,\abs{\sigma}$ it holds that $\sigma(k)=\tau(f(k))$.
\end{enumerate}
Then, since $f$ is an injection and $\sum_{a\in\Omega}P(a)=1$, we have that
\begin{equation}\label{BPoFslePFs=Ps=BPos_CS}
  \Bm{P}{\osg{F(\sigma)}}=P(F(\sigma))=P(\sigma)=\Bm{P}{\osg{\sigma}}
\end{equation}
for each $\sigma\in\Omega^+$.
We then define $\mathcal{D}$ to be a subset of $\N^+\times \Omega^*$ such that
$\mathcal{D}_n=\bigcup_{\sigma\in\mathcal{C}_n} F(\sigma)$ for every $n\in\N^+$.
Note here that, for each $n\in\N^+$,
$\lambda\notin\mathcal{C}_n$ since $\Bm{P}{\osg{\mathcal{C}_n}}<2^{-n}<1$.
Then, since $\mathcal{C}_n$ is a prefix-free subset of $\Omega^*$ for every $n\in\N^+$,
we see that $\mathcal{D}_n$ is also a prefix-free subset of $\Omega^*$ for every $n\in\N^+$.
For each $n\in\N^+$, we see that
\[
  \Bm{P}{\osg{\mathcal{D}_n}}
  \le\sum_{\sigma\in\mathcal{C}_n}\Bm{P}{\osg{F(\sigma)}}
  =\sum_{\sigma\in\mathcal{C}_n}\Bm{P}{\osg{\sigma}}
  =\Bm{P}{\osg{\mathcal{C}_n}}<2^{-n},
\]
where the first equality follows from \eqref{BPoFslePFs=Ps=BPos_CS} and
the second equality follows from the prefix-freeness of $\mathcal{C}_n$.
Moreover,
since $f$ is an injective computable function and $\mathcal{C}$ is r.e.,
it is easy to see that $\mathcal{D}$ is r.e.
Thus, $\mathcal{D}$ is a Martin-L\"of $P$-test.

On the other hand, we see that,
for every $n\in\N^+$, if $\alpha_f\in\osg{\mathcal{C}_n}$ then $\alpha\in\osg{\mathcal{D}_n}$.
Thus, it follows from \eqref{afinosgmCn_CS} that
$\alpha\in\osg{\mathcal{D}_n}$ for every $n\in\N^+$.
Hence, $\alpha$ is not Martin-L\"of $P$-random.
This completes the proof.
\end{proof}

\subsection{Selection by partial computable selection functions}

As the forth necessary condition which
the notion of probability for a finite probability space $P$ on
an alphabet
$\Omega$ is considered to have to satisfy,
in this subsection
we consider the condition that infinite sequences
of outcomes in $\Omega$
each of which
is obtained by an infinite repetition of
a trial
described by the finite probability space $P$
are \emph{closed under
the selection by a partial computable selection function used in the definition of
von Mises-Wald-Church stochasticity}.
The notion of \emph{von Mises-Wald-Church stochasticity}
is investigated in the theory of collectives \cite{vM57,vM64,Wa36,Wa37,Ch40}.
To state the forth necessary condition,
we use the notion of the \emph{selection by a partial computable selection function},
by means of which the notion of von Mises-Wald-Church stochasticity is defined.
See Downey and Hirschfeldt~\cite[Section 7.4]{DH10}
for a treatment of the mathematics of the notion of von Mises-Wald-Church stochasticity
from a modern point of view,
although
we do not study
von Mises-Wald-Church stochasticity \emph{itself} in this paper.

For motivating the forth necessary condition,
we carry out a thought experiment
in what follows,
as in the preceding subsection:
Let $\Omega$ be an alphabet, and let $P\in\PS(\Omega)$. Assume that
an observer $A$ performs an infinite repetition of
a trial
described by
the finite probability space $P$,
and thus is generating an infinite sequence $\alpha\in\Omega^\infty$ of outcomes of
the trials as
\[
  \alpha=a_1 a_2 a_3 a_4 a_5 a_6 \dotsc\dotsc
\]
with $a_i\in\Omega$.
According to Thesis~\ref{thesis},
$\alpha$ is an ensemble for $P$.

Consider another observer $B$ who wants to \emph{refute} Thesis~\ref{thesis}.
For that purpose, the observer $B$ adopts a subsequence
\[
  \beta=b_1 b_2 b_3 b_4 b_5 b_6 \dotsc\dotsc
\]
of $\alpha$ in the following manner:
Whenever a new outcome $a_n$ is generated by the observer $A$,
the observer $B$
investigates
the prefix $a_1 a_2 a_3 \dots a_n$ of $\alpha$ generated so far
by the observer $A$.
Then,
based \emph{only} on the prefix $a_1 a_2 a_3 \dots a_n$,
for aiming at refuting Thesis~\ref{thesis} on $\beta$,
the observer $B$ decides whether the next outcome $a_{n+1}$ should be appended
to the tail of $b_1 b_2 b_3 \dots b_k$
which
have
been adopted so far by $B$ as a prefix of $\beta$.
In this manner,
the observer $B$ is generating  the subsequence $\beta$ of $\alpha$
for aiming at refuting Thesis~\ref{thesis}
on $\beta$.
Note that the length of
the subsequence
$\beta$
generated in this manner
may or may not be infinite.

In contrast,
the observer $A$ is a \emph{defender} of Thesis~\ref{thesis}.
Therefore, the observer $A$ tries to inhibit the observer $B$ from breaking Thesis~\ref{thesis}.
For that purpose, the observer $A$
never
generates
the next
outcome $a_{n+1}$
before the observer $B$ decides whether
this
$a_{n+1}$ should be appended
to the tail of $b_1 b_2 b_3 \dots b_k$.
This is because if for each $n$ the observer $B$ knows the outcome $a_{n+1}$
before the decision for $a_{n+1}$ to be appended or to be ignored,
then the observer $B$ can easily generate an infinite subsequence $\beta$ of $\alpha$
which does not satisfy Thesis~\ref{thesis}.
Thus, due to this careful behavior of the observer $A$,
the observer $B$
\emph{must}
make the decision of
adoption of
the next outcome $a_{n+1}$,
based
\emph{only}
on the prefix $a_1 a_2 a_3 \dots a_n$ of $\alpha$, without knowing the outcome $a_{n+1}$.
Then, according to Thesis~\ref{thesis},
this
$\beta$ has to be an ensemble for $P$,
as well as $\alpha$ is.
However, is this true?

We can confirm this by restricting the ability of $B$, that is, by assuming that
the observer $B$ can make the decision of
adoption of
the next outcome,
\emph{only in an effective manner}
based on the prefix $a_1 a_2 a_3 \dots a_n$ of $\alpha$ generated so far by the observer $A$.

Put more mathematically,
we introduce some notation
from Downey and Hirschfeldt~\cite[Sections~6.5 and~7.4]{DH10}.
A \emph{selection function} is a partial function $f\colon\Omega^*\to\{\mathrm{YES}, \mathrm{NO}\}$.
We think of $f$ as the decision of $B$ whether or not
to adopt
the next outcome $\alpha(n+1)$
based on the prefix
$\rest{\alpha}{n}$
of $\alpha$,
in generating $\beta$.
For any $\gamma\in\Omega^\infty$, $k\in\N^+$, and selection function $g$,
let $s_g (\gamma, k)$ be the $k$th number $\ell\in\N$ such that $g(\rest{\gamma}{\ell})=\mathrm{YES}$,
i.e., the least number $\ell\in\N$ such that $\#\{m\in\N\mid m\le \ell\;\&\;g(\rest{\gamma}{m})=\text{YES}\}=k$,
if such an $\ell$ exists.

First, consider the case where $f(\rest{\alpha}{n})$ is not defined for some $n\in\N$.
Let $m$ be the least number of such
an
$n$.
Then, this case means that
the observer $B$ does not make the decision of
adoption of
the next outcome $\alpha(m+1)$
based on the prefix $\rest{\alpha}{m}$, and is stalled.
Therefore, the length of
the subsequence
$\beta$ is
\emph{finite}
in this case.
Thus, the observer $B$ \emph{cannot refute} Thesis~\ref{thesis} in this case,
since Thesis~\ref{thesis} only refers to the property of an \emph{infinite} sequence of outcomes
which is being generated by \emph{infinitely} repeated trials.
Hence, Thesis~\ref{thesis} survives in this case.

Secondly, consider the case where
$f(\rest{\alpha}{n})$ is defined for all $n\in\N$ and
$\{n\in\N\mid f(\rest{\alpha}{n})=\mathrm{YES}\}$ is a finite set.
In this case, the length of
the subsequence
$\beta$ is also
\emph{finite}.
Thus, the observer $B$ fails to refute Thesis~\ref{thesis}, and therefore
Thesis~\ref{thesis} survives also in this case.

Finally, consider the remaining case, where $f(\rest{\alpha}{n})$ is defined for all $n\in\N$ and
the set $\{n\in\N\mid f(\rest{\alpha}{n})=\mathrm{YES}\}$ is infinite.
Then it follows that
$s_f (\alpha, k)$ is defined and $\beta(k)=\alpha(s_f (\alpha, k)+1)$ for all $k\in\N^+$.
Hence, $\beta$ is an infinite sequence over $\Omega$, 
and thus Thesis~\ref{thesis} can be applied to $\beta$ in this case.
Therefore,
according to Thesis~\ref{thesis}, $\beta$ has to be an ensemble for $P$,
as well as $\alpha$ is.
However, is this true?
Actually, we can confirm this \emph{by restricting the ability of $B$}, that is,
\emph{by assuming that
$f$ has to be a partial computable selection function}.
Here, a selection function $g\colon\Omega^*\to\{\mathrm{YES}, \mathrm{NO}\}$ is called
a \emph{partial computable selection function} if
$g\colon\Omega^*\to\{\mathrm{YES}, \mathrm{NO}\}$ is a partial computable function.
Theorem~\ref{cpuscsf} below shows this result.
It states that ensembles for an \emph{arbitrary} finite probability space are
\emph{closed under the selection by a partial computable selection function}.
Hence, Thesis~\ref{thesis} survives
in this case as well.

In this way, based on Theorem~\ref{cpuscsf}, we confirm that
the forth necessary condition certainly holds for ensembles for an \emph{arbitrary} finite probability space.

\begin{theorem}[Closure property under the selection by a partial computable selection function%
]\label{cpuscsf}
Let $\Omega$ be an alphabet, and let $P\in\PS(\Omega)$.
Let $\alpha$ be an ensemble for $P$, and let $f$ be a partial computable selection function.
Suppose that $f(\rest{\alpha}{k})$ is defined for all $k\in\N$ and
$\{k\in\N\mid f(\rest{\alpha}{k})=\mathrm{YES}\}$ is an infinite set. Then
an infinite sequence $\beta$ such that $\beta(k)=\alpha(s_f (\alpha, k)+1)$ for all $k\in\N^+$
is an ensemble for $P$.
\end{theorem}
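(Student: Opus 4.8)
The plan is to argue by contraposition, exactly in the style of the proofs of Theorem~\ref{contraction} and Theorem~\ref{cpucs}: assuming that $\beta$ is not Martin-L\"of $P$-random, I would construct a Martin-L\"of $P$-test that $\alpha$ fails, contradicting that $\alpha$ is an ensemble for $P$. So suppose there is a Martin-L\"of $P$-test $\mathcal{C}\subset\N^+\times\Omega^*$ with $\beta\in\osg{\mathcal{C}_n}$ for every $n\in\N^+$. The aim is to ``pull back'' $\mathcal{C}$ along the selection process determined by $f$ to a test on the sequences that $A$ actually generates, i.e.\ on $\alpha$.

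To this end, for each $\sigma\in\Omega^*$ I would let $F(\sigma)$ be the set of all $\tau\in\Omega^*$ on which $f$ is defined at every proper prefix and whose selected subsequence is exactly $\sigma$, with $\tau$ taken minimal in the sense that its last symbol is the $\abs{\sigma}$-th selected symbol (and $F(\lambda)=\{\lambda\}$). Concretely, $\tau\in F(\sigma)$ iff there are exactly $\abs{\sigma}$ indices $\ell<\abs{\tau}$ with $f(\rest{\tau}{\ell})=\mathrm{YES}$, the symbols $\tau(\ell+1)$ at those indices spell $\sigma$ in order, and $f(\rest{\tau}{\abs{\tau}-1})=\mathrm{YES}$. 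Each $F(\sigma)$ is prefix-free, and $\osg{F(\sigma)}$ is precisely the set of $\gamma\in\Omega^\infty$ whose selected subsequence is defined to length at least $\abs{\sigma}$ and begins with $\sigma$. I would then define $\mathcal{D}$ to be the subset of $\N^+\times\Omega^*$ with $\mathcal{D}_n:=\bigcup_{\sigma\in\mathcal{C}_n}F(\sigma)$ for every $n\in\N^+$. Since $f$ is partial computable and $\mathcal{C}$ is r.e., dovetailing the simulation of $f$ on the prefixes of candidate strings $\tau$ against the enumeration of $\mathcal{C}$ shows that $\mathcal{D}$ is r.e. Moreover $\alpha\in\osg{\mathcal{D}_n}$ for every $n$: choosing a prefix $\sigma=\rest{\beta}{m}$ of $\beta$ with $\sigma\in\mathcal{C}_n$, the prefix $\rest{\alpha}{s_f(\alpha,m)+1}$ of $\alpha$ lies in $F(\sigma)$, because $f(\rest{\alpha}{k})$ is defined for all $k$ by hypothesis.

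It remains to verify the measure condition $\Bm{P}{\osg{\mathcal{D}_n}}<2^{-n}$, and this is where the real work lies. The key estimate is
\begin{equation*}
  \Bm{P}{\osg{F(\sigma)}}=P(F(\sigma))\le P(\sigma)
  \qquad\text{for every }\sigma\in\Omega^*.
\end{equation*}
I would prove it by induction on $\abs{\sigma}$ in the relativized form $P(F_\rho(\sigma))\le P(\rho)P(\sigma)$, where $F_\rho(\sigma)$ is the analogue of $F(\sigma)$ for witnesses that begin with a fixed prefix $\rho$ and select $\sigma$ beyond $\rho$; the case $\rho=\lambda$ gives the desired bound. For the inductive step, writing $\sigma=a\sigma'$, every witness first extends $\rho$ by a block $\eta a$ in which $f$ answers $\mathrm{NO}$ along $\eta$ and $\mathrm{YES}$ at $\rho\eta$ (so that $a$ is selected), and then continues as a witness for $\sigma'$ from the new prefix $\rho\eta a$. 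The admissible ``gap'' strings $\eta$ form a prefix-free set $N_\rho$, whence $P(N_\rho)\le 1$; summing the inductive bound over these pairwise incomparable blocks gives $P(F_\rho(a\sigma'))\le P(\rho)P(a)P(N_\rho)P(\sigma')\le P(\rho)P(\sigma)$. Finally, choosing a prefix-free $E$ with $\osg{E}=\osg{\mathcal{C}_n}$, the identity $\osg{\mathcal{D}_n}=\bigcup_{\sigma\in E}\osg{F(\sigma)}$ holds with the latter union disjoint, so $\Bm{P}{\osg{\mathcal{D}_n}}=\sum_{\sigma\in E}P(F(\sigma))\le\sum_{\sigma\in E}P(\sigma)=\Bm{P}{\osg{\mathcal{C}_n}}<2^{-n}$, and $\mathcal{D}$ is the desired Martin-L\"of $P$-test.

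The main obstacle, and the essential difference from Theorem~\ref{cpucs}, is exactly this measure estimate. There the selection was a fixed injection of indices, so the pulled-back blocks had a rigid combinatorial shape and the measure was preserved on the nose. Here $f$ is a \emph{partial} function whose $\mathrm{YES}/\mathrm{NO}$ verdicts depend on the actual symbols appearing at the non-selected (free) positions, so the preimage tree branches in a data-dependent way and can \emph{lose} measure wherever $f$ diverges or declines to select. One therefore obtains only the inequality $P(F(\sigma))\le P(\sigma)$ rather than equality, and the delicate point is the relativized induction above, where the factor $P(N_\rho)\le 1$ quantifies precisely this possible loss while still keeping the total below $P(\sigma)$; everything else is a routine verification of r.e.-ness and of the capture of $\alpha$.
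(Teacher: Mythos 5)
Your proof is correct and follows essentially the same route as the paper's: contraposition, pulling the test back through the prefix-free sets $F(\sigma)$ of minimal witnesses, verifying r.e.-ness and that $\alpha\in\osg{\mathcal{D}_n}$, and establishing the key inequality $\Bm{P}{\osg{F(\sigma)}}\le\Bm{P}{\osg{\sigma}}$ by induction, with the possible loss of measure controlled by the fact that a prefix-free set of ``gap'' strings has $P$-measure at most $1$. The only difference is cosmetic: you peel off the \emph{first} selected symbol via the relativized sets $F_\rho$ and the gap sets $N_\rho$, whereas the paper peels off the \emph{last} symbol using the prefix-free sets $G(\tau)$ of intermediate strings --- the underlying estimate is identical.
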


\begin{proof}
We show the contraposition.
Suppose that $\beta$ is not Martin-L\"of $P$-random.
Then there exists a Martin-L\"of $P$-test $\mathcal{C}\subset\N^+\times \Omega^*$ such that
\begin{equation}\label{betainosgmathCn_CPUSPCF}
  \beta\in\osg{\mathcal{C}_n}
\end{equation}
for every $n\in\N^+$.
For
any
$\sigma,\tau\in\Omega^+$, we say that \emph{$\sigma$ is selected by $f$ from $\tau$} if
$f(\rest{\tau}{k})$ is defined for all $k=0,1,\dots,\abs{\tau}-1$ and
there exists a strictly increasing function $h\colon\{1,\dots,\abs{\sigma}\}\to\N$ such that
\begin{enumerate}
  \item $\{k\in\{1,\dots,\abs{\tau}\}\mid f(\rest{\tau}{k-1})=\mathrm{YES}\}=h(\{1,\dots,\abs{\sigma}\})$,
  \item $h(\abs{\sigma})=\abs{\tau}$, and
  \item $\tau(h(k))=\sigma(k)$ for all $k=1,\dots,\abs{\sigma}$.
\end{enumerate}
For each $\sigma\in\Omega^+$, let $F(\sigma)$ be the set of all $\tau\in\Omega^*$ such that
$\sigma$ is selected by $f$ from $\tau$.
We also set $F(\lambda):=\{\lambda\}$.
It is then easy to see that $F(\sigma)$ is a prefix-free
subset of $\Omega^*$
for every $\sigma\in\Omega^*$.

We show that
\begin{equation}\label{PFleP-Bs}
  \Bm{P}{\osg{F(\sigma)}}\le\Bm{P}{\osg{\sigma}}
\end{equation}
for all $\sigma\in\Omega^*$ by the induction on the length of $\abs{\sigma}$.
First, the inequality \eqref{PFleP-Bs} holds for the case of $\abs{\sigma}=0$, obviously.
For an arbitrary $n\in\N$, assume that \eqref{PFleP-Bs} holds for all $\sigma\in\Omega^n$.
Let $\sigma\in\Omega^{n+1}$.
We then denote the prefix of $\sigma$ of length $n$ by $\rho$, and denote $\sigma(\abs{\sigma})$ by $a$.
Therefore $\sigma=\rho a$.
Note that
\[
  G(\tau):=\{\upsilon\in\Omega^*\mid \tau\upsilon a\in F(\sigma)\}
\]
is a prefix-free
subset of $\Omega^*$
for every $\tau\in\Omega^*$.
Therefore, we have that
\begin{equation}\label{lPGosgtle1}
  \sum_{\upsilon\in G(\tau)}\Bm{P}{\osg{\upsilon}}=\Bm{P}{\osg{G(\tau)}}\le 1
\end{equation}
for each $\tau\in\Omega^*$.
Thus,
for each $\sigma\in\Omega^*$,
we see that
\begin{align*}
  \Bm{P}{\osg{F(\sigma)}}
  &=\sum_{\nu\in F(\sigma)}\Bm{P}{\osg{\nu}}
  =\sum_{\tau\in F(\rho)}\sum_{\upsilon\in G(\tau)}\Bm{P}{\osg{\tau\upsilon a}} \\
  &=\sum_{\tau\in F(\rho)}\sum_{\upsilon\in G(\tau)}\Bm{P}{\osg{\tau}}\Bm{P}{\osg{\upsilon}}P(a) \\
  &\le\sum_{\tau\in F(\rho)}\Bm{P}{\osg{\tau}}P(a)
  =\Bm{P}{\osg{F(\rho)}}P(a) \\
  &\le\Bm{P}{\osg{\rho}}P(a)
  =\Bm{P}{\osg{\sigma}},
\end{align*}
where the second equality follows from the fact that the mapping
\[
  \{(\tau,\upsilon)\mid\tau\in F(\rho)\;\&\; \upsilon\in G(\tau)\}\ni(\tau,\upsilon)\mapsto
\tau \upsilon a\in F(\sigma)
\]
is a bijection,
the first inequality follows from \eqref{lPGosgtle1},
and the second inequality follows from the assumption.
Therefore \eqref{PFleP-Bs} holds for all $\sigma\in\Omega^{n+1}$.
Hence, \eqref{PFleP-Bs} holds for all $\sigma\in\Omega^*$, as desired.

We then define $\mathcal{D}$ to be a subset of $\N^+\times \Omega^*$ such that
$\mathcal{D}_n=\bigcup_{\sigma\in \mathcal{C}_n}F(\sigma)$ for every $n\in\N^+$.
Since $\mathcal{C}_n$ is a prefix-free subset of $\Omega^*$ for every $n\in\N^+$,
we see that $\mathcal{D}_n$ is also a prefix-free subset of $\Omega^*$ for every $n\in\N^+$.
For each $n\in\N^+$, we see that
\begin{equation*}
  \Bm{P}{\osg{\mathcal{D}_n}}\le\sum_{\sigma\in \mathcal{C}_n}\Bm{P}{\osg{F(\sigma)}}
  \le\sum_{\sigma\in \mathcal{C}_n}\Bm{P}{\osg{\sigma}}
  =\Bm{P}{\osg{\mathcal{C}_n}}<2^{-n},
\end{equation*}
where the second inequality follows from \eqref{PFleP-Bs} and
the equality follows from the prefix-freeness of $\mathcal{C}_n$.
Moreover,
since $\mathcal{C}$ is r.e.,
we see
that $\mathcal{D}$ is also r.e.
Thus, $\mathcal{D}$ is a Martin-L\"of $P$-test.

On the other hand, we see that,
for every $n\in\N^+$, if $\beta\in\osg{\mathcal{C}_n}$ then $\alpha\in\osg{\mathcal{D}_n}$.
Thus, it follows from \eqref{betainosgmathCn_CPUSPCF} that
$\alpha\in\osg{\mathcal{D}_n}$ for every $n\in\N^+$.
Hence, $\alpha$ is not Martin-L\"of $P$-random.
This completes the proof.
\end{proof}

Theorems~\ref{cpucs} and~\ref{cpuscsf}
show that
certain closure properties hold for ensembles for an \emph{arbitrary} finite probability space.
In Sections~\ref{CPITW} and~\ref{IANERV} below,
we will see that various strong closure properties of another type hold for ensembles
for \emph{arbitrary} finite probability spaces.

\subsection{Weaker randomness notions than Martin-L\"of $P$-randomness}

Before concluding this section we
investigate
the reason why we adopt
the notion of Martin-L\"of $P$-randomness
as the randomness notion in Thesis~\ref{thesis},
and not other notions of randomness than Martin-L\"of $P$-randomness.
In the field of algorithmic randomness, the notions of Schnorr randomness~\cite{Sch71} and
Kurtz randomness~\cite{Kurtz81} are two of major randomness notions weaker than
the notion of Martin-L\"of randomness~\cite{M66} for infinite binary sequences
with respect to Lebesgue measure,
where the Kurtz randomness is
weaker than the Schnorr randomness
(see e.g.~Nies~\cite{N09} and Downey and Hirschfeldt~\cite{DH10} for the detail of the relation
among these randomness notions).
For increasing the universality of Thesis~\ref{thesis}, it would be desirable for the randomness notion
adopted in Thesis~\ref{thesis} to be as weak as possible.
Actually, we have proposed to use the notion of Martin-L\"of $P$-randomness as
an operational characterization of the notion of probability
in Thesis~\ref{thesis}.
Can we replace the Martin-L\"of $P$-randomness
in Thesis~\ref{thesis}
by a $P$-randomness notion
corresponding to the Schnorr randomness or the Kurtz randomness?
In what follows, we show that neither the Schnorr $P$-randomness nor the Kurtz $P$-randomness
is considered to be appropriate as the randomness notion in Thesis~\ref{thesis}.

First,
the notion of Schnorr randomness~\cite{Sch71} is
naturally
generalized over the notion of $P$-randomness as follows.

\begin{definition}[Schnorr $P$-randomness%
]\label{Schnorr_P-randomness}
Let $\Omega$ be an alphabet, and let $P\in\PS(\Omega)$.
A subset $\mathcal{C}$ of $\N^+\times \Omega^*$ is called a \emph{Schnorr $P$-test} if
$\mathcal{C}$ is an r.e.~set such that
\begin{enumerate}
\item for every $n\in\N^+$ it holds that
$\mathcal{C}_n$ is a prefix-free subset of $\Omega^*$ and
$\Bm{P}{\osg{\mathcal{C}_n}}<2^{-n}$,
where
$\mathcal{C}_n$ denotes the set
$\left\{\,
  \sigma\bigm|(n,\sigma)\in\mathcal{C}
\,\right\}$,
and
\item $\Bm{P}{\osg{\mathcal{C}_n}}$ is uniformly compuatble in $n$, i.e.,
there exists a computable function $f\colon\N^+\times\N\to\Q$ such that
$\abs{\Bm{P}{\osg{\mathcal{C}_n}}-f(n,k)} < 2^{-k}$
for all $n\in\N^+$ and $k\in\N$.
\end{enumerate}

For any $\alpha\in\Omega^\infty$, we say that $\alpha$ is \emph{Schnorr $P$-random} if
for every Schnorr $P$-test $\mathcal{C}$
there exists $n\in\N^+$ such that $\alpha\notin\osg{\mathcal{C}_n}$.
\qed
\end{definition}

In the case where $\Omega=\{0,1\}$ and $P$ satisfies that $P(0)=P(1)=1/2$,
the Schnorr $P$-randomness
results in the
original
Schnorr randomness~\cite{Sch71}.
Recall that
in Thesis~\ref{thesis} we do not require the finite probability space $P\in\PS(\Omega)$
to be computable at all.
Thus, the Bernoulli measure $\lambda_{P}$ itself is not necessarily computable therein.
In Definition~\ref{Schnorr_P-randomness}, however, a Schnorr $P$-test $\mathcal{C}$ must satisfy that
$\Bm{P}{\osg{\mathcal{C}_n}}$ is uniformly computable in $n$.
This is unnatural,
because
the value $\Bm{P}{\osg{\mathcal{C}_n}}$ of the Bernoulli measure $\lambda_{P}$ must be
(uniformly) computable
whereas
the Bernoulli measure $\lambda_{P}$ itself is uncomputable in general.
Thus, the use of Schnorr $P$-randomness in Thesis~\ref{thesis} is considered to be unnatural.
Hence, we did not propose to use the Schnorr $P$-randomness
as the randomness notion
to represent an operational characterization of the notion of probability
in Thesis~\ref{thesis}.

On the other hand,
the notion of Kurtz randomness~\cite{Kurtz81} is
naturally
generalized over the notion of $P$-randomness as follows.

\begin{definition}[Kurtz $P$-randomness%
]\label{Kurtz_P-randomness}
Let $\Omega$ be an alphabet, and let $P\in\PS(\Omega)$.
\begin{enumerate}
\item A subset $\mathcal{C}$ of $\Omega^*$ is called a \emph{Kurtz $P$-test} if
$\mathcal{C}$ is an r.e.~set and $\Bm{P}{\osg{\mathcal{C}}}=1$.
\item For any $\alpha\in\Omega^\infty$, we say that $\alpha$ is \emph{Kurtz $P$-random} if
for every Kurtz $P$-test $\mathcal{C}$ it holds that $\alpha\in\osg{\mathcal{C}}$.
\qed
\end{enumerate}
\end{definition}

For the notion of Kurtz $P$-randomness,
the set $\Omega^\infty\setminus\osg{\mathcal{C}}$
regarding
a Kurtz $P$-test $\mathcal{C}$
plays a role
as the effective null set explained in Section~\ref{AR}.
In the case where $\Omega=\{0,1\}$ and $P$ satisfies that $P(0)=P(1)=1/2$,
the Kurtz $P$-randomness results in the
original
Kurtz randomness~\cite{Kurtz81}.

As we stated in
Section~\ref{Subsec:The law of large numbers},
the law of large numbers \emph{must} hold in an operational characterization of the notion of probability, i.e.,
in an infinite sequence of outcomes which is
being generated by infinitely repeated trials described by a finite probability space
under study.
In fact, we have confirmed in Theorem~\ref{LLN} that
the law of large numbers certainly holds for every ensemble for an arbitrary finite probability space, i.e.,
for every Martin-L\"{o}f $P$-random infinite sequence for an arbitrary finite probability space $P$.
However, the law of large numbers does not necessarily hold for
a Kurtz $P$-random infinite sequence,
as Theorem~\ref{LLNnotforKPR}~(iii) below states.
Thus, the use of Kurtz $P$-randomness in Thesis~\ref{thesis} is not considered to be
natural.
Hence, we did not propose to use the Kurtz $P$-randomness as a randomness notion
to represent an operational characterization of the notion of probability
in Thesis~\ref{thesis}.

Note however that
due to Theorem~\ref{LLNnotforKPR}~(i) and~(ii) below
the Kurtz $P$-randomness is consistent with the facts that
an elementary event with probability zero never occurs
and an elementary event with probability one occurs certainly,
which we have demonstrated
in Section~\ref{Subsec:SOEEPO}
for
the notion of probability,
in particular,
in quantum mechanics.

\begin{theorem}\label{LLNnotforKPR}
Let $\Omega$ be an alphabet, and let $P\in\PS(\Omega)$.
\begin{enumerate}
\item For every $\alpha\in\Omega^\infty$ and $a\in\Omega$,
if $\alpha$ is Kurtz $P$-random and $P(a)=0$ then $\alpha$ does not contain $a$.
\item For every $\alpha\in\Omega^\infty$ and $a\in\Omega$,
if $\alpha$ is Kurtz $P$-random and $P(a)=1$ then $\alpha$ consists only of $a$.
\item There exists a Kurtz $P$-random infinite sequence $\alpha\in\Omega^\infty$ such that
for every $a\in\Omega$ if $0<P(a)<1$ then both
$\liminf_{n\to\infty}N_a(\rest{\alpha}{n})/n=0$
and
$\limsup_{n\to\infty}N_a(\rest{\alpha}{n})/n=1$ hold,
which implies that
$N_a(\rest{\alpha}{n})/n$ does not converge to $P(a)$ as $n\to\infty$.
Here $N_a(\sigma)$ denotes the number of the occurrences of $a$ in $\sigma$ for every $a\in\Omega$ and $\sigma\in\Omega^*$.
\qed
\end{enumerate}
\end{theorem}

The corresponding result to Theorem~\ref{LLNnotforKPR}~(iii) holds
for the notion of
the original
Kurtz randomness~\cite{Kurtz81}.
That is, the law of large numbers does not necessarily hold for
a Kurtz random infinite binary sequence
(see Nies~\cite[Fact~3.5.4 and Proposition~3.5.5]{N09}).
The proof of Theorem~\ref{LLNnotforKPR}~(iii)
is obtained by
generalizing
the arguments
presented by Nies~\cite[Sections~1.8 and 3.5]{N09}
to prove the failure of the law of large numbers
for the case where $\Omega=\{0,1\}$ and $P$ satisfies that $P(0)=P(1)=1/2$.
We remark that
in Theorem~\ref{LLNnotforKPR}~(i) --- (iii)
the underlying finite probability space $P$ is \emph{quite arbitrary}.
In particular,
we do not require $P$ to be computable at all in Theorem~\ref{LLNnotforKPR}.

In order to prove Theorem~\ref{LLNnotforKPR}~(iii),
we need the following lemma, which is a generalization of
the argument in
the proof of Fact~3.5.4 of Nies [22].

\begin{lemma}\label{Subsec-WRN:lemma-KR}
Let $\Omega$ be an alphabet, and let $P\in\PS(\Omega)$.
We denote by $\Omega_u$ the set $\{a\in\Omega\mid 0<P(a)<1\}$.
Suppose that $\Omega_u\neq\emptyset$.
Let $\mathcal{C}$ be a subset of $\Omega^*$ such that $\Bm{P}{\osg{\mathcal{C}}}=1$.
Then, for every $\sigma\in\Omega_u^*$ there exists $\tau\in\Omega_u^*$ such that
$\sigma$ is a prefix of $\tau$ and $\osg{\tau}\subset\osg{\mathcal{C}}$.
\end{lemma}

\begin{proof}
Since $\Omega_u\neq\emptyset$, it follows that $P(a)<1$ for every $a\in\Omega$.
Thus, we have that
\begin{equation}\label{Subsec-WRN:lemma-KR:eq-0}
  P(a)=0
\end{equation}
for every $a\in\Omega\setminus\Omega_u$.

Let $\mathcal{C}$ be a subset of $\Omega^*$ such that $\Bm{P}{\osg{\mathcal{C}}}=1$.
We denote by $\mathcal{P}$ the set of the shortest strings $\tau\in\Omega^*$ such that
$\osg{\tau}\subset\osg{\mathcal{C}}$.
It follows that $\mathcal{P}$ is a prefix-free subset of $\Omega^*$ and $\osg{\mathcal{P}}=\osg{\mathcal{C}}$.
We then define $\mathcal{T}$ as the subset $\{\tau\in\Omega_u^*\mid \tau\in\mathcal{P}\}$ of $\mathcal{P}$.
Note that $\Bm{P}{\osg{\mathcal{P}}}=\Bm{P}{\osg{\mathcal{C}}}=1$. Thus, we see that
\begin{equation}\label{Subsec-WRN:lemma-KR:eq-1}
  \Bm{P}{\osg{\mathcal{T}}}
  =\sum_{\tau\in\mathcal{T}}P(\tau)
  =\sum_{\tau\in\mathcal{P}}P(\tau)
  =\Bm{P}{\osg{\mathcal{P}}}=1,
\end{equation}
where
the first and third equalities follow from the prefix-freeness of $\mathcal{T}$ and $\mathcal{P}$,
respectively, and
the second equality follows from
\eqref{Subsec-WRN:lemma-KR:eq-0}.

Let $\sigma\in\Omega_u^*$.
Now, assume contrarily that $\osg{\mathcal{T}}\cap\osg{\sigma}=\emptyset$.
Then we see that
\[
  \Bm{P}{\osg{\mathcal{T}}}
  =\Bm{P}{\osg{\mathcal{T}}\cup\osg{\sigma}}-\Bm{P}{\osg{\sigma}}
  \le 1-P(\sigma)<1,
\]
where the last inequality follows from the property that $P(a)>0$ for every $a\in\Omega_u$.
However,
this contradicts
the equation~\eqref{Subsec-WRN:lemma-KR:eq-1}.
Thus, we
have that $\osg{\mathcal{T}}\cap\osg{\sigma}\neq\emptyset$,
and therefore there exists $\alpha\in\Omega^\infty$
extending $\sigma$ such that
$\rest{\alpha}{n}\in\mathcal{T}$ for some $n\in\N$.
Note that $\rest{\alpha}{n}\in\Omega_u^*$ and
$\osg{\rest{\alpha}{n}}\subset\osg{\mathcal{T}}\subset\osg{\mathcal{P}}=\osg{\mathcal{C}}$.
If $\abs{\sigma}\le n$
then
we choose $\rest{\alpha}{n}$ as $\tau$
since $\sigma$ is a prefix of $\rest{\alpha}{n}$ and $\osg{\rest{\alpha}{n}}\subset\osg{\mathcal{C}}$.
Otherwise,
we choose $\sigma$ as $\tau$
since $\osg{\sigma}\subset\osg{\rest{\alpha}{n}}\subset\osg{\mathcal{C}}$.
This completes the proof.
\end{proof}

Theorem~\ref{LLNnotforKPR} is then proved as follows.

\begin{proof}[Proof of Theorem~\ref{LLNnotforKPR}]
Let $\Omega$ be an alphabet, and let $P\in\PS(\Omega)$.

(i) We denote by $\Omega_e$ the set $\{a\in\Omega\mid P(a)>0\}$.
Let $\alpha\in\Omega^\infty$, and let $a\in\Omega$.
Then,
assuming that $P(a)=0$ and $\alpha$ contains $a$, we will show that $\alpha$ is not Kurtz $P$-random.
Now, based on the assumption, we have that $\alpha(n)=a$ for some $n\in\N^+$.
Note then that $\Bm{P}{\osg{\Omega^{n}}}=1$
obviously.
Thus, we see that
\[
  \Bm{P}{\osg{\Omega_e^{n}}}
  =\sum_{\sigma\in\Omega_e^{n}}P(\sigma)
  =\sum_{\sigma\in\Omega^{n}}P(\sigma)
  =\Bm{P}{\osg{\Omega^{n}}}=1,
\]
where
the first and third equalities follow from the prefix-freeness of $\Omega_e^{n}$ and $\Omega^{n}$,
respectively, and
the second equality follows from the assumption that $P(a)=0$.
It follows that
$\Omega_e^{n}$ is a Kurtz $P$-test.
However, $\alpha\notin\osg{\Omega_e^{n}}$ obviously, and therefore $\alpha$ is not Kurtz $P$-random.
This completes the proof of Theorem~\ref{LLNnotforKPR}~(i).

(ii) Theorem~\ref{LLNnotforKPR}~(ii) follows immediately from Theorem~\ref{LLNnotforKPR}~(i).

(iii) We denote by $\Omega_u$ the set $\{a\in\Omega\mid 0<P(a)<1\}$.
In the case of $\Omega_u=\emptyset$,
the result holds formally.
Thus, in what follows, we assume that $\Omega_u\neq\emptyset$.

We denote $\#\Omega_u$ by $N$
and then
denote the elements of $\Omega_u$ as $a_0,a_1,\dots,a_{N-1}$, so that $\Omega_u=\{a_0,a_1,\dots,a_{N-1}\}$.
Moreover,
for each $n\in\N^+$, we denote $a_{n\bmod N}$ by $b_n$,
where $n\bmod N$ denotes the remainder of the division of $n$ by $N$ as usual.
Since there are only countably infinitely many Turing machines,
there are only countably infinitely many Kurtz $P$-tests $\mathcal{C}_1,\mathcal{C}_2,\mathcal{C}_3,\dotsc$.

Now, we choose infinite sequences $\{\sigma_n\}_{n\in\N}\subset\Omega_u^*$ and
$\{\tau_n\}_{n\in\N^+}\subset\Omega_u^*$ 
in the following manner:
First, we choose $\sigma_0$
to be
any particular element of
$\Omega_u^+$.
Assume that $\sigma_{n-1}$ has already been chosen. Then we choose $\sigma_n$
to be
the concatenation of $\tau_n$ and
${b_n}^{(n-1)\abs{\tau_n}}$
where $\tau_n$ is any particular element of $\Omega_u^*$ with the properties that
$\sigma_{n-1}$ is a prefix of $\tau_n$ and $\osg{\tau_n}\subset\osg{\mathcal{C}_n}$.
Note that
such a $\tau_n$ exists due to Lemma~\ref{Subsec-WRN:lemma-KR}.

Then, since
$\sigma_{n}$ is a proper prefix of $\sigma_{n+1}$ for every $n\in\N^+$,
there exists
$\alpha\in\Omega_u^\infty$ such that
$\sigma_n$ is a prefix of $\alpha$ for every $n\in\N^+$.
On the one hand,
for each $n\in\N^+$, we see that $\alpha\in\osg{\sigma_n}\subset\osg{\tau_n}\subset\osg{\mathcal{C}_n}$.
Thus, $\alpha$ is Kurtz $P$-random.
On the other hand, for each $n\in\N^+$, it follows from
the manner of choice of $\sigma_n$
that
$N_{x}(\sigma_n)/\abs{\sigma_n} \ge 1-1/n$
if $x=b_n$ and
$N_{x}(\sigma_n)/\abs{\sigma_n}\le 1/n$
otherwise.
Thus,
for every $n\in\N^+$ and $k\in\{0,1,\dots,N-1\}$, if n mod N=k then we have that
\[
  \frac{N_{x}\left(\rest{\alpha}{\abs{\sigma_n}}\right)}{\abs{\sigma_n}}\ge 1-\frac{1}{n}
\]
if $x = a_k$ and
\[
  \frac{N_{x}\left(\rest{\alpha}{\abs{\sigma_n}}\right)}{\abs{\sigma_n}}\le \frac{1}{n}
\]
otherwise.
Hence,
since
$\lim_{n\to\infty}\abs{\sigma_n}=\infty$,
we have that $\liminf_{n\to\infty}N_a(\rest{\alpha}{n})/n=0$ and $\limsup_{n\to\infty}N_a(\rest{\alpha}{n})/n=1$
for every $a\in\Omega_u$.
This completes the proof of Theorem~\ref{LLNnotforKPR}~(iii).
\end{proof}

\section{Conditional probability and the independence between two events}
\label{CPITW}

In this section we operationally characterize the notion of \emph{conditional probability} and
the notion of the \emph{independence between two events} on a finite probability space, in terms of ensembles.

Let $\Omega$ be an alphabet, and let $P\in\PS(\Omega)$. Let $A\subset\Omega$ be
an event on the finite probability space $P$.
On the one hand, based on the pair $(P,A)$,
we define a finite probability space $\charaps{P}{A}\in\PS(\{0,1\})$ by the condition that
$(\charaps{P}{A})(1):=P(A)$ and $(\charaps{P}{A})(0):=1-P(A)$
(see the equation~\eqref{eq:PA=sumainAPa} for the definition of $P(A)$).
On the other hand,
for each ensemble $\alpha$ for $P$,
we use $\chara{A}{\alpha}$ to denote the infinite binary sequence such that,
for every $n\in\N^+$,
its $n$th element $(\chara{A}{\alpha})(n)$ is $1$ if $\alpha(n)\in A$ and $0$ otherwise.
Note that the notions of $\chara{A}{\alpha}$ and $\charaps{P}{A}$ in our theory together correspond to
the notion of \emph{mixing} in the theory of collectives by von Mises \cite{vM64}.
We can then show the following theorem.

\begin{theorem}\label{charaA}
Let $\Omega$ be an alphabet, and let $P\in\PS(\Omega)$. Let $A\subset\Omega$.
Suppose that $\alpha$ is an ensemble for the finite probability space $P$.
Then $\chara{A}{\alpha}$ is an ensemble for the finite probability space $\charaps{P}{A}$.
\end{theorem}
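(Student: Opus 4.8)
The plan is to mimic the contrapositive argument of Theorem~\ref{contraction}, transporting a Martin-L\"of test backwards along the coarse-graining map $\gamma\mapsto\chara{A}{\gamma}$ that records, position by position, whether an outcome lands in $A$. So I would assume that $\chara{A}{\alpha}$ is \emph{not} Martin-L\"of $\charaps{P}{A}$-random, fix a Martin-L\"of $\charaps{P}{A}$-test $\mathcal{S}\subset\N^+\times\X$ with $\chara{A}{\alpha}\in\osg{\mathcal{S}_n}$ for every $n\in\N^+$, and from it build a Martin-L\"of $P$-test that $\alpha$ fails. This would contradict the hypothesis that $\alpha$ is an ensemble for $P$, yielding the theorem.

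The pullback is defined on strings as follows. For each $\tau\in\X$ let $g(\tau)$ be the set of all $\sigma\in\Omega^*$ with $\abs{\sigma}=\abs{\tau}$ such that, for every $k$, the symbol $\sigma(k)$ lies in $A$ exactly when $\tau(k)=1$. I would then put $\mathcal{D}_n:=\bigcup_{\tau\in\mathcal{S}_n}g(\tau)$ and $\mathcal{D}:=\{(n,\sigma)\mid n\in\N^+\;\&\;\sigma\in\mathcal{D}_n\}$. Because $A$ is a fixed finite subset of $\Omega$, membership in $A$ is decidable, so $g$ is computable and $\mathcal{D}$ is r.e.\ whenever $\mathcal{S}$ is. The key quantitative fact is the cylinder identity $\Bm{P}{\osg{g(\tau)}}=(\charaps{P}{A})(\tau)$: since all strings in $g(\tau)$ have the common length $\abs{\tau}$, the set $g(\tau)$ is prefix-free, and its $\lambda_P$-measure factorizes position by position, contributing $\sum_{a\in A}P(a)=P(A)$ at each coordinate with $\tau(k)=1$ and $\sum_{a\notin A}P(a)=1-P(A)$ at each coordinate with $\tau(k)=0$, whose product is exactly $(\charaps{P}{A})(\tau)=\Bm{\charaps{P}{A}}{\osg{\tau}}$.

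Summing this identity then gives $\Bm{P}{\osg{\mathcal{D}_n}}=\Bm{\charaps{P}{A}}{\osg{\mathcal{S}_n}}<2^{-n}$, so $\mathcal{D}$ is a Martin-L\"of $P$-test; and $\alpha$ fails it, because if $\tau\in\mathcal{S}_n$ is a prefix of $\chara{A}{\alpha}$ then $\rest{\alpha}{\abs{\tau}}\in g(\tau)\subset\mathcal{D}_n$ by the very definition of $\chara{A}{\alpha}$, whence $\alpha\in\osg{\mathcal{D}_n}$ for all $n$. I expect the only genuine subtlety to be the passage from the clean cylinder identity to the measure of the open set $\osg{\mathcal{S}_n}$ when $\mathcal{S}_n$ is not itself prefix-free: here I would evaluate $\Bm{\charaps{P}{A}}{\osg{\mathcal{S}_n}}$ using a prefix-free set $E$ with $\osg{E}=\osg{\mathcal{S}_n}$ (which exists by the remark in Section~\ref{basic notation}), observe that the corresponding union $\bigcup_{\tau\in E}g(\tau)$ is again prefix-free and generates the same open set as $\mathcal{D}_n$, and sum the identity over $E$. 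This bookkeeping, separating the r.e.\ description of $\mathcal{D}$ from the prefix-free description used to compute its measure, is exactly the step left implicit in the proof of Theorem~\ref{contraction}, and everything else is routine.
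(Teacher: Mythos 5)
Your proposal is correct and is essentially identical to the paper's proof: the paper also argues by contraposition, pulls the test back via the same preimage map (your $g(\tau)$ is the paper's $f(\sigma)$, the set of strings of the same length whose $k$th symbol lies in $A$ exactly when the $k$th bit is $1$), verifies the same cylinder identity $\Bm{P}{\osg{f(\sigma)}}=(\charaps{P}{A})(\sigma)$, and concludes that $\alpha$ fails the resulting Martin-L\"of $P$-test. Your extra remark about passing to a prefix-free generating set when summing the cylinder identity is exactly the bookkeeping the paper leaves implicit behind ``it is easy to see.''
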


\begin{proof}
We show the contraposition.
Suppose that $\chara{A}{\alpha}$ is not Martin-L\"of $\charaps{P}{A}$-random.
Then there exists a Martin-L\"of $\charaps{P}{A}$-test $\mathcal{S}\subset\N^+\times\X$
such that
\begin{equation}\label{betainosgmSn_CPA}
  \chara{A}{\alpha}\in\osg{\mathcal{S}_n}
\end{equation}
for every $n\in\N^+$.
For each $\sigma\in\{0,1\}^+$,
let $F(\sigma)$ be the set of all $\tau\in\Omega^*$ such that
$\tau$ is obtained
by replacing each occurrence of $1$ in $\sigma$, if exists, by some
element of $A$ and
by replacing each occurrence of $0$ in $\sigma$, if exists, by some
element of $\Omega\setminus A$.
Namely,
for each $\sigma\in\{0,1\}^+$,
we define $F(\sigma)$ as the set of all
$\tau\in\Omega^{\abs{\sigma}}$
such that for every $n\in\N^+$ with $n\le\abs{\sigma}$
it holds that $\tau(n)\in A$ if $\sigma(n)=1$ and $\tau(n)\in \Omega\setminus A$ otherwise.
For example, if $\Omega=\{x,y,z\}$ and $A=\{x,y\}$ then $F(011)=\{zxx,zxy,zyx,zyy\}$.
Then, since $(\charaps{P}{A})(1)=\sum_{a\in A}P(a)$ and
$(\charaps{P}{A})(0)=\sum_{a\in\Omega\setminus A}P(a)$,
we have that
\begin{equation}\label{BmQ=Q=P=BmP_CPA}
  \Bm{\charapse{P}{A}}{\osg{\sigma}}=(\charaps{P}{A})(\sigma)=P(F(\sigma))=\Bm{P}{\osg{F(\sigma)}}
\end{equation}
for each $\sigma\in\{0,1\}^+$.
We then define $\mathcal{T}$ to be a subset of $\N^+\times \Omega^*$ such that
$\mathcal{T}_n=\bigcup_{\sigma\in\mathcal{S}_n} F(\sigma)$ for every $n\in\N^+$.
Note here that, for each $n\in\N^+$,
$\lambda\notin\mathcal{S}_n$ since $\Bm{\charapse{P}{A}}{\osg{\mathcal{S}_n}}<2^{-n}<1$.
Then, since
$\mathcal{S}_n$ is a prefix-free subset of $\X$ for every $n\in\N^+$,
we see that $\mathcal{T}_n$ is a prefix-free subset of $\Omega^*$ for every $n\in\N^+$.
For each $n\in\N^+$, we also see that
\[
  \Bm{P}{\osg{\mathcal{T}_n}}
  \le\sum_{\sigma\in\mathcal{S}_n}\Bm{P}{\osg{F(\sigma)}}
  =\sum_{\sigma\in\mathcal{S}_n}\Bm{\charapse{P}{A}}{\osg{\sigma}}
  =\Bm{\charapse{P}{A}}{\osg{\mathcal{S}_n}}<2^{-n},
\]
where the first equality follows from \eqref{BmQ=Q=P=BmP_CPA} and
the second equality follows from the prefix-freeness of $\mathcal{S}_n$.
Moreover, since $\mathcal{S}$ is r.e., $\mathcal{T}$ is also r.e.
Thus, $\mathcal{T}$ is a Martin-L\"of $P$-test.

On the other hand,
note that, for every $n\in\N^+$,
if $\chara{A}{\alpha}\in\osg{\mathcal{S}_n}$ then $\alpha\in\osg{\mathcal{T}_n}$.
Thus, it follows from \eqref{betainosgmSn_CPA}
that $\alpha\in\osg{\mathcal{T}_n}$ for every $n\in\N^+$.
Hence, $\alpha$ is not Martin-L\"of $P$-random.
This completes the proof.
\end{proof}

We show that the notion of conditional probability in a finite probability space can be
represented by an
ensemble
in a \emph{natural manner}.
For that purpose, first we recall the notion of conditional probability in a finite probability space.

Let $\Omega$ be an alphabet, and let $P\in\PS(\Omega)$. Let $B\subset\Omega$ be
an event on the finite probability space $P$.
Suppose that $P(B)>0$.
Then, for each event $A\subset\Omega$,
the \emph{conditional probability of A given B}, denoted
$P(A|B)$, is defined as $P(A\cap B)/P(B)$.
This notion defines a finite probability space $P_B\in\PS(B)$
by the condition that $P_B(a):=P(\{a\}|B)$ for every $a\in B$.

When an infinite sequence $\alpha\in\Omega^\infty$ contains infinitely many elements from $B$,
$\cond{B}{\alpha}$ is defined as an infinite sequence in $B^\infty$ obtained from $\alpha$
by eliminating all elements
of
$\Omega\setminus B$ occurring in $\alpha$.
If $\alpha$ is an ensemble for the finite probability space $P$ and $P(B)>0$,
then $\alpha$ contains infinitely many elements from $B$ due to Theorem~\ref{LLN}.
Therefore, $\cond{B}{\alpha}$ is
properly
defined in this case.
Note that the notion of $\cond{B}{\alpha}$ in our theory corresponds to
the notion of \emph{partition} in the theory of collectives by von Mises \cite{vM64}.

We can then show Theorem~\ref{conditional_probability} below, which states that ensembles are
\emph{closed under conditioning}.

\begin{theorem}[Closure property under conditioning]\label{conditional_probability}
Let $\Omega$ be an alphabet, and let $P\in\PS(\Omega)$. Let $B\subset\Omega$ be
an event on the finite probability space $P$ with $P(B)>0$.
For every ensemble $\alpha$ for
$P$,
it holds that $\cond{B}{\alpha}$ is an ensemble for the finite probability space $P_B$.
\end{theorem}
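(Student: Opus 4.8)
The plan is to prove the contrapositive, exactly in the style of Theorems~\ref{contraction}, \ref{cpucs}, \ref{cpuscsf}, and \ref{charaA}: assuming $\cond{B}{\alpha}$ is not Martin-L\"of $P_B$-random, I would fix a Martin-L\"of $P_B$-test $\mathcal{S}\subset\N^+\times B^*$ with $\cond{B}{\alpha}\in\osg{\mathcal{S}_n}$ for every $n\in\N^+$, and from it manufacture a Martin-L\"of $P$-test $\mathcal{D}\subset\N^+\times\Omega^*$ with $\alpha\in\osg{\mathcal{D}_n}$ for every $n$, contradicting the Martin-L\"of $P$-randomness of $\alpha$. Since $P(B)>0$ the set $B$ is a genuine alphabet and $P_B\in\PS(B)$, so this makes sense.

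The core device is a ``de-filtering'' map $f$ that inverts the deletion of $\Omega\setminus B$. For $\sigma\in B^+$ I would let $f(\sigma)$ be the set of all $\tau\in\Omega^+$ whose subsequence of letters lying in $B$ is exactly $\sigma$ and whose last letter lies in $B$; equivalently $\tau=w_0\,\sigma(1)\,w_1\,\sigma(2)\cdots w_{k-1}\,\sigma(k)$ with $k=\abs{\sigma}$ and each $w_i\in(\Omega\setminus B)^*$, and I set $f(\lambda):=\{\lambda\}$. The ``last letter in $B$'' requirement makes each $f(\sigma)$ prefix-free and forces $\osg{f(\sigma)}=\{\gamma\in\Omega^\infty\mid\cond{B}{\gamma}\text{ extends }\sigma\}$, which is precisely the event I need to transport the test. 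The heart of the argument is the measure identity $\Bm{P}{\osg{f(\sigma)}}=\Bm{P_B}{\osg{\sigma}}$: summing $P(\tau)$ over $f(\sigma)$ factors as $\prod_{i=1}^{k}P(\sigma(i))$ times $k$ independent copies of $\sum_{w\in(\Omega\setminus B)^*}P(w)$, and this last sum is the geometric series $\sum_{m\ge0}P(\Omega\setminus B)^m=1/(1-P(\Omega\setminus B))=1/P(B)$, finite because $P(B)>0$. Hence $P(f(\sigma))=P(\sigma)/P(B)^{k}=P_B(\sigma)$, which is exactly where the normalization defining $P_B$ enters.

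I would then set $\mathcal{D}_n:=\bigcup_{\sigma\in\mathcal{S}_n}f(\sigma)$. Membership $\tau\in f(\sigma)$ is decidable (one only checks the $B$-subsequence of $\tau$ against $\sigma$ and that $\tau$ ends in $B$), so $\mathcal{D}$ is r.e.\ whenever $\mathcal{S}$ is. Taking $\mathcal{S}_n$ prefix-free without loss of generality, the blocks $f(\sigma)$ over $\sigma\in\mathcal{S}_n$ are pairwise disjoint and their union is again prefix-free (if $\tau\in f(\sigma)$ were a prefix of $\tau'\in f(\sigma')$, then $\sigma$ would be a prefix of $\sigma'$), so $\Bm{P}{\osg{\mathcal{D}_n}}=\sum_{\sigma\in\mathcal{S}_n}\Bm{P}{\osg{f(\sigma)}}=\sum_{\sigma\in\mathcal{S}_n}\Bm{P_B}{\osg{\sigma}}=\Bm{P_B}{\osg{\mathcal{S}_n}}<2^{-n}$, making $\mathcal{D}$ a Martin-L\"of $P$-test. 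Finally, since some prefix $\sigma$ of $\cond{B}{\alpha}$ lies in $\mathcal{S}_n$, the truncation of $\alpha$ just after its $\abs{\sigma}$-th $B$-letter lies in $f(\sigma)$, giving $\alpha\in\osg{\mathcal{D}_n}$ for every $n$, the desired contradiction.

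I expect the main obstacle to be the bookkeeping forced by the infinitude of $f(\sigma)$: unlike the letter-to-letter maps of the earlier theorems, here arbitrarily long runs of $\Omega\setminus B$ may be inserted, so the measure computation genuinely relies on summing a geometric series, and I must verify carefully that the prefix-free structure survives the union $\bigcup_{\sigma\in\mathcal{S}_n}f(\sigma)$ so that the measures \emph{add} rather than merely obey subadditivity (which would be too weak to beat $2^{-n}$).
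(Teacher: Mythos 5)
Your proposal is correct and is essentially the paper's own argument: the paper likewise proves the contrapositive by transporting a Martin-L\"of $P_B$-test back to a Martin-L\"of test via a ``de-filtering'' map $F(\sigma)$ that inserts arbitrary runs of discarded symbols, with the same geometric-series computation $\sum_{m\ge 0}(1-P(B))^m=1/P(B)$ yielding the measure identity $\Bm{P}{\osg{F(\sigma)}}=\Bm{P_B}{\osg{\sigma}}$. The only cosmetic difference is that the paper first collapses all of $\Omega\setminus B$ into a single fresh symbol $a$ (invoking Theorem~\ref{contraction}) and runs the argument over the alphabet $B\cup\{a\}$, whereas you work directly over $\Omega$ with fillers from $(\Omega\setminus B)^*$ --- an inlining of the same step.
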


\begin{proof}
In the case of $B=\Omega$, we have $P_B=P$ and $\cond{B}{\alpha}=\alpha$.
Therefore the result is obvious.
Thus, in what follows, we assume that
$B$ is a proper subset of $\Omega$.

First, we choose any
particular
$a\in\Omega\setminus B$ and
define a finite probability space $Q\in\PS(B\cup\{a\})$ by the condition that
$Q(x):=P(\Omega\setminus B)$ if $x=a$ and $Q(x):=P(x)$ otherwise.
Note here that
\begin{equation}\label{1-Q=PB}
 1-Q(a)=P(B),
\end{equation}
and therefore
\begin{equation}\label{Q<1}
  Q(a)<1.
\end{equation}
Let $\beta$ be the infinite sequence over $B\cup\{a\}$
obtained by replacing all occurrences of elements of $\Omega\setminus B$ in $\alpha$ by $a$.
Then, by using Theorem~\ref{contraction} repeatedly,
it is easy to show that $\beta$ is Martin-L\"of $Q$-random.
Hence,
in order to complete the proof,
it is sufficient to show that if $\cond{B}{\alpha}$ is not Martin-L\"of $P_B$-random
then $\beta$ is not Martin-L\"of $Q$-random.

Thus, let us assume that $\cond{B}{\alpha}$ is not Martin-L\"of $P_B$-random.
Then there exists a Martin-L\"of $P_B$-test $\mathcal{C}\subset B\times\N^+$ such that
\begin{equation}\label{fBaC}
  \cond{B}{\alpha}\in\osg{\mathcal{C}_n}
\end{equation}
for every $n\in\N^+$.
For each $\sigma\in B^+$, let $F(\sigma)$ be the set of all finite strings over $B\cup\{a\}$ of the form
$a^{k_1}\sigma_1a^{k_2}\sigma_2\dots \sigma_{L-1}a^{k_L}\sigma_{L}$ for some $k_1,k_2,\dots,k_L\in\N$,
where
$L:=\abs{\sigma}$ and $\sigma_1\sigma_2\dots\sigma_L:=\sigma$ with $\sigma_i\in B$.
Note that $F(\sigma)$ is a prefix-free subset of $(B\cup\{a\})^*$ for every $\sigma\in B^+$.
Thus,
for each $\sigma\in B^+$, we see that
\begin{equation}\label{BmQosgFs=BmPBosgs_CPEuC}
\begin{split}
  \Bm{Q}{\osg{F(\sigma)}}
  &=\sum_{k_1,k_2,\dots,k_L=0}^\infty
    \Bm{Q}{\osg{a^{k_1}\sigma_1a^{k_2}\sigma_2\dots \sigma_{L-1}a^{k_L}\sigma_{L}}} \\
  &=\sum_{k_1,k_2,\dots,k_L=0}^\infty\Bm{Q}{\osg{\sigma}}Q(a)^{k_1}Q(a)^{k_2}\dots Q(a)^{k_L} \\
  &=\Bm{Q}{\osg{\sigma}}\left(\sum_{k=0}^\infty Q(a)^k\right)^L \\
  &=\Bm{Q}{\osg{\sigma}}\frac{1}{(1-Q(a))^L} \\
  &=\Bm{Q}{\osg{\sigma}}\frac{1}{P(B)^L} \\
  &=\Bm{P_B}{\osg{\sigma}},
\end{split}
\end{equation}
where we use \eqref{Q<1} and \eqref{1-Q=PB} in the forth and fifth equalities, respectively.
We then define $\mathcal{D}$ to be a subset of $\N^+\times(B\cup\{a\})^*$ such that
$\mathcal{D}_n=\bigcup_{\sigma\in \mathcal{C}_n}F(\sigma)$ for every $n\in\N^+$.
Note here that, for each $n\in\N^+$,
$\lambda\notin\mathcal{C}_n$ since $\Bm{P_B}{\osg{\mathcal{C}_n}}<2^{-n}<1$.
Then, since $\mathcal{C}_n$ is a prefix-free subset of $B^*$ for every $n\in\N^+$,
we see that $\mathcal{D}_n$ is a prefix-free subset of $(B\cup\{a\})^*$ for every $n\in\N^+$.
For each $n\in\N^+$, we
also
see that
\[
  \Bm{Q}{\osg{\mathcal{D}_n}}
  \le\sum_{\sigma\in\mathcal{C}_n}\Bm{Q}{\osg{F(\sigma)}}
  =\sum_{\sigma\in\mathcal{C}_n}\Bm{P_B}{\osg{\sigma}}
  =\Bm{P_B}{\osg{\mathcal{C}_n}}<2^{-n},
\]
where the first equality follows from \eqref{BmQosgFs=BmPBosgs_CPEuC} and
the second equality follows from the prefix-freeness of $\mathcal{C}_n$.
Moreover, since $\mathcal{C}$ is r.e.,
$\mathcal{D}$ is also r.e.
Thus, $\mathcal{D}$ is a Martin-L\"of $Q$-test.

On the other hand,
since $\cond{B}{\alpha}$ is the infinite sequence over $B$ obtained from $\beta$
by eliminating all occurrences of the symbol $a$ in $\beta$,
we see that,
for every $n\in\N^+$, if $\cond{B}{\alpha}\in\osg{\mathcal{C}_n}$ then $\beta\in\osg{\mathcal{D}_n}$.
Thus, it follows from \eqref{fBaC} that $\beta\in\osg{\mathcal{D}_n}$ for every $n\in\N^+$.
Hence, $\beta$ is not Martin-L\"of $Q$-random.
This completes the proof.
\end{proof}

As an application of Theorem~\ref{conditional_probability},
we consider
the Von Neumann extractor~\cite{vN51}
as follows.

\begin{example}[Von Neumann extractor,
von Neumann~\cite{vN51}]\label{Von-Neumann-extractor}
According to \cite{W14},
\emph{in the terminology of the conventional probability theory}, consider a Bernoulli sequence.
The \emph{Von Neumann extractor} takes successive pairs of consecutive bits from the Bernoulli sequence.
If the two bits matches, no output is generated.
If the bits differ, the value of the first bit is output.
The Von Neumann extractor can be shown to produce a uniform binary output.
This description of the Von Neumann extractor is intuitive.
But
it is
vague and incomplete from the measure-theoretic point of view
on which the conventional probability theory relies.

In our framework, in contrast, the Von Neumann extractor operates
\emph{in a rigorous way but appealing to intuition}, as follows:
Let $P\in\PS(\{0,1\})$ and let $\alpha$ be an ensemble for $P$.
Then $\alpha$ can be regarded as an ensemble for
a finite probability space $Q\in\PS(\{00,01,10,11\})$ where
$Q(ab)=P(a)P(b)$ for every $a,b\in\{0,1\}$. Consider the event $B=\{01,10\}$ on $Q$.
It follows from Theorem~\ref{conditional_probability} that
$\cond{B}{\alpha}$ is an ensemble for $Q_B\in\PS(\{01,10\})$ with $Q_B(01)=Q_B(10)=1/2$.
Namely, $\alpha$ is a Martin-L\"of random infinite sequence over the alphabet $\{01,10\}$
instead of $\{0,1\}$.
Hence, a random individual infinite sequence is certainly extracted by the Von Neumann extractor in our framework.
\qed
\end{example}

Let $P\in\PS(\{0,1\})$, and
let us consider an infinite sequence $\alpha\in\XI$ of outcomes
which is being generated by infinitely repeated trials described
by the finite probability space $P$ on $\{0,1\}$.
According to Thesis~\ref{thesis}, we have that $\alpha$ is an ensemble for $P$.
Thus, via the
infinitely repeated
trials
we obtain each element of the Martin-L\"of $P$-random infinite sequence $\alpha$
one by one in a sequential order from the top of
$\alpha$.
Now, applying the procedure of the Von Neumann extractor described in
Example~\ref{Von-Neumann-extractor}
to the elements of $\alpha$ one by one in a sequential order from the top of $\alpha$,
we can \emph{effectively} convert $\alpha$ into a Martin-L\"of random infinite
binary
sequence.
What about the converse?
That is,
is it possible to generate
elements of a Martin-L\"of $P$-random infinite sequence one by one in a sequential order
from the top of it,
given arbitrary elements of a Martin-L\"of random infinite binary sequence?
In Section~\ref{AFBC},
we describe a
\emph{simple}
procedure for performing this task
in the case where the finite probability space $P$ on $\{0,1\}$ is computable.

Let $\Omega$ be an alphabet, and let $P\in\PS(\Omega)$.
For any events $A,B\subset\Omega$ on the finite probability space $P$,
we say that $A$ and $B$ are \emph{independent on} $P$ if $P(A\cap B)=P(A)P(B)$.
In the case of $P(B)>0$, it holds that $A$ and $B$ are independent on $P$ if and only if $P(A|B)=P(A)$.

Theorem~\ref{cond-ind} below gives operational characterizations of the notion of the independence between two events
in terms of ensembles.
For any $\alpha,\beta\in\Omega^\infty$,
we say that $\alpha$ and $\beta$ are \emph{equivalent}
if there exists $P\in\PS(\Omega)$ such that $\alpha$ and $\beta$ are both an ensemble for $P$.

\begin{theorem}\label{cond-ind}
Let $\Omega$ be an alphabet, and let $P\in\PS(\Omega)$. Let $A,B\subset\Omega$ be
events on the finite probability space $P$.
Suppose that $P(B)>0$.
Then the following conditions are equivalent to one another.
\begin{enumerate}
  \item The events $A$ and $B$ are independent on $P$.
  \item For every ensemble $\alpha$ for the finite probability space $P$,
    it holds that $\chara{A}{\alpha}$ is equivalent to $\chara{A\cap B}{\cond{B}{\alpha}}$.
  \item There exists an ensemble $\alpha$ for the finite probability space $P$ such that
    $\chara{A}{\alpha}$ is equivalent to $\chara{A\cap B}{\cond{B}{\alpha}}$.
\end{enumerate}
\end{theorem}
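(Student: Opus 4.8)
The plan is to realize both binary sequences appearing in the statement as ensembles for explicit two-point finite probability spaces, and then to exploit the uniqueness of the underlying probability space (Corollary~\ref{uniquness}) to collapse the equivalence relation to a plain equality between these two probability spaces.

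First I would identify the two probability spaces. By Theorem~\ref{charaA}, $\chara{A}{\alpha}$ is an ensemble for $\charaps{P}{A}$, where $(\charaps{P}{A})(1)=P(A)$. Next, since $P(B)>0$, Theorem~\ref{LLN} guarantees that $\alpha$ contains infinitely many elements of $B$, so $\cond{B}{\alpha}$ is defined, and by Theorem~\ref{conditional_probability} it is an ensemble for $P_B\in\PS(B)$. Applying Theorem~\ref{charaA} a second time, now to the space $P_B$ and the event $A\cap B\subset B$, I obtain that $\chara{A\cap B}{\cond{B}{\alpha}}$ is an ensemble for $\charaps{P_B}{A\cap B}$, whose value at $1$ is $P_B(A\cap B)=P(A\cap B)/P(B)=P(A\mid B)$. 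This evaluation is the only computation in the whole argument.

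With these two identifications the three implications follow quickly. For (iii)$\Rightarrow$(i): if some ensemble $\alpha$ makes $\chara{A}{\alpha}$ equivalent to $\chara{A\cap B}{\cond{B}{\alpha}}$, then by definition there is $Q\in\PS(\{0,1\})$ for which both are ensembles; since $\chara{A}{\alpha}$ is also an ensemble for $\charaps{P}{A}$ and $\chara{A\cap B}{\cond{B}{\alpha}}$ is also an ensemble for $\charaps{P_B}{A\cap B}$, Corollary~\ref{uniquness} forces $Q=\charaps{P}{A}$ and $Q=\charaps{P_B}{A\cap B}$, hence $\charaps{P}{A}=\charaps{P_B}{A\cap B}$. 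Comparing values at $1$ gives $P(A)=P(A\cap B)/P(B)$, i.e. $P(A\cap B)=P(A)P(B)$, which is independence. For (i)$\Rightarrow$(ii): independence yields $P(A)=P(A\cap B)/P(B)$, so $\charaps{P}{A}=\charaps{P_B}{A\cap B}=:Q$, and then for every ensemble $\alpha$ for $P$ both $\chara{A}{\alpha}$ and $\chara{A\cap B}{\cond{B}{\alpha}}$ are ensembles for the single space $Q$, so they are equivalent. Finally (ii)$\Rightarrow$(iii) is immediate because an ensemble for $P$ exists at all: by Theorem~\ref{Bmae} the set $\mathrm{ML}_P$ has Bernoulli measure $1$ and is therefore nonempty.

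I do not expect a genuine obstacle here: the statement is essentially an assembly of the previously established closure theorems (Theorems~\ref{charaA} and~\ref{conditional_probability}) together with the uniqueness corollary. The one point that requires care is the bookkeeping in the second paragraph --- correctly tracking that $\chara{A\cap B}{\cond{B}{\alpha}}$ is formed by applying the construction of Theorem~\ref{charaA} to the space $P_B$ rather than to $P$, and verifying that its induced $1$-probability equals the conditional probability $P(A\mid B)$. Once that identification is in place, the logical skeleton is simply that ``equivalent'' together with Corollary~\ref{uniquness} means ``induces the same two-point probability space,'' and this holds precisely when $P(A)=P(A\mid B)$.
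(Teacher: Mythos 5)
Your proposal is correct and follows essentially the same route as the paper's proof: both identify $\chara{A}{\alpha}$ and $\chara{A\cap B}{\cond{B}{\alpha}}$ as ensembles for $\charaps{P}{A}$ and $\charaps{P_B}{A\cap B}$ respectively via Theorems~\ref{charaA} and~\ref{conditional_probability}, then use Corollary~\ref{uniquness} to reduce equivalence to the equality $P(A)=P_B(A\cap B)$, with Theorem~\ref{Bmae} supplying the existence of an ensemble for (ii)$\Rightarrow$(iii). No gaps.
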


\begin{proof}
Let $\alpha$ be an arbitrary ensemble for the finite probability space $P$.
Then, on the one hand,
it follows from Theorem~\ref{charaA} that $\chara{A}{\alpha}$ is Martin-L\"of $\charaps{P}{A}$-random.
On the other hand, it follows from $P(B)>0$ and Theorem~\ref{conditional_probability} that
$\cond{B}{\alpha}$ is an ensemble for the finite probability space $P_B$.
Therefore, by Theorem~\ref{charaA},
we see that $\chara{A\cap B}{\cond{B}{\alpha}}$ is Martin-L\"of $\charaps{P_B}{A\cap B}$-random.

Now, let us assume
that the condition~(i) holds. Then $P_B(A\cap B)=P(A)$.
It follows that $\charaps{P_B}{A\cap B}=\charaps{P}{A}$.
Therefore, for an arbitrary ensemble $\alpha$ for the finite probability space $P$,
we see that $\chara{A}{\alpha}$ and $\chara{A\cap B}{\cond{B}{\alpha}}$ are equivalent.
Thus, we have the implication (i) $\Rightarrow$ (ii).

Since there exists an ensemble $\alpha$ for the finite probability space $P$ by Theorem~\ref{Bmae},
the implication (ii) $\Rightarrow$ (iii) is obvious.

Finally, the implication (iii) $\Rightarrow$ (i) is shown as follows.
Assume that the condition~(iii) holds.
Then there exist
an ensemble $\alpha$ for the finite probability space $P$ and a finite probability space $Q\in\PS(\{0,1\})$
such that $\chara{A}{\alpha}$ and $\chara{A\cap B}{\cond{B}{\alpha}}$ are both Martin-L\"of $Q$-random.
It follows from the consideration at the beginning of this proof
that $\chara{A}{\alpha}$ is Martin-L\"of $\charaps{P}{A}$-random,
and $\chara{A\cap B}{\cond{B}{\alpha}}$ is Martin-L\"of $\charaps{P_B}{A\cap B}$-random.
Using Corollary~\ref{uniquness} we see that $\charaps{P}{A}=Q=\charaps{P_B}{A\cap B}$,
and therefore $P(A)=P_B(A\cap B)$.
This completes the proof.
\end{proof}

\section{The independence of an arbitrary number of events/random variables}
\label{IANERV}

In this section we operationally characterize the notion of the \emph{independence
of an arbitrary number of events/random variables} on a finite probability space, in terms of ensembles.
We will do this by introducing the notion of the \emph{independence of ensembles}
and then basing on it
in this section.

First, we operationally characterize the notion of
the \emph{independence of an arbitrary number of random variables},
in terms of the notion of the independence of ensembles.
For that purpose, we introduce some terminology regarding the conventional probability theory.
Let $\Omega$ be an alphabet, and let $P\in\PS(\Omega)$.
A \emph{random variable} on
$\Omega$ is a function
$X\colon\Omega\to\Omega'$ where $\Omega'$ is an alphabet.
Let $X_1\colon\Omega\to\Omega_1,\dots,X_n\colon\Omega\to\Omega_n$ be random variables on $\Omega$.
For any predicate
$F(v_1,\dotsc,v_n)$ with variables $v_1,\dots,v_n$,
we use $F(X_1,\dots,X_n)$ to denote the event
$$\{a\in\Omega\mid F(X_1(a),\dots,X_n(a))\}$$
on $P$.
We say that the random variables $X_1,\dots,X_n$ are
\emph{independent on} $P$
if for every $x_1\in\Omega_1,\dots,x_n\in\Omega_n$ it holds that
$$P(X_1=x_1\;\&\;\dotsc\;\&\;X_n=x_n)=P(X_1=x_n)\dotsm P(X_n=x_n).$$
We use $X_1\times\dots\times X_n$ to denote a random variable
$Y\colon\Omega\to\Omega_1\times\dots\times\Omega_n$ on $\Omega$ such that
$$Y(a)=(X_1(a),\dots,X_n(a))$$ for every $a\in\Omega$.

For any random variable $X\colon\Omega\to\Omega'$ on $\Omega$,
we use $X(P)$ to denote a finite probability space $P'\in\PS(\Omega')$ such that
$P'(x)=P(X=x)$ for every $x\in\Omega'$.

Let $\Omega_1,\dots,\Omega_n$ be alphabets.
For any $P_1\in\PS(\Omega_1),\dots,P_n\in\PS(\Omega_n)$,
we use
$$P_1\times\dots\times P_n$$
to denote a finite probability space
$Q\in\PS(\Omega_1\times\dots\times\Omega_n)$ such that
$$Q(a_1,\dots,a_n)=P_1(a_1)\dotsm P_n(a_n)$$
for every $a_1\in\Omega_1,\dotsc,a_n\in\Omega_n$.
Then the notion of the independence of random variables can be rephrased as follows.

\begin{proposition}\label{X(P)-independent}
Let $\Omega$ be an alphabet, and let $P\in\PS(\Omega)$. Let
$X_1\colon\Omega\to\Omega_1,\dots,X_n\colon\Omega\to\Omega_n$ be random variables on $\Omega$.
Then the random variables $X_1,\dots,X_n$ are independent on $P$ if and only if
$$(X_1\times\dots\times X_n)(P)=X_1(P)\times\dots\times X_n(P).$$
\end{proposition}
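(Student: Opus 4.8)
The plan is to prove the proposition purely by unfolding the notation on the two sides of the displayed equation and observing that their pointwise equality on the common sample space $\Omega_1\times\dots\times\Omega_n$ is literally the defining condition of independence. No nontrivial estimate or construction is needed; the whole content is bookkeeping of three overloaded uses of the symbol ``$\times$'' and of the pushforward operation $X(P)$.

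First I would record that both $(X_1\times\dots\times X_n)(P)$ and $X_1(P)\times\dots\times X_n(P)$ are elements of $\PS(\Omega_1\times\dots\times\Omega_n)$, hence functions with the common domain $\Omega_1\times\dots\times\Omega_n$. Therefore the displayed equation holds if and only if the two functions agree at every point $(x_1,\dots,x_n)$ of this domain, and it suffices to evaluate each side there. Writing $Y:=X_1\times\dots\times X_n$, the definition $Y(a)=(X_1(a),\dots,X_n(a))$ shows that the event $\{a\in\Omega\mid Y(a)=(x_1,\dots,x_n)\}$ coincides with the event $\{a\in\Omega\mid X_1(a)=x_1\;\&\;\dotsc\;\&\;X_n(a)=x_n\}$; this is the one identification that deserves a moment's care, but it is immediate from the definition of $Y$. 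Consequently, by the definition of the pushforward $Y(P)$, the left-hand side evaluated at $(x_1,\dots,x_n)$ equals $P(X_1=x_1\;\&\;\dotsc\;\&\;X_n=x_n)$.

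Next I would treat the right-hand side. By the definition of the product finite probability space $X_1(P)\times\dots\times X_n(P)$ together with the definition of each pushforward $X_i(P)$, its value at $(x_1,\dots,x_n)$ is $(X_1(P))(x_1)\dotsm(X_n(P))(x_n)=P(X_1=x_1)\dotsm P(X_n=x_n)$. Comparing the two evaluations, the displayed equation is seen to be equivalent to the assertion that for every $x_1\in\Omega_1,\dots,x_n\in\Omega_n$,
$$
  P(X_1=x_1\;\&\;\dotsc\;\&\;X_n=x_n)=P(X_1=x_1)\dotsm P(X_n=x_n),
$$
which is exactly the definition of independence of $X_1,\dots,X_n$ on $P$ recalled just before the statement. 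Since every step is a reversible rewriting of definitions, the equivalence holds in both directions at once, completing the proof. The only ``obstacle'' is notational discipline: keeping the product of random variables, the product of probability spaces, and the pushforward distinct while checking that they combine as claimed.
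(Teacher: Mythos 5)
Your proposal is correct and follows essentially the same route as the paper's own proof: evaluate both sides at an arbitrary point $(x_1,\dots,x_n)$, identify the event $\{(X_1\times\dots\times X_n)=(x_1,\dots,x_n)\}$ with $\{X_1=x_1\;\&\;\dotsc\;\&\;X_n=x_n\}$, and observe that pointwise equality of the two finite probability spaces is literally the definition of independence.
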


\begin{proof}
First, note that both $(X_1\times\dots\times X_n)(P)$ and $X_1(P)\times\dots\times X_n(P)$ are
finite probability spaces on $\Omega_1\times\dots\times\Omega_n$.

Let $x_1\in\Omega_1,\dots,x_n\in\Omega_n$.
On the one hand, we have
\begin{equation*}
  ((X_1\times\dots\times X_n)(P))(x_1,\dots,x_n)
  =P((X_1\times\dots\times X_n)=(x_1,\dots,x_n))
  =P(X_1=x_1\;\&\;\dotsc\;\&\;X_n=x_n).
\end{equation*}
On the other hand, we have
\begin{equation*}
  (X_1(P)\times\dots\times X_n(P))(x_1,\dots,x_n)
  = (X_1(P))(x_1)\dotsm(X_n(P))(x_n)
  =P(X_1=x_n)\dotsm P(X_n=x_n).
\end{equation*}
Thus, the result follows.
\end{proof}

Let $\Omega$ be an alphabet, and let $X\colon\Omega\to\Omega'$ be a random variable on $\Omega$.
For any $\alpha\in\Omega^\infty$,
we use $X(\alpha)$ to denote an infinite sequence $\beta$ over $\Omega'$ such that
$\beta(k)=X(\alpha(k))$ for every $k\in\N^+$.
We can then show the following theorem, which states that
ensembles are \emph{closed under the mapping by a random variable}.

\begin{theorem}[Closure property under the mapping by a random variable]\label{X(p)-X(P)}
Let $\Omega$ be an alphabet, and let $P\in\PS(\Omega)$. Let
$X\colon\Omega\to\Omega'$ be a random variable on $\Omega$.
If $\alpha$ is an ensemble for $P$ then $X(\alpha)$ is an ensemble for $X(P)$.
\end{theorem}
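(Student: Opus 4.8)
The plan is to argue by contraposition, in exactly the style of Theorems~\ref{contraction}, \ref{charaA}, and \ref{conditional_probability}: I assume that $X(\alpha)$ is \emph{not} Martin-L\"of $X(P)$-random, take a Martin-L\"of $X(P)$-test $\mathcal{S}\subset\N^+\times(\Omega')^*$ with $X(\alpha)\in\osg{\mathcal{S}_n}$ for all $n\in\N^+$, and then \emph{pull it back} along $X$ to a Martin-L\"of $P$-test that $\alpha$ fails, contradicting that $\alpha$ is an ensemble for $P$. Write $P':=X(P)\in\PS(\Omega')$. Since $X$ extends componentwise to finite strings, for each $\tau\in(\Omega')^*$ I would set $f(\tau):=\{\sigma\in\Omega^*\mid\abs{\sigma}=\abs{\tau}\text{ and }X(\sigma(k))=\tau(k)\text{ for }1\le k\le\abs{\tau}\}$, the full set of $P$-preimages of $\tau$ of the correct length, and define $\mathcal{D}$ by $\mathcal{D}_n:=\bigcup_{\tau\in\mathcal{S}_n}f(\tau)$, equivalently $\mathcal{D}:=\{(n,\sigma)\mid(n,X(\sigma))\in\mathcal{S}\}$. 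Because $X$ is a function between finite alphabets it is computable, so $\mathcal{D}$ is r.e.\ whenever $\mathcal{S}$ is.

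The heart of the argument is the measure identity $\Bm{P}{\osg{f(\tau)}}=\Bm{P'}{\osg{\tau}}$ for every $\tau\in(\Omega')^*$. Since $\Bm{P}{}$ is a Bernoulli measure, $P(\sigma)$ factorizes over coordinates, and membership of $\sigma$ in $f(\tau)$ is the per-coordinate constraint $\sigma(k)\in X^{-1}(\{\tau(k)\})$; hence summing $P(\sigma)$ over $f(\tau)$ splits as $\prod_{k=1}^{\abs{\tau}}\sum_{a\in X^{-1}(\{\tau(k)\})}P(a)=\prod_{k=1}^{\abs{\tau}}P'(\tau(k))=P'(\tau)$, which equals $\Bm{P'}{\osg{\tau}}$ by \eqref{pBm}. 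Here I use that $f(\tau)\subset\Omega^{\abs{\tau}}$ is automatically prefix-free, so $\Bm{P}{\osg{f(\tau)}}=P(f(\tau))$. This cylinder identity says precisely that the pushforward of $\Bm{P}{}$ under the componentwise map $X$ is $\Bm{P'}{}$, and it lifts from cylinders to the open sets $\osg{\mathcal{S}_n}$: since $\osg{\mathcal{D}_n}=X^{-1}(\osg{\mathcal{S}_n})$, I get $\Bm{P}{\osg{\mathcal{D}_n}}=\Bm{P'}{\osg{\mathcal{S}_n}}<2^{-n}$. Thus $\mathcal{D}$ is a Martin-L\"of $P$-test.

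Finally I would verify that $\alpha$ fails $\mathcal{D}$: applying $X$ componentwise sends a prefix $\rest{\alpha}{m}$ to $\rest{X(\alpha)}{m}$, so $\rest{\alpha}{m}\in\mathcal{D}_n$ iff $\rest{X(\alpha)}{m}\in\mathcal{S}_n$, whence $\alpha\in\osg{\mathcal{D}_n}$ iff $X(\alpha)\in\osg{\mathcal{S}_n}$; as the latter holds for every $n$, so does the former, and $\alpha$ is not Martin-L\"of $P$-random, completing the contraposition. I expect the only genuine subtlety to be turning the cylinder identity into the \emph{equality} $\Bm{P}{\osg{\mathcal{D}_n}}=\Bm{P'}{\osg{\mathcal{S}_n}}$ rather than a mere inequality: handled either by the pushforward viewpoint above, or combinatorially by first normalizing each $\mathcal{S}_n$ to a prefix-free set, which makes the $f(\tau)$ pairwise disjoint with prefix-free union. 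Everything else is routine; indeed Theorem~\ref{charaA} is recovered as the special case where $X\colon\Omega\to\{0,1\}$ is the indicator of $A$, for which $X(P)=\charaps{P}{A}$ and $X(\alpha)=\chara{A}{\alpha}$.
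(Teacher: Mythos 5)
Your proposal is correct and follows essentially the same route as the paper's own proof: contraposition, pulling back the Martin-L\"of $X(P)$-test along the componentwise extension of $X$ via exactly the preimage sets $f(\tau)$, and the cylinder measure identity $\Bm{P}{\osg{f(\tau)}}=\Bm{X(P)}{\osg{\tau}}$. Your extra remarks on why the equality lifts from cylinders to the open sets $\osg{\mathcal{S}_n}$ only spell out a step the paper dismisses as ``easy to see.''
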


\begin{proof}
We show the contraposition. Suppose that $X(\alpha)$ is not Martin-L\"of $X(P)$-random.
Then there exists a Martin-L\"of $X(P)$-test $\mathcal{S}\subset\N^+\times(\Omega')^*$ such that
\begin{equation}\label{XainosgmathSn_CPuMbRV}
  X(\alpha)\in\osg{\mathcal{S}_n}
\end{equation}
for every $n\in\N^+$.
For each
$\sigma\in(\Omega')^+$,
let $F(\sigma)$ be the set of all
$\tau\in\Omega^*$
such that
(i) $\abs{\tau}=\abs{\sigma}$ and (ii)~$X(\tau(k))=\sigma(k)$ for every $k=1,2,\dots,\abs{\sigma}$.
Then, since
$$(X(P))(x)=\sum_{a\in X^{-1}(\{x\})}P(a)$$
for every $x\in\Omega'$, we have that
\begin{equation}\label{BmXP=os=XPs=Pfs=BmPofs_CPuMbRV}
  \Bm{X(P)}{\osg{\sigma}}=(X(P))(\sigma)=P(F(\sigma))=\Bm{P}{\osg{F(\sigma)}}
\end{equation}
for each $\sigma\in(\Omega')^+$.
We then define $\mathcal{T}$ to be a subset of $\N^+\times \Omega^*$ such that
$\mathcal{T}_n=\bigcup_{\sigma\in\mathcal{S}_n} F(\sigma)$ for every $n\in\N^+$.
Note here that, for each $n\in\N^+$,
$\lambda\notin\mathcal{S}_n$ since $\Bm{X(P)}{\osg{\mathcal{S}_n}}<2^{-n}<1$.
Then, since $\mathcal{S}_n$ is a prefix-free subset of $(\Omega')^*$ for every $n\in\N^+$,
we see that $\mathcal{T}_n$ is a prefix-free subset of $\Omega^*$ for every $n\in\N^+$.
For each $n\in\N^+$, we also see that
\[
  \Bm{P}{\osg{\mathcal{T}_n}}
  \le\sum_{\sigma\in\mathcal{S}_n}\Bm{P}{\osg{F(\sigma)}}
  =\sum_{\sigma\in\mathcal{S}_n}\Bm{X(P)}{\osg{\sigma}}
  =\Bm{X(P)}{\osg{\mathcal{S}_n}}<2^{-n},
\]
where the first equality follows from \eqref{BmXP=os=XPs=Pfs=BmPofs_CPuMbRV} and
the second equality follows from the prefix-freeness of $\mathcal{S}_n$.
Moreover, since $\mathcal{S}$ is r.e.,
$\mathcal{T}$ is also r.e.
Thus, $\mathcal{T}$ is a Martin-L\"of $P$-test.

On the other hand,
note that, for every $n\in\N^+$, if $X(\alpha)\in\osg{\mathcal{S}_n}$ then $\alpha\in\osg{\mathcal{T}_n}$.
Thus, it follows from \eqref{XainosgmathSn_CPuMbRV}
that $\alpha\in\osg{\mathcal{T}_n}$ for every $n\in\N^+$.
Hence, $\alpha$ is not Martin-L\"of $P$-random.
This completes the proof.
\end{proof}

Now, let us introduce the notion of the \emph{independence} of ensembles, as follows.
Let $\Omega_1,\dots,\Omega_n$ be alphabets.
For any $\alpha_1\in\Omega_1^\infty,\dots,\alpha_n\in\Omega_n^\infty$,
we use
$$\alpha_1\times\dots\times\alpha_n$$
to denote an infinite sequence
$\alpha$ over $\Omega_1\times\dots\times\Omega_n$
such that $\alpha(k)=(\alpha_1(k),\dots,\alpha_n(k))$ for every $k\in\N^+$.
Thus, $\alpha_1\times\dots\times\alpha_n\in(\Omega_1\times\dots\times\Omega_n)^\infty$
for every $\alpha_1\in\Omega_1^\infty,\dots,\alpha_n\in\Omega_n^\infty$.
For any $\sigma_1\in\Omega_1^*,\dots,\sigma_n\in\Omega_n^*$
with $\abs{\sigma_1}=\dots =\abs{\sigma_n}$,
we define
$$\sigma_1\times\dots\times\sigma_n$$
in a similar manner,
where we define $\lambda\times\dots\times\lambda$ as $\lambda$, in particular.
Thus, $\sigma_1\times\dots\times\sigma_n\in(\Omega_1\times\dots\times\Omega_n)^*$
for every $\sigma_1\in\Omega_1^*,\dots,\sigma_n\in\Omega_n^*$
with $\abs{\sigma_1}=\dots =\abs{\sigma_n}$.

\begin{definition}[Independence of ensembles]\label{Def:Ind-Ensmbls}
Let $\Omega_1,\dotsc,\Omega_n$ be alphabets, and let
$P_1\in\PS(\Omega_1),\dots,$ $P_n\in\PS(\Omega_n)$.
Let $\alpha_1,\dots,\alpha_n$ be ensembles for $P_1,\dots,P_n$, respectively.
We say that $\alpha_1,\dots,\alpha_n$ are
\emph{independent}
if $\alpha_1\times\dots\times\alpha_n$ is
an ensemble for
$P_1\times\dots\times P_n$.
\qed
\end{definition}

Note that the notion of the independence of ensembles in our theory corresponds to
the notion of \emph{independence} of collectives in the theory of collectives by von Mises \cite{vM64}.

Theorem~\ref{independence-independence} below gives
\emph{equivalent characterizations of the notion of
the independence of random variables in terms of that of ensembles}.
To prove Theorem~\ref{independence-independence}, we first show the following proposition.

\begin{proposition}\label{X1xn-alpha}
Let $\Omega$ be an alphabet, and let $\alpha\in\Omega^\infty$. Let
$X_1\colon\Omega\to\Omega_1,\dots,X_n\colon\Omega\to\Omega_n$ be
random variables on $\Omega$. Then
$(X_1\times\dots\times X_n)(\alpha)=X_1(\alpha)\times\dots\times X_n(\alpha)$.
\end{proposition}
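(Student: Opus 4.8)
The plan is to prove the equality of two infinite sequences over the common alphabet $\Omega_1\times\dots\times\Omega_n$ by a direct definition-chase, checking that they agree coordinate-by-coordinate. Since two elements of $(\Omega_1\times\dots\times\Omega_n)^\infty$ are identical precisely when they take the same value at each position, it suffices to fix an arbitrary $k\in\N^+$ and compute the $k$th element of each side.

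First I would unwind the left-hand side. Applying the definition of $X(\alpha)$ to the random variable $X_1\times\dots\times X_n$, the $k$th element of $(X_1\times\dots\times X_n)(\alpha)$ is $(X_1\times\dots\times X_n)(\alpha(k))$; and by the definition of the product random variable $X_1\times\dots\times X_n$, this tuple equals $(X_1(\alpha(k)),\dots,X_n(\alpha(k)))$. Next I would unwind the right-hand side. By the definition of the product of sequences, the $k$th element of $X_1(\alpha)\times\dots\times X_n(\alpha)$ is $(X_1(\alpha)(k),\dots,X_n(\alpha)(k))$; and by the definition of $X_i(\alpha)$, each component $X_i(\alpha)(k)$ equals $X_i(\alpha(k))$, so this tuple is again $(X_1(\alpha(k)),\dots,X_n(\alpha(k)))$.

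Since the two sides take the same value at every position $k\in\N^+$, the sequences coincide, which is the desired equality. There is essentially no obstacle here: the statement is a pure definition-chase with no measure-theoretic or computability content. The only point requiring care is to keep the two distinct notions of product clearly separated—the product $X_1\times\dots\times X_n$ of random variables versus the product $X_1(\alpha)\times\dots\times X_n(\alpha)$ of sequences—so that the appropriate definition is invoked at each step and the componentwise evaluations are matched up correctly.
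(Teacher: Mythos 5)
Your proposal is correct and follows exactly the same route as the paper: the paper's proof is a four-line chain of equalities computing the $k$th element of $(X_1\times\dots\times X_n)(\alpha)$ and showing it equals the $k$th element of $X_1(\alpha)\times\dots\times X_n(\alpha)$ by unwinding the two product definitions. Nothing is missing.
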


\begin{proof}
First note that
both $(X_1\times\dots\times X_n)(\alpha)$ and $X_1(\alpha)\times\dots\times X_n(\alpha)$ are
infinite sequences over $\Omega_1\times\dots\times\Omega_n$.
For each $k\in\N^+$, we see that
\begin{align*}
((X_1\times\dots\times X_n)(\alpha))(k)
&=(X_1\times\dots\times X_n)(\alpha(k))
=(X_1(\alpha(k)),\dots,X_n(\alpha(k))) \\
&=((X_1(\alpha))(k),\dots,(X_n(\alpha))(k)) \\
&=(X_1(\alpha)\times\dots\times X_n(\alpha))(k).
\end{align*}
This completes the proof.
\end{proof}

\begin{theorem}\label{independence-independence}
Let $\Omega$ be an alphabet, and let $P\in\PS(\Omega)$. Let
$X_1\colon\Omega\to\Omega_1,\dots,X_n\colon\Omega\to\Omega_n$ be random variables on $\Omega$.
Then the following conditions are equivalent to one another.
\begin{enumerate}
  \item The random variables $X_1,\dots,X_n$ are independent on $P$.
  \item For every ensemble $\alpha$ for
    $P$,
    the ensembles $X_1(\alpha),\dots,X_n(\alpha)$ are independent.
  \item There exists an ensemble $\alpha$ for $P$ such that
    the ensembles $X_1(\alpha),\dots,X_n(\alpha)$ are independent.
\end{enumerate}
\end{theorem}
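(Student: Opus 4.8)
The plan is to prove the cycle of implications (i) $\Rightarrow$ (ii) $\Rightarrow$ (iii) $\Rightarrow$ (i), exactly paralleling the structure already used for Theorem~\ref{cond-ind}. The whole argument rests on assembling four facts established earlier: the closure property under the mapping by a random variable (Theorem~\ref{X(p)-X(P)}), the identity $(X_1\times\dots\times X_n)(\alpha)=X_1(\alpha)\times\dots\times X_n(\alpha)$ (Proposition~\ref{X1xn-alpha}), the reformulation of independence as $(X_1\times\dots\times X_n)(P)=X_1(P)\times\dots\times X_n(P)$ (Proposition~\ref{X(P)-independent}), and the uniqueness of the finite probability space associated with an ensemble (Corollary~\ref{uniquness}). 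Throughout I write $Y:=X_1\times\dots\times X_n$ for brevity, so that $Y$ is a random variable on $\Omega$ and $Y(P)$ is a finite probability space on $\Omega_1\times\dots\times\Omega_n$.

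For (i) $\Rightarrow$ (ii), I would fix an arbitrary ensemble $\alpha$ for $P$. By Theorem~\ref{X(p)-X(P)}, $Y(\alpha)$ is an ensemble for $Y(P)$, and each $X_i(\alpha)$ is an ensemble for $X_i(P)$. Proposition~\ref{X1xn-alpha} identifies $Y(\alpha)$ with $X_1(\alpha)\times\dots\times X_n(\alpha)$, while the hypothesized independence, through Proposition~\ref{X(P)-independent}, gives $Y(P)=X_1(P)\times\dots\times X_n(P)$. Combining these, $X_1(\alpha)\times\dots\times X_n(\alpha)$ is an ensemble for $X_1(P)\times\dots\times X_n(P)$, which is precisely the statement that $X_1(\alpha),\dots,X_n(\alpha)$ are independent. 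The implication (ii) $\Rightarrow$ (iii) is then immediate, since Theorem~\ref{Bmae} guarantees the existence of at least one ensemble for $P$.

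For (iii) $\Rightarrow$ (i), I would run the same chain in reverse. Given an ensemble $\alpha$ for $P$ with $X_1(\alpha),\dots,X_n(\alpha)$ independent, the definition of independence of ensembles says that $X_1(\alpha)\times\dots\times X_n(\alpha)$ is an ensemble for $X_1(P)\times\dots\times X_n(P)$; by Proposition~\ref{X1xn-alpha} this sequence equals $Y(\alpha)$. On the other hand, Theorem~\ref{X(p)-X(P)} shows that $Y(\alpha)$ is an ensemble for $Y(P)$. Thus $Y(\alpha)$ is a single infinite sequence that is simultaneously an ensemble for $Y(P)$ and for $X_1(P)\times\dots\times X_n(P)$, so Corollary~\ref{uniquness} forces these two finite probability spaces to coincide; Proposition~\ref{X(P)-independent} then yields the independence of $X_1,\dots,X_n$ on $P$.

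No genuine computation is required once $Y$ is introduced; the content of the theorem is entirely bookkeeping over the prior results. The only step demanding any care—and the nearest thing to an obstacle—is recognizing in the final implication that Corollary~\ref{uniquness} is exactly the instrument that converts the coincidence of two ensembles (of the single sequence $Y(\alpha)$) into the coincidence of the two underlying probability spaces, which is what the independence of random variables amounts to via Proposition~\ref{X(P)-independent}.
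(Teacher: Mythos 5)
Your proposal is correct and follows essentially the same route as the paper's own proof: both directions rest on combining Theorem~\ref{X(p)-X(P)}, Proposition~\ref{X1xn-alpha}, and Proposition~\ref{X(P)-independent}, with Theorem~\ref{Bmae} giving (ii) $\Rightarrow$ (iii) and Corollary~\ref{uniquness} closing the loop in (iii) $\Rightarrow$ (i). The only difference is cosmetic (your abbreviation $Y:=X_1\times\dots\times X_n$); the logical content is identical.
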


\begin{proof}
First,
assume that the condition~(i) holds.
Let $\alpha$ be an arbitrary ensemble for the finite probability space $P$.
It follows from Theorem~\ref{X(p)-X(P)} that $X_i(\alpha)$ is Martin-L\"of $X_i(P)$-random
for every $i=1,2,\dots,n$,
and $(X_1\times\dots\times X_n)(\alpha)$ is Martin-L\"of $(X_1\times\dots\times X_n)(P)$-random.
Therefore, by Proposition~\ref{X1xn-alpha} and Proposition~\ref{X(P)-independent}, we see that
$X_1(\alpha)\times\dots\times X_n(\alpha)$ is Martin-L\"of $X_1(P)\times\dots\times X_n(P)$-random.
Hence, we have the implication (i) $\Rightarrow$ (ii).

Since there exists an ensemble $\alpha$ for the finite probability space $P$ by Theorem~\ref{Bmae},
the implication (ii) $\Rightarrow$ (iii) is obvious.

Finally, the implication (iii) $\Rightarrow$ (i) is shown as follows.
Assume that the condition~(iii) holds.
Since $\alpha$ is an ensemble for $P$, note
from Theorem~\ref{X(p)-X(P)}
that
$X_i(\alpha)$ is an ensemble for $X_i(P)$ for every $i=1,2,\dots,n$.
Then
it follows from Definition~\ref{Def:Ind-Ensmbls} and Proposition~\ref{X1xn-alpha} that
$(X_1\times\dots\times X_n)(\alpha)$ is an ensemble for $X_1(P)\times\dots\times X_n(P)$.
On the other hand, by Theorem~\ref{X(p)-X(P)} we see that
$(X_1\times\dots\times X_n)(\alpha)$ is an ensemble for $(X_1\times\dots\times X_n)(P)$.
Thus
it follows from Corollary~\ref{uniquness} that
$X_1(P)\times\dots\times X_n(P)=(X_1\times\dots\times X_n)(P)$.
Hence,
by Proposition~\ref{X(P)-independent} we have that $X_1,\dots,X_n$ are independent on $P$.
This completes the proof.
\end{proof}

Next, we operationally characterize the notion of the \emph{independence of an arbitrary number of events},
in terms of the notion of the independence of ensembles.
For that purpose,
we first recall the notion of the independence of an arbitrary number of events from the conventional probability theory.
Let $\Omega$ be an alphabet, and let $P\in\PS(\Omega)$.
Let $A_1,\dots,A_n$ be arbitrary events on
the finite probability space
$P$.
We say that the events $A_1,\dots,A_n$ are \emph{independent on} $P$ if
for every $i_1,\dots,i_k$ with $1\le i_1<\dots <i_k\le n$ it holds that
$$P(A_{i_1}\cap\dots\cap A_{i_k})=P(A_{i_1})\dotsm P(A_{i_k}).$$
For any $A\subset\Omega$, we use $\chi_A$ to denote a function $f\colon\Omega\to\{0,1\}$ such that
$f(a)=1$ if $a\in A$ and $f(a)=0$ otherwise.
Note that
$\chara{A}{\alpha}=\chi_A(\alpha)$ for every $A\subset\Omega$ and $\alpha\in\Omega^\infty$.
It is then easy to show the following proposition.

\begin{proposition}\label{independence-random-variables-events}
Let $\Omega$ be an alphabet, and let $P\in\PS(\Omega)$. Let
$A_1,\dots,A_n\subset\Omega$.
Then the events $A_1,\dots,A_n$ are independent on $P$ if and only if
the random variables $\chi_{A_1},\dots,\chi_{A_n}$ are independent on $P$.
\qed
\end{proposition}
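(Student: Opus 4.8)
The plan is to unwind both notions of independence into statements purely about probabilities of intersections, and then invoke the classical combinatorial equivalence between them. First I would translate the independence of the random variables $\chi_{A_1},\dots,\chi_{A_n}$. Since each $\chi_{A_i}$ takes values in $\{0,1\}$, the event $\chi_{A_i}=1$ is exactly $A_i$, while the event $\chi_{A_i}=0$ is exactly $\Omega\setminus A_i$. Hence, for a tuple $(x_1,\dots,x_n)\in\{0,1\}^n$, setting $B_i:=A_i$ when $x_i=1$ and $B_i:=\Omega\setminus A_i$ when $x_i=0$, the independence of $\chi_{A_1},\dots,\chi_{A_n}$ on $P$ is precisely the condition that $P(B_1\cap\dots\cap B_n)=P(B_1)\dotsm P(B_n)$ holds for every such tuple. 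Thus the proposition reduces to the standard fact that factorization of $P(A_{i_1}\cap\dots\cap A_{i_k})$ over all index subsets is equivalent to factorization over all patterns of $A_i$ and $\Omega\setminus A_i$.

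For the implication from random-variable independence to event independence, I would fix a subset $\{i_1,\dots,i_k\}$ and observe that $A_{i_1}\cap\dots\cap A_{i_k}$ is the disjoint union, over all assignments of $\{0,1\}$ to the remaining indices, of the events of the form $B_1\cap\dots\cap B_n$. Summing the factorized probabilities, the factors attached to the fixed indices give $P(A_{i_1})\dotsm P(A_{i_k})$, while each free index $i$ contributes a sum $P(A_i)+P(\Omega\setminus A_i)=1$; hence the whole sum collapses to $P(A_{i_1})\dotsm P(A_{i_k})$. This direction is routine.

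The more delicate implication is from event independence to random-variable independence, and this is where I would place the bulk of the work. I would prove by induction on the number $m$ of indices $i$ with $B_i=\Omega\setminus A_i$ that every pattern factorizes. The base case $m=0$ is exactly the event-independence hypothesis applied to the set $S=\{\,i\mid B_i=A_i\,\}$. For the inductive step, I would pick an index $j$ with $B_j=\Omega\setminus A_j$, abbreviate the intersection of the other $B_i$ by $E$, and use the disjoint decomposition to write $P(E\cap(\Omega\setminus A_j))=P(E)-P(E\cap A_j)$. Both $P(E)$ and $P(E\cap A_j)$ involve one fewer complement, so the inductive hypothesis factorizes them; factoring out the common product and using $1-P(A_j)=P(\Omega\setminus A_j)$ yields the desired factorization.

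I expect the main obstacle to be purely the bookkeeping of this induction over complement patterns rather than any conceptual difficulty, since the underlying statement is the well-known fact that mutual independence of events is preserved when any of the events is replaced by its complement. Once that lemma is in place, combining it with the marginalization argument of the second paragraph closes the equivalence in both directions.
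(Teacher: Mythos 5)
Your proof is correct. Note that the paper offers no proof of this proposition at all --- it is stated with the remark ``It is then easy to show the following proposition'' and closed immediately with a QED --- so there is nothing to diverge from; your argument (marginalizing over the free coordinates for one direction, and inducting on the number of complemented events for the other) is exactly the standard verification the author is leaving to the reader, and both directions check out.
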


Now,
using Proposition~\ref{independence-random-variables-events},
Theorem~\ref{independence-independence} results in
Theorem~\ref{independence-independence-events} below,
which gives
\emph{equivalent characterizations of the notion of
the independence of an arbitrary number of events in terms of that of ensembles}.

\begin{theorem}\label{independence-independence-events}
Let $\Omega$ be an alphabet, and let $P\in\PS(\Omega)$.
Let $A_1,\dots,A_n$ be events on the finite probability space $P$.
Then the following conditions are equivalent to one another.
\begin{enumerate}
  \item The events $A_1,\dots,A_n$ are independent on $P$.
  \item For every ensemble $\alpha$ for $P$,
    the ensembles $\chara{A_1}{\alpha},\dots,\chara{A_n}{\alpha}$ are independent.
  \item There exists an ensemble $\alpha$ for $P$ such that
    the ensembles $\chara{A_1}{\alpha},\dots,\chara{A_n}{\alpha}$ are independent.\qed
\end{enumerate}
\end{theorem}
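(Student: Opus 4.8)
The plan is to reduce everything to the already-established case of random variables, Theorem~\ref{independence-independence}, by viewing each event $A_i$ through its indicator random variable $\chi_{A_i}\colon\Omega\to\{0,1\}$. First I would apply Theorem~\ref{independence-independence} to the particular random variables $X_1:=\chi_{A_1},\dots,X_n:=\chi_{A_n}$, each mapping $\Omega$ into the alphabet $\{0,1\}$. This immediately yields the equivalence of three statements about the $\chi_{A_i}$: their independence on $P$; the independence of $\chi_{A_1}(\alpha),\dots,\chi_{A_n}(\alpha)$ for every ensemble $\alpha$ for $P$; and the same for some ensemble $\alpha$ for $P$.

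Next I would match these three statements, one by one, to the three conditions of the present theorem. For condition (i), Proposition~\ref{independence-random-variables-events} states precisely that the events $A_1,\dots,A_n$ are independent on $P$ if and only if the random variables $\chi_{A_1},\dots,\chi_{A_n}$ are independent on $P$, which identifies condition (i) here with condition (i) of Theorem~\ref{independence-independence}. For conditions (ii) and (iii), I would invoke the identity $\chara{A_i}{\alpha}=\chi_{A_i}(\alpha)$, noted just before the theorem, valid for every $\alpha\in\Omega^\infty$ and each $i$. Under this identity the sequences $\chara{A_1}{\alpha},\dots,\chara{A_n}{\alpha}$ are literally the sequences $\chi_{A_1}(\alpha),\dots,\chi_{A_n}(\alpha)$, so the assertion that the former are independent coincides with the assertion that the latter are independent. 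Hence condition (ii) here matches condition (ii) of Theorem~\ref{independence-independence}, and likewise condition (iii) matches condition (iii).

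Combining these identifications, the three conditions of the present theorem are term-by-term equivalent to the three conditions of Theorem~\ref{independence-independence} applied to $\chi_{A_1},\dots,\chi_{A_n}$, and are therefore equivalent to one another, which completes the argument.

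Since every ingredient is already in place, there is no genuine obstacle here; the proof is a direct corollary, and the only points requiring care are bookkeeping ones. I would confirm that each $\chi_{A_i}$ is a bona fide random variable on $\Omega$ (with codomain alphabet $\{0,1\}$) so that Theorem~\ref{independence-independence} applies, and that the notion of \emph{independence of ensembles} invoked in both theorems is the identical notion, the relevant product space being $\chi_{A_1}(P)\times\dots\times\chi_{A_n}(P)$ in $\PS(\{0,1\}\times\dots\times\{0,1\})$. No new Martin-L\"of $P$-test construction or measure estimate is needed.
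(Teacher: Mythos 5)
Your proposal is correct and matches the paper's own argument exactly: the paper derives this theorem from Theorem~\ref{independence-independence} applied to $\chi_{A_1},\dots,\chi_{A_n}$, using Proposition~\ref{independence-random-variables-events} for condition (i) and the identity $\chara{A_i}{\alpha}=\chi_{A_i}(\alpha)$ for conditions (ii) and (iii). Nothing further is needed.
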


Note that the underlying finite probability spaces which we consider
both in the preceding section and in this section
are quite arbitrary, and therefore
are not required to be computable at all, in particular.

\section{Further equivalence of the notions of independence on computable finite probability spaces}
\label{FENICFPS}

In the preceding section we saw that
the independence of an arbitrary number of events/random variables
and that of ensembles are equivalent to each other on an arbitrary finite probability space.
In this section we show that
these independence notions are further equivalent to
the notion of the independence in the sense of van Lambalgen's Theorem~\cite{vL87}
in the case where the underlying finite probability space is \emph{computable}.
Thus, \emph{the three independence notions are equivalent to one another} in this case.
To show
the equivalence,
we generalize van Lambalgen's Theorem~\cite{vL87} over our framework first.

\subsection{A generalization of van Lambalgen's Theorem}
\label{van Lambalgen}

To study a generalization of van Lambalgen's Theorem,
first we generalize the notion of Martin-L\"of $P$-randomness over \emph{relativized computation}
and introduce the notion of \emph{Martin-L\"of $P$-randomness relative to an oracle}.

\emph{Relativized computation}
is a generalization of normal computation.
Let $\beta_1,\dots,\beta_\ell$ be arbitrary infinite sequences over an alphabet.
In
relativized computation,
a (deterministic) Turing machine is allowed to refer to $\beta_1,\dots,\beta_\ell$ as an \emph{oracle} during the computation.
Namely, in
relativized computation,
a Turing machine can query $(k,n)\in\{1,\dots,\ell\}\times\N^+$ at any time and then
obtains the response $\beta_k(n)$ during the computation.
Such a Turing machine is called an \emph{oracle Turing machine}.
Relativized computation
is more powerful than normal computation, in general.
See for instance Rogers~\cite{Rog67} or Soare~\cite{Soa87} for the treatment of
relativized computation in general.
See Nies~\cite{N09} and Downey and Hirschfeldt~\cite{DH10}
for the detail of the treatment of
relativized computation
in the context of
algorithmic randomness.

We define
the notion of
a
\emph{Martin-L\"of $P$-test relative to $\beta_1,\dots,\beta_\ell$}
as a Martin-L\"{o}f $P$-test
where the Turing machine computing the Martin-L\"{o}f $P$-test is an oracle Turing machine which can refer to the sequences $\beta_1,\dots,\beta_\ell$ during the computation.
Based on this notion,
we define the notion of \emph{Martin-L\"of $P$-randomness relative to $\beta_1,\dots,\beta_\ell$}
in the same manner as
(ii) and (iii)
of Definition~\ref{ML_P-randomness}.
Formally, the notion of \emph{Martin-L\"of $P$-randomness relative to infinite sequences}
is defined as follows.

\begin{definition}[Martin-L\"of $P$-randomness relative to infinite sequences]\label{ML_P-randomness_rs}
Let $\Omega$ be an alphabet, and let $P\in\PS(\Omega)$. Let
$\beta_1,\dots,\beta_\ell$ be infinite sequences over an alphabet.
A subset $\mathcal{C}$ of $\N^+\times \Omega^*$ is called
a \emph{Martin-L\"{o}f $P$-test relative to $\beta_1,\dots,\beta_\ell$}
if the following (i) and (ii) hold:
\begin{enumerate}
  \item
    There exists an oracle Turing machine $\mathcal{M}$ such that
    $\mathcal{C}
    =\{x\in\N^+\times \Omega^*\mid
    \text{$\mathcal{M}$ accepts $x$ relative to $\beta_1,\dots,\beta_\ell$}\}$;
  \item For every $n\in\N^+$ it holds that
    $\mathcal{C}_n$ is a prefix-free subset of $\Omega^*$ and
    $\Bm{P}{\osg{\mathcal{C}_n}}< 2^{-n}$
    where
    $\mathcal{C}_n$ denotes the set
    $\left\{\,
      \sigma\mid (n,\sigma)\in\mathcal{C}
    \,\right\}$.
\end{enumerate}

For any $\alpha\in \Omega^\infty$,
we say that $\alpha$ is \emph{Martin-L\"{o}f $P$-random relative to $\beta_1,\dots,\beta_\ell$}
if for every Martin-L\"{o}f $P$-test $\mathcal{C}$ relative to $\beta_1,\dots,\beta_\ell$
there exists $n\in\N^+$ such that $\alpha\notin\osg{\mathcal{C}_n}$.
\qed
\end{definition}

Just like in the definition of a Martin-L\"{o}f $P$-test given in Definition~\ref{ML_P-randomness},
we require in Definition~\ref{ML_P-randomness_rs} that
the set $\mathcal{C}_n$ is prefix-free in the definition of
a Martin-L\"{o}f $P$-test $\mathcal{C}$ relative to $\beta_1,\dots,\beta_\ell$.
However, as in the case of a Martin-L\"{o}f $P$-test,
we can eliminate this
requirement
while keeping the notion of Martin-L\"of $P$-randomness relative to $\beta_1,\dots,\beta_\ell$
the same.
Namely, we can show the following theorem
by generalizing the proof of Lemma~\ref{eliminate-prefix-freeness} over
relativized computation
in an obvious way.

\begin{theorem}\label{eliminate-prefix-freeness-relative-to-infinite-sequences}
Let $\Omega$ be an alphabet, and let $P\in\PS(\Omega)$. Let
$\beta_1,\dots,\beta_\ell$ be infinite sequences over an alphabet.
Suppose that a subset $\mathcal{C}$ of $\N^+\times \Omega^*$ satisfies
the following two conditions~(i) and (ii):
\begin{enumerate}
  \item
    There exists an oracle Turing machine $\mathcal{M}$ such that
    $\mathcal{C}
    =\{x\in\N^+\times \Omega^*\mid
    \text{$\mathcal{M}$ accepts $x$ relative to $\beta_1,\dots,\beta_\ell$}\}$;
  \item For every $n\in\N^+$ it holds that
    $\Bm{P}{\osg{\mathcal{C}_n}}< 2^{-n}$
    where
    $\mathcal{C}_n$ denotes the set
    $\left\{\,
      \sigma\mid (n,\sigma)\in\mathcal{C}
    \,\right\}$.
\end{enumerate}
Then
there exists a Martin-L\"{o}f $P$-test
$\mathcal{D}$ relative to $\beta_1,\dots,\beta_\ell$
such that
$\osg{\mathcal{C}_n}=\osg{\mathcal{D}_n}$ for every $n\in\N^+$,
where $\mathcal{D}_n$ denotes the set
$\left\{\,
  \sigma\mid (n,\sigma)\in\mathcal{D}
\,\right\}$.
\qed
\end{theorem}

From Theorem~\ref{eliminate-prefix-freeness-relative-to-infinite-sequences}
we have the following theorem,
corresponding to Theorem~\ref{ML_P-randomness_eliminated-prefix-freeness}.

\begin{theorem}\label{ML_P-randomness_eliminated-prefix-freeness-relative-to-infinite-sequences}
Let $\Omega$ be an alphabet, and let $P\in\PS(\Omega)$. Let
$\beta_1,\dots,\beta_\ell$ be infinite sequences over an alphabet.
Let $\alpha\in\Omega^\infty$.
Then the following conditions~(i) and (ii) are equivalent to each other:
\begin{enumerate} 
  \item The infinite sequence $\alpha$ is Martin-L\"{o}f $P$-random relative to $\beta_1,\dots,\beta_\ell$.
  \item For every subset $\mathcal{C}$ of $\N^+\times \Omega^*$, if
    $\Bm{P}{\osg{\mathcal{C}_n}}< 2^{-n}$ for every $n\in\N^+$ and moreover
    there exists an oracle Turing machine $\mathcal{M}$ such that
    $\mathcal{C}
    =\{x\in\N^+\times \Omega^*\mid
    \text{$\mathcal{M}$ accepts $x$ relative to $\beta_1,\dots,\beta_\ell$}\}$,
    then there exists $n\in\N^+$ such that $\alpha\notin\osg{\mathcal{C}_n}$.\qed
\end{enumerate}
\end{theorem}

Note that the underlying finite probability space $P$ is quite arbitrary
in Theorems~\ref{eliminate-prefix-freeness-relative-to-infinite-sequences} and
\ref{ML_P-randomness_eliminated-prefix-freeness-relative-to-infinite-sequences}.

The following holds, obviously.

\begin{proposition}
Let $\Omega$ be an alphabet, and let $P\in\PS(\Omega)$. Let
$\beta_1,\dots,\beta_\ell$ be infinite sequences over an alphabet.
For every $\alpha\in\Omega^\infty$,
if $\alpha$ is Martin-L\"of $P$-random relative to $\beta_1,\dots,\beta_\ell$
then $\alpha$ is Martin-L\"of $P$-random.
\qed
\end{proposition}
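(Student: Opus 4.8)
The plan is to show that every Martin-L\"of $P$-test in the sense of Definition~\ref{ML_P-randomness} is automatically a Martin-L\"of $P$-test relative to $\beta_1,\dots,\beta_\ell$ in the sense of Definition~\ref{ML_P-randomness_rs}. Once this containment of test classes is established, the proposition follows immediately by unwinding the two definitions of randomness: passing the larger class of relativized tests entails passing every unrelativized test.

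First I would recall that a subset $\mathcal{C}$ of $\N^+\times\Omega^*$ is an ordinary Martin-L\"of $P$-test precisely when it is r.e.\ and satisfies $\Bm{P}{\osg{\mathcal{C}_n}}<2^{-n}$ for all $n\in\N^+$. The measure condition is verbatim the same in both definitions, so the only point to verify is that plain r.e.-ness implies acceptance by an oracle Turing machine in the sense of condition (i) of Definition~\ref{ML_P-randomness_rs}. This is the entire content of the argument: a deterministic Turing machine $\mathcal{M}$ witnessing the r.e.-ness of $\mathcal{C}$ may be regarded as an oracle Turing machine that simply never queries its oracle. Such a machine accepts exactly the same set $\mathcal{C}$ relative to any $\beta_1,\dots,\beta_\ell$ as it does without an oracle. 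Hence $\mathcal{C}$ is a Martin-L\"of $P$-test relative to $\beta_1,\dots,\beta_\ell$.

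With the containment in hand, the implication is routine. Suppose $\alpha\in\Omega^\infty$ is Martin-L\"of $P$-random relative to $\beta_1,\dots,\beta_\ell$, and let $\mathcal{C}$ be an arbitrary Martin-L\"of $P$-test. By the containment just established, $\mathcal{C}$ is also a Martin-L\"of $P$-test relative to $\beta_1,\dots,\beta_\ell$, so there exists $n\in\N^+$ with $\alpha\notin\osg{\mathcal{C}_n}$; that is, $\alpha$ passes $\mathcal{C}$. Since $\mathcal{C}$ was arbitrary, $\alpha$ is Martin-L\"of $P$-random.

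I do not expect any genuine obstacle: the whole point is that relativization only \emph{enlarges} the supply of available tests, and a non-querying oracle machine exhibits each unrelativized test as a relativized one. The only thing to be mildly careful about is reconciling the two formulations of ``test'' --- the unrelativized definition phrases effectivity as plain r.e.-ness, whereas the relativized definition phrases it through acceptance by an oracle Turing machine --- but these coincide in the no-query case, so no real work is needed.
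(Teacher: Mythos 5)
Your proof is correct and is precisely the standard argument the paper has in mind: the paper states this proposition without proof (prefacing it with ``Obviously, the following holds''), and the implicit reasoning is exactly your observation that every unrelativized Martin-L\"of $P$-test is a relativized one via an oracle machine that never queries its oracle, so the relativized notion of randomness is at least as strong. No gaps.
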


The converse does not necessarily hold.
In the case where $\alpha$ is Martin-L\"of $P$-random, the converse means that
the Martin-L\"of $P$-randomness of $\alpha$ is \emph{independent} of $\beta_1,\dots,\beta_\ell$
in a certain sense.

We here recall van Lambalgen's Theorem.
Let $\beta$ be an infinite sequence over an alphabet.
For any $\alpha\in\XI$,
we say that $\alpha$ is \emph{Martin-L\"{o}f random relative to $\beta$} if
$\alpha$ is Martin-L\"{o}f $U$-random relative to $\beta$ where
$U\in\PS(\{0,1\})$
such that $U(0)=U(1)=1/2$.
Based on this notion of \emph{Martin-L\"of randomness relative to an
infinite sequence},
van Lambalgen's Theorem is stated as follows.

\begin{theorem}[van Lambalgen's Theorem, van Lambalgen \cite{vL87}]
Let $\alpha,\beta\in\{0,1\}^\infty$,
and let $\alpha\oplus\beta$ denote the infinite binary sequence
\[
  \alpha(1)\beta(1)\alpha(2)\beta(2)\alpha(3)\beta(3)\dotsc\dotsc.
\]
Then the following conditions~(i) and (ii) are equivalent to each other:
\begin{enumerate}
  \item $\alpha\oplus\beta$ is Martin-L\"of random.
  \item $\alpha$ is Martin-L\"of random relative to $\beta$, and moreover $\beta$ is Martin-L\"of random.\qed
\end{enumerate}
\end{theorem}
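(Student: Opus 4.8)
The plan is to prove both implications by contraposition, exploiting the measure-preserving correspondence between $\XI$ and $\XI\times\XI$ furnished by $(\alpha,\beta)\mapsto\alpha\oplus\beta$, under which $\Lm{\cdot}$ on $\XI$ corresponds to the product of two copies of Lebesgue measure. For a set $\mathcal{G}\subseteq\XI$ and a fixed $Y\in\XI$ write the section $\mathcal{G}^Y:=\{X\in\XI\mid X\oplus Y\in\mathcal{G}\}$; Fubini's theorem then gives $\Lm{\mathcal{G}}=\int_{\XI}\Lm{\mathcal{G}^Y}\,dY$. Two effectivity observations make the sections usable: if $\mathcal{G}=\osg{S}$ with $S$ r.e.\ then each section is open and r.e.\ relative to $Y$, uniformly in $Y$; and the map $Y\mapsto\Lm{\mathcal{G}^Y}$ is lower-semicomputable, uniformly, so a condition such as $\Lm{\mathcal{G}^Y}>2^{-n}$ defines an r.e.\ open set of $Y$'s.

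First I would treat the implication (i)$\Rightarrow$(ii). Arguing contrapositively, I assume $\alpha\oplus\beta$ is not Martin-L\"of random, witnessed by a test $\{\mathcal{C}_n\}$, and produce a violation of one of the two conditions in (ii). That $\beta$ fails randomness is the easy half: a hypothetical test $\{\mathcal{S}_n\}$ for $\beta$ pulls back along the second coordinate, since $\{X\oplus Y\mid Y\in\osg{\mathcal{S}_n}\}$ has the same measure as $\osg{\mathcal{S}_n}$ (the first coordinate being free) and is r.e., so it would already capture $\alpha\oplus\beta$. The substantial half is the relative randomness of $\alpha$: here I fix the \emph{universal} oracle Martin-L\"of $U$-test $\{\mathcal{R}^Y_n\}$ (with $U(0)=U(1)=1/2$), which by its construction satisfies $\Lm{\osg{\mathcal{R}^Y_n}}<2^{-n}$ for \emph{every} oracle $Y$ because of the built-in measure guard. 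Assuming $\alpha$ is not Martin-L\"of random relative to $\beta$, the family $\{\mathcal{R}^\beta_n\}$ captures $\alpha$; I then set $\widehat{\mathcal{R}}_n:=\{X\oplus Y\mid X\in\osg{\mathcal{R}^Y_n}\}$. Fubini and the uniform guard give $\Lm{\osg{\widehat{\mathcal{R}}_n}}=\int\Lm{\osg{\mathcal{R}^Y_n}}\,dY<2^{-n}$, r.e.-ness holds because the oracle answers are read off the $Y$-coordinate of the very sequence being enumerated, and $\alpha\oplus\beta\in\osg{\widehat{\mathcal{R}}_n}$ for all $n$, contradicting randomness of $\alpha\oplus\beta$.

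Next I would treat (ii)$\Rightarrow$(i), again contrapositively. Suppose $\alpha\oplus\beta$ is not Martin-L\"of random; re-indexing an ordinary test I may assume $\Lm{\osg{\mathcal{G}_m}}\le 2^{-2m}$ and $\alpha\oplus\beta\in\osg{\mathcal{G}_m}$ for all $m$. Put $g_m(Y):=\Lm{\{X\mid X\oplus Y\in\osg{\mathcal{G}_{m}}\}}$ and define $\mathcal{V}_m:=\{Y\mid g_m(Y)>2^{-m}\}$, an r.e.\ open set. Since $\int g_m=\Lm{\osg{\mathcal{G}_m}}\le2^{-2m}$, Markov's inequality yields $\Lm{\mathcal{V}_m}<2^{-m}$, so $\{\mathcal{V}_m\}$ is a Martin-L\"of test. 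I then split on $\beta$: if $\beta\in\mathcal{V}_m$ for infinitely many $m$ then $\beta$ is captured (the family is even a Solovay test), so $\beta$ is not random and (ii) fails; otherwise there is $m_0$ with $g_m(\beta)<2^{-m}$ for all $m\ge m_0$, whence $\{\{X\mid X\oplus\beta\in\osg{\mathcal{G}_{m}}\}\}_{m\ge m_0}$ is a Martin-L\"of test relative to $\beta$ (r.e.\ in $\beta$, of measure $<2^{-m}$) that captures $\alpha$, so $\alpha$ is not random relative to $\beta$ and again (ii) fails.

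The main obstacle is the effectivity bookkeeping underlying these clean measure statements. In Step 1 one must use the \emph{universal} oracle test precisely so that the bound $2^{-n}$ survives the pull-back for \emph{all} oracles simultaneously, since an arbitrary relative test need not have small section-measure off the actual oracle $\beta$. In Step 2 the choice of the index shift $m\mapsto 2m$ together with the threshold $2^{-m}$ is exactly what makes the \emph{same} family serve both as a genuine test for $\beta$ and, upon failing it, as a relative test for $\alpha$ of the correct rate. The one routine-but-essential lemma I would isolate is that $Y\mapsto\Lm{\osg{\mathcal{G}_m}^Y}$ is uniformly lower-semicomputable, which is what guarantees that $\mathcal{V}_m$ and the relativized sections are genuinely r.e.
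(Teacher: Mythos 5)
Your proposal is correct and follows essentially the same route as the paper's own argument (given there for the generalization in Theorem~\ref{gvL} via Theorems~\ref{only-if-part} and~\ref{if-part}): contraposition in both directions, with the ``only if'' direction using a universal oracle test whose built-in measure guard survives over all oracles plus a discrete Fubini bound, and the ``if'' direction using Markov's inequality on section measures with the $2^{-2d}$ re-indexing and the case split on whether $\beta$ lands in infinitely many of the large-section sets. The effectivity points you isolate (uniform lower semicomputability of the section measure, r.e.-ness of the pulled-back sets) are exactly the ones the paper handles with its sets $\mathcal{U}^\sigma$, $G_n(k)$, $S_d$, and $H_d(n)$.
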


We generalize van Lambalgen's Theorem as follows.

\begin{theorem}[Generalization of van Lambalgen's Theorem I]\label{gvL}
Let $\Omega_1$ and $\Omega_2$ be alphabets, and let $P_1\in\PS(\Omega_1)$ and $P_2\in\PS(\Omega_2)$.
Let $\alpha_1\in\Omega_1^\infty$ and $\alpha_2\in\Omega_2^\infty$, and
let $\beta_1,\dots,\beta_\ell$ be infinite sequences over an alphabet. 
Suppose that $P_1$ is computable.
Then $\alpha_1\times\alpha_2$ is Martin-L\"{o}f $P_1\times P_2$-random relative to
$\beta_1,\dots,\beta_\ell$
if and only if
$\alpha_1$ is Martin-L\"{o}f $P_1$-random relative to
$\alpha_2,\beta_1,\dots,\beta_\ell$
and moreover $\alpha_2$ is Martin-L\"{o}f $P_2$-random relative to $\beta_1,\dots,\beta_\ell$.
\qed
\end{theorem}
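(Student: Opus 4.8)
The plan is to prove the two directions separately, exploiting the computability of $P_1$ in an essential way. The overall strategy mirrors the proof of classical van Lambalgen's Theorem but must be adapted to an arbitrary (noncomputable) $P_2$ and to the product Bernoulli measure $\lambda_{P_1\times P_2}$. Throughout I would work with Martin-L\"of $P$-tests relative to the oracles in the sense of Definition~\ref{ML_P-randomness_rs}, and I would use repeatedly the product structure $\Bm{P_1\times P_2}{\osg{\sigma_1\times\sigma_2}}=\Bm{P_1}{\osg{\sigma_1}}\Bm{P_2}{\osg{\sigma_2}}$ for strings of equal length, which is immediate from \eqref{pBm} and the definition of $P_1\times P_2$.

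\textbf{The easy direction.} For the ``only if'' part, I would argue the contrapositive for each of the two conclusions. Suppose first that $\alpha_2$ is not Martin-L\"of $P_2$-random relative to $\beta_1,\dots,\beta_\ell$, witnessed by a relativized test $\mathcal{S}\subset\N^+\times\Omega_2^*$ with $\alpha_2\in\osg{\mathcal{S}_n}$ for all $n$. I would lift $\mathcal{S}$ to a test on $(\Omega_1\times\Omega_2)^*$ by setting $\mathcal{D}_n$ to consist of all products $\sigma_1\times\sigma_2$ with $\sigma_2\in\mathcal{S}_n$ and $\sigma_1$ ranging over $\Omega_1^{\abs{\sigma_2}}$; since $\sum_{\sigma_1\in\Omega_1^{m}}\Bm{P_1}{\osg{\sigma_1}}=1$, the measure is preserved and $\alpha_1\times\alpha_2\in\osg{\mathcal{D}_n}$, contradicting randomness of the product. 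The symmetric argument, lifting a test relative to $\alpha_2,\beta_1,\dots,\beta_\ell$ that refutes randomness of $\alpha_1$, handles the first conjunct; here the oracle for the lifted test must additionally query $\alpha_2$ in order to reconstruct the second coordinates, which is legitimate since $\alpha_2$ is among the allowed oracles.

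\textbf{The hard direction and the main obstacle.} The ``if'' part is where computability of $P_1$ is indispensable. Assuming $\alpha_1\times\alpha_2$ is not $(P_1\times P_2)$-random relative to $\beta_1,\dots,\beta_\ell$, I would take a witnessing relativized test $\mathcal{C}$ and, for each candidate second-coordinate string $\tau\in\Omega_2^*$, consider the ``section'' measuring how much of $\mathcal{C}_n$ sits above $\tau$. Concretely I would define, for a threshold, the set of $\sigma_1\in\Omega_1^{\abs{\tau}}$ with $\sigma_1\times\tau\in\mathcal{C}_n$ and use the computability of $P_1$ to \emph{decide} effectively whether the conditional $P_1$-measure of a section exceeds a given rational bound. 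A Fubini-type splitting then yields a dichotomy: either the set of $\tau$ whose section is ``heavy'' (conditional $P_1$-measure $\ge 2^{-k}$, say) has small $P_2$-measure, giving a relativized $P_2$-test refuting randomness of $\alpha_2$, or else the heavy sections themselves, read in the first coordinate, assemble into a $P_1$-test relative to $\alpha_2,\beta_1,\dots,\beta_\ell$ refuting randomness of $\alpha_1$ (the oracle $\alpha_2$ is used to locate which section is relevant). The delicate point — the main obstacle — is that the section measures $\Bm{P_1}{\{\sigma_1\mid \sigma_1\times\tau\in\mathcal{C}_n\}}$ are only left-computable in general, so comparing them against a rational threshold requires the two-sided approximability furnished by $P_1$ computable; without it, the set of heavy $\tau$ would fail to be r.e.\ relative to the oracles and the second test could not be constructed. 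I would therefore calibrate the thresholds with a summable series (choosing levels $2^{-(n+k)}$ and summing over $k$) so that both constructed objects genuinely satisfy the measure bounds of Definition~\ref{ML_P-randomness_rs}, completing the contrapositive.
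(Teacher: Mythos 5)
Your ``if'' direction follows essentially the paper's own argument (Theorem~\ref{if-part}): the Fubini-type dichotomy between ``the set of second coordinates with heavy sections has small $P_2$-measure'' and ``the heavy sections assemble into a $P_1$-test relative to $\alpha_2,\beta_1,\dots,\beta_\ell$'' is exactly what the paper does with the sets $S_d$ and $H_d(n)$, and your remark about calibrating thresholds with a summable series corresponds to the paper's use of the $2^{-2d}$ bound and the tails $\bigcup_{c\ge d}S_{c+1}$. One small inaccuracy there: enumerating the heavy second coordinates only requires semi-deciding a strict \emph{lower} bound on a section measure, so left-computability of $P_1$ already suffices (the paper assumes only left-computability in Theorem~\ref{if-part}); your appeal to two-sided approximability is not needed, but it is harmless since $P_1$ is computable.

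The genuine gap is in the ``only if'' direction, for the first conjunct. You lift a Martin-L\"of $P_1$-test relative to $\alpha_2,\beta_1,\dots,\beta_\ell$ capturing $\alpha_1$ to a test on $(\Omega_1\times\Omega_2)^*$ and justify its queries to $\alpha_2$ by saying that ``$\alpha_2$ is among the allowed oracles.'' It is not: the statement to be contradicted is that $\alpha_1\times\alpha_2$ is Martin-L\"of $P_1\times P_2$-random relative to $\beta_1,\dots,\beta_\ell$ \emph{only}, so the lifted test may query $\beta_1,\dots,\beta_\ell$ but not $\alpha_2$. Absorbing the oracle $\alpha_2$ into the product is precisely the nontrivial content of van Lambalgen's theorem. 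If you instead try to read $\alpha_2$ off the second coordinates of the product strings, two things go wrong: the original machine may query positions of $\alpha_2$ beyond the length of the string currently being enumerated, and, more seriously, when the machine is fed an arbitrary finite second coordinate $\sigma$ that is \emph{not} a prefix of $\alpha_2$, nothing guarantees that the resulting section has $P_1$-measure below $2^{-n}$, so the measure bound for the product test can fail. The paper resolves this by first passing to a \emph{universal} Martin-L\"of $P_1$-test relative to $\ell+1$ oracles (Theorem~\ref{ExistsUML_P-test_rs}, which is where the right-computability of $P_1$ enters): the universal machine satisfies $\Bm{P_1}{\osg{\mathcal{U}^\sigma_n}}<2^{-n}$ for \emph{every} finite oracle prefix $\sigma$, so the sets $G_n(k)$ of products $u\times\sigma$ with some prefix of $u$ in $\mathcal{U}^\sigma_n$ form a legitimate $P_1\times P_2$-test relative to $\beta_1,\dots,\beta_\ell$ alone. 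Your proposal is missing this step (or an equivalent clipping construction exploiting the computability of $P_1$), and without it the argument for the first conjunct does not go through. Your treatment of the second conjunct, by contrast, is fine and is a reasonable alternative to the paper's repeated application of Theorem~\ref{contraction-relative}.
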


The proof of Theorem~\ref{gvL} is obtained by
generalizing and elaborating
the proof of van Lambalgen's Theorem given in Nies~\cite[Section 3.4]{N09}.
The detail
of the proof of Theorem~\ref{gvL}
is given in the subsequent two subsections.
Note that in Theorem~\ref{gvL},
the computability of $P_1$ is
assumed
while that of $P_2$ is not required.

We have Theorem~\ref{cor_gvL} below based on Theorem~\ref{gvL}.
Note that the computability of $P_n$ is not required in Theorem~\ref{cor_gvL}.

\begin{theorem}[Generalization of van Lambalgen's Theorem II]\label{cor_gvL}
Let $n\ge 2$.
Let $\Omega_1,\dots,\Omega_n$ be alphabets, and let $P_1\in\PS(\Omega_1),\dots,P_n\in\PS(\Omega_n)$.
For each $i=1,\dots,n$, let $\alpha_i\in\Omega_i^\infty$, and
let $\beta_1,\dots,\beta_\ell$ be infinite sequences over an alphabet.
Suppose that $P_1,\dots,P_{n-1}$ are computable.
Then $\alpha_1\times\dots\times\alpha_n$ is
Martin-L\"of $P_1\times\dots\times P_n$-random relative to $\beta_1,\dots,\beta_\ell$ if and only if
for every $k=1,\dots,n$ it holds that
$\alpha_k$ is Martin-L\"of $P_k$-random relative to
$\alpha_{k+1},\dots, \alpha_n,\beta_1,\dots,\beta_\ell$.
\end{theorem}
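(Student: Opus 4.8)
The plan is to prove the statement by induction on $n\ge 2$, using the two-factor version, Theorem~\ref{gvL}, both as the base case and as the engine of the inductive step. The base case $n=2$ is literally Theorem~\ref{gvL}. For the inductive step I would assume the assertion for $n-1$ factors and establish it for $n$ factors by regrouping the product as $\alpha_1\times(\alpha_2\times\dots\times\alpha_n)$.

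Concretely, set $\gamma:=\alpha_2\times\dots\times\alpha_n$ and $Q:=P_2\times\dots\times P_n$. Under the canonical identification of $\Omega_1\times(\Omega_2\times\dots\times\Omega_n)$ with $\Omega_1\times\dots\times\Omega_n$ one has $\alpha_1\times\gamma=\alpha_1\times\dots\times\alpha_n$ and $P_1\times Q=P_1\times\dots\times P_n$, so the randomness of the $n$-fold product is the same assertion as the randomness of the two-fold product $\alpha_1\times\gamma$. Since $P_1$ is computable by hypothesis, Theorem~\ref{gvL} applies to the pair $(\alpha_1,\gamma)$ with oracle sequences $\beta_1,\dots,\beta_\ell$ and yields: $\alpha_1\times\gamma$ is Martin-L\"of $P_1\times Q$-random relative to $\beta_1,\dots,\beta_\ell$ if and only if $\alpha_1$ is Martin-L\"of $P_1$-random relative to $\gamma,\beta_1,\dots,\beta_\ell$ and $\gamma$ is Martin-L\"of $Q$-random relative to $\beta_1,\dots,\beta_\ell$.

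Next I would dispatch the two resulting conditions separately. For the second, that $\gamma=\alpha_2\times\dots\times\alpha_n$ is Martin-L\"of $Q$-random relative to $\beta_1,\dots,\beta_\ell$, the inductive hypothesis applies to the $n-1$ factors $\alpha_2,\dots,\alpha_n$ --- whose probability spaces $P_2,\dots,P_{n-1}$ are computable, exactly as required, while $P_n$ need not be --- and rewrites it as: for every $k=2,\dots,n$, $\alpha_k$ is Martin-L\"of $P_k$-random relative to $\alpha_{k+1},\dots,\alpha_n,\beta_1,\dots,\beta_\ell$. The first condition is then precisely the $k=1$ instance, once we know that relativizing to the single oracle $\gamma$ is the same as relativizing to the separate oracles $\alpha_2,\dots,\alpha_n$. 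Combining the two, the full equivalence for $n$ factors drops out.

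The one point demanding genuine care --- and the step I expect to be the main obstacle --- is exactly this oracle-equivalence remark: that Martin-L\"of $P_1$-randomness relative to $\gamma,\beta_1,\dots,\beta_\ell$ coincides with Martin-L\"of $P_1$-randomness relative to $\alpha_2,\dots,\alpha_n,\beta_1,\dots,\beta_\ell$. I would establish this by observing that an oracle Turing machine can compute each coordinate sequence $\alpha_i$ from $\gamma$ by projecting the entry $\gamma(k)=(\alpha_2(k),\dots,\alpha_n(k))$ onto its appropriate component, and conversely can reconstruct $\gamma$ from the $\alpha_i$; hence the oracle Turing machines producing $P_1$-tests relative to $\gamma,\beta_1,\dots,\beta_\ell$ and those producing $P_1$-tests relative to $\alpha_2,\dots,\alpha_n,\beta_1,\dots,\beta_\ell$ define the very same family of tests, so the two relativized randomness notions are identical. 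The same coordinatewise identification, applied to products of sequences, measures, and alphabets, also underlies the associativity facts $\alpha_1\times\gamma=\alpha_1\times\dots\times\alpha_n$ and $P_1\times Q=P_1\times\dots\times P_n$ used above; these are routine but should be recorded, since the whole induction rests on passing freely between the grouped and ungrouped forms of the product.
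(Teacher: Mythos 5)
Your proof is correct and follows the same overall strategy as the paper --- induction on $n$ with Theorem~\ref{gvL} serving as both the base case and the engine of the inductive step --- but the decomposition is genuinely different. The paper peels off the \emph{last} factor: it applies Theorem~\ref{gvL} with $P_1\times\dots\times P_m$ as the (computable) first space and $P_{m+1}$ as the second, obtaining that $\alpha_1\times\dots\times\alpha_m$ is random relative to $\alpha_{m+1},\beta_1,\dots,\beta_\ell$ and that $\alpha_{m+1}$ is random relative to $\beta_1,\dots,\beta_\ell$; the inductive hypothesis is then invoked with $\alpha_{m+1}$ appended to the oracle list, so the $n$ conditions emerge already in exactly the required form. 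You peel off the \emph{first} factor, which has two consequences. On the one hand, your application of Theorem~\ref{gvL} needs only the computability of $P_1$ itself rather than of a finite product of computable spaces (a triviality either way). On the other hand, your $k=1$ condition comes out relativized to the single product oracle $\gamma=\alpha_2\times\dots\times\alpha_n$ rather than to the tuple $\alpha_2,\dots,\alpha_n$, so you incur an extra obligation: showing that these two relativizations coincide. You correctly flag this as the delicate point, and your justification --- the mutual uniform computability of $\gamma$ and its coordinate projections, hence identical families of relativized tests --- is sound; the paper's grouping simply never generates this obligation. Both arguments rely equally on the routine associativity identifications for products of alphabets, measures, and sequences, which you are right to say should be recorded.
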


\begin{proof}
We show the result by induction on $n\ge 2$.
In the case of $n=2$, the result holds since it is precisely Theorem~\ref{gvL}.

For an arbitrary $m\ge 2$, assume that the result holds for $n=m$.
Let $\Omega_1,\dots,\Omega_{m+1}$ be alphabets,
and let $P_1\in\PS(\Omega_1),\dots,P_{m+1}\in\PS(\Omega_{m+1})$.
For each $i=1,\dots,m+1$, let $\alpha_i\in\Omega_i^\infty$, and
let $\beta_1,\dots,\beta_\ell$ be infinite sequences over an alphabet.
Suppose that $P_1,\dots,P_m$ are computable.
Then, by applying Theorem~\ref{gvL} with $P_1\times\dots\times P_m$ as $P_1$, $P_{m+1}$ as $P_2$,
$\alpha_1\times\dots\times \alpha_m$ as $\alpha_1$, and $\alpha_{m+1}$ as $\alpha_2$
in Theorem~\ref{gvL},
we have that $(\alpha_1\times\dots\times\alpha_m)\times\alpha_{m+1}$ is
Martin-L\"{o}f $(P_1\times\dots\times P_m)\times P_{m+1}$-random relative to $\beta_1,\dots,\beta_\ell$
if and only if
$\alpha_1\times\dots\times\alpha_m$ is Martin-L\"{o}f $P_1\times\dots\times P_m$-random
relative to $\alpha_{m+1},\beta_1,\dots,\beta_\ell$ and
$\alpha_{m+1}$ is Martin-L\"{o}f $P_{m+1}$-random relative to $\beta_1,\dots,\beta_\ell$.
Thus, by applying the result for $n=m$ we have the result for $n=m+1$.
This completes the proof.
\end{proof}

\subsection{The proof of the ``only if'' part of Theorem~\ref{gvL}}
\label{section-only-if-part}

We prove the following theorem, from which the ``only if'' part of Theorem~\ref{gvL} follows.

\begin{theorem}\label{only-if-part}
Let $\Omega_1$ and $\Omega_2$ be alphabets, and let $P_1\in\PS(\Omega_1)$ and $P_2\in\PS(\Omega_2)$.
Let $\alpha_1\in\Omega_1^\infty$ and $\alpha_2\in\Omega_2^\infty$, and
let $\beta_1,\dots,\beta_\ell$ be infinite sequences over an alphabet $\Theta$. 
Suppose that $P_1$ is right-computable.
If $\alpha_1\times\alpha_2$ is Martin-L\"{o}f $P_1\times P_2$-random relative to
$\beta_1,\dots,\beta_\ell$, then $\alpha_1$ is Martin-L\"of $P_1$-random relative to
$\alpha_2,\beta_1,\dots,\beta_\ell$
and moreover $\alpha_2$ is Martin-L\"{o}f $P_2$-random relative to $\beta_1,\dots,\beta_\ell$.
\qed
\end{theorem}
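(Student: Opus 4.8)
The plan is to prove the two conclusions of Theorem~\ref{only-if-part} separately, each by contraposition, treating $\beta_1,\dots,\beta_\ell$ as passive oracles carried through every test.

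First I would establish that $\alpha_2$ is Martin-L\"of $P_2$-random relative to $\beta_1,\dots,\beta_\ell$. Assuming the contrary, fix a Martin-L\"of $P_2$-test $\mathcal{C}$ relative to $\beta_1,\dots,\beta_\ell$ with $\alpha_2\in\osg{\mathcal{C}_n}$ for all $n$. I lift it along the second-coordinate projection: for each $n$ let $\mathcal{D}_n$ consist of all $\tau\in(\Omega_1\times\Omega_2)^*$ whose second-coordinate string has the same length as, and lies in, $\mathcal{C}_n$. Since $\sum_{\rho\in\Omega_1^{m}}P_1(\rho)=1$ for every $m$, summing $P_1$ over the free first coordinate gives $\Bm{P_1\times P_2}{\osg{\mathcal{D}_n}}=\Bm{P_2}{\osg{\mathcal{C}_n}}<2^{-n}$, so $\mathcal{D}$ is a Martin-L\"of $P_1\times P_2$-test relative to $\beta_1,\dots,\beta_\ell$; moreover $\alpha_1\times\alpha_2\in\osg{\mathcal{D}_n}$ for all $n$, because its second coordinate is $\alpha_2$. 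This contradicts the hypothesis. Note this half uses no computability assumption on $P_1$.

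The substantive half is that $\alpha_1$ is Martin-L\"of $P_1$-random relative to $\alpha_2,\beta_1,\dots,\beta_\ell$. Again by contraposition, fix a Martin-L\"of $P_1$-test $\mathcal{C}$ relative to $\alpha_2,\beta_1,\dots,\beta_\ell$ with $\alpha_1\in\osg{\mathcal{C}_n}$ for all $n$ and $\Bm{P_1}{\osg{\mathcal{C}_n}}<2^{-n}$. The difficulty is that $\mathcal{C}$ queries $\alpha_2$, while the target test may use only $\beta_1,\dots,\beta_\ell$; so I must integrate out the dependence on $\alpha_2$. Running the enumerating oracle machine over all finite strings $\rho\in\Omega_2^*$ in place of $\alpha_2$, I record for each $\sigma\in\Omega_1^*$ enumerated at level $m$ its oracle use $\rho$ (the queried prefix of the $\alpha_2$-oracle) and put into the candidate test $\mathcal{D}_m$ (relative to $\beta_1,\dots,\beta_\ell$ only) the strings over $\Omega_1\times\Omega_2$ generating the product cylinder $\osg{\sigma}\times\osg{\rho}$. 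Identifying $(\Omega_1\times\Omega_2)^\infty$ with $\Omega_1^\infty\times\Omega_2^\infty$ and $\lambda_{P_1\times P_2}$ with the product measure, Fubini gives $\Bm{P_1\times P_2}{\osg{\mathcal{D}_m}}=\int \Bm{P_1}{(\osg{\mathcal{D}_m})_{\gamma_2}}\,d\lambda_{P_2}(\gamma_2)$, where the section at $\gamma_2$ is the $P_1$-open set enumerated with oracle $\gamma_2$.

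The crux, and the main obstacle, is that this section has $P_1$-measure below $2^{-m}$ only when $\gamma_2=\alpha_2$; for a spurious oracle $\gamma_2$ it may be large, destroying the test bound. I therefore cap the construction: along each branch $\gamma_2$ a newly enumerated $\sigma$ is admitted only while the accumulated $P_1$-measure of the admitted cylinders stays below $2^{-m}$, with the cap organized at each node $\rho\in\Omega_2^*$ so as to be consistent across all branches extending $\rho$. Under the cap every section has $P_1$-measure at most $2^{-m}$, so Fubini yields $\Bm{P_1\times P_2}{\osg{\mathcal{D}_m}}\le 2^{-m}$, and taking level $m=n+1$ for the $n$th component of $\mathcal{D}$ gives the strict bound needed for a test. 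The cap never binds along the true oracle $\alpha_2$, since there the total admitted $P_1$-measure is $\Bm{P_1}{\osg{\mathcal{C}_m^{\alpha_2}}}<2^{-m}$ by hypothesis, so $\alpha_1\times\alpha_2$ stays captured in every $\osg{\mathcal{D}_m}$. Executing the cap effectively is precisely where right-computability of $P_1$ enters: to admit a string while keeping the running measure below $2^{-m}$ I need rational upper bounds on finite unions of cylinder measures, i.e.\ upper approximations to the values $\Bm{P_1}{\osg{\sigma}}=P_1(\sigma)$, available because each $P_1(a)$---and hence each finite product and finite sum---is right-computable. Admitting on the basis of such upper bounds guarantees the measure never exceeds $2^{-m}$ for any oracle, while letting the approximations tighten over stages guarantees that every $\sigma$ whose admission is genuinely safe (in particular all those arising along $\alpha_2$) is eventually admitted. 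This contradicts the randomness of $\alpha_1\times\alpha_2$ relative to $\beta_1,\dots,\beta_\ell$ and completes the contraposition.
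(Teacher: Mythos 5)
Your proof is correct, and its overall skeleton matches the paper's: both halves go by contraposition, and the hard half in both cases simulates the oracle machine on all finite candidate oracle strings $\rho\in\Omega_2^*$ and bounds $\Bm{P_1\times P_2}{\osg{\mathcal{D}_m}}$ by summing the $P_1$-measures of the sections against $\lambda_{P_2}$ (your Fubini step is, for these cylinder-generated open sets, exactly the paper's sum $\sum_{\sigma\in\Omega_2^k}\Bm{P_1}{\osg{\mathcal{U}^\sigma_n}}\Bm{P_2}{\osg{\sigma}}$). The one genuine difference is how the two arguments obtain the measure bound \emph{uniformly over all oracles}, which is the crux you correctly identify. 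The paper first replaces the given test by a \emph{universal} Martin-L\"of $P_1$-test relative to $\ell+1$ oracles (Theorem~\ref{ExistsUML_P-test_rs}, whose existence is where right-computability of $P_1$ is consumed, and which the paper does not prove); universality then gives $\Bm{P_1}{\osg{\mathcal{U}^\sigma_n}}<2^{-n}$ for every finite oracle string $\sigma$ with no further work. You instead keep the arbitrary test and enforce the bound by an explicit effective cap on the admitted measure, using rational upper approximations to $P_1(\sigma)$ --- which is essentially the proof of the universal-test lemma inlined into the main argument. Your route is more self-contained (it does not rely on an unproved auxiliary theorem) at the cost of the extra bookkeeping you flag: the cap must be coherent across all branches extending a node, and the tightening upper approximations must guarantee that every string enumerated along the true oracle $\alpha_2$ is eventually admitted, so that $\alpha_1\times\alpha_2$ remains captured. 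Both of those points are handled correctly in your sketch. Your treatment of the easy half (lifting the $P_2$-test along the second-coordinate projection, using $\sum_{\rho\in\Omega_1^m}P_1(\rho)=1$) also differs cosmetically from the paper, which instead applies Theorem~\ref{contraction-relative} repeatedly to contract $\Omega_1$ to a single symbol; the two are interchangeable and, as you note, neither needs any computability hypothesis on $P_1$.
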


In order to prove Theorem~\ref{only-if-part},
we use the notion of \emph{universal Martin-L\"of $P$-test relative to infinite sequences}.

\begin{definition}[Universal Martin-L\"of $P$-test relative to infinite sequences]\label{UML_P-test_rs}
Let $\Omega$ be an alphabet, and let $P\in\PS(\Omega)$.
Let $\Theta$ be an alphabet, and let $\ell\in\N^+$.
An oracle Turing machine $\mathcal{M}$ is called a
\emph{universal Martin-L\"of $P$-test relative to $\ell$ infinite sequences over $\Theta$}
if for every $\beta_1,\dots,\beta_\ell\in\Theta^\infty$ there exists
$\mathcal{C}$
such that
\begin{enumerate}
  \item 
    $\mathcal{C}=\{x\in\N^+\times \Omega^*\mid
    \text{$\mathcal{M}$ accepts $x$ relative to $\beta_1,\dots,\beta_\ell$}\}$,
  \item for every $n\in\N^+$ it holds that
    $\mathcal{C}_n$ is a prefix-free subset of $\Omega^*$ and
    $\Bm{P}{\osg{\mathcal{C}_n}}<2^{-n}$
    where
    $\mathcal{C}_n$ denotes the set
    $\left\{\,
      \sigma\mid (n,\sigma)\in\mathcal{C}
    \,\right\}$, and
  \item for every Martin-L\"{o}f $P$-test $\mathcal{D}$ relative to $\beta_1,\dots,\beta_\ell$,
    \[
      \bigcap_{n=1}^\infty \osg{\mathcal{D}_n}\subset \bigcap_{n=1}^\infty \osg{\mathcal{C}_n}.
    \]
    \qed
\end{enumerate}
\end{definition}

It is then easy to show the following theorem.

\begin{theorem}\label{ExistsUML_P-test_rs}
Let $\Omega$ be an alphabet, and let $P\in\PS(\Omega)$.
Let $\Theta$ be an alphabet, and let $\ell\in\N^+$.
Suppose that $P$ is right-computable.
Then there exists a universal Martin-L\"of $P$-test
relative to $\ell$ infinite sequences over $\Theta$.
\qed
\end{theorem}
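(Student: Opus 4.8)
The plan is to construct a single oracle Turing machine $\mathcal{M}$ by the standard universal-test construction, adapted to the Bernoulli measure $\lambda_P$ and to relativized computation. First I would fix an effective enumeration $\mathcal{M}^{(1)},\mathcal{M}^{(2)},\dots$ of all oracle Turing machines with inputs from $\N^+\times\Omega^*$ that query $\ell$ oracles over $\Theta$. For fixed $\beta_1,\dots,\beta_\ell\in\Theta^\infty$, each $\mathcal{M}^{(e)}$ accepts some relativized-r.e.\ set $V^{(e)}\subset\N^+\times\Omega^*$, which need not satisfy the measure bound in Definition~\ref{UML_P-test_rs}. The idea is to \emph{trim} each $V^{(e)}$ into a set $\tilde{\mathcal{C}}^{(e)}$ that always satisfies $\Bm{P}{\osg{\tilde{\mathcal{C}}^{(e)}_n}}<2^{-n}$, while leaving $V^{(e)}$ unchanged whenever it already is a genuine test, and then to amalgamate the trimmed tests with an offset so that the total measure telescopes.

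The trimming is where right-computability of $P$ is essential, and it is the main obstacle. For each level $n$ I would enumerate the strings of $V^{(e)}_n$, accumulate a growing finite set $E$ of committed strings, and commit a newly enumerated $\sigma$ to $\tilde{\mathcal{C}}^{(e)}_n$ only after confirming $\Bm{P}{\osg{E\cup\{\sigma\}}}<2^{-n}$. Since $\lambda_P$ of a finite open set equals the finite sum $\sum P(\tau)$ over the prefix-free reduct of $E\cup\{\sigma\}$, and since each $P(a)$ — hence each $P(\tau)$ and hence this finite sum — is right-computable, one computes a decreasing sequence of rational upper bounds converging to $\Bm{P}{\osg{E\cup\{\sigma\}}}$. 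The strict inequality $<2^{-n}$ is therefore semidecidable: if it holds, some rational upper bound eventually drops below $2^{-n}$ and we commit $\sigma$. This directionality is exactly what right-computability supplies and what a merely left-computable $P$ could not. Two properties must then be checked: (a) each trimmed level has measure $<2^{-n}$, which is immediate from the acceptance rule; and (b) if $V^{(e)}$ is itself a Martin-L\"of $P$-test relative to $\beta_1,\dots,\beta_\ell$, then every string of $V^{(e)}_n$ is eventually committed, so $\tilde{\mathcal{C}}^{(e)}_n=V^{(e)}_n$ — this holds because $\Bm{P}{\osg{V^{(e)}_n}}<2^{-n}$ forces every finite $E\subset V^{(e)}_n$ to have measure strictly below $2^{-n}$ by monotonicity, whence its upper approximations clear the threshold.

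Finally I would amalgamate by setting $\mathcal{C}_n:=\bigcup_{e\ge 1}\tilde{\mathcal{C}}^{(e)}_{n+e+1}$ and letting $\mathcal{M}$ be the oracle machine that dovetails the trimming procedures over all $e$ and accepts $(n,\sigma)$ exactly when $\sigma$ is committed to some $\tilde{\mathcal{C}}^{(e)}_{n+e+1}$; this yields condition (i). Condition (ii) follows from countable subadditivity of $\lambda_P$ together with the geometric estimate
\[
  \Bm{P}{\osg{\mathcal{C}_n}}\le\sum_{e\ge 1}\Bm{P}{\osg{\tilde{\mathcal{C}}^{(e)}_{n+e+1}}}<\sum_{e\ge 1}2^{-(n+e+1)}=2^{-n-1}<2^{-n}.
\]
For universality (iii), given any Martin-L\"of $P$-test $\mathcal{D}$ relative to $\beta_1,\dots,\beta_\ell$, pick $e_0$ with $V^{(e_0)}=\mathcal{D}$; by property (b) the trimming leaves it unchanged, so $\tilde{\mathcal{C}}^{(e_0)}_m=\mathcal{D}_m$ for all $m$, and since $\osg{\mathcal{C}_n}\supset\osg{\tilde{\mathcal{C}}^{(e_0)}_{n+e_0+1}}=\osg{\mathcal{D}_{n+e_0+1}}$ for every $n$, any $x\in\bigcap_m\osg{\mathcal{D}_m}$ lies in $\osg{\mathcal{C}_n}$ for every $n$, giving $\bigcap_m\osg{\mathcal{D}_m}\subset\bigcap_n\osg{\mathcal{C}_n}$. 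Note that the definitions impose no nesting requirement on the levels of a test, so no monotonicity of $\osg{\mathcal{C}_n}$ in $n$ needs to be arranged.
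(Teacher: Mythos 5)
Your construction is correct and is precisely the standard universal-test argument (relativized enumeration of all oracle machines, semidecidable trimming of each level, amalgamation with an index offset) that the paper implicitly invokes when it declares Theorem~\ref{ExistsUML_P-test_rs} ``easy to show''; in particular you correctly isolate right-computability of $P$ as exactly what makes the threshold condition $\Bm{P}{\osg{E\cup\{\sigma\}}}<2^{-n}$ semidecidable via decreasing rational upper bounds on the finite sums $\sum_\tau P(\tau)$. One small imprecision: the acceptance rule only guarantees $\Bm{P}{\osg{\tilde{\mathcal{C}}^{(e)}_n}}\le 2^{-n}$ in the limit (each committed finite stage has measure strictly below $2^{-n}$, but the supremum over stages need not), yet this is harmless because your offset still gives $\Bm{P}{\osg{\mathcal{C}_n}}\le\sum_{e\ge 1}2^{-(n+e+1)}=2^{-n-1}<2^{-n}$.
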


In a similar manner to the proof of Theorem~\ref{contraction}
we can show the following theorem in the context of relativized computation.

\begin{theorem}\label{contraction-relative}
Let $\Omega$ be an alphabet, and let $P\in\PS(\Omega)$. Let
$\beta_1,\dots,\beta_\ell$ be infinite sequences over an alphabet.
Let $\alpha\in\Omega^\infty$, and
let $a$ and $b$ be distinct elements
of
$\Omega$.
Suppose that $\gamma$ is an infinite sequence over $\Omega\setminus\{b\}$
obtained by replacing all occurrences of $b$ by $a$ in $\alpha$.
If $\alpha$ is Martin-L\"of $P$-random relative to $\beta_1,\dots,\beta_\ell$
then $\gamma$ is Martin-L\"of $Q$-random relative to $\beta_1,\dots,\beta_\ell$,
where
$Q\in\PS(\Omega\setminus\{b\})$
such that $Q(x):=P(a)+P(b)$ if $x=a$ and $Q(x):=P(x)$ otherwise.\qed
\end{theorem}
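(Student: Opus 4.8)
The plan is to prove the contrapositive by relativizing, essentially verbatim, the proof of Theorem~\ref{contraction}; the only genuinely new point is to check that every construction there is effective \emph{relative to} the oracle $\beta_1,\dots,\beta_\ell$, so that the plain Martin-L\"of $P$-test produced in Theorem~\ref{contraction} becomes a Martin-L\"of $P$-test relative to $\beta_1,\dots,\beta_\ell$. Accordingly, I would assume that $\gamma$ is \emph{not} Martin-L\"of $Q$-random relative to $\beta_1,\dots,\beta_\ell$ and produce a witness showing that $\alpha$ is not Martin-L\"of $P$-random relative to the same oracle.

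First I would fix a Martin-L\"of $Q$-test $\mathcal{S}$ relative to $\beta_1,\dots,\beta_\ell$ with $\gamma\in\osg{\mathcal{S}_n}$ for every $n\in\N^+$, and reuse the map $f$ from the proof of Theorem~\ref{contraction}: for $\sigma\in(\Omega\setminus\{b\})^*$, let $f(\sigma)$ be the set of all $\tau\in\Omega^*$ obtained by replacing some or none of the occurrences of $a$ in $\sigma$ by $b$. Defining $\mathcal{T}$ by $\mathcal{T}_n=\bigcup_{\sigma\in\mathcal{S}_n}f(\sigma)$, the identity $\Bm{Q}{\osg{\sigma}}=P(f(\sigma))=\Bm{P}{\osg{f(\sigma)}}$, and hence $\Bm{P}{\osg{\mathcal{T}_n}}=\Bm{Q}{\osg{\mathcal{S}_n}}<2^{-n}$, carry over exactly as before, since these equalities refer only to $P$, $Q$, and $f$ and make no mention of the oracle.

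The key step is to verify that $\mathcal{T}$ is recursively enumerable relative to $\beta_1,\dots,\beta_\ell$, i.e.\ that it is accepted by some oracle Turing machine using precisely that oracle. By hypothesis $\mathcal{S}$ is accepted by an oracle Turing machine $\mathcal{M}$ with oracle $\beta_1,\dots,\beta_\ell$. Since $f(\sigma)$ is a finite set (of $2^{k}$ strings when $\sigma$ contains $k$ occurrences of $a$) computed from $\sigma$ by a total, oracle-free procedure, I would build an oracle machine $\mathcal{M}'$ that, on input $(n,\tau)$, searches for some $\sigma$ with $\tau\in f(\sigma)$ and simulates $\mathcal{M}$ to test $(n,\sigma)\in\mathcal{S}$, passing every oracle query of $\mathcal{M}$ through unchanged. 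Then $\mathcal{M}'$ accepts $\mathcal{T}$ using the same oracle $\beta_1,\dots,\beta_\ell$, so $\mathcal{T}$ is a Martin-L\"of $P$-test relative to $\beta_1,\dots,\beta_\ell$.

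Finally, exactly as in Theorem~\ref{contraction}, the membership $\gamma\in\osg{\mathcal{S}_n}$ forces $\alpha\in\osg{\mathcal{T}_n}$ for every $n$: if $\sigma$ is a prefix of $\gamma$ lying in $\mathcal{S}_n$, then the equal-length prefix of $\alpha$ is obtained from $\sigma$ by turning the appropriate occurrences of $a$ back into $b$, hence lies in $f(\sigma)\subset\mathcal{T}_n$. Therefore $\alpha$ is not Martin-L\"of $P$-random relative to $\beta_1,\dots,\beta_\ell$, which completes the contrapositive. I expect no serious obstacle here: the entire content is the routine observation that the reduction of the original proof is oracle-preserving, so the single point deserving care is the construction of $\mathcal{M}'$ and the verification that it queries nothing beyond $\beta_1,\dots,\beta_\ell$.
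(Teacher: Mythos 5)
Your proposal is correct and follows exactly the route the paper intends: the paper gives no written proof of this theorem beyond the remark that it is obtained ``in a similar manner to the proof of Theorem~\ref{contraction} \dots in the context of relativized computation,'' and your argument is precisely that relativization, with the only nontrivial point (that the transformed test $\mathcal{T}$ is r.e.\ relative to the same oracle $\beta_1,\dots,\beta_\ell$) correctly identified and handled.
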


Then, using Theorems~\ref{contraction-relative} and \ref{ExistsUML_P-test_rs},
we can prove Theorem~\ref{only-if-part} as follows.

\begin{proof}[Proof of Theorem~\ref{only-if-part}]
First, we show that
if $\alpha_1\times\alpha_2$ is Martin-L\"{o}f $P_1\times P_2$-random relative to
$\beta_1\dots,\beta_\ell$
then $\alpha_2$ is Martin-L\"of $P_2$-random relative to $\beta_1,\dots,\beta_\ell$.
This is easily shown using Theorem~\ref{contraction-relative} repeatedly.

Next, we show that if $\alpha_1\times\alpha_2$ is Martin-L\"{o}f $P_1\times P_2$-random relative to
$\beta_1\dots,\beta_\ell$
then $\alpha_1$ is Martin-L\"of $P_1$-random relative to $\alpha_2,\beta_1,\dots,\beta_\ell$.
Since $P_1$ is right-computable, it follows from Theorem~\ref{ExistsUML_P-test_rs} that
there exists a universal Martin-L\"of $P_1$-test
relative to $\ell+1$ infinite sequences over $\Omega_2\cup\Theta$.
Thus, there exists an oracle Turing machine $\mathcal{M}$ such that
for every $\gamma\in\Omega_2^\infty$ there exists
$\mathcal{C}$
such that
\begin{enumerate}
  \item
    $\mathcal{C}
    =\{x\in\N^+\times \Omega_1^*\mid
    \text{$\mathcal{M}$ accepts $x$ relative to $\gamma,\beta_1,\dots,\beta_\ell$}\}$,
  \item for every $n\in\N^+$ it holds that $\mathcal{C}_n$ is a prefix-free subset of $\Omega_1^*$ and
    $\Bm{P_1}{\osg{\mathcal{C}_n}}<2^{-n}$, and
  \item for every Martin-L\"{o}f $P_1$-test $\mathcal{D}$ relative to $\gamma,\beta_1,\dots,\beta_\ell$,
    \begin{equation}\label{bcDnssbcCn-UMLP1}
      \bigcap_{n=1}^\infty \osg{\mathcal{D}_n}\subset \bigcap_{n=1}^\infty \osg{\mathcal{C}_n}.
    \end{equation}
\end{enumerate}
We choose any particular $a\in\Omega_2$.
Then,
for each $\sigma\in\Omega_2^*$,
let $\mathcal{U}^\sigma$ be the set of all $x\in\N^+\times \Omega_1^*$ such that
$\mathcal{M}$ accepts $x$ relative to $\sigma a^\infty,\beta_1,\dots,\beta_\ell$
with oracle access only to the prefix of $\sigma a^\infty$ of length $\abs{\sigma}$
in the first infinite sequence.
Here, $\sigma a^\infty$ denotes the infinite sequence over $\Omega_2$
which is the concatenation of the finite string $\sigma$ and the infinite sequence consisting only of $a$.
It follows that
\begin{equation}\label{BmP1osgmUsn<2-n_RtIS}
  \Bm{P_1}{\osg{\mathcal{U}^\sigma_n}}<2^{-n}
\end{equation}
for every $\sigma\in\Omega_2^*$ and every $n\in\N^+$,
where
$\mathcal{U}^\sigma_n:=
    \left\{\,
      \tau\bigm|(n,\tau)\in\mathcal{U}^\sigma
    \,\right\}$.
For each $k,n\in\N^+$, let
\[
  G_n(k)=\{u\times\sigma\mid u\in\Omega_1^k\text{ \& }\sigma\in\Omega_2^k\text{ \& Some prefix of $u$ is in $\mathcal{U}^\sigma_n$}\}.
\]
Then,
it is easy to see that
$G_n(k)$ is r.e.~relative to $\beta_1,\dots,\beta_\ell$ uniformly in $n$ and $k$.
Note that
\[
  \osg{G_n(k)}=\bigcup_{\sigma\in\Omega_2^k}\bigcup_{u\in S_n(k,\sigma)}\osg{u\times\sigma},
\]
for every $n,k\in\N^+$,
where $S_n(k,\sigma):=\{u\in\Omega_1^k\mid\text{Some prefix of $u$ is in $\mathcal{U}^\sigma_n$}\}$.
Therefore, for each $n,k\in\N^+$,
we see that
\begin{align*}
  \Bm{P_1\times P_2}{\osg{G_n(k)}}
  &=\sum_{\sigma\in\Omega_2^k}\sum_{u\in S_n(k,\sigma)}\Bm{P_1\times P_2}{\osg{u\times\sigma}}
  =\sum_{\sigma\in\Omega_2^k}\Bm{P_2}{\osg{\sigma}}\sum_{u\in S_n(k,\sigma)}\Bm{P_1}{\osg{u}} \\
  &=\sum_{\sigma\in\Omega_2^k}\Bm{P_2}{\osg{\sigma}}
      \Bm{P_1}{\osg{\mathcal{U}^\sigma_n\cap\Omega_1^{\le k}}}
  \le \sum_{\sigma\in\Omega_2^k}\Bm{P_1}{\osg{\mathcal{U}^\sigma_n}}\Bm{P_2}{\osg{\sigma}} \\
  &<\sum_{\sigma\in\Omega_2^k}2^{-n}\Bm{P_2}{\osg{\sigma}}
  = 2^{-n},
\end{align*}
where the last inequality follows from \eqref{BmP1osgmUsn<2-n_RtIS} (and
the fact that $\Bm{P_2}{\osg{\sigma}}>0$ for some $\sigma\in\Omega_2^k$). 
On the other hand, it follows that $\osg{G_n(k)}\subset\osg{G_n(k+1)}$
for every $n,k\in\N^+$.
For each $n\in\N^+$, let $G_n=\bigcup_{k=1}^\infty G_n(k)$.
Then $G_n$ is r.e.~relative to $\beta_1,\dots,\beta_\ell$ uniformly in $n$, and
\[
  \Bm{P_1\times P_2}{\osg{G_n}}\le 2^{-n}
\]
for every $n\in\N^+$.
We define $\mathcal{A}$ to be a subset of $\N^+\times (\Omega_1\times\Omega_2)^*$ such that
$\mathcal{A}_n=G_{n+1}$ for every $n\in\N^+$.
Then  $\mathcal{A}$ is r.e.~relative to $\beta_1,\dots,\beta_\ell$ and
\begin{equation}\label{BmP1timesP2omAn<2-n}
  \Bm{P_1\times P_2}{\osg{\mathcal{A}_n}}< 2^{-n}
\end{equation}
for every $n\in\N^+$.

Now, assume that
$\alpha_1$ is not Martin-L\"of $P_1$-random relative to $\alpha_2,\beta_1,\dots,\beta_\ell$.
Then,
there exists a Martin-L\"{o}f $P_1$-test $\mathcal{E}$ relative to $\alpha_2,\beta_1,\dots,\beta_\ell$
such that $\alpha_1\in\osg{\mathcal{E}_n}$ for every $n\in\N^+$.
It follows from \eqref{bcDnssbcCn-UMLP1} that
there exists
$\mathcal{C}$
such that
\begin{enumerate}
  \item
    $\mathcal{C}
    =\{x\in\N^+\times \Omega_1^*\mid
    \text{$\mathcal{M}$ accepts $x$ relative to $\alpha_2,\beta_1,\dots,\beta_\ell$}\}$, and
  \item
    \[
      \alpha_1\in\bigcap_{n=1}^\infty \osg{\mathcal{C}_n}.
    \]
\end{enumerate}
Let $n\in\N^+$.
Then there exists $m\in\N^+$ such that $\rest{\alpha_1}{m}\in\mathcal{C}_{n+1}$.
Then, there exists $k\ge m$ such that
$\mathcal{M}$ accepts $(n+1,\rest{\alpha_1}{m})$ relative to $\alpha_2,\beta_1,\dots,\beta_\ell$
with oracle access only to the prefix of $\alpha_2$ of length $k$ in the first infinite sequence $\alpha_2$.
It follows that $\rest{\alpha_1}{m}\in\mathcal{U}^{\reste{\alpha_2}{k}}_{n+1}$.
Thus, $\rest{\alpha_1}{k}\times\rest{\alpha_2}{k}\in G_{n+1}(k)$,
and therefore $\alpha_1\times\alpha_2\in\osg{G_{n+1}(k)}\subset\osg{G_{n+1}}=\osg{\mathcal{A}_n}$.
Hence,
it follows from
\eqref{BmP1timesP2omAn<2-n} and
Theorem~\ref{ML_P-randomness_eliminated-prefix-freeness-relative-to-infinite-sequences} that
$\alpha_1\times\alpha_2$ is not Martin-L\"{o}f $P_1\times P_2$-random relative to
$\beta_1,\dots,\beta_\ell$.
This completes the proof.
\end{proof}

\subsection{The proof of the ``if'' part of Theorem~\ref{gvL}}
\label{section-if-part}

Next, we prove the following theorem, from which the ``if'' part of Theorem~\ref{gvL} follows.

\begin{theorem}\label{if-part}
Let $\Omega_1$ and $\Omega_2$ be alphabets, and let $P_1\in\PS(\Omega_1)$ and $P_2\in\PS(\Omega_2)$.
Let $\alpha_1\in\Omega_1^\infty$ and $\alpha_2\in\Omega_2^\infty$, and
let $\beta_1,\dots,\beta_\ell$ be infinite sequences over an alphabet. 
Suppose that $P_1$ is left-computable.
If $\alpha_1$ is Martin-L\"of $P_1$-random relative to $\alpha_2,\beta_1,\dots,\beta_\ell$
and $\alpha_2$ is Martin-L\"{o}f $P_2$-random relative to $\beta_1,\dots,\beta_\ell$,
then
$\alpha_1\times\alpha_2$ is Martin-L\"{o}f $P_1\times P_2$-random relative to
$\beta_1,\dots,\beta_\ell$. 
\end{theorem}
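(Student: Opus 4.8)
The plan is to prove the contrapositive: assuming that $\alpha_1 \times \alpha_2$ is \emph{not} Martin-L\"of $P_1\times P_2$-random relative to $\beta_1,\dots,\beta_\ell$, I will show that either $\alpha_2$ fails to be Martin-L\"of $P_2$-random relative to $\beta_1,\dots,\beta_\ell$, or else $\alpha_1$ fails to be Martin-L\"of $P_1$-random relative to $\alpha_2,\beta_1,\dots,\beta_\ell$. So let $\mathcal{C}$ be a Martin-L\"of $P_1\times P_2$-test relative to $\beta_1,\dots,\beta_\ell$ with $\alpha_1\times\alpha_2 \in \bigcap_n \osg{\mathcal{C}_n}$. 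The strategy, following Nies, is to split the ``mass'' captured by $\mathcal{C}_n$ according to its $\Omega_2$-marginal: define, for each string $\sigma \in \Omega_2^*$ in the second coordinate, the conditional section of the test as seen after fixing that prefix, and then separate the sequences $\sigma$ into those whose conditional $P_1$-measure is large (a ``heavy'' set that will feed a $P_2$-test) and the residual behavior (which, when $\sigma$ is not heavy, will feed a relativized $P_1$-test with $\alpha_2$ as oracle).

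More concretely, for each level $n$ and each $\sigma\in\Omega_2^*$, I would consider the set of $u\in\Omega_1^*$ such that $u\times\sigma$ (suitably matched in length) enters $\osg{\mathcal{C}_n}$, and measure its $\Bm{P_1}{\cdot}$-value. Doubling the index to work at level $2n$, I define the heavy set $H_n := \{\sigma \mid \Bm{P_1}{(\text{the }\sigma\text{-section of }\mathcal{C}_{2n})} \ge 2^{-n}\}$. A Fubini/averaging computation using $\Bm{P_1\times P_2}{\osg{\mathcal{C}_{2n}}}<2^{-2n}$ gives $\Bm{P_2}{\osg{H_n}} < 2^{-n}$, so $\{(n,\sigma)\mid \sigma\in H_n\}$ is a $P_2$-test relative to $\beta_1,\dots,\beta_\ell$. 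If $\alpha_2\in\osg{H_n}$ for infinitely many $n$, we are done on the $P_2$ side. Otherwise, $\rest{\alpha_2}{k}\notin H_n$ for all large $k$ and the relevant $n$; in that case the $\alpha_2$-sections of the $\mathcal{C}_{2n}$ each have $P_1$-measure below $2^{-n}$, and assembling these sections (which are r.e.\ relative to $\alpha_2,\beta_1,\dots,\beta_\ell$ by querying $\alpha_2$ as an oracle) yields a Martin-L\"of $P_1$-test relative to $\alpha_2,\beta_1,\dots,\beta_\ell$ that captures $\alpha_1$.

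The role of the \textbf{left-computability} of $P_1$ is the crux, and it enters precisely in the effectivity of the heavy-set construction. Deciding ``$\Bm{P_1}{(\text{section})} \ge 2^{-n}$'' is not decidable for an arbitrary real-valued measure, but because $\Bm{P_1}{\osg{\cdot}}$ is given by a \emph{left-computable} real and the relevant sections are unions of basic cylinders that we are enumerating anyway, the condition ``the section's measure \emph{exceeds} some rational threshold'' is r.e.: we can enumerate $\sigma$ into $H_n$ once enough cylinders have been observed to push a lower approximation of the section measure past the threshold. This is exactly where I expect the main obstacle to lie --- keeping the threshold bookkeeping consistent so that (a) the enumeration of $H_n$ is genuinely r.e.\ relative to the oracles, and (b) the measure estimate $\Bm{P_2}{\osg{H_n}}<2^{-n}$ survives the slack introduced by approximating from below.

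Finally, I would remark that this ``if'' direction needs only left-computability of $P_1$, matching the ``only if'' direction of Theorem~\ref{only-if-part} which needed only right-computability; together, under the \emph{computability} hypothesis of Theorem~\ref{gvL} (which is both left- and right-computability by the Proposition in Section~\ref{FPS}), the two halves combine to give the full equivalence.
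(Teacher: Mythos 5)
Your plan is correct and follows essentially the same route as the paper's proof: both decompose the product test by its $\Omega_2$-sections, form the ``heavy'' set of second-coordinate strings whose conditional $P_1$-measure exceeds $2^{-d}$ (made r.e.\ via the left-computability of $P_1$), bound its $P_2$-measure by a Fubini-type computation against the $2^{-2d}$ bound, and then split into the case where $\alpha_2$ is captured infinitely often versus the case where the $\alpha_2$-sections assemble into a $P_1$-test relative to $\alpha_2,\beta_1,\dots,\beta_\ell$ capturing $\alpha_1$. The only details to watch in a full write-up are that the heavy-set condition must be a strict inequality (as you implicitly acknowledge when you say ``exceeds''), that the infinitely-often case needs the tail unions $\bigcup_{c\ge d}S_{c+1}$ to become an actual test, and that bounding the section measure requires $\Bm{P_2}{\osg{\rest{\alpha_2}{n}}}>0$, which the paper extracts from the $P_2$-randomness of $\alpha_2$.
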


\begin{proof}
Suppose that $\alpha_1\times\alpha_2$ is not Martin-L\"{o}f $P_1\times P_2$-random relative to
$\beta_1,\dots,\beta_\ell$. 
Then there exists a Martin-L\"of $P_1\times P_2$-test $\mathcal{V}$ relative to
$\beta_1,\dots,\beta_\ell$ such that
\begin{enumerate}
  \item $\mathcal{V}_d$ is prefix-free
    for every $d\in\N^+$,
  \item
    \begin{equation}\label{BmP1tP2omVd<2-2d}
      \Bm{P_1\times P_2}{\osg{\mathcal{V}_d}}<2^{-2d}
    \end{equation}
    for every $d\in\N^+$, and
  \item $\alpha_1\times\alpha_2\in\osg{\mathcal{V}_d}$
    for every $d\in\N^+$.
\end{enumerate}

On the one hand, for each $x\in\Omega_2^*$, we use $[\emptyset\times x]$ to denote the set
\[
  \{\gamma_1\times\gamma_2\mid
  \text{$\gamma_1\in\Omega_1^\infty$, $\gamma_2\in\Omega_2^\infty$,
  and $x$ is a prefix of $\gamma_2$}\}.
\]
On the other hand, 
for each $\sigma\in(\Omega_1\times\Omega_2)^*$,
we use $\Projl{\sigma}$ to denote
a finite string $\sigma_1\in\Omega_1^*$ such that
there exists a finite string $\sigma_2\in\Omega_2^*$ with the property that
$\sigma=\sigma_1\times\sigma_2$.
For each $x\in\Omega_2^*$ and $W\subset(\Omega_1\times\Omega_2)^{\le\abs{x}}$,
we use $F(W,x)$ to denote the set of all
$\sigma\in W$ such that
there exist $\sigma_1\in\Omega_1^*$ and $\sigma_2\in\Omega_2^*$ for which
(i) $\sigma=\sigma_1\times\sigma_2$ and
(ii) $\sigma_2$ is a prefix of $x$.
Finally, for each $x\in\Omega_2^*$ and $W\subset(\Omega_1\times\Omega_2)^{\le\abs{x}}$,
we use $P_1(W,x)$ to denote the sum
\[
  \sum_{\sigma\in F(W,x)}P_1(\Projl{\sigma}).
\]
Then, it is easy to see that
\begin{equation}\label{FWx}
  P_1(W,x)P_2(x)=\Bm{P_1\times P_2}{\osg{W}\cap[\emptyset\times x]}
\end{equation}
for every $x\in\Omega_2^*$ and every prefix-free subset $W$ of
$(\Omega_1\times\Omega_2)^{\le\abs{x}}$.

For each $d\in\N^+$, let
\[
  S_d=\bigl\{x\in\Omega_2^*\bigm|
  2^{-d}<P_1(\mathcal{V}_d\cap (\Omega_1\times\Omega_2)^{\le\abs{x}},x)\bigr\}.
\]
Since $P_1$ is left-computable, $S_d$ is r.e.~relative to $\beta_1,\dots,\beta_\ell$ uniformly in $d$.

Let $d\in\N^+$.
Let $\{x_i\}$ be a listing of the minimal strings in $S_d$.
It follows
from \eqref{FWx}
that
\begin{align*}
  2^{-d}\Bm{P_2}{\osg{x_i}}&=2^{-d}P_2(x_i)
  \le P_1(\mathcal{V}_d\cap (\Omega_1\times\Omega_2)^{\le\abs{x_i}},x_i)P_2(x_i) \\
  &=\Bm{P_1\times P_2}{\osg{\mathcal{V}_d\cap (\Omega_1\times\Omega_2)^{\le\abs{x_i}}}\cap[\emptyset\times x_i]} \\
  &\le\Bm{P_1\times P_2}{\osg{\mathcal{V}_d}\cap[\emptyset\times x_i]}.
\end{align*}
Since the sets
$\{\osg{\mathcal{V}_d}\cap[\emptyset\times x_i]\}_i$
are pairwise disjoint, we have
that
\[
  \sum_{i}2^{-d}\Bm{P_2}{\osg{x_i}}
  \le\sum_{i}\Bm{P_1\times P_2}{\osg{\mathcal{V}_d}\cap[\emptyset\times x_i]}
  \le\Bm{P_1\times P_2}{\osg{\mathcal{V}_d}}<2^{-2d},
\]
where the last inequality follows from \eqref{BmP1tP2omVd<2-2d}.
Thus, since $\Bm{P_2}{\osg{S_d}}=\sum_{i}\Bm{P_2}{\osg{x_i}}$ and
$d$ is an arbitrary positive integer,
we have that
\[
  \Bm{P_2}{\osg{S_d}}<2^{-d}
\]
for every $d\in\N^+$.
For each $d\in\N^+$, let $T_d=\bigcup_{c=d}^\infty S_{c+1}$.
It follows that
\begin{equation}\label{BmP2oTdlesBmP2oSc+1<2^-d}
  \Bm{P_2}{\osg{T_d}}\le\sum_{c=d}^\infty\Bm{P_2}{\osg{S_{c+1}}}<2^{-d}
\end{equation}
for each $d\in\N^+$, and
$T_d$
is r.e.~relative to $\beta_1,\dots,\beta_\ell$
uniformly in $d$.

Now, let us assume that
$\alpha_2$ is Martin-L\"of $P_2$-random relative to $\beta_1,\dots,\beta_\ell$.
We will then show that
$\alpha_1$ is not Martin-L\"of $P_1$-random relative to $\alpha_2,\beta_1,\dots,\beta_\ell$,
in what follows.
If $\alpha_2\in\osg{S_d}$ for infinitely many
$d\in\N^+$,
then we have that
$\alpha_2\in\osg{T_d}$
for every
$d\in\N^+$,
and therefore
using
\eqref{BmP2oTdlesBmP2oSc+1<2^-d} and
Theorem~\ref{ML_P-randomness_eliminated-prefix-freeness-relative-to-infinite-sequences}
we have that
$\alpha_2$ is not Martin-L\"of $P_2$-random relative to $\beta_1,\dots,\beta_\ell$.
This contradicts the assumption.
Thus, there must exists $d_0\in\N^+$ such that
\begin{equation}\label{alph2notinoSd_if-part}
  \alpha_2\notin\osg{S_d}
\end{equation}
for every $d>d_0$.

For each $d,n\in\N^+$, let
\[
  H_d(n)=\{w\in\Omega_1^n\mid
  \osg{w\times\rest{\alpha_2}{n}}\subset\osg{\mathcal{V}_d\cap(\Omega_1\times\Omega_2)^{\le n}}\}.
\]
Let $d,n\in\N^+$, and let ${w_1,\dots,w_m}$ be a listing of all elements of $H_d(n)$.
Since
\[
  \osg{w_i\times\rest{\alpha_2}{n}}\subset
  \osg{\mathcal{V}_d\cap(\Omega_1\times\Omega_2)^{\le n}}\cap[\emptyset\times\rest{\alpha_2}{n}]
\]
for every $i=1,\dots,m$,
and the sets
$\{\osg{w_i\times\rest{\alpha_2}{n}}\}_i$
are pairwise disjoint,
we see that
\begin{equation}\label{postFWx}
\begin{split}
  \Bm{P_1}{\osg{H_d(n)}}\Bm{P_2}{\osg{\rest{\alpha_2}{n}}}
  &=\left(\sum_{i=1}^m\Bm{P_1}{\osg{w_i}}\right)\Bm{P_2}{\osg{\rest{\alpha_2}{n}}} \\
  &=\sum_{i=1}^m\Bm{P_1}{\osg{w_i}}\Bm{P_2}{\osg{\rest{\alpha_2}{n}}} \\
  &=\sum_{i=1}^m\Bm{P_1\times P_2}{\osg{w_i\times\rest{\alpha_2}{n}}} \\
  &=\Bm{P_1\times P_2}{\bigcup_{i=1}^m\osg{w_i\times\rest{\alpha_2}{n}}} \\
  &\le\Bm{P_1\times P_2}{\osg{\mathcal{V}_d\cap(\Omega_1\times\Omega_2)^{\le n}}\cap[\emptyset\times\rest{\alpha_2}{n}]}.
\end{split}
\end{equation}
Assume that $d>d_0$.
Then, using \eqref{alph2notinoSd_if-part} we have $\rest{\alpha_2}{n}\notin S_d$.
Therefore,
it follows from \eqref{FWx} that
\begin{align*}
  \Bm{P_1\times P_2}{\osg{\mathcal{V}_d\cap(\Omega_1\times\Omega_2)^{\le n}}\cap[\emptyset\times\rest{\alpha_2}{n}]}
  &=P_1(\mathcal{V}_d\cap(\Omega_1\times\Omega_2)^{\le n},\rest{\alpha_2}{n})P_2(\rest{\alpha_2}{n}) \\
  &\le 2^{-d}\Bm{P_2}{\osg{\rest{\alpha_2}{n}}}.
\end{align*}
Thus,
using \eqref{postFWx} we have
\[
  \Bm{P_1}{\osg{H_d(n)}}\Bm{P_2}{\osg{\rest{\alpha_2}{n}}}\le 2^{-d}\Bm{P_2}{\osg{\rest{\alpha_2}{n}}}.
\]
Since $\alpha_2$ is Martin-L\"of $P_2$-random relative to $\beta_1,\dots,\beta_\ell$,
we can show that $\Bm{P_2}{\osg{\rest{\alpha_2}{n}}}>0$,
in a similar manner to the proof of (i) of Theorem~\ref{zero_probability}.
Hence, we see that
\begin{equation}\label{PHdn}
  \Bm{P_1}{\osg{H_d(n)}}\le 2^{-d}
\end{equation}
for every $d>d_0$ and
$n\in\N^+$.

On the other hand, we see that $\osg{H_d(n)}\subset\osg{H_d(n+1)}$
for every $d\in\N^+$ and $n\in\N^+$.
For each $d\in\N^+$, let $H_d=\bigcup_{n=1}^\infty H_{d+d_0}(n)$.
It follows from \eqref{PHdn} that
\begin{equation}\label{BmP1oHd<2-d_if-part}
  \Bm{P_1}{\osg{H_d}}<2^{-d}
\end{equation}
for every $d\in\N^+$.
It is easy to show that
$H_d(n)=\{w\in\Omega_1^n\mid\text{Some prefix of $w\times\rest{\alpha_2}{n}$ is in $\mathcal{V}_d$}\}$
for every $d,n\in\N^+$.
It follows that $H_d$ is r.e.~relative to $\alpha_2,\beta_1,\dots,\beta_\ell$ uniformly in $d$.

Let $d\in\N^+$.
Then,
since $\alpha_1\times\alpha_2\in\osg{\mathcal{V}_{d+d_0}}$, there exists $n\in\N^+$ such that
$\rest{(\alpha_1\times\alpha_2)}{n}\in\mathcal{V}_{d+d_0}$.
It follows that
$\rest{\alpha_1}{n}\times\rest{\alpha_2}{n}\in\mathcal{V}_{d+d_0}\cap(\Omega_1\times\Omega_2)^{\le n}$,
and therefore $\rest{\alpha_1}{n}\in H_{d+d_0}(n)$.
It follows that $\alpha_1\in\osg{H_{d+d_0}(n)}\subset\osg{H_d}$.
Therefore, $\alpha_1\in\osg{H_d}$ for every $d\in\N^+$.
Hence,
using
\eqref{BmP1oHd<2-d_if-part} and
Theorem~\ref{ML_P-randomness_eliminated-prefix-freeness-relative-to-infinite-sequences}
we have that
$\alpha_1$ is not Martin-L\"of $P_1$-random relative to $\alpha_2,\beta_1,\dots,\beta_\ell$,
as desired.
This completes the proof.
\end{proof}

\subsection{Equivalence between the three independence notions on computable finite probability spaces}

Theorem~\ref{mgvL} below
gives an equivalent characterization of the notion of the independence of
ensembles in terms of Martin-L\"of $P$-randomness relative to an oracle.

\begin{theorem}[Generalization of van Lambalgen's Theorem III]\label{mgvL}
Let $n\ge 2$.
Let $\Omega_1,\dots,\Omega_n$ be alphabets, and let $P_1\in\PS(\Omega_1),\dots,P_n\in\PS(\Omega_n)$.
Let $\alpha_1,\dots,\alpha_n$ be ensembles for $P_1,\dots,P_n$, respectively.
Suppose that $P_1,\dots,P_{n-1}$ are computable.
Then the ensembles $\alpha_1,\dots,\alpha_n$ are independent if and only if
for every $k=1,\dots,n-1$ it holds that
$\alpha_k$ is Martin-L\"of $P_k$-random relative to $\alpha_{k+1},\dots,\alpha_n$.
\end{theorem}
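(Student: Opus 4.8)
The plan is to derive Theorem~\ref{mgvL} as an almost immediate consequence of Theorem~\ref{cor_gvL}, the generalization of van Lambalgen's Theorem already established. First I would unfold the definition of the independence of ensembles: by definition, $\alpha_1,\dots,\alpha_n$ are independent precisely when $\alpha_1\times\dots\times\alpha_n$ is an ensemble for $P_1\times\dots\times P_n$, i.e., when $\alpha_1\times\dots\times\alpha_n$ is Martin-L\"of $P_1\times\dots\times P_n$-random. Thus the task reduces to characterizing this single Martin-L\"of randomness condition, which is exactly what Theorem~\ref{cor_gvL} does.

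Next I would apply Theorem~\ref{cor_gvL} in the unrelativized setting, i.e., with no auxiliary oracle sequences $\beta_1,\dots,\beta_\ell$ (the case $\ell=0$ of that theorem; equivalently one may relativize to a fixed computable sequence such as $0^\infty$, which leaves Martin-L\"of $P$-randomness unchanged). Since $P_1,\dots,P_{n-1}$ are computable by hypothesis, Theorem~\ref{cor_gvL} applies and yields that $\alpha_1\times\dots\times\alpha_n$ is Martin-L\"of $P_1\times\dots\times P_n$-random if and only if, for every $k=1,\dots,n$, the sequence $\alpha_k$ is Martin-L\"of $P_k$-random relative to $\alpha_{k+1},\dots,\alpha_n$. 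Both directions of the desired equivalence come at once from this single biconditional, so no separate ``if'' and ``only if'' arguments are needed.

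Finally, I would remove the boundary index $k=n$ from this list of conditions. For $k=n$ the assertion is simply that $\alpha_n$ is Martin-L\"of $P_n$-random with no relativization, and this holds automatically because $\alpha_n$ is assumed to be an ensemble for $P_n$ (Definition~\ref{ensemble}). Hence the conjunction of the conditions over $k=1,\dots,n$ is equivalent to the conjunction over $k=1,\dots,n-1$, which is exactly the stated characterization. I expect essentially no deep obstacle here: the proposition is a short corollary, and the only point requiring minor care is confirming that the empty relativization in Theorem~\ref{cor_gvL} genuinely recovers plain Martin-L\"of $P_k$-randomness, which is immediate from Definition~\ref{ML_P-randomness_rs} read with $\ell=0$, and that the $k=n$ clause is absorbed by the standing hypothesis that $\alpha_n$ is an ensemble.
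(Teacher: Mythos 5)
Your proposal is correct and follows essentially the same route as the paper, which simply states that Theorem~\ref{mgvL} follows immediately from Theorem~\ref{cor_gvL}; your additional remarks about taking $\ell=0$ (or an oracle $0^\infty$) and absorbing the $k=n$ clause into the hypothesis that $\alpha_n$ is an ensemble are exactly the details the paper leaves implicit.
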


\begin{proof}
Theorem~\ref{mgvL} follows immediately from Theorem~\ref{cor_gvL}.
\end{proof}

Note that the computability of $P_n$ is not required in Theorem~\ref{mgvL}.
Combining Theorem~\ref{independence-independence} with Theorem~\ref{mgvL}
we obtain the following theorem.

\begin{theorem}\label{ind-vL}
Let $\Omega$ be an alphabet, and let $P\in\PS(\Omega)$. Let
$X_1\colon\Omega\to\Omega_1,\dots,X_n\colon\Omega\to\Omega_n$ be random variables on $\Omega$.
Suppose that $X_1(P),\dots,X_{n-1}(P)$ are computable.
Then the following conditions are equivalent to one another.
\begin{enumerate}
  \item The random variables $X_1,\dots,X_n$ are independent on $P$.
  \item For every ensemble $\alpha$ for $P$ and every $k=1,\dots,n-1$ it holds that
    $X_k(\alpha)$ is Martin-L\"of $X_k(P)$-random relative to $X_{k+1}(\alpha),\dots,X_n(\alpha)$.
  \item There exists an ensemble $\alpha$ for $P$ such that for every $k=1,\dots,n-1$ it holds that
    $X_k(\alpha)$ is Martin-L\"of $X_k(P)$-random relative to $X_{k+1}(\alpha),\dots,X_n(\alpha)$.
\end{enumerate}
\end{theorem}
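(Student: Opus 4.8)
The plan is to obtain Theorem~\ref{ind-vL} as a direct composition of Theorem~\ref{independence-independence} with Theorem~\ref{mgvL}, using the closure property Theorem~\ref{X(p)-X(P)} as the bridge that lets the two results be applied to the same family of sequences. First I would fix an arbitrary ensemble $\alpha$ for $P$ (one exists by Theorem~\ref{Bmae}) and invoke Theorem~\ref{X(p)-X(P)} to see that each $X_i(\alpha)$ is an ensemble for $X_i(P)$. This puts us exactly in the hypotheses of Theorem~\ref{mgvL} with the identifications $\alpha_i:=X_i(\alpha)$ and $P_i:=X_i(P)$: the assumption that $X_1(P),\dots,X_{n-1}(P)$ are computable is precisely the computability of $P_1,\dots,P_{n-1}$ required there. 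Theorem~\ref{mgvL} then gives, for each such $\alpha$, that $X_1(\alpha),\dots,X_n(\alpha)$ are independent if and only if for every $k=1,\dots,n-1$ the sequence $X_k(\alpha)$ is Martin-L\"of $X_k(P)$-random relative to $X_{k+1}(\alpha),\dots,X_n(\alpha)$.

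With this equivalence available for every ensemble $\alpha$, the three implications follow by chaining with Theorem~\ref{independence-independence}. For (i)$\Rightarrow$(ii): assuming $X_1,\dots,X_n$ are independent on $P$, the implication (i)$\Rightarrow$(ii) of Theorem~\ref{independence-independence} makes $X_1(\alpha),\dots,X_n(\alpha)$ independent for \emph{every} ensemble $\alpha$, and the forward direction of the displayed equivalence then yields the relative-randomness conclusion for each $k$. The implication (ii)$\Rightarrow$(iii) is immediate, since an ensemble for $P$ exists by Theorem~\ref{Bmae}. For (iii)$\Rightarrow$(i): from the witnessing ensemble $\alpha$ of (iii), the backward direction of the displayed equivalence makes $X_1(\alpha),\dots,X_n(\alpha)$ independent, whence the implication (iii)$\Rightarrow$(i) of Theorem~\ref{independence-independence} returns the independence of $X_1,\dots,X_n$ on $P$.

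The step requiring the most care is not computational but the bookkeeping of hypotheses at the seam between the two theorems: I must verify that $X_i(\alpha)$ is genuinely an ensemble for $X_i(P)$ (so Theorem~\ref{X(p)-X(P)} is essential and cannot be skipped) and that the computability assumption on the first $n-1$ push-forward spaces lines up with the computability demanded by Theorem~\ref{mgvL}, so that the latter applies verbatim for each fixed $\alpha$. Once these identifications are checked, the remainder is the routine logical composition sketched above, and the proof is short.

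\begin{proof}
Theorem~\ref{ind-vL} follows by combining Theorem~\ref{independence-independence} with Theorem~\ref{mgvL}, using Theorem~\ref{X(p)-X(P)} to guarantee that for every ensemble $\alpha$ for $P$ each $X_i(\alpha)$ is an ensemble for $X_i(P)$, and noting that the computability of $X_1(P),\dots,X_{n-1}(P)$ supplies the computability hypothesis of Theorem~\ref{mgvL}.
\end{proof}
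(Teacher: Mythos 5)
Your proposal is correct and follows exactly the paper's own argument: apply Theorem~\ref{X(p)-X(P)} to see that each $X_i(\alpha)$ is an ensemble for $X_i(P)$, invoke Theorem~\ref{mgvL} to translate independence of the ensembles $X_1(\alpha),\dots,X_n(\alpha)$ into the relative-randomness conditions, and then chain with the three-way equivalence of Theorem~\ref{independence-independence}. The bookkeeping of hypotheses you flag (computability of the push-forward spaces matching the hypothesis of Theorem~\ref{mgvL}) is exactly the point the paper relies on, so nothing is missing.
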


\begin{proof}
Let $\alpha$ be an arbitrary ensemble for $P$.
Then it follows from Theorem~\ref{X(p)-X(P)} that
$X_1(\alpha),\dots,X_n(\alpha)$ are ensembles for $X_1(P),\dots,X_n(P)$, respectively.
Therefore, in the case where $X_1(P),\dots,X_{n-1}(P)$ are computable,
using Theorem~\ref{mgvL} we have that
the ensembles $X_1(\alpha),\dots,X_n(\alpha)$ are independent if and only if
for every $k=1,\dots,n-1$ it holds that
$X_k(\alpha)$ is Martin-L\"of $X_k(P)$-random relative to $X_{k+1}(\alpha),\dots,X_n(\alpha)$.
Thus, Theorem~\ref{ind-vL} follows from Theorem~\ref{independence-independence}.
\end{proof}

We remark
that the computability of the finite probability space $X_n(P)$ is not required in Theorem~\ref{ind-vL}.
This fact
plays a crucial role in providing
new equivalent characterizations of the notion of perfect secrecy
for an arbitrary encryption scheme
by
Theorem~\ref{PS-AR} below,
when we apply
Theorem~\ref{ind-vL}
to cryptography in Section~\ref{subsec:AC}.

Theorem~\ref{independence-independence} and Theorem~\ref{ind-vL} together show that
\emph{the three independence notions we have considered so far:
the independence of random variables, the independence of ensembles,
and the independence in the sense of van Lambalgen's Theorem,
are equivalent to one another} on an arbitrary \emph{computable} finite probability space.

Theorem~\ref{ind-vL} results in Theorem~\ref{ind-events-vL} below.
Theorem~\ref{independence-independence-events} and Theorem~\ref{ind-events-vL} together
show that
\emph{the three independence notions are equivalent
to one another also
for arbitrary events}, instead of random variables,
on an arbitrary \emph{computable} finite probability space.

\begin{theorem}\label{ind-events-vL}
Let $\Omega$ be an alphabet, and let $P\in\PS(\Omega)$.
Let $A_1,\dots,A_n$ be events on the finite probability space $P$.
Suppose that the finite probability space $\charaps{P}{A_k}$ is computable for every $k=1,\dots,n-1$.
Then the following conditions are equivalent to one another.
\begin{enumerate}
  \item The events $A_1,\dots,A_n$ are independent on $P$.
  \item For every ensemble $\alpha$ for $P$ and every $k=1,\dots,n-1$ it holds that
    $\chara{A_k}{\alpha}$ is Martin-L\"of $\charaps{P}{A_k}$-random relative to $\chara{A_{k+1}}{\alpha},\dots,\chara{A_n}{\alpha}$.
  \item There exists an ensemble $\alpha$ for $P$ such that for every $k=1,\dots,n-1$ it holds that
    $\chara{A_k}{\alpha}$ is Martin-L\"of $\charaps{P}{A_k}$-random relative to $\chara{A_{k+1}}{\alpha},\dots,\chara{A_n}{\alpha}$.
\end{enumerate}
\end{theorem}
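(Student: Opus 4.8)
The plan is to reduce Theorem~\ref{ind-events-vL} to Theorem~\ref{ind-vL} by specializing the random variables to the characteristic functions $\chi_{A_1},\dots,\chi_{A_n}$, exactly in the way that Theorem~\ref{independence-independence-events} was obtained from Theorem~\ref{independence-independence}. First I would record the two identifications that make this substitution work. On the one hand, for each $A\subset\Omega$ the random variable $\chi_A\colon\Omega\to\{0,1\}$ satisfies $\chi_A(P)=\charaps{P}{A}$, since $(\chi_A(P))(1)=P(\chi_A=1)=P(A)=(\charaps{P}{A})(1)$ and likewise at the value $0$. On the other hand, by the observation stated just before Proposition~\ref{independence-random-variables-events} we have $\chi_A(\alpha)=\chara{A}{\alpha}$ for every $\alpha\in\Omega^\infty$.

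With these two identities in hand, I would apply Theorem~\ref{ind-vL} to the random variables $X_k:=\chi_{A_k}$ for $k=1,\dots,n$, each mapping $\Omega$ to $\{0,1\}$. The computability hypothesis of Theorem~\ref{ind-vL}, namely that $X_1(P),\dots,X_{n-1}(P)$ are computable, translates under the identity $X_k(P)=\charaps{P}{A_k}$ into precisely the hypothesis of Theorem~\ref{ind-events-vL} that $\charaps{P}{A_k}$ is computable for every $k=1,\dots,n-1$. Hence Theorem~\ref{ind-vL} is applicable and yields the equivalence of its three conditions for the family $\chi_{A_1},\dots,\chi_{A_n}$.

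It then remains to check that each of the three conditions produced by Theorem~\ref{ind-vL} coincides with the corresponding condition of Theorem~\ref{ind-events-vL}. Condition (i) of Theorem~\ref{ind-vL} asserts that $\chi_{A_1},\dots,\chi_{A_n}$ are independent on $P$; by Proposition~\ref{independence-random-variables-events} this is equivalent to the independence of the events $A_1,\dots,A_n$ on $P$, which is condition (i) here. For conditions (ii) and (iii), substituting $X_k(\alpha)=\chara{A_k}{\alpha}$ and $X_k(P)=\charaps{P}{A_k}$ turns the clause ``$X_k(\alpha)$ is Martin-L\"of $X_k(P)$-random relative to $X_{k+1}(\alpha),\dots,X_n(\alpha)$'' verbatim into ``$\chara{A_k}{\alpha}$ is Martin-L\"of $\charaps{P}{A_k}$-random relative to $\chara{A_{k+1}}{\alpha},\dots,\chara{A_n}{\alpha}$.'' Thus the three conditions match term by term and the desired equivalence follows.

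I do not expect any genuine obstacle here: the whole argument is the substitution $X_k=\chi_{A_k}$ combined with Proposition~\ref{independence-random-variables-events}, mirroring the derivation of Theorem~\ref{independence-independence-events} from Theorem~\ref{independence-independence}. The only point requiring care is the verification of the identity $\chi_A(P)=\charaps{P}{A}$, since it is exactly this identity that simultaneously matches the computability hypotheses and lines up the relativized-randomness clauses; once it is established, the proof is a direct translation with no remaining work.
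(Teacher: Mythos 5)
Your proposal is correct and follows exactly the paper's own proof: apply Theorem~\ref{ind-vL} with $X_k=\chi_{A_k}$ and invoke Proposition~\ref{independence-random-variables-events}, using the identities $\chi_A(P)=\charaps{P}{A}$ and $\chi_A(\alpha)=\chara{A}{\alpha}$ to match hypotheses and conclusions. You simply spell out the bookkeeping that the paper leaves implicit.
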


\begin{proof}
The result is obtained by applying Theorem~\ref{ind-vL}
to the random variables
$\chi_{A_1},\dots,\chi_{A_n}$
as $X_1,\dots,X_n$, respectively,
and then using Proposition~\ref{independence-random-variables-events}.
\end{proof}

\section{Effectivization of the law of large numbers for computable finite probability spaces}
\label{Section:EffectiveLLN}

In Section~\ref{Subsec:The law of large numbers},
we
proved Theorem~\ref{LLN}, which states that the law of large numbers holds for an \emph{arbitrary} ensemble.
In that theorem,
the underlying finite probability space is quite arbitrary, and therefore
is not required to be computable at all, in particular.
In this regard, the following question may arise naturally.
\begin{quote}
\textbf{Question}:
What role does
the computability of the underlying finite probability space
play in the law of large numbers for an ensemble?
\end{quote}
In this section, we see that
the computability of the underlying finite probability space
leads to an \emph{effectivization} of the law of large numbers.

Let $\Omega$ be an alphabet, and let $P\in\PS(\Omega)$.
Let $\alpha$ be an ensemble for $P$, and let $a\in\Omega$.
Then, Theorem~\ref{LLN} states
that for every real $\varepsilon>0$ there exists
$n_0\in\N^+$
such that
for every $n\in\N^+$ if
$n\ge n_0$
then 
\begin{equation}\label{LLN-classical}
  \abs{\frac{N_a(\rest{\alpha}{n})}{n}-P(a)}<\varepsilon.
\end{equation}
In what follows, in the form of Theorem~\ref{EffectiveLLN} below we show that
there exists an \emph{effective} procedure which computes the positive integer $n_0$
in the above statement~\eqref{LLN-classical} for any given rational $\varepsilon>0$,
provided that the underlying finite probability space $P$ is \emph{computable}.

Before showing Theorem~\ref{EffectiveLLN}, we
first
show the following theorem.

\begin{theorem}
\label{EffectiveLLN0}
Let $\Omega$ be an alphabet, and let $P\in\PS(\Omega)$.
Suppose that $P$ is computable.
Let $\alpha\in\Omega^\infty$. Suppose that $\alpha$ is an ensemble for $P$.
Let $\varepsilon$ be an arbitrary positive real.
Then
there exists $M\in\N^+$ such that for every $n\ge M$ and every $k\in\N^+$ if $k\ge n^{2+\varepsilon}$ then
for every $a\in\Omega$ it holds that
\begin{equation}\label{EffectiveLLN0-eq}
  \abs{\frac{N_a(\rest{\alpha}{k})}{k}-P(a)}<\frac{1}{n},
\end{equation}
where $N_a(\sigma)$ denotes the number of the occurrences of $a$ in $\sigma$
for every $a\in\Omega$ and $\sigma\in\Omega^*$.
\qed
\end{theorem}

In order to prove Theorem~\ref{EffectiveLLN0}, we need the following two lemmas.

\begin{lemma}\label{EffectiveLLN-lemma-trf}
Let $\epsilon$ be a positive rational.
Consider a function $f\colon\N^+\to\N^+$ defined by
$f(n)=\left\lceil n^\epsilon \right\rceil$.
Then the function $f\colon\N^+\to\N^+$ is a total recursive function.
\end{lemma}

\begin{proof}
Since $\epsilon$ is a positive rational, there are positive integers $p$ and $q$ such that $\epsilon=p/q$.
Then, for each $n\in\N^+$,
we see that
the function value $f(n)$ is the least positive integer $m$ such that $n^p\le m^q$,
since it is  the least positive integer $m$ such that $n^\epsilon\le m$.
Thus, the function $f\colon\N^+\to\N^+$ is a total recursive function.
\end{proof}

\begin{lemma}\label{EffectiveLLN-lemma}
Let $\epsilon$ be a positive rational, and let $L\in\N^+$.
Then there exists a total recursive function $g\colon\N^+\to\N^+$ such that
for every $m\in\N^+$ it holds that $g(m)\ge L$ and
\begin{equation}\label{EffectiveLLN-lemma-Result}
  \sum_{n=g(m)}^\infty\sum_{k=f(n)}^\infty \exp(-k/n^2)<2^{-m-1},
\end{equation}
where $f$ denotes a function  $f\colon\N^+\to\N^+$ defined by
$f(n)=\left\lceil n^{2+\epsilon} \right\rceil$.
\end{lemma}

\begin{proof}
First,
for each $n\in\N^+$,
using the mean value theorem, we have that
\begin{equation}\label{EffectiveLLN-lemma-MVT1}
  1-\exp(-1/n^2)>\exp(-1/n^2)\frac{1}{n^2}\ge \exp(-1)\frac{1}{n^2}.
\end{equation}
Thus, for each $n\in\N^+$, we see that
\begin{equation}\label{EffectiveLLN-lemma-ineq01}
  \sum_{k=f(n)}^\infty \exp(-k/n^2)
  \le \sum_{l=0}^\infty\exp(-n^{\epsilon}) \exp(-l/n^2)
  =\frac{\exp(-n^{\epsilon})}{1-\exp(-1/n^2)}
  <e n^2\exp(-n^{\epsilon}),
\end{equation}
where the last inequality follows from the inequality~\eqref{EffectiveLLN-lemma-MVT1}.

In what follows, we use the incomplete gamma function $\Gamma(a,x)$ defined by
\begin{equation*}%
  \Gamma(a,x):=\int_{x}^\infty t^{a-1} e^{-t}dt,
\end{equation*}
where $a$ and $x$ are arbitrary reals
satisfying that
$a>0$ and $x\ge 0$.
It is easy to see that
the improper integral in the definition above
exists
certainly
for such reals $a$ and $x$.
Then,
using
the method of integration by parts, we have that
\begin{equation*}%
  \Gamma(a+1,x)=a\Gamma(a,x)+x^a e^{-x}
\end{equation*}
for every reals $a>0$ and $x>0$.
Thus, it follows that
\begin{equation}\label{Gmx=m-1sm-afxke-xk}
  \Gamma(n,x)=(n-1)!\sum_{k=0}^{n-1}\frac{x^k e^{-x}}{k!}
\end{equation}
for every $n\in\N^+$ and
every real
$x>0$.
See for instance Jameson~\cite{Jam16}
for the detail of the properties of the incomplete gamma function~$\Gamma(a,x)$.

We define a function $h\colon\N^+\to\N^+$ by $h(n):=\left\lceil n^{\epsilon} \right\rceil$.
Then it follows from Lemma~\ref{EffectiveLLN-lemma-trf} that
the function $h\colon\N^+\to\N^+$ is a total recursive function,
and obviously
\begin{equation}\label{EffectiveLLN-lemma-hn-1<eo2lehn}
  h(n)-1< n^\epsilon\le h(n)
\end{equation}
holds for every $n\in\N^+$.

Now, let $N\in\N^+$ with
$N^{\epsilon}\ge 2/\epsilon$.
Applying the method of integration of substitution to
the improper integral $\Gamma(3/\epsilon,N^{\epsilon})$ with $t=u^{\epsilon}$, we see that
the improper integral
\[
  \int_N^\infty u^2 \exp(-u^{\epsilon}) du
\]
exists and equals
$(1/\epsilon)\Gamma(3/\epsilon,N^{\epsilon})$.
Thus, since $u^2\exp(-u^{\epsilon})$ is a strictly decreasing function of $u$ for all
positive real
$u$ with $u^{\epsilon}\ge 2/\epsilon$,
we have that
\begin{equation}\label{Lemma-ML-Gamma-eq01}
  \sum_{n=N+1}^\infty n^2\exp(-n^{\epsilon})\le\int_N^\infty u^2 \exp(-u^{\epsilon}) du
  =\frac{1}{\epsilon}\Gamma(3/\epsilon,N^{\epsilon}).
\end{equation}
We
hereafter
denote $\left\lceil 3/\epsilon\right\rceil$ by $D$.
Note that $\Gamma(a,N^{\epsilon})$ is a non-decreasing function of $a>0$, since $N^{\epsilon}\ge 1$.
Thus,
we see that
\begin{equation}\label{Lemma-ML-Gamma-eq02}
\begin{split}
  \Gamma(3/\epsilon,N^{\epsilon})&\le\Gamma(D,N^{\epsilon})
  =(D-1)!\sum_{l=0}^{D-1}\frac{(N^{\epsilon})^l}{l!}\exp(-N^{\epsilon}) \\
  &\le (D-1)!\sum_{l=0}^{D-1}(N^{\epsilon})^{D-1}\exp(-N^{\epsilon})
  < D! \frac{(N^{\epsilon})^{D-1}}{2^{N^{\epsilon}}} \\
  &< 2 D! \frac{h(N)^{D}}{2^{h(N)}},
\end{split}
\end{equation}
where
the first equality follows from \eqref{Gmx=m-1sm-afxke-xk},
the third inequality follows from the fact that $e>2$,
and the last inequality follows from \eqref{EffectiveLLN-lemma-hn-1<eo2lehn}.
Thus, it follows from \eqref{EffectiveLLN-lemma-ineq01}, \eqref{Lemma-ML-Gamma-eq01},
and \eqref{Lemma-ML-Gamma-eq02} that
\begin{equation*}%
  \sum_{n=N+1}^\infty\sum_{k=f(n)}^\infty \exp(-k/n^2)
  <\frac{2e}{\epsilon} D! \frac{h(N)^{D}}{2^{h(N)}}.
\end{equation*}
Hence, since $N$ is an arbitrary positive integer such that $N^{\epsilon}\ge 2/\epsilon$,
we have that
\begin{equation}\label{Lemma-ML-Gamma-eq04}
  \sum_{n=N+1}^\infty\sum_{k=f(n)}^\infty \exp(-k/n^2)<C\frac{h(N)^{D}}{2^{h(N)}}
\end{equation}
for every $N\in\N^+$ with $N^{\epsilon}\ge 2/\epsilon$,
where $C$ denotes the positive integer $\left\lceil 2 e D!/\epsilon\right\rceil$.

Then, based on the above considerations, we can show that
there exists a total recursive function $g\colon\N^+\to\N^+$ such that
both $g(m)\ge L$ and the inequality~\eqref{EffectiveLLN-lemma-Result} hold for all $m\in\N^+$.
Actually, this $g$ can be computed by the following procedure:

Given $m\in\N^+$, one finds $N\in\N^+$ such that $N\ge L$, $N^{\epsilon}\ge 2/\epsilon$,
and
\begin{equation}\label{Lemma-ML-Gamma-eq05}
  \frac{h(N)^{D}}{2^{h(N)}}\le\frac{2^{-m-1}}{C}.
\end{equation}
This is possible, since
the function $h\colon\N^+\to\N^+$ is a total recursive function such that
$\lim_{N\to\infty}h(N)=\infty$, and moreover
$\lim_{x\to\infty}x^{D}/2^x=0$
holds.
One then outputs $N+1$.
Note
here
that
\[
  \sum_{n=N+1}^\infty\sum_{k=f(n)}^\infty \exp(-k/n^2)<2^{-m-1}
\]
certainly
holds for this $N+1$,
due to the inequalities~\eqref{Lemma-ML-Gamma-eq04} and \eqref{Lemma-ML-Gamma-eq05}.
\end{proof}

Theorem~\ref{EffectiveLLN0} is then proved as follows.

\begin{proof}[Proof of Theorem~\ref{EffectiveLLN0}]
Let $\Omega$ be an alphabet, and let $P\in\PS(\Omega)$.
Suppose that $P$ is computable. Let $\alpha\in\Omega^\infty$. Suppose that $\alpha$ is an ensemble for $P$.
Let $\varepsilon$ be an arbitrary positive real.
Let $a$ be an arbitrary element of $\Omega$.
We first show the following statement:
There exists $M\in\N^+$ such that for every $n\ge M$ and every $k\in\N^+$ if $k\ge n^{2+\varepsilon}$ then
\begin{equation}\label{EffectiveLLN-eq01}
  \abs{\frac{N_a(\rest{\alpha}{k})}{k}-P(a)}<\frac{1}{n}.
\end{equation}

In the case of $P(a)=0$, the statement~\eqref{EffectiveLLN-eq01} follows immediately from
the result~(i)
of Theorem~\ref{zero_probability}.
In the case of $P(a)=1$,
the statement~\eqref{EffectiveLLN-eq01} follows immediately from Theorem~\ref{one_probability}.
Thus we assume that $0<P(a)<1$, in what follows.

We define
$Q\in\PS(\{0,1\})$ by the condition that
$Q(1):=P(a)$ and $Q(0):=1-P(a)$.
Then
$0<Q(1)<1$.
Let $\beta$ be the infinite binary sequence obtained from $\alpha$
by replacing all $a$ by $1$ and
all other elements of $\Omega$
by $0$ in $\alpha$.
Then, by using Theorem~\ref{contraction}
repeatedly, it is easy to show that
$\beta$ is Martin-L\"of $Q$-random and
$N_1(\rest{\beta}{k})=N_a(\rest{\alpha}{k})$ for every $k\in\N^+$.
Note also that $Q(1)$ is a computable real, since $P$ is a computable finite probability space. 

We choose any specific $n_0\in\N^+$ such that
\[
  \frac{2}{n_0}\le \min\{Q(0),Q(1)\}.
\]
This is possible since $0<Q(1)<1$.
Then, it follows from Theorem~\ref{Chernoff_bound} that
\begin{equation}\label{EffectiveLLN-rCb}
  \Bm{Q}{\osg{\{\sigma\in\{0,1\}^k\mid\abs{N_1(\sigma)/k-Q(1)}>2/n\}}}<2\exp(-k/n^2)
\end{equation}
for every $n\ge n_0$ and every $k\in\N^+$.
We choose any specific positive rational $\delta$ such that $\varepsilon\ge 2\delta$, and
define a function $f\colon\N^+\to\N^+$ by $f(n):=\left\lceil n^{2+\delta} \right\rceil$.
Then it follows from Lemma~\ref{EffectiveLLN-lemma} that
there exists a total recursive function $g\colon\N^+\to\N^+$ such that
for every $m\in\N^+$ it holds that $g(m)\ge n_0$ and
\begin{equation}\label{EffectiveLLN0-lemma-Result-referred}
  \sum_{n=g(m)}^\infty\sum_{k=f(n)}^\infty \exp(-k/n^2)<2^{-m-1}.
\end{equation}
For each $m\in\N^+$,
we define a subset $S(m)$ of $\X$ by
\begin{equation}\label{EffectiveLLN0-def-S(N)}
  S(m):=\bigcup_{n=g(m)}^\infty\bigcup_{k=f(n)}^\infty\{\sigma\in\{0,1\}^k\mid\abs{N_1(\sigma)/k-Q(1)}>2/n\}.
\end{equation}
Then, for each $m\in\N^+$, we see that
\begin{equation}\label{EffectiveLLN-ineq-lQ<ss2xk2n2<2N6}
\begin{split}
  \Bm{Q}{\osg{S(m)}}
  &\le\sum_{n=g(m)}^\infty\sum_{k=f(n)}^\infty
         \Bm{Q}{\osg{\{\sigma\in\{0,1\}^k\mid\abs{N_1(\sigma)/k-Q(1)}>2/n\}}} \\
  &<\sum_{n=g(m)}^\infty\sum_{k=f(n)}^\infty 2\exp(-k/n^2) \\
  &<2^{-m},
\end{split}
\end{equation}
where the second inequality follows from the inequality~\eqref{EffectiveLLN-rCb} and the fact that $g(m)\ge n_0$,
and the last inequality follows from the inequality~\eqref{EffectiveLLN0-lemma-Result-referred}.

Now, we denote
the set
$\{(m,\sigma)\in\N^+\times\X\mid\sigma\in S(m)\}$
by $\mathcal{T}$.
On the one hand,
note
from Lemma~\ref{EffectiveLLN-lemma-trf} that
the function $f\colon\N^+\to\N^+$ is a total recursive function.
Moreover, note that
\[
  S(m)=\bigcup_{n=g(m)}^\infty\bigcup_{k=f(n)}^\infty
  \{\sigma\in\{0,1\}^k\mid N_1(\sigma)/k+2/n<Q(1)\;\text{ or }\;Q(1)<N_1(\sigma)/k-2/n\}
\]
for every
$m\in\N^+$.
Thus, since $Q(1)$ is both left-computable and right-computable, it is easy to see that $\mathcal{T}$ is an r.e.~set.
On the other hand,
\eqref{EffectiveLLN-ineq-lQ<ss2xk2n2<2N6} implies that
$\Bm{Q}{\osg{\mathcal{T}_m}}<2^{-m}$ for every $m\in\N^+$,
where $\mathcal{T}_m$ denotes the set
$\left\{\,
  \sigma\mid (m,\sigma)\in\mathcal{T}
\,\right\}$.
Hence,
since $\beta$ is Martin-L\"of $Q$-random,
it follows from Theorem~\ref{ML_P-randomness_eliminated-prefix-freeness} that
there exists $L\in\N^+$ such that $\beta\notin\osg{\mathcal{T}_L}$.
Thus,
since $\mathcal{T}_L=S(L)$,
from the definition of $S(L)$
we have the following:
For every $n\ge g(L)$ and every $k\ge f(n)$ it holds that
\begin{equation}\label{EffectiveLLN0-eq-absNbok-Q1le2on}
  \abs{\frac{N_1(\rest{\beta}{k})}{k}-Q(1)}\le\frac{2}{n}.
\end{equation}
Then we choose any specific $M\in\N^+$ such that $M^\delta\ge 4^{2+\delta}$ and $4M\ge g(L)$.
Note that for every $n\in\N^+$ if $n\ge M$ then
$\left\lceil n^{2+\varepsilon}\right\rceil\ge f(4n)$.
It follows from \eqref{EffectiveLLN0-eq-absNbok-Q1le2on} that
for every $n\ge M$ and every $k\ge n^{2+\varepsilon}$ it holds that
\[
  \abs{\frac{N_1(\rest{\beta}{k})}{k}-Q(1)}<\frac{1}{n}.
\]
Thus, since $Q(1)=P(a)$ and $N_1(\rest{\beta}{k})=N_a(\rest{\alpha}{k})$ for every $k\in\N^+$,
the statement~\eqref{EffectiveLLN-eq01} holds in this case of $0<P(a)<1$, as desired.
Hence, since $a$ is an arbitrary element of $\Omega$,
the statement~\eqref{EffectiveLLN-eq01} holds for every $a\in\Omega$.

Now, for each $a\in\Omega$,
let $M_a$ be the positive integer $M$ whose existence is guaranteed in the statement~\eqref{EffectiveLLN-eq01}.
Note that $\max\{M_a\mid a\in\Omega\}$ exists, since $\Omega$ is a
non-empty
finite set. 
We denote
by $\overline{M}$
this $\max\{M_a\mid a\in\Omega\}$.
It is then easy to check that
for every $n\ge \overline{M}$ and every $k\in\N^+$ if $k\ge n^{2+\varepsilon}$ then for every $a\in\Omega$
the inequality~\eqref{EffectiveLLN0-eq} holds.
This completes the proof.
\end{proof}

Theorem~\ref{EffectiveLLN0}
leads to
the following corollary,
in particular, for the notion of Martin-L\"of randomness.

\begin{corollary}
Let $\alpha$ be a Martin-L\"of random infinite binary sequence.
Let $\varepsilon$ be an arbitrary positive real.
Then there exists $M\in\N^+$ such that for every $n\ge M$ and every $k\in\N^+$ if $k\ge n^{2+\varepsilon}$ then
for every $a\in\{0,1\}$ it holds that
\begin{equation*}
  \abs{\frac{N_a(\rest{\alpha}{k})}{k}-\frac{1}{2}}<\frac{1}{n},
\end{equation*}
where $N_a(\sigma)$ denotes the number of the occurrences of $a$ in $\sigma$
for every $a\in\{0,1\}$ and $\sigma\in\X$.
\end{corollary}

\begin{proof}
Let
$U$
be a finite probability space on $\{0,1\}$ such that $U(0)=U(1)=1/2$.
Then the infinite binary sequence $\alpha$ is an ensemble for $U$. 
Thus, since $1/2$ is a computable real, the result follows from Theorem~\ref{EffectiveLLN0}.
\end{proof}

\begin{theorem}
[Effectivization of the law of large numbers]
\label{EffectiveLLN}
Let $\Omega$ be an alphabet, and let $P\in\PS(\Omega)$.
Suppose that $P$ is computable.
Let $\alpha\in\Omega^\infty$. Suppose that $\alpha$ is an ensemble for $P$.
Let $\epsilon$ be an arbitrary positive rational. 
Then there exists a
total recursive
function $f\colon\N^+\to\N^+$ which satisfies the following conditions~(i) and (ii):
\begin{enumerate}
  \item For every $n\in\N^+$ and every $k\in\N^+$
    if $k\ge f(n)$ then
    for every $a\in\Omega$ it holds that
    \begin{equation*}%
      \abs{\frac{N_a(\rest{\alpha}{k})}{k}-P(a)}<\frac{1}{n},
    \end{equation*}
    where $N_a(\sigma)$ denotes the number of the occurrences of $a$ in $\sigma$
    for every $a\in\Omega$ and $\sigma\in\Omega^*$.
\item The
total recursive
function $f$ has the
following
form:%
\footnote{This total recursive function $f\colon\N^+\to\N^+$ is
the restriction to $\N^+$ of some primitive recursive function on $\N$.
Actually, every function $g\colon\N\to\N$
which is an extension of $f$ to $\N$
is a primitive recursive function.}
\[f(n)=\left\lceil n^{2+\epsilon} \right\rceil\] for all but finitely many $n\in\N^+$.
\end{enumerate}
\end{theorem}

\begin{proof}
We define a function $g\colon\N^+\to\N^+$ by $g(n):=\left\lceil n^{2+\epsilon} \right\rceil$.
Then, on the one hand, it follows from Lemma~\ref{EffectiveLLN-lemma-trf} that
the function $g\colon\N^+\to\N^+$ is a total recursive function.
On the other hand, it follows from Theorem~\ref{EffectiveLLN0} that
there exists $M\in\N^+$ such that for every $n\ge M$ and every $k\in\N^+$ if $k\ge g(n)$ then
for every $a\in\Omega$ it holds that
\begin{equation}\label{EffectiveLLNpr-eq}
  \abs{\frac{N_a(\rest{\alpha}{k})}{k}-P(a)}<\frac{1}{n}.
\end{equation}
Then we
define a function $f\colon\N^+\to\N^+$ by the condition that
$f(n):=g(n)$ if $n\ge M$ and $f(n):=g(M)$ otherwise.
Since $g\colon\N^+\to\N^+$ is a total recursive function,
it follows that
$f\colon\N^+\to\N^+$ is also a
total recursive
function.
Then, based on \eqref{EffectiveLLNpr-eq}, it is
easy to check that
the conditions~(i) and (ii) of Theorem~\ref{EffectiveLLN} hold for this $f$.
This completes the proof.
\end{proof}

\section{Applications}
\label{Applications}

In this section
we make applications of our framework to the general areas of science and technology
in order to demonstrate the wide applicability of our framework to them.
We here choose \emph{information theory}, \emph{cryptography}, and
\emph{the simulation of a biased coin using fair coins}
as examples of the fields for the applications.
Furthermore,
we mention an application of our framework to quantum mechanics, which
was
developed in a series of
works~\cite{T14CCR,T15Kokyuroku,T15WiNF-Tadaki_rule,T16QIT35,T18arXiv}.

\subsection{Application to information theory}

In this subsection, we make
an
application of our framework to information theory~\cite{Sha48}.
\emph{Instantaneous codes} play
a basic
role in the \emph{noiseless source coding problem} in information theory, as described
in what follows
in terms of our framework.
See for instance Ash~\cite{A90} or Cover and Thomas~\cite{CT06}
for the detail of the noiseless source coding
based on
instantaneous codes
\emph{in the terminology of the conventional probability theory}.

First we
describe the basic notion and definitions in the noiseless source coding problem
based on instantaneous codes, in terms of our framework.
Let $\Omega$ be an alphabet, as in the preceding sections.
An \emph{instantaneous code} $C$ for $\Omega$ is an injective mapping from $\Omega$ to $\{0,1\}^+$
such that $C(\Omega):=\{C(a)\mid a\in\Omega\}$ is a prefix-free set.
Let $P\in\PS(\Omega)$ be a finite probability space.
In
the context of
the source coding problem,
an element of the sample space $\Omega$
of $P$
is called a \emph{source symbol}, and
the finite probability space $P$
itself
is called an \emph{information source}
which emits a source symbol in $\Omega$.
The \emph{average codeword length} $L_P(C)$ of
an instantaneous code $C$ for $\Omega$
in
an information source
$P\in\PS(\Omega)$
is defined by
\begin{equation*}
  L_P(C):=\sum_{a\in\Omega}P(a)\abs{C(a)}.
\end{equation*}
Then the objective of the noiseless source coding problem based on instantaneous codes is
to design an instantaneous code $C$ for $\Omega$ which minimizes 
the average codeword length $L_P(C)$ of $C$
for a given information source $P\in\PS(\Omega)$.

In what follows,
we
provide
\emph{in terms of ensembles}
a scenario which
motivates
this objective of the noiseless source coding problem
and which clarifies the
operational
meaning of the average codeword length.
For that purpose, we introduce
some notation:
Let $C$ be an instantaneous code for $\Omega$.
For any infinite sequence $\alpha$ over $\Omega$, we use $C(\alpha)$ to denote an infinite binary sequence
\[
  C(\alpha(1))C(\alpha(2))C(\alpha(3))\dotsc\dotsc.
\]
Similarly, for any $\tau\in\Omega^+$, we use $C(\tau)$ to denote
the concatenation $C(\tau(1))C(\tau(2))\dots C(\tau(\abs{\tau}))$.
Thus,
$C(\rest{\alpha}{n})$ denotes $C(\alpha(1))C(\alpha(2))\dots C(\alpha(n))$
for every $\alpha\in\Omega^\infty$ and $n\in\N^+$,
in particular.
Since $C$ is an instantaneous code for $\Omega$,
it is easy to
see
that for every $\alpha,\beta\in\Omega^\infty$ if $C(\alpha)=C(\beta)$ then $\alpha=\beta$.

Now, let $\Omega$ be an alphabet, and let $P\in\PS(\Omega)$.
Let us
consider an infinite sequence $\alpha$ of source symbols in $\Omega$
which is being generated by infinitely repeated
emission
from the information source $P$.
Namely, we consider an infinite sequence $\alpha\in\Omega^\infty$ of outcomes
which is being generated by infinitely repeated trials described
by the finite probability space $P$ on $\Omega$.
Then, according to
Thesis~\ref{thesis}, $\alpha$ is an ensemble for $P$.
Moreover, we consider a \emph{noiseless binary communication channel} which can reliably transmits
bits
from one end,
called the \emph{input} of the channel, to the other end, called the \emph{output} of the channel.
Let $C$ be an instantaneous code for $\Omega$.
We make use of
the instantaneous code $C$
in order
to transmit the ensemble $\alpha$ from the input to the output
of the channel
in the following manner:
Whenever a source symbol $a$ is emitted from the information source $P$,
an \emph{encoding equipment} at the input of the channel
converts
it into $C(a)$
and then transmits $C(a)$ through the channel from the input to the output.
In this way,
the channel is transmitting
each bit of the infinite binary sequence $C(\alpha)$ one by one in a sequential order from the top of $C(\alpha)$,
toward the output of the channel.
At the output of the channel, an \emph{decoding equipment}
performs
the following \emph{decoding procedure} forever:
The decoding equipment is continuously monitoring bits
which are being outputted from the channel and
which
are
thus
accumulating at the output.
Whenever the accumulation of the bits becomes a finite binary string $\sigma$ which is an element of $C(\Omega)$,
the decoding equipment converts $\sigma$
into a source symbol $a$
satisfying that $C(a)=\sigma$,
and then repeats this procedure. 
The
decoding procedure
can recover
the original
ensemble
$\alpha$
at the output of the channel.
Namely,
through
the
decoding procedure, the
original
infinite sequence $\alpha$ is being generated at the output of the channel.
This
result
can be confirmed
by using the property of $C$ as an instantaneous code for $\Omega$.

From a practical point of view, 
it is important to consider the \emph{efficiency} of the transmission of the ensemble $\alpha$ through the channel
in the above setting.
As a natural measure of the efficiency
we consider
the number of bits per source symbol sent through the channel
during the transmission of the ensemble $\alpha$, and
we
try to minimize it.
Note that in the transmission of the ensemble $\alpha$
each of the
finite
binary strings $C(\alpha(1)),C(\alpha(2)),C(\alpha(3)),\dotsc\dotsc$ passes through the channel
one by one in a sequential order,
and moreover
the total number of bits used
to transmit the source symbols $\alpha(1),\alpha(2),\dots,\alpha(n)$
through the channel equals $\sum_{k=1}^n\abs{C(\alpha(k))}=\abs{C(\rest{\alpha}{n})}$.
Thus,
we adopt the quantity
\begin{equation}\label{AppIT:me}
  \lim_{n\to\infty}\frac{\abs{C(\rest{\alpha}{n})}}{n}
\end{equation}
as
the
measure of the efficiency of the transmission
which represents
the number of bits per source symbol sent through the channel
during the transmission of the ensemble $\alpha$.
Theorem~\ref{Operational-Meaning-of-ACL} below states that
this measure~\eqref{AppIT:me}
of the efficiency precisely equals
the average codeword length $L_P(C)$ of the instantaneous code $C$
for the information source $P$.
Thus, via the quantity~\eqref{AppIT:me},
the average codeword length $L_P(C)$ has an operational meaning as
the number of bits per source symbol sent through the channel
during the transmission of an ensemble $\alpha$
being
emitted from the information source $P$.
This scenario also explains the reason why
the objective of the noiseless source coding problem based on instantaneous codes is
to design an instantaneous code $C$ for $\Omega$ which minimizes 
the average codeword length $L_P(C)$.
The reason is that
the minimization of the number of bits per source symbol sent through the channel
is essential to
an efficient transmission of the ensemble $\alpha$ through the channel
in this scenario.

\begin{theorem}\label{Operational-Meaning-of-ACL}
Let $\Omega$ be an alphabet, and let $P\in\PS(\Omega)$.
Let $C$ be an instantaneous code $C$ for $\Omega$.
For every $\alpha\in\Omega^\infty$, if $\alpha$ is an ensemble for $P$ then
\begin{equation*}
  \lim_{n\to\infty}\frac{\abs{C(\rest{\alpha}{n})}}{n}=L_P(C).
\end{equation*}
\end{theorem}

\begin{proof}
For each $n\in\N^+$, we see that
\begin{equation}\label{OM-ACL:eq1-0}
\frac{\abs{C(\rest{\alpha}{n})}}{n}=\frac{1}{n}\sum_{k=1}^n\abs{C(\alpha(k))}
=\frac{1}{n}\sum_{a\in\Omega}N_a(\rest{\alpha}{n})\abs{C(a)}
=\sum_{a\in\Omega}\frac{N_a(\rest{\alpha}{n})}{n}\abs{C(a)},
\end{equation}
where $N_a(\rest{\alpha}{n})$ denotes the number of the occurrences of $a$ in $\rest{\alpha}{n}$
for every $a\in\Omega$,
as in Theorem~\ref{LLN}.
It follows from Theorem~\ref{LLN} that,
on letting $n\to\infty$,
the limit value of the most right-hand side of \eqref{OM-ACL:eq1-0} equals
\begin{equation*}
  \sum_{a\in\Omega}P(a)\abs{C(a)}=L_P(C),
\end{equation*}
as desired.
This completes the proof.
\end{proof}

We can
show that $L_P(C)\ge H(P)$ for every instantaneous code $C$ for $\Omega$
and every finite probability space $P\in\PS(\Omega)$,
where $H(P)$ is the Shannon entropy of $P$ defined by \eqref{def:Shannon-entropy}.
This is a basic result of information theory.
Hence,
\emph{the Shannon entropy gives the limit of the efficiency of the transmission of the ensemble $\alpha$, i.e,
a lower bound for
the number of bits per source symbol sent through the channel
during the transmission of the ensemble $\alpha$},
in the scenario above for the noiseless source coding problem based on instantaneous codes.
Note that
in information theory this situation
is phrased as
``the Shannon entropy gives the data compression limit
for the noiseless source coding problem based on instantaneous codes.''
For these reasons, 
it is important to consider the notion of absolutely optimality of an instantaneous code,
where we say that
an instantaneous code $C$ for $\Omega$ is \emph{absolutely optimal}
for a finite probability space $P\in\PS(\Omega)$ if $L_P(C)=H(P)$.
Then we can show
Theorem~\ref{OSDCL} below.
Recall from Theorem~\ref{equivMLR} that
Martin-L\"of random sequences are precisely the infinite binary sequences which
\emph{cannot be
compressible
any more}.
Thus, Theorem~\ref{OSDCL} rephrases in a sharp manner in terms of our framework
the following basic result of the noiseless source coding problem,
stated
in the terminology of information theory:
``The Shannon entropy gives the \emph{data compression limit}.''

\begin{theorem}\label{OSDCL}
Let $\Omega$ be an alphabet, and let $P\in\PS(\Omega)$. Let
$C$ be an instantaneous code for $\Omega$.
Suppose that $\alpha$ is an ensemble for $P$.
Then the following conditions~(i) and (ii) are equivalent to each other:
\begin{enumerate}
 \item
  The instantaneous code $C$ is absolutely optimal for the finite probability space $P$.
 \item $C(\alpha)$ is Martin-L\"of random.%
\end{enumerate}
\end{theorem}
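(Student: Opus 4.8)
The plan is to reduce everything to a single measure–preserving property of the prefix–coding map and then invoke the uniqueness of the space described by an ensemble (Corollary~\ref{uniquness}). First I would pin down the equality case of the source–coding inequality. Since $C(\Omega)$ is prefix-free, Kraft's inequality gives $K:=\sum_{a\in\Omega}2^{-\abs{C(a)}}\le 1$, and writing $L_P(C)-H(P)=\sum_{a:P(a)>0}P(a)\log_2\frac{P(a)}{2^{-\abs{C(a)}}}$ and applying the log-sum (Gibbs) inequality shows this quantity is nonnegative and vanishes exactly when $P(a)=2^{-\abs{C(a)}}$ for every $a$ with $P(a)>0$ and $\sum_{a:P(a)>0}2^{-\abs{C(a)}}=1$. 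Hence condition (i) is equivalent to the purely combinatorial statement that $P(a)=2^{-\abs{C(a)}}$ for all $a\in\Omega$; in particular absolute optimality forces $K=1$ (the code is complete) and $P(a)>0$ for every $a$.

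The central device is the coding map $\Phi\colon\Omega^\infty\to\XI$ defined by $\Phi(\gamma)=\Coded{C}{\gamma}$. Because $C(\Omega)$ is prefix-free, $\Phi$ is injective and satisfies $\Phi^{-1}(\osg{C(\tau)})=\osg{\tau}$ for every $\tau\in\Omega^*$; moreover, whenever $K=1$ every infinite binary sequence parses uniquely into codewords, so $\Phi$ is in fact a bijection. I would then introduce $U\in\PS(\Omega)$ given by $U(a)=2^{-\abs{C(a)}}$ (well defined precisely when $K=1$) and record the key identity $\Bm{U}{\osg{\tau}}=U(\tau)=2^{-\abs{C(\tau)}}=\Lm{\osg{C(\tau)}}$. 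By uniqueness of the measure extension this upgrades to $\Lm{\Phi(\mathcal{A})}=\Bm{U}{\mathcal{A}}$ on all Borel sets, i.e.\ $\Phi$ transports $\lambda_U$ to Lebesgue measure. This is exactly the situation of absolute optimality, with $P$ in the role of $U$.

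For (i)$\Rightarrow$(ii) I would argue contrapositively: given a Martin-L\"of test $\mathcal{S}$ on $\XI$ caught by $\Coded{C}{\alpha}$, the pulled-back set $\mathcal{T}_n:=\{\tau\in\Omega^*\mid \text{$\sigma$ is a prefix of $C(\tau)$ for some $\sigma\in\mathcal{S}_n$}\}$ satisfies $\osg{\mathcal{T}_n}=\Phi^{-1}(\osg{\mathcal{S}_n})$, is r.e.\ (as $C$ is a fixed finite, hence computable, map), and has $\Bm{P}{\osg{\mathcal{T}_n}}=\Lm{\osg{\mathcal{S}_n}}<2^{-n}$ by the transport identity with $P=U$; since $\alpha\in\osg{\mathcal{T}_n}$ for all $n$, $\alpha$ is not an ensemble, a contradiction. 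For (ii)$\Rightarrow$(i), assume $\Coded{C}{\alpha}$ is Martin-L\"of random. If $K<1$ then $\Coded{C}{\alpha}$ lies in $\bigcap_m\osg{(C(\Omega))^{m}}$, an effective null set (its $m$-fold level has Lebesgue measure at most $K^{m}$ and $K$ is a computable rational), contradicting randomness; hence $K=1$. With $K=1$ I would push any hypothetical Martin-L\"of $U$-test $\mathcal{C}$ for $\alpha$ forward to $\mathcal{S}_n:=\{C(\tau)\mid\tau\in\mathcal{C}_n\}$, which by the transport identity is a Martin-L\"of test with $\Lm{\osg{\mathcal{S}_n}}=\Bm{U}{\osg{\mathcal{C}_n}}<2^{-n}$ and is caught by $\Coded{C}{\alpha}=\Phi(\alpha)$; so $\alpha$ must be Martin-L\"of $U$-random, i.e.\ an ensemble for $U$. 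As $\alpha$ is also an ensemble for $P$, Corollary~\ref{uniquness} yields $P=U$, that is $P(a)=2^{-\abs{C(a)}}$ for all $a$, which is (i).

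The main obstacle is the measure–transport property of $\Phi$, and specifically the role of completeness: the identity $\Lm{\Phi(\mathcal{A})}=\Bm{U}{\mathcal{A}}$ holds only once $K=1$, because otherwise $\Phi$ misses a positive-measure set of unparsable sequences, so I must first extract $K=1$ from randomness before transporting any test. A secondary subtlety is that $P$ is not assumed computable, which rules out the program-size characterization of Theorem~\ref{equivMLR} and forces every step to be phrased through explicit test constructions; fortunately the codeword lengths $\abs{C(a)}$ and hence $K$ are concrete rationals and $U$ is automatically computable, so the tests built above are genuinely effective even when $P$ is not. Finally, I would note that zero-probability letters need no separate treatment: condition (i) already forces all $P(a)>0$, and in the converse direction the conclusion $P=U$ delivers the same, so the whole argument may be carried out over the full finite (hence computable) alphabet $\Omega$.
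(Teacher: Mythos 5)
Your proposal is correct and follows essentially the same route as the paper: both directions are handled by transporting Martin-L\"of tests through the coding map using the identity $P(\tau)=2^{-\abs{C(\tau)}}$ (pulling a test back for (i)$\Rightarrow$(ii), pushing one forward for (ii)$\Rightarrow$(i)), and the converse is closed out by showing $\alpha$ is an ensemble for the dyadic distribution $U$ and invoking Corollary~\ref{uniquness}. The only substantive difference is cosmetic: where you dispose of the incomplete-code case $K<1$ by an explicit effective null set $\bigcap_m\osg{(C(\Omega))^m}$ before defining $U$ on $\Omega$, the paper instead adjoins a dummy symbol $b_0$ with $Q(b_0)=1-K$ and lets the uniqueness corollary force $Q(b_0)=0$, absorbing completeness into the same step.
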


\begin{proof}%
We note that
the instantaneous code $C$ for $\Omega$ is absolutely optimal for the finite probability space $P$
if and only if $P(a)=2^{-\abs{C(a)}}$ for every $a\in\Omega$.
This is a basic result of information theory.

First, we show the implication (i) $\Rightarrow$ (ii).
For that purpose, suppose that $C$ is absolutely optimal for $P$.
Then
$P(a)=2^{-\abs{C(a)}}$
for every $a\in\Omega$.
It follows that
\begin{equation}\label{P=2Ctau}
  P(\tau)=2^{-\abs{C(\tau)}}
\end{equation}
for every $\tau\in\Omega^+$.

Now, let us
assume contrarily that
$C(\alpha)$
is not Martin-L\"of random.
Then there exists a Martin-L\"of test $\mathcal{S}\subset\N^+\times\X$ such that
\begin{equation}\label{CodedCalphainosgmathcalSn}
  C(\alpha)\in\osg{\mathcal{S}_n}
\end{equation}
for every $n\in\N^+$.
For each $\sigma\in\{0,1\}^+$, let $F(\sigma)$ be
the set of all minimal strings $\tau\in\Omega^+$ such that $\sigma$ is a prefix of $C(\tau)$, i.e.,
the set of all $\tau\in\Omega^+$ such that $\sigma$ is a prefix of $C(\tau)$ but
$\sigma$ is not a prefix of $C(\upsilon)$ for any proper prefix $\upsilon$ of $\tau$ with $\upsilon\neq\lambda$.
Then
we see that
\begin{equation}\label{ossupsetbctFsoCt}
  \bigcup_{\tau\in F(\sigma)}\osg{C(\tau)}\subset\osg{\sigma}
\end{equation}
for every $\sigma\in\{0,1\}^+$.
Moreover,
it is easy to see that $F(\sigma)$ is a finite prefix-free subset of $\Omega^*$
for every $\sigma\in\{0,1\}^+$.
Note here that,
given a finite binary string $\sigma\in\{0,1\}^+$, one can effectively calculate the finite set $F(\sigma)$.
Thus, since $C$ is an instantaneous code for $\Omega$,
for each $\sigma\in\{0,1\}^+$, we see that
\begin{equation}\label{App-IT:eq-oCtcapCtp=es}
  \osg{C(\tau)}\cap\osg{C(\tau')}=\emptyset
\end{equation}
for every $\tau,\tau'\in F(\sigma)$ with $\tau\neq\tau'$.
Hence,
for each $\sigma\in\{0,1\}^+$, we have that
\begin{align}\label{BmPoFs=mLebesgueos-IT}
  \Bm{P}{\osg{F(\sigma)}}
  &\le\sum_{\tau\in F(\sigma)}\Bm{P}{\osg{\tau}}
  =\sum_{\tau\in F(\sigma)}P(\tau)
  =\sum_{\tau\in F(\sigma)}2^{-\abs{C(\tau)}}
  =\sum_{\tau\in F(\sigma)}\mathcal{L}(\osg{C(\tau)})
  \le\mathcal{L}(\osg{\sigma}),
\end{align}
where
the second equality follows from \eqref{P=2Ctau},
and the last inequality follows from \eqref{App-IT:eq-oCtcapCtp=es} and \eqref{ossupsetbctFsoCt}.
Recall also that $\mathcal{L}$ denotes Lebesgue measure on $\XI$.
We then define $\mathcal{T}$ to be a subset of $\N^+\times \Omega^*$ such that
$\mathcal{T}_n=\bigcup_{\sigma\in\mathcal{S}_n} F(\sigma)$ for every $n\in\N^+$.
Note here that, for each $n\in\N^+$,
$\lambda\notin\mathcal{S}_n$ since $\mathcal{L}(\osg{\mathcal{S}_n})<2^{-n}<1$.
Then, since $\mathcal{S}_n$ is a prefix-free subset of $\X$ for every $n\in\N^+$
and $F(\sigma)$ is a prefix-free subset of $\Omega^*$ for every $\sigma\in\{0,1\}^+$,
it is easy to see that $\mathcal{T}_n$ is a prefix-free subset of $\Omega^*$ for every $n\in\N^+$.
For each $n\in\N^+$, we also see that
\[
  \Bm{P}{\osg{\mathcal{T}_n}}
  \le\sum_{\sigma\in\mathcal{S}_n}\Bm{P}{\osg{F(\sigma)}}
  \le\sum_{\sigma\in\mathcal{S}_n}\mathcal{L}(\osg{\sigma})
  =\mathcal{L}(\osg{\mathcal{S}_n})<2^{-n},
\]
where the second inequality follows from \eqref{BmPoFs=mLebesgueos-IT} and
the equality follows from the prefix-freeness of $\mathcal{S}_n$.
Moreover, since $\mathcal{S}$ is r.e.,
we see that $\mathcal{T}$ is also r.e.
Thus, $\mathcal{T}$ is a Martin-L\"of $P$-test.

On the other hand,
note that, for every $n\in\N^+$, if
$C(\alpha)\in\osg{\mathcal{S}_n}$
then $\alpha\in\osg{\mathcal{T}_n}$.
Thus, it follows from \eqref{CodedCalphainosgmathcalSn}
that $\alpha\in\osg{\mathcal{T}_n}$ for every $n\in\N^+$.
Hence,
$\alpha$ is not Martin-L\"of $P$-random.
However, this contradicts the assumption of the theorem.
Thus,
we have that
$C(\alpha)$
is Martin-L\"of random.

Next, we show the implication (ii) $\Rightarrow$ (i).
We choose any specific $b_0\notin\Omega$ and then denote the alphabet $\Omega\cup\{b_0\}$ by $\Phi$.
Since $C(\Omega)$ is
a prefix-free subset of $\X$,
the Kraft inequality
\[
  \sum_{a\in\Omega}2^{-\abs{C(a)}}\le 1
\]
holds.
Hence, we can define a finite probability space $Q\in\PS(\Phi)$ with the property that
$Q(x):=2^{-\abs{C(x)}}$
if $x\in\Omega$ and
\[
  Q(x):=1-\sum_{a\in\Omega}2^{-\abs{C(a)}}
\]
otherwise.
Note that since $\alpha$ is an infinite sequence over $\Omega$ and $\Omega\subset\Phi$,
it is also an infinite sequence over $\Phi$.

We show that if $\alpha$ is not Martin-L\"of $Q$-random then
$C(\alpha)$
is not Martin-L\"of random.
Thus, assume that $\alpha$ is not Martin-L\"of $Q$-random.
Then there exists a Martin-L\"of $Q$-test $\mathcal{S}\subset\N^+\times\Phi^*$ such that
\begin{equation}\label{alphainosgmathcalSn-IT}
  \alpha\in\osg{\mathcal{S}_n}
\end{equation}
for every $n\in\N^+$.
Thus, since $Q(a)=2^{-\abs{C(a)}}$ for every $a\in\Omega$, we have that
\begin{equation}\label{Qos=Qs=2-aCs=mLoCs-IT}
  \Bm{Q}{\osg{\sigma}}=Q(\sigma)=2^{-\abs{C(\sigma)}}=\mathcal{L}(\osg{C(\sigma)})
\end{equation}
for each $\sigma\in\Omega^+$.
We define $\mathcal{T}$ to be a subset of $\N^+\times \X$ such that
$\mathcal{T}_n=\{C(\sigma)\mid\sigma\in\mathcal{S}_n\cap\Omega^+\}$
for every $n\in\N^+$.
Then, since
$C$ is an instantaneous code for $\Omega$ and
$\mathcal{S}_n$ is a prefix-free subset of $\Phi^*$ for every $n\in\N^+$,
it is easy to see that $\mathcal{T}_n$ is a prefix-free subset of $\X$ for every $n\in\N^+$.
For each $n\in\N^+$, we also see that
\[
  \mathcal{L}(\osg{\mathcal{T}_n})
  \le\sum_{\sigma\in\mathcal{S}_n\cap\Omega^+}\mathcal{L}(\osg{C(\sigma)})
  =\sum_{\sigma\in\mathcal{S}_n\cap\Omega^+}\Bm{Q}{\osg{\sigma}}
  =\Bm{Q}{\osg{\mathcal{S}_n\cap\Omega^+}}
  \le\Bm{Q}{\osg{\mathcal{S}_n}}<2^{-n},
\]
where the first equality follows from \eqref{Qos=Qs=2-aCs=mLoCs-IT} and
the second equality follows from the prefix-freeness of
$\mathcal{S}_n$.
Moreover, since $\mathcal{S}$ is r.e., $\mathcal{T}$ is also r.e.
Thus, $\mathcal{T}$ is Martin-L\"of test.

On the other hand,
we see that, for every $n\in\N^+$,
if $\alpha\in\osg{\mathcal{S}_n}$ then
$C(\alpha)\in\osg{\mathcal{T}_n}$.
Note here that, for each $n\in\N^+$,
$\lambda\notin\mathcal{S}_n$ since $\Bm{Q}{\osg{\mathcal{S}_n}}<2^{-n}<1$.
Thus, it follows from \eqref{alphainosgmathcalSn-IT} that
$C(\alpha)\in\osg{\mathcal{T}_n}$
for every $n\in\N^+$.
Hence,
$C(\alpha)$
is not Martin-L\"of random, as desired.

Now,
recall that $\alpha$ is a Martin-L\"of $P$-random infinite sequence over $\Omega$
by the assumption of the theorem.
We define a finite probability space $P'\in\PS(\Phi)$ by the condition that
$P'(x):=P(x)$ if $x\in\Omega$ and $P'(x):=0$ otherwise.
It is then easy to see that
$\alpha$ is a Martin-L\"of $P'$-random infinite sequence over $\Phi$.
Suppose that the condition~(ii) holds.
Then $\alpha$ is a Martin-L\"of $Q$-random infinite sequence over $\Phi$,
due to the result above.
It follows from Corollary~\ref{uniquness} that $P'=Q$, and therefore $P(a)=2^{-\abs{C(a)}}$
for every $a\in\Omega$. Hence, the condition~(i) holds.
Thus, we have the implication (ii) $\Rightarrow$ (i).
This completes the proof.
\end{proof}

By definition, an absolutely optimal instantaneous code for an information source $P\in\PS(\Omega)$ has
the minimal average codeword length for $P$ among all instantaneous codes for $\Omega$,
and therefore
it follows from Theorem~\ref{Operational-Meaning-of-ACL} that
it provides the most efficient coding
in transmitting
the ensemble $\alpha$ through
the noiseless binary communication channel
in the scenario above for the noiseless source coding problem based on instantaneous codes.
On the other hand,
we again recall from Theorem~\ref{equivMLR} that
Martin-L\"of random sequences are precisely the infinite binary sequences which
\emph{cannot be compressible any more}.
Thus,
the implication (i) $\Rightarrow$ (ii) of Theorem~\ref{OSDCL}
further shows that
an
arbitrary
absolutely optimal instantaneous code
$C$
can already
achieve
the limit of the efficiency of the transmission
of the ensemble $\alpha$
\emph{among all conceivable noiseless source coding schemes},
including but not limited to the noiseless source coding
scheme
based on
instantaneous codes,
in a certain sense,
since $C(\alpha)$
is a Martin-L\"of random infinite binary sequence in this case and therefore it
cannot be compressed any more.

Finally,
we remark that
in both Theorem~\ref{Operational-Meaning-of-ACL} and Theorem~\ref{OSDCL}
the underlying finite probability space $P$ is \emph{quite arbitrary},
and therefore is not required to be computable at all, in particular.

\subsection{Application to cryptography}
\label{subsec:AC}

In this subsection, we make
an
application of our framework to cryptography.
The notion of probability plays a crucial role in modern cryptography.
The security of almost all cryptographic schemes, including
private-key encryption schemes, public-key encryption schemes, message authentication, and digital signatures,
is based on the notion of probability (see Goldreich~\cite{G01,G04}).
In this subsection we present new equivalent characterizations of the notion of \emph{perfect secrecy},
in terms of ensembles.

The notion of perfect secrecy was introduced by Shannon \cite{Sha49},
and plays a basic role in modern cryptography.
Perfect secrecy is the most stringent security notion
among the class of security notions
called \emph{information-theoretic security} or \emph{unconditional security}.
See Katz and Lindell~\cite[Section 2]{KL07} for a modern treatment of the notion of perfect secrecy.
To begin with, we review the notion of \emph{encryption schemes} to which the notion of perfect secrecy is applied,
in terms of our framework.

\begin{definition}[Encryption scheme]\label{ES_simple}
Let $\mathcal{M}$, $\mathcal{K}$, and $\mathcal{C}$ be alphabets.
An \emph{encryption scheme} over a message space $\mathcal{M}$, a key space $\mathcal{K}$,
and a ciphertext space $\mathcal{C}$ is a tuple $\Pi=(P_{\mathrm{key}},\mathsf{Enc},\mathsf{Dec})$ such that
\begin{enumerate}
  \item $P_{\mathrm{key}}\in\PS(\mathcal{K})$,
  \item $\mathsf{Enc}\colon\mathcal{M}\times\mathcal{K}\to\mathcal{C}$,
  \item $\mathsf{Dec}\colon\mathcal{C}\times\mathcal{K}\to\mathcal{M}$, and
  \item $\mathsf{Dec}(\mathsf{Enc}(m,k),k)=m$ for every $m\in\mathcal{M}$ and $k\in\mathcal{K}$.\qed
\end{enumerate}
\end{definition}

Let $\Pi=(P_{\mathrm{key}},\mathsf{Enc},\mathsf{Dec})$ be as in Definition~\ref{ES_simple},
and let $P_{\mathrm{msg}}\in\PS(\mathcal{M})$.
The finite probability space $P_{\mathrm{msg}}$ serves as a ``probability distribution''
over message space $\mathcal{M}$ for the encryption scheme $\Pi$.
We then define random variables $M_{\Pi}\colon\mathcal{M}\times\mathcal{K}\to\mathcal{M}$ and
$C_{\Pi}\colon\mathcal{M}\times\mathcal{K}\to\mathcal{C}$ on $\mathcal{M}\times\mathcal{K}$
by $M_{\Pi}(m,k):=m$ and $C_{\Pi}(m,k):=\mathsf{Enc}(m,k)$, respectively.
The notion of perfect secrecy is then defined as follows,
in terms of our framework.

\begin{definition}[Perfect secrecy, Shannon \cite{Sha49}]\label{PS_simple}
Let $\mathcal{M}$, $\mathcal{K}$, and $\mathcal{C}$ be alphabets.
Let $\Pi=(P_{\mathrm{key}},\mathsf{Enc},\mathsf{Dec})$ be an encryption scheme
over a message space $\mathcal{M}$, a key space $\mathcal{K}$, and a ciphertext space $\mathcal{C}$.
The encryption scheme $\Pi$ is \emph{perfectly secret} if
for every $P_{\mathrm{msg}}\in\PS(\mathcal{M})$
it holds that the random variables $M_{\Pi}$ and $C_{\Pi}$ are independent
on $P_{\mathrm{msg}}\times P_{\mathrm{key}}$.
\qed
\end{definition}

We use $U_{\mathcal{M}}$ to denote ``the uniform distribution over a message space $\mathcal{M}$,''
i.e.,
to denote
a finite probability space in $\PS(\mathcal{M})$ such that
\[
  U_{\mathcal{M}}(m)=\frac{1}{\#\mathcal{M}}
\]
for every $m\in\mathcal{M}$.
Note that $U_{\mathcal{M}}$ is a \emph{computable} finite probability space
since
every rational is computable.
Based on Theorems~\ref{independence-independence} and \ref{ind-vL}
we can show Theorems~\ref{PS-AR} and \ref{PS-AR2} below,
which characterize
the notion of perfect secrecy equivalently in terms of
the notions of the independence of ensembles and Martin-L\"of $P$-randomness relative to an oracle.

\begin{theorem}[New equivalent characterizations of perfect secrecy I]\label{PS-AR}
Let $\mathcal{M}$, $\mathcal{K}$, and $\mathcal{C}$ be alphabets.
Let $\Pi=(P_{\mathrm{key}},\mathsf{Enc},\mathsf{Dec})$ be an encryption scheme
over a message space $\mathcal{M}$, a key space $\mathcal{K}$, and a ciphertext space $\mathcal{C}$.
Then the following conditions are equivalent to one another.
\begin{enumerate}
  \item The encryption scheme $\Pi$ is perfectly secret.
  \item For every $P_{\mathrm{msg}}\in\PS(\mathcal{M})$ and every ensemble $\alpha$
    for $P_{\mathrm{msg}}\times P_{\mathrm{key}}$,
    the ensembles $M_{\Pi}(\alpha)$ and $C_{\Pi}(\alpha)$ are independent.
  \item For every $P_{\mathrm{msg}}\in\PS(\mathcal{M})$ there exists an ensemble $\alpha$
    for $P_{\mathrm{msg}}\times P_{\mathrm{key}}$
    such that the ensembles $M_{\Pi}(\alpha)$ and $C_{\Pi}(\alpha)$ are independent.
  \item For every computable $P_{\mathrm{msg}}\in\PS(\mathcal{M})$ and
    every ensemble $\alpha$ for $P_{\mathrm{msg}}\times P_{\mathrm{key}}$ it holds that
    $M_{\Pi}(\alpha)$ is Martin-L\"of $P_{\mathrm{msg}}$-random relative to $C_{\Pi}(\alpha)$.
  \item For every computable $P_{\mathrm{msg}}\in\PS(\mathcal{M})$
    there exists an ensemble $\alpha$ for $P_{\mathrm{msg}}\times P_{\mathrm{key}}$ such that
    $M_{\Pi}(\alpha)$ is Martin-L\"of $P_{\mathrm{msg}}$-random relative to $C_{\Pi}(\alpha)$.
  \item For every ensemble $\alpha$ for $U_{\mathcal{M}}\times P_{\mathrm{key}}$ it holds that
    $M_{\Pi}(\alpha)$ is Martin-L\"of $U_{\mathcal{M}}$-random relative to $C_{\Pi}(\alpha)$.
  \item There exists an ensemble $\alpha$ for $U_{\mathcal{M}}\times P_{\mathrm{key}}$ such that
    $M_{\Pi}(\alpha)$ is Martin-L\"of $U_{\mathcal{M}}$-random relative to $C_{\Pi}(\alpha)$.\qed
\end{enumerate}
\end{theorem}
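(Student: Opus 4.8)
The plan is to derive all seven equivalences from the two bridging theorems already proved, Theorem~\ref{independence-independence} and Theorem~\ref{ind-vL}, applied to the pair of random variables $M_{\mathrm{\Pi}}$ and $C_{\mathrm{\Pi}}$ on the product space $P_{\mathrm{msg}}\times P_{\mathrm{key}}$, supplemented by one genuinely cryptographic lemma that reduces perfect secrecy to the uniform message distribution. The single computation I would record at the outset is that $M_{\mathrm{\Pi}}(P_{\mathrm{msg}}\times P_{\mathrm{key}})=P_{\mathrm{msg}}$, since $M_{\mathrm{\Pi}}$ is the projection onto the message coordinate; this identifies the space relative to which $M_{\mathrm{\Pi}}(\alpha)$ must be random in Theorem~\ref{ind-vL}, and explains why the van Lambalgen-type conditions (iv)--(vii) only quantify over \emph{computable} $P_{\mathrm{msg}}$, Theorem~\ref{ind-vL} requiring $M_{\mathrm{\Pi}}(P_{\mathrm{msg}}\times P_{\mathrm{key}})=P_{\mathrm{msg}}$ to be computable.

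First I would dispatch (i)$\iff$(ii)$\iff$(iii): for each fixed $P_{\mathrm{msg}}\in\PS(\mathcal{M})$, Theorem~\ref{independence-independence} applied with $n=2$ to $M_{\mathrm{\Pi}},C_{\mathrm{\Pi}}$ on $P_{\mathrm{msg}}\times P_{\mathrm{key}}$ makes the independence of these two random variables equivalent both to the independence of $M_{\mathrm{\Pi}}(\alpha),C_{\mathrm{\Pi}}(\alpha)$ for \emph{every} ensemble $\alpha$ and to their independence for \emph{some} ensemble $\alpha$, an ensemble existing by Theorem~\ref{Bmae}. Universally quantifying over $P_{\mathrm{msg}}$ and unwinding Definition~\ref{PS_simple} then yields (i)$\iff$(ii)$\iff$(iii). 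Next I would treat the relativized-randomness conditions. For any \emph{computable} $P_{\mathrm{msg}}$ the space $M_{\mathrm{\Pi}}(P_{\mathrm{msg}}\times P_{\mathrm{key}})=P_{\mathrm{msg}}$ is computable, so Theorem~\ref{ind-vL} (again with $n=2$) makes the independence of $M_{\mathrm{\Pi}},C_{\mathrm{\Pi}}$ on $P_{\mathrm{msg}}\times P_{\mathrm{key}}$ equivalent to the assertion that $M_{\mathrm{\Pi}}(\alpha)$ is Martin-L\"of $P_{\mathrm{msg}}$-random relative to $C_{\mathrm{\Pi}}(\alpha)$, for every (resp.\ some) ensemble $\alpha$. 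Thus (i) $\Rightarrow$ (iv) by restricting (i) to computable $P_{\mathrm{msg}}$; (iv) $\Rightarrow$ (v) is immediate since ensembles exist by Theorem~\ref{Bmae}; and specializing (v) to the computable distribution $U_{\mathcal{M}}$ gives (vii). The same instance of Theorem~\ref{ind-vL} with $P_{\mathrm{msg}}=U_{\mathcal{M}}$ shows that (vi) and (vii) are each equivalent to the independence of $M_{\mathrm{\Pi}},C_{\mathrm{\Pi}}$ on $U_{\mathcal{M}}\times P_{\mathrm{key}}$.

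It remains to close the loop by proving (vii) $\Rightarrow$ (i), and this is the main obstacle, since (i) demands independence for \emph{all} $P_{\mathrm{msg}}$ whereas (vii) furnishes it only for the uniform one. I would isolate it as a lemma of the Shannon type: writing $q(c\mid m):=P_{\mathrm{key}}(\{k\mid\mathsf{Enc}(m,k)=c\})$, a direct expansion gives, for $P=P_{\mathrm{msg}}\times P_{\mathrm{key}}$, that $P(M_{\mathrm{\Pi}}=m,\ C_{\mathrm{\Pi}}=c)=P_{\mathrm{msg}}(m)\,q(c\mid m)$ while $P(C_{\mathrm{\Pi}}=c)=\sum_{m'}P_{\mathrm{msg}}(m')\,q(c\mid m')$ and $P(M_{\mathrm{\Pi}}=m)=P_{\mathrm{msg}}(m)$. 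Hence independence on $P_{\mathrm{msg}}\times P_{\mathrm{key}}$ amounts to $q(c\mid m)$ being constant in $m$ over the support of $P_{\mathrm{msg}}$. Taking $P_{\mathrm{msg}}=U_{\mathcal{M}}$, which has full support, forces $q(c\mid m)$ to be independent of $m$ across all of $\mathcal{M}$ for each $c$; conversely, once $q(c\mid m)$ is constant in $m$ the displayed identities show independence holds for every $P_{\mathrm{msg}}$. Therefore perfect secrecy is equivalent to independence on $U_{\mathcal{M}}\times P_{\mathrm{key}}$, which by the previous paragraph is equivalent to (vii); this yields (vii) $\Rightarrow$ (i) and simultaneously (i)$\iff$(vi). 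Assembling the chain (i) $\Rightarrow$ (iv) $\Rightarrow$ (v) $\Rightarrow$ (vii) $\Rightarrow$ (i) together with (i)$\iff$(ii)$\iff$(iii) and (i)$\iff$(vi) establishes the mutual equivalence of all seven conditions.
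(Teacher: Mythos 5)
Your proposal is correct and follows essentially the same route as the paper: the key observation $M_{\mathrm{\Pi}}(P_{\mathrm{msg}}\times P_{\mathrm{key}})=P_{\mathrm{msg}}$, the reduction of conditions (ii)--(vii) to independence of the random variables $M_{\mathrm{\Pi}},C_{\mathrm{\Pi}}$ via Theorems~\ref{independence-independence} and \ref{ind-vL}, and the Shannon-type reduction of perfect secrecy to independence under $U_{\mathcal{M}}$ alone. Your inline lemma that $q(c\mid m)$ must be constant in $m$ is precisely the paper's Lemma~\ref{PS_lemma}, proved by the same computation.
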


\begin{theorem}[New equivalent characterizations of perfect secrecy II]\label{PS-AR2}
Let $\mathcal{M}$, $\mathcal{K}$, and $\mathcal{C}$ be alphabets.
Let $\Pi=(P_{\mathrm{key}},\mathsf{Enc},\mathsf{Dec})$ be an encryption scheme
over a message space $\mathcal{M}$, a key space $\mathcal{K}$, and a ciphertext space $\mathcal{C}$.
Suppose that $P_{\mathrm{key}}$ is computable.
Then the following conditions are equivalent to one another.
\begin{enumerate}
  \item The encryption scheme $\Pi$ is perfectly secret.
  \item For every computable $P_{\mathrm{msg}}\in\PS(\mathcal{M})$ and
    every ensemble $\alpha$ for $P_{\mathrm{msg}}\times P_{\mathrm{key}}$ it holds that
    $C_{\Pi}(\alpha)$ is
    Martin-L\"of $C_{\Pi}(P_{\mathrm{msg}}\times P_{\mathrm{key}})$-random relative to $M_{\Pi}(\alpha)$.
  \item For every computable $P_{\mathrm{msg}}\in\PS(\mathcal{M})$
    there exists an ensemble $\alpha$ for $P_{\mathrm{msg}}\times P_{\mathrm{key}}$ such that
    $C_{\Pi}(\alpha)$ is
    Martin-L\"of $C_{\Pi}(P_{\mathrm{msg}}\times P_{\mathrm{key}})$-random relative to $M_{\Pi}(\alpha)$.
  \item For every ensemble $\alpha$ for $U_{\mathcal{M}}\times P_{\mathrm{key}}$ it holds that
    $C_{\Pi}(\alpha)$ is Martin-L\"of $C_{\Pi}(U_{\mathcal{M}}\times P_{\mathrm{key}})$-random relative to $M_{\Pi}(\alpha)$.
  \item There exists an ensemble $\alpha$ for $U_{\mathcal{M}}\times P_{\mathrm{key}}$ such that
    $C_{\Pi}(\alpha)$ is Martin-L\"of $C_{\Pi}(U_{\mathcal{M}}\times P_{\mathrm{key}})$-random relative to $M_{\Pi}(\alpha)$.\qed
\end{enumerate}
\end{theorem}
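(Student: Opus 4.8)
The plan is to obtain Theorem~\ref{PS-AR2} as the exact counterpart of Theorem~\ref{PS-AR}, but with the roles of the random variables $M_{\mathrm{\Pi}}$ and $C_{\mathrm{\Pi}}$ exchanged in the application of the generalized van Lambalgen Theorem. In Theorem~\ref{PS-AR} one conditions $M_{\mathrm{\Pi}}$ on $C_{\mathrm{\Pi}}$, which through Theorem~\ref{ind-vL} (with $X_1=M_{\mathrm{\Pi}}$) only requires $M_{\mathrm{\Pi}}(P_{\mathrm{msg}}\times P_{\mathrm{key}})=P_{\mathrm{msg}}$ to be computable; this is why that theorem carries no computability hypothesis on $P_{\mathrm{key}}$. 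Here conditions (ii)--(v) instead condition $C_{\mathrm{\Pi}}$ on $M_{\mathrm{\Pi}}$, so I would apply Theorem~\ref{ind-vL} with $X_1=C_{\mathrm{\Pi}}$ and $X_2=M_{\mathrm{\Pi}}$, for which the hypothesis to check is that $X_1(P)=C_{\mathrm{\Pi}}(P_{\mathrm{msg}}\times P_{\mathrm{key}})$ is computable.

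The first step is to verify exactly this computability. Since $C_{\mathrm{\Pi}}(m,k)=\mathsf{Enc}(m,k)$, we have $\left(C_{\mathrm{\Pi}}(P_{\mathrm{msg}}\times P_{\mathrm{key}})\right)(c)=\sum_{(m,k):\,\mathsf{Enc}(m,k)=c}P_{\mathrm{msg}}(m)P_{\mathrm{key}}(k)$ for each $c\in\mathcal{C}$. As $\mathcal{M}$ and $\mathcal{K}$ are finite, this is a finite sum of products of the reals $P_{\mathrm{msg}}(m)$ and $P_{\mathrm{key}}(k)$. Hence, whenever $P_{\mathrm{msg}}$ is computable, the hypothesis that $P_{\mathrm{key}}$ is computable guarantees that $C_{\mathrm{\Pi}}(P_{\mathrm{msg}}\times P_{\mathrm{key}})$ is computable; in particular $C_{\mathrm{\Pi}}(U_{\mathcal{M}}\times P_{\mathrm{key}})$ is computable, since $U_{\mathcal{M}}$ is computable.

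With this in hand I would assemble the equivalences as follows. Applying Theorem~\ref{ind-vL} with $X_1=C_{\mathrm{\Pi}}$, $X_2=M_{\mathrm{\Pi}}$ over the space $P_{\mathrm{msg}}\times P_{\mathrm{key}}$ (for each computable $P_{\mathrm{msg}}$), and then over $U_{\mathcal{M}}\times P_{\mathrm{key}}$, both the universal and the existential van Lambalgen conditions there become equivalent, for the space in question, to the independence of $M_{\mathrm{\Pi}}$ and $C_{\mathrm{\Pi}}$ on it. Hence conditions (ii) and (iii) both amount to ``$M_{\mathrm{\Pi}}$ and $C_{\mathrm{\Pi}}$ are independent on $P_{\mathrm{msg}}\times P_{\mathrm{key}}$ for every computable $P_{\mathrm{msg}}$,'' and conditions (iv) and (v) both amount to the same independence on $U_{\mathcal{M}}\times P_{\mathrm{key}}$. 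It then remains to link these with condition (i), for which I would invoke Theorem~\ref{PS-AR}: it already certifies that perfect secrecy is equivalent to the independence of $M_{\mathrm{\Pi}}$ and $C_{\mathrm{\Pi}}$ for every message distribution (its conditions (i)--(iii), read through Theorem~\ref{independence-independence}) and, via its uniform-distribution conditions, to that independence on $U_{\mathcal{M}}\times P_{\mathrm{key}}$ alone. Squeezing the computable distributions between $U_{\mathcal{M}}$ and all distributions then closes the cycle (i) $\Leftrightarrow$ (ii) $\Leftrightarrow$ (iii) $\Leftrightarrow$ (iv) $\Leftrightarrow$ (v).

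I expect the one genuinely delicate point to be this quantifier reduction, namely that the definition of perfect secrecy ranges over all message distributions whereas conditions (ii)--(v) only see computable ones. The resolution is that independence of $M_{\mathrm{\Pi}}$ and $C_{\mathrm{\Pi}}$ is controlled by the message-free quantity $\sum_{k:\,\mathsf{Enc}(m,k)=c}P_{\mathrm{key}}(k)$ being the same for all $m$, so it suffices to test it against the full-support distribution $U_{\mathcal{M}}$; since $U_{\mathcal{M}}$ is computable, this feeds correctly into Theorem~\ref{ind-vL}. As this reduction is already packaged inside Theorem~\ref{PS-AR}, the remaining work is essentially bookkeeping, provided the variable ordering in Theorem~\ref{ind-vL}---conditioning $C_{\mathrm{\Pi}}$ on $M_{\mathrm{\Pi}}$, with $C_{\mathrm{\Pi}}(P)$ computable---is matched correctly against each of the four conditions.
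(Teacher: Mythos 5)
Your proposal is correct and follows essentially the same route as the paper: verify that $C_{\mathrm{\Pi}}(P_{\mathrm{msg}}\times P_{\mathrm{key}})$ is computable whenever $P_{\mathrm{msg}}$ and $P_{\mathrm{key}}$ are, apply Theorem~\ref{ind-vL} with $C_{\mathrm{\Pi}}$ in the role whose image distribution must be computable, and reduce the quantifier over all message distributions to the computable ones and to $U_{\mathcal{M}}$. The paper packages that last reduction as Lemma~\ref{PS_lemma} rather than routing it through Theorem~\ref{PS-AR}, but the underlying argument (the message-free quantity $\sum_{k:\,\mathsf{Enc}(m,k)=c}P_{\mathrm{key}}(k)$) is exactly the one you identify.
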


Note that in Theorem~\ref{PS-AR}
the encryption scheme $\Pi=(P_{\mathrm{key}},\mathsf{Enc},\mathsf{Dec})$ is \emph{quite arbitrary}
and therefore the finite probability space $P_{\mathrm{key}}$ is not required to be computable at all, in particular.
In contrast, the computability of $P_{\mathrm{key}}$ is required in Theorem~\ref{PS-AR2}.
Note, however,
that the finite probability space $P_{\mathrm{key}}$,
which serves as a ``probability distribution'' over key space $\mathcal{K}$,
is normally \emph{computable} in modern cryptography.

In the cryptographic community, the notion of perfect secrecy for an encryption scheme
is said to imply that
even if the eavesdropper has \emph{infinite computing power},
she cannot obtain any information about a message
from the corresponding ciphertext.
We may interpret the conditions (iv)--(vii) of Theorem~\ref{PS-AR}
as implying
this situation in a certain sense
since they state that
the randomness of a message $M_{\Pi}(\alpha)$
cannot be
reduced
even by
Martin-L\"of tests of unlimited computing power
with a complete reference to the corresponding ciphertext $C_{\Pi}(\alpha)$ as side information.

Now, in order to prove Theorems~\ref{PS-AR} and \ref{PS-AR2}, we need the following lemma.

\begin{lemma}\label{PS_lemma}
Let $\mathcal{M}$, $\mathcal{K}$, and $\mathcal{C}$ be alphabets.
Let $\Pi=(P_{\mathrm{key}},\mathsf{Enc},\mathsf{Dec})$ be an encryption scheme
over a message space $\mathcal{M}$, a key space $\mathcal{K}$, and a ciphertext space $\mathcal{C}$.
Then the following conditions are equivalent
to one another.
\begin{enumerate}
  \item The encryption scheme $\Pi$ is perfectly secret.
  \item For every computable $P_{\mathrm{msg}}\in\PS(\mathcal{M})$
    it holds that the random variables $M_{\Pi}$ and $C_{\Pi}$ are independent
    on $P_{\mathrm{msg}}\times P_{\mathrm{key}}$.
  \item The random variables $M_{\Pi}$ and $C_{\Pi}$ are independent
    on $U_{\mathcal{M}}\times P_{\mathrm{key}}$.
\end{enumerate}
\end{lemma}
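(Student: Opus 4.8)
The plan is to prove the cyclic chain of implications (i) $\Rightarrow$ (ii) $\Rightarrow$ (iii) $\Rightarrow$ (i), of which only the last is substantial. First I would dispose of the two easy steps: the implication (i) $\Rightarrow$ (ii) is immediate from Definition~\ref{PS_simple}, since every computable $P_{\mathrm{msg}}$ is in particular an element of $\PS(\mathcal{M})$; and (ii) $\Rightarrow$ (iii) holds because $U_{\mathcal{M}}$ is a computable finite probability space, as already observed in the text, so condition (ii) applies to it directly. Thus the whole content of the lemma lies in showing (iii) $\Rightarrow$ (i), i.e.\ that independence on the single space $U_{\mathcal{M}}\times P_{\mathrm{key}}$ upgrades to independence on $P_{\mathrm{msg}}\times P_{\mathrm{key}}$ for \emph{every} $P_{\mathrm{msg}}$.

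For this, the key device is to introduce, for each $m\in\mathcal{M}$ and $c\in\mathcal{C}$, the quantity $\kappa(m,c):=P_{\mathrm{key}}(\{k\in\mathcal{K}\mid\mathsf{Enc}(m,k)=c\})$, which depends only on $P_{\mathrm{key}}$ and $\mathsf{Enc}$ and \emph{not} on any distribution over the message space. For an arbitrary $P_{\mathrm{msg}}\in\PS(\mathcal{M})$ I would then compute, directly from the definitions of $M_{\mathrm{\Pi}}$ and $C_{\mathrm{\Pi}}$ and of the product space, that $(P_{\mathrm{msg}}\times P_{\mathrm{key}})(M_{\mathrm{\Pi}}=m)=P_{\mathrm{msg}}(m)$, that $(P_{\mathrm{msg}}\times P_{\mathrm{key}})(M_{\mathrm{\Pi}}=m\;\&\;C_{\mathrm{\Pi}}=c)=P_{\mathrm{msg}}(m)\kappa(m,c)$, and that $(P_{\mathrm{msg}}\times P_{\mathrm{key}})(C_{\mathrm{\Pi}}=c)=\sum_{m'\in\mathcal{M}}P_{\mathrm{msg}}(m')\kappa(m',c)$. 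Consequently, independence of $M_{\mathrm{\Pi}}$ and $C_{\mathrm{\Pi}}$ on $P_{\mathrm{msg}}\times P_{\mathrm{key}}$ is \emph{exactly} the family of equalities $P_{\mathrm{msg}}(m)\kappa(m,c)=P_{\mathrm{msg}}(m)\sum_{m'}P_{\mathrm{msg}}(m')\kappa(m',c)$ ranging over all $m$ and $c$.

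The crucial use of the uniform distribution now enters, and I expect this to be the main point of the argument. Since $U_{\mathcal{M}}(m)=1/\#\mathcal{M}>0$ for \emph{every} $m$, the hypothesis (iii) lets me cancel the common nonzero factor $1/\#\mathcal{M}$ from both sides of the above equalities, yielding $\kappa(m,c)=\frac{1}{\#\mathcal{M}}\sum_{m'}\kappa(m',c)$ for all $m$ and $c$. The right-hand side does not depend on $m$, so $\kappa(m,c)$ takes one common value $\bar\kappa(c)$ as $m$ ranges over all of $\mathcal{M}$. The essential feature being exploited here is the full support of $U_{\mathcal{M}}$: it is precisely the positivity of $U_{\mathcal{M}}(m)$ at every $m$ that forces $\kappa(\cdot,c)$ to be constant in its first argument, and no distribution with a smaller support would suffice.

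Finally, armed with $\kappa(m,c)=\bar\kappa(c)$ for all $m$, I would verify the independence equalities for an arbitrary $P_{\mathrm{msg}}$: the left-hand side becomes $P_{\mathrm{msg}}(m)\bar\kappa(c)$, while the right-hand side becomes $P_{\mathrm{msg}}(m)\bar\kappa(c)\sum_{m'}P_{\mathrm{msg}}(m')=P_{\mathrm{msg}}(m)\bar\kappa(c)$, using $\sum_{m'}P_{\mathrm{msg}}(m')=1$. (Note this holds even when $P_{\mathrm{msg}}(m)=0$, as both sides then vanish, so no separate treatment of zero-probability messages is needed.) The two sides agree, so $M_{\mathrm{\Pi}}$ and $C_{\mathrm{\Pi}}$ are independent on $P_{\mathrm{msg}}\times P_{\mathrm{key}}$ for every $P_{\mathrm{msg}}$, which is exactly condition (i). This closes the cycle and completes the proof.
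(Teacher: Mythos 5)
Your proposal is correct and follows essentially the same route as the paper's proof: the same two trivial implications, the same three product-space computations (your $\kappa(m,c)$ is exactly the paper's $\sum_{k}P_{\mathrm{key}}(k)\jump{\mathsf{Enc}(m,k)=c}$), the same cancellation of the factor $1/\#\mathcal{M}$ to conclude that $\kappa(\cdot,c)$ is constant in $m$, and the same final verification for arbitrary $P_{\mathrm{msg}}$. No gaps.
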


\begin{proof}
The implication (i) $\Rightarrow$ (ii) is obvious.
Since $U_{\mathcal{M}}$ is computable, the implication (ii) $\Rightarrow$ (iii) is also obvious.
Thus, we show the implication (iii) $\Rightarrow$ (i) in what follows.
For that purpose, we first note that
the following hold for every $P_{\mathrm{msg}}\in\PS(\mathcal{M})$, $m\in\mathcal{M}$, and $c\in\mathcal{C}$:
\begin{equation}\label{PSparts}
\begin{split}
  &(P_{\mathrm{msg}}\times P_{\mathrm{key}})(M_{\Pi}=m)=P_{\mathrm{msg}}(m),\\
  &(P_{\mathrm{msg}}\times P_{\mathrm{key}})(C_{\Pi}=c)=\sum_{m'\in\mathcal{M},\,k\in\mathcal{K}}P_{\mathrm{msg}}(m')P_{\mathrm{key}}(k)\jump{\mathsf{Enc}(m',k)=c},\\
  &(P_{\mathrm{msg}}\times P_{\mathrm{key}})(M_{\Pi}=m\;\&\;C_{\Pi}=c)
  =P_{\mathrm{msg}}(m)\sum_{k\in\mathcal{K}}P_{\mathrm{key}}(k)\jump{\mathsf{Enc}(m,k)=c},
\end{split}
\end{equation}
where $\jump{\mathsf{Enc}(m,k)=c}:=1$ if $\mathsf{Enc}(m,k)=c$ holds and $\jump{\mathsf{Enc}(m,k)=c}:=0$
otherwise.

Suppose that the random variables $M_{\Pi}$ and $C_{\Pi}$ are independent
on $U_{\mathcal{M}}\times P_{\mathrm{key}}$.
It follows from \eqref{PSparts} that
\begin{equation}\label{not-depend-on-m}
  \sum_{k\in\mathcal{K}}P_{\mathrm{key}}(k)\jump{\mathsf{Enc}(m,k)=c}
  =\frac{1}{\#\mathcal{M}}\sum_{m'\in\mathcal{M},\,k\in\mathcal{K}}P_{\mathrm{key}}(k)\jump{\mathsf{Enc}(m',k)=c}
\end{equation}
for every $m\in\mathcal{M}$ and $c\in\mathcal{C}$.
Note that the left-hand side of \eqref{not-depend-on-m} is independent of $m$.
Let $P_{\mathrm{msg}}$ be an arbitrary finite probability space in $\PS(\mathcal{M})$.
For each $m\in\mathcal{M}$ and $c\in\mathcal{C}$ we see that
\begin{align*}
  (P_{\mathrm{msg}}\times P_{\mathrm{key}})(C_{\Pi}=c)
  &=\sum_{m'\in\mathcal{M}}(P_{\mathrm{msg}}\times P_{\mathrm{key}})(M_{\Pi}=m'\;\&\;C_{\Pi}=c) \\
  &=\sum_{m'\in\mathcal{M}}P_{\mathrm{msg}}(m')\sum_{k\in\mathcal{K}}P_{\mathrm{key}}(k)\jump{\mathsf{Enc}(m',k)=c} \\
  &=\sum_{m'\in\mathcal{M}}P_{\mathrm{msg}}(m')\sum_{k\in\mathcal{K}}P_{\mathrm{key}}(k)\jump{\mathsf{Enc}(m,k)=c} \\
  &=\sum_{k\in\mathcal{K}}P_{\mathrm{key}}(k)\jump{\mathsf{Enc}(m,k)=c},
\end{align*}
where the second and third equalities follow from \eqref{PSparts} and \eqref{not-depend-on-m}, respectively.
It follows from \eqref{PSparts} that the random variables $M_{\Pi}$ and $C_{\Pi}$ are
independent
on
$P_{\mathrm{msg}}\times P_{\mathrm{key}}$.
Since $P_{\mathrm{msg}}$ is an arbitrary finite probability space in $\PS(\mathcal{M})$,
we have that the encryption scheme $\Pi$ is perfectly secret.
This completes the proof.
\end{proof}

Then, on the one hand, the proof of Theorem~\ref{PS-AR} is given as follows.

\begin{proof}[Proof of Theorem~\ref{PS-AR}]
Note that $M_{\Pi}(P_{\mathrm{msg}}\times P_{\mathrm{key}})=P_{\mathrm{msg}}$
for every $P_{\mathrm{msg}}\in\PS(\mathcal{M})$,
and $U_{\mathcal{M}}$ is computable.
Thus, the theorem follows from Theorems~\ref{independence-independence} and \ref{ind-vL}
using Lemma~\ref{PS_lemma}.
Note here that we do not
need
to require the computability of
the finite probability space $C_{\Pi}(P_{\mathrm{msg}}\times P_{\mathrm{key}})$
when we apply Theorem~\ref{ind-vL}
to the finite probability space $P_{\mathrm{msg}}\times P_{\mathrm{key}}$ on $\mathcal{M}\times\mathcal{K}$
and the random variables $M_{\Pi}$ and $C_{\Pi}$ on $\mathcal{M}\times\mathcal{K}$.
\end{proof}

On the other hand, the proof of Theorem~\ref{PS-AR2} is given as follows.

\begin{proof}[Proof of Theorem~\ref{PS-AR2}]
Since $P_{\mathrm{key}}$ is computable,
$C_{\Pi}(P_{\mathrm{msg}}\times P_{\mathrm{key}})$ is also computable
for every computable $P_{\mathrm{msg}}\in\PS(\mathcal{M})$.
Note also that $U_{\mathcal{M}}$ is computable.
Hence, the theorem follows from Theorem~\ref{ind-vL} using Lemma~\ref{PS_lemma}.
\end{proof}

\subsection{Application to repeated tossing of a biased coin}
\label{AFBC}

Arora and Barak~\cite[Lemma 7.12]{AB09} describes
a
method
for simulating one
tossing
of a biased coin
by repeated
tossing
of a fair coin,
in the case where
the ``probability'' of getting a head of the biased coin is a
\emph{computable}
real.
Their method is simple and clever.
But their description of
their
method is made
\emph{in the terminology of the conventional probability theory},
and
seems
vague and
incomplete from the measure-theoretic point of view
on which the conventional probability theory relies.
In this subsection, we adapt
their
method to our framework
for generating an ensemble for an arbitrary
\emph{computable}
finite probability space
on $\{0,1\}$,
given a Martin-L\"of random infinite binary sequence
as an \emph{oracle}.
Our method so obtained is simple and appealing to intuition, but rigorous.

To describe our method, we first introduce
some notation:
Let $\alpha\in\XI$.
For any $n\in\N^+$,
we denote by
$\alpha(n;\infty)$
the infinite binary sequence obtained from $\alpha$ by eliminating the first~$n-1$ elements
in
$\alpha$.
Thus,
for example,
we have that $\alpha=(\rest{\alpha}{n})\alpha(n+1;\infty)$
for every $n\in\N$.

Now,
let $P$ is
an arbitrary
computable finite probability space on $\{0,1\}$.
Let $\rho$ be an infinite binary sequence such that $0.\rho$ is a base-two expansion of
the real $P(0)\in[0,1]$.
Since $P(0)$ is a computable real,
note that
the infinite binary sequence $\rho$ is also computable.
Let $\alpha$ be an arbitrary Martin-L\"of random infinite binary sequence.
Given the infinite binary sequence $\alpha$ as an oracle,
by the following procedure,
we calculate each bit of an infinite binary sequence $\beta$,
which depends on both $P$ and $\alpha$,
one by one in a sequential order from the top
of $\beta$.

Initially, one sets $\omega:=\alpha$ and $i:=1$.
One then
repeats the following
three steps
\emph{forever}:
\begin{description}
\item[Step 1.] One finds the least $k\in\N^+$ such that $\omega(k)\neq\rho(k)$.
\item[Step 2.] One sets $\beta(i):=\omega(k)$.
\item[Step 3.] One updates $\omega$ and $i$ by $\omega:=\omega(k+1;\infty)$ and $i:=i+1$. 
\end{description}

Note that (i)~every Martin-L\"of random infinite binary sequence is not computable and
(ii)~$\gamma(n+1;\infty)$ is Martin-L\"of random
for every Martin-L\"of random infinite binary sequence $\gamma$ and $n\in\N$.
Thus, since $\rho$ is computable,
it follows that
$\alpha(n+1;\infty)\neq\rho$ for every $n\in\N$.
Therefore, in the above procedure,
one certainly finds the least $k\in\N^+$ such that $\omega(k)\neq\rho(k)$, whenever executing \textbf{Step~1}.
Hence,
by the above procedure,
one certainly calculates each bit of the infinite binary sequence $\beta$
one by one in a sequential order from the top
of $\beta$.
We use $\alpha_P$ to denote this infinite binary sequence $\beta$.
We remark that,
since the above procedure is effective,
the argument so far is \emph{formally} summarized as the statement that
for every computable finite probability space $P$ on $\{0,1\}$
there exists an oracle Turing machine $M$ such that
for every Martin-L\"of random infinite binary sequence $\alpha$
it holds that, on every input $n\in\N^+$,
$M$ eventually halts and outputs $\alpha_P(n)$ relative to $\alpha$.

We can then prove Theorem~\ref{biased-coins-from-ML-random} below,
which shows that the infinite binary sequence $\alpha_P$ obtained by the above
simple
procedure
is an ensemble for $P$.

\begin{theorem}[Repeated tossing of a biased coin]
\label{biased-coins-from-ML-random}
Let $P\in\PS(\{0,1\})$. Suppose that $P$ is computable.
For every Martin-L\"of random infinite binary sequence  $\alpha$,
it holds that $\alpha_P$ is an ensemble for $P$.
\end{theorem}

\begin{proof}
For any $\sigma\in\{0,1\}^+$, we use $\mathrm{Flip}(\sigma)$
to denote the finite binary sequence which is obtained from $\sigma$
by flipping the last bit of $\sigma$.
Let $\rho$ be an infinite binary sequence such that $0.\rho$ is a base-two expansion of the real $P(0)$.
We define $P_0$ and $P_1$
as the sets $\{\mathrm{Flip}(\rest{\rho}{n})\mid \rho(n)=1\text{ \& }n\in\N^+\}$
and $\{\mathrm{Flip}(\rest{\rho}{n})\mid \rho(n)=0\text{ \& }n\in\N^+\}$, respectively.
Then we see that
\begin{equation}\label{mLoP0=ssigP02-abssig=srn=12-n=0r=P0-biased-coin}
  \sum_{\sigma\in P_0}2^{-\abs{\sigma}}
  =\sum_{\rho(n)=1}2^{-n}=0.\rho=P(0),
\end{equation}
where the second sum is over all $n\in\N^+$ such that $\rho(n)=1$.
Similarly, we see that
\begin{equation}\label{mLoP1=srn=2-n=1-srn12-n=1-P0=P1-biased-coin}
  \sum_{\sigma\in P_1}2^{-\abs{\sigma}}
  =\sum_{\rho(n)=0}2^{-n}=1-\sum_{\rho(n)=1}2^{-n}=1-P(0)=P(1).
\end{equation}
Moreover, it is easy to see that $P_0\cup P_1$ is a prefix-free subset of $\X$.
Furthermore, note
that both $P_0$ and $P_1$ are
r.e.~subsets of $\X$
since $P(0)$ is a computable real.

Now, let $\alpha$ be an arbitrary Martin-L\"of random infinite binary sequence.
Let us
assume contrarily that $\alpha_P$ is not Martin-L\"of $P$-random.
Then there exists a Martin-L\"of $P$-test $\mathcal{S}\subset\N^+\times\X$
such that
\begin{equation}\label{betainosgmSn_biased-coin}
  \alpha_P\in\osg{\mathcal{S}_n}
\end{equation}
for every $n\in\N^+$.
For each $\sigma\in\{0,1\}^+$,
let $F(\sigma)$ be the set of all $\tau\in\X$ such that
$\tau$ is obtained
by replacing each occurrence of $0$ in $\sigma$, if exists, by some element of $P_0$ and
by replacing each occurrence of $1$ in $\sigma$, if exists, by some element of $P_1$.
Namely,
for each $\sigma\in\{0,1\}^+$,
we define $F(\sigma)$ as the set of all finite binary strings of the form $\tau_1 \tau_2 \dots \tau_L$
with $\tau_i\in\X$
such that for every $i=1,2,\dots,L$
it holds that $\tau_i\in P_0$ if $\sigma(i)=0$ and $\tau_i\in P_1$ otherwise,
where 
$L:=\abs{\sigma}$.
Then, for each $\sigma\in\{0,1\}^+$, we have that
\begin{equation}\label{mLoFslestFs2-at=Ps=BmPos-biased-coin}
  \mathcal{L}(\osg{F(\sigma)})
  =
  \sum_{\tau\in F(\sigma)}2^{-\abs{\tau}}=P(\sigma)=\Bm{P}{\osg{\sigma}},
\end{equation}
where the first equality follows from the prefix-freeness of $P_0\cup P_1$,
and the second equality follows from \eqref{mLoP0=ssigP02-abssig=srn=12-n=0r=P0-biased-coin} and
\eqref{mLoP1=srn=2-n=1-srn12-n=1-P0=P1-biased-coin}.
Also recall here that $\mathcal{L}$ denotes Lebesgue measure on $\XI$.
We then define $\mathcal{T}$ to be a subset of $\N^+\times \X$ such that
$\mathcal{T}_n=\bigcup_{\sigma\in\mathcal{S}_n} F(\sigma)$ for every $n\in\N^+$.
Note here that, for each $n\in\N^+$,
$\lambda\notin\mathcal{S}_n$ since $\Bm{P}{\osg{\mathcal{S}_n}}<2^{-n}<1$.
Then, since
$\mathcal{S}_n$ is a prefix-free subset of $\X$ for every $n\in\N^+$
and $P_0\cup P_1$ is a prefix-free subset of $\X$,
it is easy to see that $\mathcal{T}_n$ is a prefix-free subset of $\X$ for every $n\in\N^+$.
For each $n\in\N^+$, we
also
see that
\[
  \mathcal{L}(\osg{\mathcal{T}_n})
  \le\sum_{\sigma\in\mathcal{S}_n}\mathcal{L}(\osg{F(\sigma)})
  =\sum_{\sigma\in\mathcal{S}_n}\Bm{P}{\osg{\sigma}}
  =\Bm{P}{\osg{\mathcal{S}_n}}<2^{-n},
\]
where the first equality follows from \eqref{mLoFslestFs2-at=Ps=BmPos-biased-coin} and
the second equality follows from the prefix-freeness of $\mathcal{S}_n$.
Moreover, since $\mathcal{S}$, $P_0$, and $P_1$ are all r.e., we see that $\mathcal{T}$ is also r.e.
Thus, $\mathcal{T}$ is a Martin-L\"of test.

On the other hand,
from the definition of $\alpha_P$
it is easy to see
that, for every $n\in\N^+$, if $\alpha_P\in\osg{\mathcal{S}_n}$ then $\alpha\in\osg{\mathcal{T}_n}$.
Thus, it follows from \eqref{betainosgmSn_biased-coin}
that $\alpha\in\osg{\mathcal{T}_n}$ for every $n\in\N^+$.
Hence, $\alpha$ is not Martin-L\"of random.
Thus, we have a contradiction, and
therefore $\alpha_P$ is Martin-L\"of $P$-random.
This completes the proof.
\end{proof}

Let
$U$
be a finite probability space on $\{0,1\}$ such that $U(0)=U(1)=1/2$, and
let us consider an infinite sequence $\alpha\in\XI$ of outcomes
which is being generated by infinitely repeated trials described
by the finite probability space $U$ on $\{0,1\}$.
\emph{Intuitively},
this
infinite binary sequence $\alpha$
is considered as
an infinite sequence
generated
by infinitely repeated
tossing
of a fair coin.
Thesis~\ref{thesis} treats this intuition in a rigorous manner.
Actually,
according to Thesis~\ref{thesis}, we have that $\alpha$ is an ensemble for $U$,
that is,
$\alpha$ is a Martin-L\"of random infinite binary sequence.
Thus, via the
infinitely repeated
trials
above,
we obtain each element of the Martin-L\"of random infinite binary sequence $\alpha$
one by one in a sequential order from the top of
$\alpha$.
Now, suppose that we are given a computable finite probability space $P\in\PS(\{0,1\})$.
Then, applying the above
simple
procedure
to the elements of $\alpha$ one by one in a sequential order from the top of $\alpha$,
we can \emph{effectively} convert $\alpha$ into a Martin-L\"of $P$-random infinite
binary
sequence, i.e.,
the infinite
binary
sequence $\alpha_P$.

In this subsection we require that
the finite probability space $P$ on $\{0,1\}$ is computable when we are generating an ensemble for $P$.
This requirement can be considered to be \emph{natural} due to the following reason:
To perform the above procedure the computability of the infinite binary sequence $\rho$ is
\emph{indispensable},
and the computability of $\rho$ is equivalent to the computability of $P$.
Thus, the computability of $P$ is natural
and does not impose a restriction on the applicability of
the above procedure and Theorem~\ref{biased-coins-from-ML-random}.

\subsection{Application to quantum mechanics}

The notion of probability plays a crucial role in quantum mechanics.
It appears in quantum mechanics as the so-called \emph{Born rule}, i.e.,
the \emph{probability interpretation of the wave function} \cite{D58,vN55,LL77,NC00}.
In modern mathematics which describes quantum mechanics, however,
probability theory means nothing other than \emph{measure theory},
and therefore any \emph{operational characterization of the notion of probability} is still missing
in quantum mechanics.
In this sense, the current form of quantum mechanics is considered to be \emph{imperfect}
as a physical theory which must stand on operational means.

As a \emph{major application} of our framework developed in the present paper,
we developed a framework
of
an operational refinement of quantum mechanics with respect to
measurements
in a series of works~\cite{T14CCR,T15Kokyuroku,T15WiNF-Tadaki_rule,T16QIT35,T18arXiv}.
In these works
we first presented
a \emph{refinement} of the Born rule as an alternative rule to it,
based on the notion of ensemble,
for aiming at
making quantum mechanics
\emph{operationally perfect}.
Namely,
we used the notion of ensemble
to state
the refinement of the Born rule
for specifying the property of the results of quantum measurements \emph{in an operational way}.
We then presented an operational refinement of the Born rule for mixed states, as an alternative rule to it,
based on the notion of ensemble.
In particular, we gave a precise definition for the notion of \emph{mixed state},
using the notion of ensemble.
We then showed that all of
the refined rules of the Born rule for both pure states and mixed states
can be derived
from a \emph{single} postulate
on quantum measurements,
called the \emph{principle of typicality}, in a unified manner.
We did this from the point of view of the \emph{many-worlds interpretation of quantum mechanics}
\cite{E57}.
Furthermore,
we made an application of our framework to the BB84 quantum key distribution protocol
in order to demonstrate how properly our framework works in \emph{practical} problems
in quantum mechanics, based on the principle of typicality.
See the work~\cite{T18arXiv} for the detail
of the development
of the framework based on the principle of typicality.

In the work~\cite{T20Kokyuroku}, we made an application of our framework
based on the principle of typicality
to the argument of Bell's inequality versus quantum mechanics
to refine it.
Specifically, in the work~\cite{T20Kokyuroku}, we \emph{refined} and \emph{reformulated}
the argument of Bell's inequality versus quantum mechanics,
which is described
in
Section~2.6 ``EPR and the Bell inequality'' of Nielsen and Chuang~\cite{NC00}.
Thus,
on the one hand,
we refined and reformulated the \emph{assumptions of local realism}
to lead to Bell's inequality,
in terms of
our framework on the
operational characterization of the notion of probability
developed by
the present paper.
On the other hand,
we refined and reformulated
the corresponding argument of quantum mechanics to violate Bell's inequality,
based on the principle of typicality~\cite{T18arXiv}.
The results of the work~\cite{T20Kokyuroku} demonstrates further that
both the framework on the operational characterization of the notion of probability
developed by the present paper and
the framework on the refinement of quantum mechanics based on the principle of typicality~\cite{T18arXiv}
work properly in \emph{practical} problems.

\section{Concluding remarks}
\label{Concluding}

In this paper we have developed an operational characterization of the notion of probability.
As the first step of the research of this line,
we have considered only the case of finite probability space, where the sample space is finite, for simplicity.
As the next step of the research,
it is natural to consider the case of \emph{discrete probability space},
where the sample space is \emph{countably infinite}.
Actually,
in this case
we can develop a framework for the operational characterization of the notion of probability
in \emph{almost the same manner} as the case of finite probability space.
The detail is reported in the sequel to this paper,
Tadaki~\cite{T19arXiv}.

\begin{acks}
This work was partially supported
by JSPS KAKENHI Grant Numbers JP24540142, JP15K04981, JP18K03405.
This work was
partially done
while the author was visiting the Institute for Mathematical Sciences,
National University of Singapore in 2014.
\end{acks}

\end{document}